\documentclass[11pt,reqno]{amsart}
\usepackage{geometry} 

\usepackage{amsmath,amssymb,amsthm, mathrsfs,appendix}
\usepackage{xcolor}
\usepackage{graphicx} 
\usepackage{tikz}
\usetikzlibrary{positioning} 
\usepackage{pgfplots}
\allowdisplaybreaks

\usepackage{marvosym}
\usepackage{cancel}

\usepackage{latexsym}
\usepackage[nobysame]{amsrefs}[11pts]
\usepackage{bm}
\usepackage{enumerate, enumitem}
\usepackage{indentfirst}
\usepackage[colorlinks,
           linkcolor=black,
          anchorcolor=black,
         citecolor=black
        ]{hyperref}

\usepackage[left, pagewise]{lineno}

\newcommand{\dy}{\mathrm{d}y}

\newcommand{\lss}{c}
\newcommand{\Clss}{Q}
\newcommand{\subclss}{\zeta}
\newcommand{\dive}{\mathrm{div}}
\newcommand{\smallc}{\nu}
\newcommand{\md}{\mathbb{D}}

\newtheorem{Theorem}{Theorem}[section]
\newtheorem{Lemma}{Lemma}[section]
\newtheorem{Proposition}{Proposition}
\newtheorem{Remark}{Remark}

\numberwithin{Remark}{section}
\numberwithin{Proposition}{section}
\numberwithin{Definition}{section}
\numberwithin{Lemma}{section}
\numberwithin{equation}{section}
\numberwithin{Theorem}{section}

\begin{document}
\title[Degenetate compressible Navier-Stokes equations]{Development of Implosions of Solutions \\
to the Three-Dimensional  Degenerate Compressible Navier-Stokes Equations}
\date{\today} 

\author{Gui-Qiang G. Chen}
\address[Gui-Qiang G. Chen]{Mathematical Institute, University of Oxford, Oxford, OX2 6GG, UK.} \email{\tt gui-qiang.chen@maths.ox.ac.uk}

\author{Lihui Liu}
\address[Lihui Liu]{School of Mathematical Sciences, Shanghai Jiao Tong University, Shanghai, 200240, China; Mathematical Institute, University of Oxford, Oxford, OX2 6GG, UK.}
\email{\tt lzy1604@sjtu.edu.cn}

\author{Shengguo Zhu }
\address[Shengguo Zhu]{School of Mathematical Sciences, CMA-Shanghai  and MOE-LSC,  Shanghai Jiao Tong University, Shanghai 200240, P. R. China}
 \email{\tt  zhushengguo@sjtu.edu.cn}

\begin{abstract}
A fundamental open problem in the theory of the multidimensional compressible Navier–Stokes equations 
is whether smooth solutions can develop singularities---such as cavitation or implosion---in finite time. 
In the case of constant viscosity coefficients, recent remarkable results show that 
there exist some smooth initial data for which the corresponding smooth solutions 
of the barotropic flow undergo finite-time implosion at the origin in three spatial dimensions, 
with the density $\rho$ blowing up to infinity  (see \cites{MPI, buckmaster, shao, shijia}).
On the other hand, when the viscosity coefficients depend linearly 
on $\rho$ (as in the shallow water equations), it has also been established recently that, 
for general large spherically symmetric initial data (even allowing vacuum states),
the solutions of the barotropic flow remain globally regular in two and three spatial 
dimensions (see \cites{CZZ, CZZ1}).
These observations indicate that the behavior of the  multi-dimensional solutions depends sensitively on the structures of the viscosity coefficients, thereby complicating the analysis of singularity formation.
In this paper, we analyze the case of
nonlinear density-dependent viscosity coefficients (\textit{i.e.}, $\rho^\delta$ with an exponent $\delta>0$). 
We identify a value $\delta^*(\gamma)<\frac12$, depending on the adiabatic exponent $\gamma$, such that, 
for every $0<\delta< \delta^*(\gamma)$, 
there exists a class of smooth initial data for which the corresponding  smooth solutions 
of the barotropic flow implode (with infinite density) in finite time at the origin in three spatial dimensions. 
Moreover, we assume that the initial density is strictly positive, 
in order to rule out 
the possibility that the corresponding implosion is an artifact of the vacuum.
The key point is to prove that, in this regime, the degenerate viscous terms are not strong enough 
to suppress the convective mechanism driving the implosion. 
However, verifying this issue is challenging due to fundamental difficulties 
arising from the degenerate structure. 
Specifically, for flows with constant viscosity coefficients (as in \cites{MPI, buckmaster, shao, shijia}), the dissipative effect in the momentum equation remains spatially uniform. In contrast, in our setting, the dissipative effect increases significantly in regions of large density. 
Therefore, one would expect the viscous terms to suppress any tendency toward implosion (just as in \cites{CZZ,CZZ1}).
To overcome this obstacle,  we first provide a pointwise estimate of $\rho$ by a region segmentation method, and then establish the spatial decay estimates for the gradient of the velocity via carefully designed weighted estimates for high-order derivatives  and interpolation inequalities. 
The corresponding decay rate is fast enough to absorb the singularity of $\rho$  and then leads to a uniform-in-time control of the viscous terms.
\end{abstract}

\date{March 23, 2026}
\subjclass[2020]{Primary: 35A01, 35B40; Secondary: 35B65, 35A09} 
\keywords{Compressible Navier-Stokes equations,   Degenerate viscosity, Three-Dimensions, Smooth solutions, Singularity formation, Implosion, Cauchy problem}.

\maketitle

\tableofcontents

\section{Introduction}\label{Section1}
We are concerned with the formation of singularities in compressible viscous flows 
and prove that there exists a class of smooth initial data for which the corresponding 
smooth solutions of the Cauchy problem for the three-dimensional (3-D) compressible 
Navier–Stokes equations (\textbf{CNS}) undergo finite-time implosion 
within a physical parameter regime.
More specifically, we consider the motion of a barotropic, compressible, viscous Newtonian polytropic 
fluid in  $\mathbb{R}^3$, governed by the following \textbf{CNS}{\rm:}
\begin{equation}\label{eq:1.1}
\begin{cases}
\partial_t\rho+\dive_x(\rho u)=0,\\[6pt]
\partial_t(\rho u)+\dive_x(\rho u\otimes u)
+\nabla_x P =\dive_x \mathbb{S},
\end{cases}
\end{equation}
and  the initial and far-field conditions:
\begin{equation}\label{eqs:CauchyInit}
\begin{split}
&(\rho,u)(0, x)=(\rho_0,u_0)(x)
\qquad\,\, \text{for $x \in \mathbb{R}^3$},\\[3pt]
\displaystyle
&(\rho,u)(t, x)
\to (\bar\rho,0)
\qquad \ \ \ \ \quad\, \text{as $|x|\to \infty$ for $t\ge 0$}.
\end{split}
\end{equation}
Here, $t\geq 0$ denotes the time, 
$x=(x_1, x_2, x_3)^{\top}\in \mathbb{R}^3$ is the Eulerian spatial coordinates, 
$\rho\geq 0$ is the fluid density, $\bar\rho>0$ is a constant,   $u=(u_1, u_2, u_3)^\top$ $\in \mathbb{R}^3$ is  
the fluid velocity, and $P$ denotes the fluid pressure. For polytropic gases, the constitutive relation is given by
\begin{equation}
\label{eq:1.2}
P=A\rho^{\gamma}, 
\end{equation}
where $A>0$ is  the entropy  constant and  $\gamma>1$ is the adiabatic exponent. 
Without loss of generality, by scaling, we may take $A=\frac{1}{\gamma}$.
The viscous stress tensor $\mathbb{S}$ takes the form{\rm:}
\begin{equation}\label{eq:1.1t}
\mathbb{S}=2\mu(\rho)D_x(u)+\lambda(\rho)\,\dive_x u\,\mathbb{I}_3,
\end{equation} 
where $D_x(u)=\frac{1}{2}\big(\nabla_x u+(\nabla_x u)^\top\big)$ is the deformation tensor, $\mathbb{I}_3$ is the $3 \times 3$ identity matrix,
\begin{equation}\label{fandan}
\mu(\rho)=a_1  \rho^\delta,\quad \lambda(\rho)=a_2 \rho^\delta\qquad\,\,\mbox{for some  constant $\delta\geq 0$},
\end{equation}
$\mu(\rho)$ is the shear viscosity coefficient, 
$\lambda(\rho)+\frac{2}{3}\mu(\rho)$ is the bulk viscosity coefficient,  
and $a_1$ and $a_2$ are both constants satisfying
\begin{equation}\label{viscoeff}
a_1 >0, \qquad 2a_1+3a_2 \geq 0. 
\end{equation}

For rarefied gases, the full \textbf{CNS}  can be formally derived from the Boltzmann equation 
via the Chapman--Enskog expansion (see Chapman--Cowling \cite{chap} and Li--Qin \cite{tlt}). 
Under proper physical assumptions,  the viscosity coefficients $(\mu,\lambda)$ and the thermal conductivity coefficient $\kappa$ are all functions 
of the absolute temperature $\vartheta$. In fact,  
for the cutoff inverse power force models, if the intermolecular potential varies as $\ell^{-\varkappa}$,
where $\ell$ is the intermolecular distance and $\varkappa$ is a positive constant, then   
\begin{equation}\label{eq:1.6g}
\mu(\vartheta)=b_1 \vartheta^{\frac{1}{2}+ b},\quad \lambda(\vartheta)=b_2 \vartheta^{\frac{1}{2}+ b}, 
\quad  \kappa(\vartheta)=b_3 \vartheta^{\frac{1}{2}+ b}\qquad\,\, \text{with}\,\,\,  b=\frac{2}{\varkappa} \in [0,\infty),
\end{equation}
for some constants $b_i$, $i=1,2,3$.
In particular, for the ionized gas,
$\varkappa=1$;
for Maxwellian molecules,
$\varkappa=4$;
while $\varkappa=\infty$ for rigid elastic spherical molecules (see \S 10 of \cite{chap}).
For barotropic and polytropic fluids, such a dependence is inherited through the laws of Boyle and Gay-Lussac{\rm:}
\begin{equation*}
P=\hat{A}\rho \vartheta=A\rho^\gamma \qquad \text{for a constant } \hat{A}>0,
\end{equation*}
\textit{i.e.}, $\vartheta=A\hat{A}^{-1}\rho^{\gamma-1}$, 
and  the viscosity coefficients become functions of $\rho$ taking form $\eqref{fandan}$; see Liu--Xin--Yang \cite{taiping}.  
Note that there exist other physical models satisfying the density-dependent viscosity assumption \eqref{fandan}, such as the Korteweg system, the shallow water equations, the lake equations, the quantum Navier-Stokes system, among others; see \cites{Dunn,gpm,jun,Kort, Mar,Gent}.

The local well-posedness of strong solutions of the Cauchy problem for \textbf{CNS} has been thoroughly studied.
When  $\inf_x {\rho_0(x)}>0$, the local well-posedness of smooth solutions in the inhomogeneous Hilbert spaces $H^s$ ($s>\frac{n}{2}+1$)  follows from the standard symmetric hyperbolic-parabolic structure that 
satisfies the well-known Kawashima's condition (\textit{cf.} Kawashima \cite{KA} and Nash \cite{nash}).  
However, such approaches fail in the presence of vacuum due to the possible degeneracies 
from the time evolution and spatial dissipation as follows{\rm:}
\begin{equation}\label{dege}
\displaystyle
 \underbrace{\rho(\partial_t u+u\cdot \nabla_x u)}_{Degenerate   \ time \ evolution}+\nabla_x P= \underbrace{\text{div}(2\mu(\rho)D_x(u)+\lambda(\rho)\,\dive_x u \,\mathbb{I}_n)}_{Degenerate\ spatial  \ dissipation}.
\end{equation}
Generally, a vacuum appears in the far-field under some physical requirements such as a finite total mass in $ \mathbb{R}^n$. 
For the multidimensional   (M-D) \textbf{CNS} with constant viscosity coefficients ($\delta=0$ in (\ref{fandan})),  by introducing some compatibility conditions, the local well-posedness of smooth solutions 
with vacuum in the homogeneous Sobolev spaces has been established successfully; 
see Salvi--Stra\v skraba \cite{Salvi} and Cho--Kim  \cite{guahu}, among others. 
On the other hand,  for M-D \textbf{CNS} with degenerate viscosity coefficients ($\delta>0$ in (\ref{fandan})),
by introducing some enlarged reformulations to deal with the degenerate spatial dissipation,   
the local well-posedness of smooth solutions with vacuum in the inhomogeneous Sobolev spaces 
has been established by Zhu \cite{zhuthesis}, Li--Pan--Zhu \cites{sz3,sz333}, Xin--Zhu \cite{zz2}, and the references cited therein. 

Another fundamental problem in the theory of fluid dynamics is the question of global regularity in $\mathbb{R}^n$ ($n\geq 2$). 
Under proper smallness assumptions on the initial data, 
some important progress has been made on the global well-posedness of smooth solutions 
to the M-D \textbf{CNS};
see Matsumura--Nishida \cite{MN}, Danchin \cite{danchin1}, and Huang--Li--Xin \cite{HX1}  for flows with constant viscosity coefficients, and Sundbye \cite{Sundbye2}, Wang--Xu \cite{weike}, and Xin--Zhu \cite{zz} for flows with degenerate viscosity coefficients. 
On the other hand, for the general smooth initial data, very few results are available concerning
the global well-posedness of smooth solutions to the M-D \textbf{CNS}. 
Recently, when the viscosity coefficients depend linearly on the mass 
density $\rho$ (as in the shallow water equations, \textit{i.e.}, the viscous Saint-Venant system 
with $\delta=1$ and $a_2=0$ in \eqref{fandan}), 
Chen--Zhang--Zhu \cite{CZZ} have established global well-posedness of spherically symmetric smooth solutions of the Cauchy problem in $\mathbb{R}^n$ ($n=2$ or $3$) for the following two classes of initial density profiles{\rm:}
 \begin{enumerate}[leftmargin=2.5em,
  align=left,
  labelsep=0.2em,
  labelwidth=1.1em]
        \item[\rm{(i)}] Strictly positive density in the whole space $\mathbb{R}^n$:
        $$
        \inf_{x\in \mathbb{R}^n} \rho_0(x)>0;
        $$
        \item[\rm{(ii)}] Positive density decaying at infinity:
        $$   
        \rho_0(x) >0 \quad \text{for $x\in \mathbb{R}^n$},  \qquad\quad
        \rho_0(x) \rightarrow 0 
        \quad \text{as $|x|\rightarrow \infty$}.      
        $$
    \end{enumerate}
Furthermore, for the degenerate \textbf{CNS} \eqref{eq:1.1} with $\delta=1$ in \eqref{fandan},
even when the initial density vanishes in some open set, under the setting of the vacuum free boundary problem,  Chen--Zhang--Zhu have proved in the recent work \cite{CZZ1} that  
the solutions with large data of spherical symmetry still remain globally regular and unique 
in two and three spatial dimensions. 
It is worth emphasizing that, for the global-in-time well-posedness established 
in \cite{CZZ1}, the physical vacuum boundary condition is allowed for the class of initial
data considered, and the corresponding solutions remain smooth all the way up 
to and including the moving vacuum boundary.
For the general M-D problem with large initial data, 
although many important results on the global existence of weak solutions 
of the Cauchy problem have been obtained (see \cites{bvy,fu1, fu3,JZ,lz, lions, vayu}), 
the issue of uniqueness remains widely open due to the limited regularity of these weak solutions.

We remark
that there are several results concerning the singularity formation for the M-D \textbf{CNS}.
For the flow with constant viscosity coefficients,  Xin \cite{zx} showed that any smooth solutions in the inhomogeneous Sobolev spaces to the non-isentropic \textbf{CNS} with zero thermal conductivity cannot exist globally in time, since it may blow up in finite time, provided that
the density is compactly supported, which holds regardless of the size of the initial data. 
Subsequently, Rozanova \cite{olga} showed that the smooth solutions of the Cauchy problem for 
\textbf{CNS}  in three or higher spatial dimensions, with conserved total mass, finite total energy, and finite moment of inertia, lose their initial smoothness in finite time, 
even when the initial density is not compactly supported. 
On the other hand, it remains unclear whether the class of solutions considered 
in \cites{olga, zx} admits the local-in-time existence in $\mathbb{R}^n$ ($n\geq 2$).
For the barotropic \textbf{CNS} with degenerate viscosity coefficients ($\delta>1$  in \eqref{fandan}), it was shown in \cites{sz333,zz} that,  
for certain classes of 3-D initial data with vacuum in some open set, one can construct corresponding local smooth solutions in the inhomogeneous Sobolev spaces that 
break down in finite time, regardless of the size of the initial data. 

These observations indicate that, in the presence of vacuum, 
under the setting of the Cauchy problem, 
viscous effects are insufficient to prevent the singularity formation of 
classical solutions to \textbf{CNS} for the cases   $\delta=0$ and $\delta>1$ in \eqref{fandan}. However, the results obtained in \cites{olga,zx,sz333,zz} 
show that the lifespan of the corresponding classical solutions is finite, 
typically via contradiction arguments, 
and thus do not provide a detailed description of the underlying blow-up mechanism.
If the viscous effects are formally neglected in compressible flow, 
the motion of the corresponding barotropic inviscid flow is governed by 
the compressible Euler equations (system \eqref{eq:1.1} with $a_1=a_2=0$).
For this inviscid system, the mechanism of singularity formation has been studied extensively and deeply.
In particular,  significant progress has been made over the past decades
on the shock formation;
see, for example, the works of Lax \cite{lax}, Lebaud \cite{lebaud},  
Sideris \cite{sideris}, 
Chen--Dong \cite{chen-dong}, Yin \cite{yin},
Christodoulou \cite{Christodoulou}, Luk--Speck \cite{Luk}, Christodoulou--Miao \cite{ChristodoulouMiao},
Chen--Chen--Zhu \cite{gengzhu1}, Buckmaster--Shkoller--Vicol \cites{bsv1, bsv2}, 
Shkoller--Vicol \cite{sv}, and the references cited therein. 

In addition to the shock formation, another mechanism of singularity formation in 
compressible flows, known as implosion, has been observed. 
In this scenario, either the density or the velocity becomes unbounded at the blow-up time.
Recently, Merle--Rapha\"el--Rodnianski--Szeftel \cite{merle1} constructed $C^\infty$ smooth, 
spherically symmetric imploding solutions to the barotropic compressible Euler equations 
for $\gamma>1$ in $\mathbb{R}^3$, 
except for at most countably many values of $\gamma$, 
based on a self-similar reformulation to the equations. 
Specifically, denoting $\alpha=\frac{\gamma-1}{2}$ and introducing the variable
$$
\lss=\alpha^{-1}\rho^{\alpha},
$$
the Euler equations (system \eqref{eq:1.1} with $a_1=a_2=0$) can be rewritten  as
\begin{equation}\label{eq:1.21}
\begin{cases}
\partial_t \lss=-u\cdot \nabla_x \lss -\alpha \lss \,\dive_x u,\\[6pt]
 \partial_t u = -u\cdot \nabla_x u -\alpha \lss \nabla_x \lss.
\end{cases}
\end{equation}
Then, for fixed constants  $T>0$ and $\Lambda>1$, one can  introduce  the vector function $(\Clss, U)$
by
\begin{equation}\label{scaling1}
\begin{aligned}
    \lss(t,x)& = \frac{(T-t)^{\frac{1}{\Lambda}-1}}{\Lambda} \Clss (-\frac{\log (T-t)}{\Lambda}, \frac{x}{(T-t)^{\frac{1}{\Lambda}}} ),\\
    u(t,x)&= \frac{(T-t)^{\frac{1}{\Lambda}-1}}{\Lambda}U(-\frac{\log (T-t)}{\Lambda}, \frac{x}{(T-t)^{\frac{1}{\Lambda}}} ),
\end{aligned}    
\end{equation}
 and  the new self-similar time and space coordinates $(\tau, y)${\rm:}
\begin{equation}\label{scaling-coordinat1}
    \tau=-\frac{\log (T-t)}{\Lambda},\quad y= \frac{x}{(T-t)^{\frac{1}{\Lambda}}}=e^{\tau}x,
\end{equation}
with the initial time $\tau_0=-\frac{\log T}{\Lambda}>0$,
where $y=(y_1, y_2, y_3)^{\top}\in \mathbb{R}^3$ and $\tau\in [\tau_0,\infty)$.
In the  self-similar coordinates $(\tau, y)$, the compressible Euler equations become
\begin{equation}\label{selfsimilar eq1}
    \begin{cases}
    \displaystyle
        \partial_\tau \Clss =-(\Lambda-1)\Clss -(y+U)\cdot\nabla_y \Clss - \alpha \Clss \,\dive_y U,\\[6pt]
        \displaystyle
        \partial_\tau U = -(\Lambda-1)U-(y+U)\cdot\nabla_y U- \alpha \Clss\, \nabla_y \Clss.
    \end{cases}
\end{equation}
In \cite{merle1}, for   $\gamma \in (1,\, \infty )$ except for at most countably many points, there exists a discrete sequence   $\{\Lambda_n\}_{n=1}^\infty$ satisfying 
\begin{equation}\label{range of Lambda}
    1<\Lambda_n <\Lambda^*(\gamma), \qquad\,\, \lim_{n\rightarrow \infty} \Lambda_n=\Lambda^*(\gamma),
    \end{equation}
with 
\begin{equation}\label{def:Lambda*}
    \Lambda^*(\gamma)=\begin{cases}
        1+\frac{2}{(1+\sqrt{\frac{2}{\gamma-1}})^2} & \,\,\text{for} \,\, 1<\gamma <\frac{5}{3},\\[6pt]
        \frac{3\gamma-1}{2+\sqrt{3}(\gamma-1)} & \,\,\text{for} \,\, \gamma\geq \frac{5}{3},
    \end{cases}
\end{equation}
such that, for every $\Lambda_n$,  nontrivial $C^\infty$ smooth spherically symmetric profiles $(\overline \Clss_n, \overline U_n)$ were constructed 
by solving 
\begin{equation}\label{Profilen}
    \begin{cases}
        (\Lambda_n-1)\overline \Clss_n +(y+\overline U_n)\cdot\nabla_y \overline \Clss_n + \alpha \overline \Clss_n \, \dive_y \overline U_n =0,\\[6pt]
        (\Lambda_n-1)\overline U_n+(y+\overline U_n)\cdot\nabla_y \overline U_n+ \alpha \overline \Clss_n\, \nabla_y \overline \Clss_n =0.
    \end{cases}
\end{equation}
These profiles are regular at the origin and decay at spatial infinity. In
particular,
\[
\overline{\Clss}_n(0)=\Clss_*>0,\qquad  \overline U_n(0)=0,\
\qquad (\overline{\Clss}_n,\overline U_n)(y)\to (0,0)\quad \text{as $|y|\to\infty$}.
\]
The existence of such self-similar profiles implies that the corresponding $C^\infty$ solutions to the compressible Euler equations \eqref{eq:1.21} 
undergo finite-time implosion. For the 3-D barotropic \textbf{CNS} with constant viscosity coefficients 
and spherical symmetry, based on the existence of the $C^\infty$ smooth self-similar solutions 
to the compressible Euler equations  in \cite{merle1}, 
 Merle--Rapha\"el--Rodnianski--Szeftel \cite{MPI} proved  that, for   $\gamma \in \big(1,\, 1+\frac{2}{\sqrt{3}}\big)$ except for at most countably many points,  there exists a class of smooth finite-energy initial data with far-field vacuum such that
 the corresponding solutions undergo finite-time implosion, with the density blowing up to infinity.

It is worth noting that the case $\gamma = \frac{5}{3}$, corresponding to a monatomic gas,  
 was excluded in \cites{merle1,MPI} 
due to a triple-point degeneracy in the underlying analysis. 
This gap was later filled by Shao--Wang--Wei--Zhang \cite{shao} by introducing a novel renormalization
for the autonomous system \eqref{atuosys} in Appendix~\ref{appendix B}. In particular, it was proved in \cite{shao} that, for $\gamma=\frac{5}{3}$,
the sequence $\{\Lambda_n\}_{n=1}^{\infty}$ described in \eqref{range of Lambda}--\eqref{Profilen}  
also exists, which can be used to show the finite-time blow-up  of solutions of the 3-D  barotropic  \textbf{CNS}.
Moreover, for 3-D barotropic flow, Buckmaster--Cao-Labora--Gómez-Serrano in \cite{buckmaster}
constructed smooth self-similar imploding solutions to the compressible Euler equations for all $\gamma>1$,
and subsequently proved the finite-time blow-up of solutions of \textbf{CNS} with strictly positive initial density 
and spherical symmetry  when $\gamma=\frac{7}{5}$ (corresponding to a diatomic gas).
On the other hand,  for the 3-D barotropic \textbf{CNS} without any symmetry assumption, 
 Cao-Labora--Gómez-Serrano--Shi--Staffilani \cite{shijia}  constructed some smooth solutions 
 that remain strictly away from vacuum and nevertheless develop an imploding finite-time 
 singularity in $\mathbb{T}^3$ or $\mathbb{R}^3$. 
 These developments indicate that, although viscosity generally has a regularizing effect, 
 the self-similar blow-up mechanisms identified in the inviscid setting 
 may still persist or influence the dynamics in weakly viscous regimes.

These results naturally raise the question of whether similar finite-time singularities persist
in the physically important case that the viscosity coefficients depend on the density 
in a power law  (\textit{i.e.}, the form: $\rho^\delta$ with an exponent $\delta>0$). 
Indeed, when vacuum appears in some open set, finite-time blow-up phenomena have been observed 
for the case $\delta>1$ in \eqref{fandan} (see \cites{sz333,zz}).
However, from a physical standpoint, the concept of fluid velocity itself loses meaning in the region where the density vanishes. 
In the derivation of hydrodynamic equations from first principles, 
a fundamental underlying assumption is that 
 the fluid is non-dilute and can be described as a continuum. This continuum hypothesis fails in vacuum regions, where the hydrodynamic 
description no longer applies to the evolution of thermodynamic states.
Consequently, the dynamics within vacuum regions cannot be meaningfully analyzed using classical hydrodynamic models. 
Based on the above considerations, in the setting of the Cauchy problem,  it is natural to expect the
viscous system may fail to be globally well-posed when vacuum is present in an open set at the initial time. 
From this viewpoint, the finite-time blow-up results obtained for the Cauchy problem
in \cites{sz333, zz} for the case $\delta>1$ in \eqref{fandan} 
are perhaps not unexpected.
What is of genuine interest, however, is the regime in which the initial data remain strictly away from the vacuum in the domain under consideration.
In contrast to the case $\delta>1$, 
for the degenerate \textbf{CNS} \eqref{eq:1.1} with $\delta=1$ in \eqref{fandan} 
in two and three spatial dimensions, as mentioned above, no matter for the Cauchy problems that 
are strictly away from the vacuum or  allow the far-field vacuum in \cite{CZZ}, 
or the vacuum free boundary problem that contains the physical vacuum in \cite{CZZ1}, 
the corresponding smooth solutions with large initial data of spherical symmetry remain 
globally regular and unique. 
These results demonstrate that the qualitative behavior of solutions 
to the M-D degenerate \textbf{CNS} depends sensitively on the viscosity exponent $\delta$, 
thereby substantially complicating the analysis of singularity formation 
in the density-dependent viscosity setting.

In this paper, we study the Cauchy problem for the M-D \textbf{CNS} with the viscosity coefficients determined by \eqref{fandan} involving $\rho^\delta$ with $\delta>0$. 
A fundamental difficulty in this problem arises from the fact that the dissipative effects become 
increasingly strong in the regions of large density. From a formal perspective, 
one would therefore expect the dissipative terms to dominate the dynamics 
and suppress any tendency toward implosion.

Beyond this physical intuition, the rigorous analysis presents substantial mathematical challenges. 
After performing an Euler-type self-similar transformation to \textbf{CNS}, 
the structure of the equations changes fundamentally.
In particular, the original viscous terms are transformed into a degenerate nonlinear elliptic operator, together with additional coupling terms between the velocity field and the density.
As a result, the standard energy methods and uniform elliptic estimates are no longer directly 
applicable.
To overcome these difficulties, the analysis requires a delicate interplay of 
carefully designed weighted estimates across different spatial regimes.

The main contribution of this paper is to show that the above intuition 
fails in a certain physical parameter regime.
More precisely, when the viscosity exponent $\delta$ is smaller than a threshold value 
$\delta^*(\gamma)$, 
depending on the adiabatic exponent $\gamma$, the nonlinear viscous terms are not strong enough 
to counterbalance the convective effects, leading to finite-time implosion. 
Our main result on  the   Cauchy problem  \eqref{eq:1.1}{\rm--}\eqref{viscoeff}
can be stated as follows:

\begin{Theorem}\label{Thm1.1}  Let  $(\overline{\Clss}, \overline{U})$ be  the $C^\infty$  self-similar profile 
solving 
 \begin{equation}\label{Profile}
 \begin{cases}
        (\Lambda-1)\overline \Clss +(y+\overline U)\cdot\nabla_y \overline \Clss + \alpha \overline \Clss \, \dive_y \overline U =0,\\[6pt]
        (\Lambda-1)\overline U+(y+\overline U)\cdot\nabla_y \overline U+ \alpha \overline \Clss\, \nabla_y \overline \Clss =0,
    \end{cases}
\end{equation}
which is obtained in {\rm Lemma \ref{lem:existofselfsimilar}} {\rm(}see {\rm Appendix \ref{appendix B}}{\rm)}, and let 
  $\delta^*(\gamma)$ be   a constant given by
\begin{equation}\label{def:delta*}
\begin{aligned}
\delta^*(\gamma)=\begin{cases}
     \frac{\gamma+1}{4}-\frac{\sqrt{2(\gamma-1)}}{2} &\text{for}\,\,\,\, 1 < \gamma <\frac{5}{3},\\[6pt]
     \frac{1-(2\sqrt{3}-3)\gamma}{2(3-\sqrt{3})} &\text{for}\,\,\,\, \frac{5}{3} \leq \gamma <1+\frac{2}{\sqrt{3}}.
\end{cases}
\end{aligned}
\end{equation}
If  parameters  $(\gamma,\delta)$ satisfy
any one of the following  conditions $(P_1)$--$(P_2)${\rm :}
\begin{itemize}
\item[$(\rm P_1)$] 
\begin{equation}\label{canshu2}
1<\gamma<1+\frac{2}{\sqrt{3}},\quad\, 0<\delta<\frac{1}{2},\quad\, 
\delta_{\rm dis}=\frac{(1-\delta)(\Lambda-1)}{\alpha}+\Lambda-2>0,\end{equation}
   where $\Lambda\in (1,\Lambda^*(\gamma))$ is the  scaling parameter introduced in {\rm Lemma \ref{thm:existence_profiles};}
\smallskip
\smallskip
\item[$(\rm P_2)$] 
\begin{equation}\label{canshu1}
\gamma\in (1,1+\frac{2}{\sqrt{3}})\setminus \mathcal{J},\quad
0< \delta < \delta^*(\gamma),
\end{equation}
where the set $\mathcal{J}$ {\rm (}possibly empty{\rm )}
contains at most countably many points excluding $\{\frac{5}{3}, \frac{7}{5}\}$ and is introduced in {\rm Lemma \ref{thm:existence_profiles}}, \end{itemize}
then, for two constants $T, \sigma>0$ sufficiently small,    
there are $C^{\infty}$ initial data $(\rho_0, u_0)$ with $\rho_0 >\sigma$ such  that the corresponding smooth  solution $(\rho,u)(t,x)$ of the   Cauchy problem  \eqref{eq:1.1}{\rm--}\eqref{viscoeff} blows up at time $T${\rm:} for any $\varepsilon>0$,
\begin{equation}\label{densityimplosion}
\quad \ \ \ \ \lim_{t\to T}\rho(t,0)=\infty,
\qquad  \lim_{t\to T} \sup_{|x|\leq \varepsilon}|u(t,x)|=\infty.
\end{equation}  
  More precisely, for any fixed $y \in \mathbb{R}^3$, 
  \begin{equation}\label{limitrelation}
        \begin{aligned}
            \lim_{t\to T}\,\big(\alpha^{-1} \Lambda(T-t)^{1-\frac{1}{\Lambda}}\big)^{\frac{1}{\alpha}}\rho(t,(T-t)^{\frac{1}{\Lambda}}y)&= \overline{\Clss}(|y|)^{\frac{1}{\alpha}},\\
            \lim_{t\to T}\, \Lambda(T-t)^{1-\frac{1}{\Lambda}}u(t,(T-t)^{\frac{1}{\Lambda}}y)&= \overline{U}(|y|).
        \end{aligned}
    \end{equation}
 Moreover, there exists a finite-codimension set of initial data satisfying the above conclusions {\rm(}see {\rm{Appendix \ref{appendix D} }}for more details{\rm)}.
\end{Theorem}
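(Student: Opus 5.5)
We follow the strategy of Merle--Rapha\"el--Rodnianski--Szeftel \cite{MPI} and Buckmaster--Cao-Labora--G\'omez-Serrano \cite{buckmaster}: a stability analysis of the Euler profile $(\overline\Clss,\overline U)$ in self-similar variables, in which the degenerate viscous terms are treated as a perturbation that decays exponentially in $\tau$.

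\textbf{Self-similar reformulation.} Rewrite \eqref{eq:1.1} in terms of $(\lss,u)$ and apply the scaling \eqref{scaling1}--\eqref{scaling-coordinat1}. The Euler part gives exactly \eqref{selfsimilar eq1}. The viscous term $\rho^{-1}\dive(\rho^\delta(2a_1D u+a_2\dive u\,\mathbb I))$ transforms into
\[
e^{-\delta_{\rm dis}\tau}\,\mathcal{V}(\Clss,U),
\]
where $\mathcal V$ is a second-order operator with coefficients of size $\Clss^{(\delta-1)/\alpha}$, plus first-order couplings involving $\nabla\Clss^{(\delta-1)/\alpha}$. Counting powers of $(T-t)$ in the velocity equation gives a prefactor $(T-t)^{\kappa}$ with $\kappa>0$ exactly when
\[
\delta_{\rm dis}=\frac{(1-\delta)(\Lambda-1)}{\alpha}+\Lambda-2>0,
\]
which is condition $(\rm P_1)$.

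For $(\rm P_2)$ we need a suitable $\Lambda$. We choose $\Lambda=\Lambda_n$ from Lemma~\ref{thm:existence_profiles}, close to $\Lambda^*(\gamma)$; this is why $\gamma\notin\mathcal J$ is required. Solving $\delta_{\rm dis}(\Lambda^*)>0$ for $\delta$ using \eqref{def:Lambda*} yields precisely $\delta<\delta^*(\gamma)$. By continuity in $\Lambda$, a large enough $n$ then reduces $(\rm P_2)$ to $(\rm P_1)$.

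\textbf{Perturbation and linear theory.} Write $(\Clss,U)=(\overline\Clss,\overline U)+(\tilde\Clss,\tilde U)$, with a far-field correction so that $\Clss\to e^{-(\Lambda-1)\tau}\alpha^{-1}\bar\rho^{\alpha}$-type behaviour, consistent with $\rho_0>\sigma$. The linearized Euler operator around the profile is treated as in \cite{MPI,buckmaster}, whose results we assume:
\begin{itemize}
\item on a large ball $|y|\le R$, its weighted $H^K$ energy yields a maximally dissipative part plus a compact perturbation;
\item hence it has finitely many unstable eigenmodes, and the rest of the semigroup decays like $e^{-\eta\tau}$.
\end{itemize}
The nonlinear problem is then closed by a bootstrap on the stable part, using a Brouwer topological argument to choose the finitely many unstable directions in the initial data. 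This produces the finite-codimension statement.

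\textbf{Main obstacle: controlling the viscous term.} The key step is to bound $e^{-\delta_{\rm dis}\tau}\mathcal V$ uniformly in the bootstrap, despite $\Clss^{(\delta-1)/\alpha}$ blowing up where $\Clss$ is small. The density stays positive but decays in $y$ like $|y|^{-(\Lambda-1)}$ until $|y|\sim e^{\tau}$, where it saturates at the background level. We would proceed in three steps.
\begin{enumerate}
\item Prove two-sided pointwise bounds for $\Clss$ by a region segmentation: an inner region $|y|\lesssim R$, an intermediate self-similar region, and an outer region $|y|\gtrsim e^{\tau}$ governed by the original nondegenerate flow. Transport along characteristics of $y+U$ gives the bounds in each region.
\item Prove weighted decay estimates $|\nabla^j U|\lesssim\langle y\rangle^{-(\Lambda-1)-j}$ from weighted high-order energies, with weights $\langle y\rangle^{2m}$ chosen so that damping from $(\Lambda-1)+y\cdot\nabla$ dominates, combined with Gagliardo--Nirenberg interpolation.
\item Check that $\Clss^{(\delta-1)/\alpha}|\nabla^2U|\,e^{-\delta_{\rm dis}\tau}$ is integrable in $\tau$ in the energy norms. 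The growth $\langle y\rangle^{(1-\delta)(\Lambda-1)/\alpha}$ must be absorbed by the decay $e^{-\delta_{\rm dis}\tau}$ on $|y|\lesssim e^{\tau}$; the restriction $\delta<\tfrac12$ is used here to control the top-order terms, where $\nabla\rho^{\delta}$ and $\rho^{\delta-1}$ interact.
\end{enumerate}
Using the damping in $\delta_{\rm dis}$ together with the parabolic smoothing of the viscous term, in place of the usual treatment of viscosity as a pure error, we expect the energy estimates to close.

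\textbf{Conclusion.} Once the bootstrap closes globally in $\tau$, undoing the scaling gives the convergence \eqref{limitrelation}. Since $\overline\Clss(0)=\Clss_*>0$ and $1-\frac1\Lambda>0$, this implies the blow-up \eqref{densityimplosion}. Smoothness for $t<T$ follows from the local theory with $\rho>0$.
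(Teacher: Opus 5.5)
Your overall architecture matches the paper's. You use the self-similar reformulation with the factor $C_{\rm dis}e^{-\delta_{\rm dis}\tau}$ on the viscous terms, and reduce $(\mathrm P_2)$ to $(\mathrm P_1)$ by taking $\Lambda=\Lambda_n$ close to $\Lambda^*(\gamma)$. You then use a truncated linearized operator with finitely many unstable modes fixed by a Brouwer argument, and two-sided bounds on $Q$ along rays.

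The gap is in your step~2, which is exactly where the paper's new work lies: weighted top-order energies plus Gagliardo--Nirenberg do not give the decay you need. Interpolation needs a pointwise anchor, and none of the pointwise bounds in your plan has global spatial decay. The bootstrap only gives $|\widetilde Q|+|\widetilde U|\lesssim\nu_0$, and $Q$ tends to the background $Q^*\neq 0$ at infinity. The weighted $K$-th order energy closes only with a weight below critical, $\phi^K\sim|y|^{2K(1-\eta)}$, so that the margin $K\eta$ beats the $O(E)$ lower-order terms. Interpolating the anchors against it gives only $|\nabla^kQ|+|\nabla^kU|\lesssim\phi^{-k/2}$, not the rate $\langle y\rangle^{-(\Lambda-1)-j}$ you claim. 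After dividing by $Q\gtrsim\nu_1\langle y/R_0\rangle^{1-\Lambda}$, each factor $\nabla^kQ/Q$ loses $\langle y\rangle^{\Lambda-1}$. Products of up to $K$ such factors occur in $[\partial_\beta,Q^{\frac{\delta-1}{\alpha}}]L(U)$ and $[\partial_\beta,Q^{\frac{\delta-1}{\alpha}-1}](\nabla Q\cdot\mathbb{D}(U))$, so the weighted $K$-th order estimate cannot be closed this way. Note also that you only propose decay for $\nabla^jU$, whereas the degenerate terms are governed by the factors $Q^{-1}\nabla^kQ$.

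\emph{The missing idea} is the sharp gradient decay $|\nabla Q|+|\nabla U|\lesssim\langle y/R_0\rangle^{-\Lambda}$ of Lemma~\ref{lemma:Sprimebounds}. It comes not from energies but from integrating the once-differentiated equations along the rays $y=y_0e^{\tau-\bar\tau}$. There, $e^{\Lambda(\tau-\bar\tau)}\partial_{y_i}Q$ and $e^{\Lambda(\tau-\bar\tau)}\partial_{y_i}U$ satisfy ODEs with integrable forcing. These ODEs start from the data assumption $\langle y\rangle^{\Lambda}(|\nabla Q_0|+|\nabla U_0|)<\infty$, which is absent from your plan, and from interior bounds on $|y_0|=R_0$. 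For $\nabla U$, the viscous forcing along a ray has size $e^{-\delta_{\rm dis}\bar\tau}\langle y_0/R_0\rangle^{\delta_{\rm dis}-1-\Lambda+3\eta}e^{-(1-3\eta)(\tau-\bar\tau)}$. This is harmless because $\delta_{\rm dis}>0$, so $e^{-\delta_{\rm dis}\bar\tau}$ absorbs the large constants $\nu_1^{(\delta-1)/\alpha}$, and because $\delta_{\rm dis}<1$. Only with this anchor does weighted Gagliardo--Nirenberg give $|\nabla^jQ|+|\nabla^jU|\lesssim\phi^{-j/2}\langle y\rangle^{1-\Lambda}$, hence $|\nabla^jQ|/Q\lesssim\phi^{-j/2}$ with no loss.

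Two further corrections. First, the growth $\langle y\rangle^{(1-\delta)(\Lambda-1)/\alpha}$ is not absorbed by $e^{-\delta_{\rm dis}\tau}$ on $|y|\lesssim e^{\tau}$. Since $(1-\delta)(\Lambda-1)/\alpha=\delta_{\rm dis}+2-\Lambda$, the product is $e^{(2-\Lambda)\tau}$ at $|y|\sim e^\tau$. The growth is absorbed by the spatial decay of derivatives, using $\delta_{\rm dis}<1<\Lambda$; $e^{-\delta_{\rm dis}\tau}$ only supplies time integrability. Second, $\delta<\frac12$ plays no role at top order, since it already follows from $\delta_{\rm dis}>0$ and $\Lambda<\Lambda^*(\gamma)$. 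The loss of a derivative on $Q$ in $Q^{\frac{\delta-1}{\alpha}-1}\partial_\beta\nabla Q\cdot\mathbb{D}(U)$ is handled by integrating by parts onto $\partial_\beta U\cdot\mathbb{D}(U)$. The result is absorbed into the dissipation $\int Q^{\frac{\delta-1}{\alpha}}\phi^K|\nabla^{K+1}U|^2\,{\rm d}y$.
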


Furthermore, we consider the period problem in a torus of period $10$:
$$\mathbb{T}^3_{10}:=(\mathbb{R}/10 \mathbb{Z})^3$$
with the fundamental domain $[-5, 5)^{3}$, and take $0$ as the center of the torus (without loss of generality, by scaling). 
The same conclusion also holds for the periodic problem in $\mathbb{T}^3_{10}$:
\begin{equation}\label{periodicproblem}
\begin{cases}
\displaystyle
\partial_t\rho+\dive_x(\rho u)=0,\\[6pt]
\displaystyle
\partial_t(\rho u)+\dive_x(\rho u\otimes u)
+\nabla_x P =\dive_x \mathbb{S},\\[6pt]
(\rho,u)(0, x)=(\rho_0,u_0)(x)
\qquad\,\, \text{for $x \in \mathbb{T}_{10}^3$},
\end{cases}
\end{equation}
which can be stated as follows:

\begin{Theorem}\label{Thm1.2peordic}
Let  $(\overline{\Clss}, \overline{U})$ be  the $C^\infty$  self-similar profile 
solving \eqref{Profile},
which is obtained in {\rm Lemma \ref{lem:existofselfsimilar}} {\rm(}see {\rm Appendix \ref{appendix B}}{\rm)}{\rm ;} 
and let $\delta^*(\gamma)$ be a constant given by \eqref{def:delta*}.
If parameters  $(\gamma,\delta)$ satisfy condition 
 $(P_1)$ or condition $(P_2)$ shown in \eqref{canshu2}--\eqref{canshu1},
then  for two constants $T, \sigma>0$ sufficiently small, 
there are $C^{\infty}$ initial data $(\rho_0, u_0)$ with $\rho_0 >\sigma$ such  that the corresponding smooth  solution $(\rho,u)(t,x)$ of  problem   \eqref{periodicproblem}  blows up at time $T${\rm:}
   for any $\varepsilon>0$, \eqref{densityimplosion} holds.
  More precisely, for any fixed $y \in \mathbb{T}_{10}^3$, \eqref{limitrelation} holds.
 Moreover, there exists a finite-codimension set of initial data satisfying the above conclusions {\rm(}see {\rm{Appendix \ref{appendix D} }}for more details{\rm)}.
\end{Theorem}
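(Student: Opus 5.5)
The periodic statement will be derived from the whole-space construction behind Theorem \ref{Thm1.1}, using the fact that the implosion is localized. In self-similar variables the physical domain $\mathbb{T}^3_{10}$ becomes the growing box $e^{\tau}[-5,5)^3$. Only the region $|y|\lesssim e^{\tau}$, i.e.\ $|x|\lesssim 1$, carries the singular dynamics, and the periodic boundary stays at physical distance of order one from the origin. I therefore plan to rerun the whole-space stability argument with the perturbation supported inside the fundamental domain, and to treat the region near $\partial[-5,5)^3$ as a smooth, non-degenerate regime controlled by standard local estimates.

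\textbf{Step 1 (initial data).} I construct initial data in the self-similar variables at time $\tau_0=-\frac{\log T}{\Lambda}$. Take the profile $(\overline{\Clss},\overline U)$ from Lemma \ref{lem:existofselfsimilar}. Truncate it with a radial cutoff $\chi(x)$ that equals $1$ for $|x|\le 1$ and vanishes for $|x|\ge 2$. Glue the result to a constant state with $\rho\equiv\bar\rho>\sigma$ and $u\equiv 0$ near the boundary of the fundamental domain. The resulting data are $10$-periodic and $C^\infty$, and they satisfy $\rho_0>\sigma$, since $\overline{\Clss}>0$ and the constant state is positive. Finally, add a small perturbation lying in the finite-codimension set of Appendix \ref{appendix D}; this is used to kill the finitely many unstable modes of the linearized operator.

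\textbf{Step 2 (bootstrap in $\tau$).} I reuse the bootstrap scheme of the Cauchy problem: the weighted high-order energy estimates, the pointwise density bound obtained by region segmentation, and the spatial decay of $\nabla U$. The decay of $\nabla U$ is what keeps the degenerate viscous term, which carries the factor $e^{-\delta_{\rm dis}\tau}$ times powers of $\Clss^{\delta/\alpha}$, perturbative. Conditions $(P_1)$ or $(P_2)$ give $\delta_{\rm dis}>0$, so the viscous contribution decays in $\tau$, exactly as in the whole-space case. The weights and cutoffs are all supported in $|y|\le Ce^{\tau}$, so the energy integrals can be taken over $e^{\tau}\mathbb{T}^3_{10}$. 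The far region $|x|\ge 1$ is not handled in self-similar variables. There I work in the original variables on a time interval of length at most $T$. The density stays bounded above and below, the solution is a smooth solution of a uniformly parabolic-hyperbolic system, and local well-posedness together with finite speed of propagation of the hyperbolic part keeps it close to the constant state for $T$ small.

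\textbf{Step 3 (matching and conclusion).} The near and far regions are connected through an overlap annulus $1/2\le|x|\le 2$. On this annulus I use standard energy estimates with time-independent cutoffs. The commutator terms are lower order and are bounded by the decay of the profile, which at $|y|\sim e^{\tau}$ is $\overline{\Clss}\sim|y|^{-(\Lambda-1)}$. After this, the topological shooting argument (Brouwer) for the unstable directions goes through unchanged and yields a global-in-$\tau$ solution converging to the profile. This gives \eqref{limitrelation}, and \eqref{densityimplosion} follows from $\overline{\Clss}(0)>0$. The closure of the bootstrap in the overlap region is where I expect the main obstacle. The self-similar weights must be compatible with the physical-variable estimates, and the degenerate viscosity must stay controlled where $\Clss$ is not small in physical variables. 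I would handle this by choosing the bootstrap weights to be flat (polynomially bounded) beyond $|y|\sim e^{\tau}$ and by using the positive lower bound on $\rho$ there.
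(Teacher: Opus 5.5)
There is a genuine gap, and it is in the part of the argument you treat as routine. For the far region you appeal to ``local well-posedness together with finite speed of propagation of the hyperbolic part.'' But once $\rho$ is bounded below, the momentum equation is uniformly parabolic, so the system has no finite domain of dependence. The transport structure of the continuity equation does not help, because its velocity is produced by the parabolic equation.

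Nor can you apply local well-posedness to $\{|x|\ge 1\}$ as a separate problem. That region is coupled to the imploding core through the viscous term, and the only existence interval available for the full problem ends at $T$. The solution there is also not close to a constant state: on $1\le|x|\le 2$ your data equal the truncated profile, which is $O(1)$ and non-constant in physical variables.

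So the far-field bounds would have to come from the near-region estimates. At the same time, your near-region bootstrap, once localized to $|x|\lesssim 2$, needs commutator and boundary control from the far field. Closing this two-way coupling is exactly your Step 3. You only describe it (``standard energy estimates with time-independent cutoffs'', flat weights beyond $|y|\sim e^{\tau}$) and flag it yourself as the main obstacle, so as written the bootstrap is not closed.

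The idea you are missing is that no near/far splitting is needed. The paper reruns the whole-space self-similar bootstrap essentially verbatim on the expanding box $e^{\tau}\mathbb{T}^3_{10}$, for the following reasons.
\begin{itemize}
\item The cutoff profile $(\widehat X\overline{\Clss},\widehat X\overline U)$ is supported in $B(0,e^{\tau})$, strictly inside the box, so it extends periodically.
\item The balls $B(0,C_0)$ and $B(0,R_0)$, and also $B(0,3C_0)$ for $\tau_0$ large, stay inside the box for $\tau\ge\tau_0$. Hence the truncated operator, the stable/unstable decomposition and the Brouwer selection are unchanged.
\item The weighted Gagliardo--Nirenberg inequalities hold on tori with period-independent constants (Lemmas \ref{lemma:GN_general_period}--\ref{lemma:GN_generalnoweightwholespace_period}).
\item The region near $\partial[-5,5)^3$ is already handled in self-similar variables by the region-segmentation estimates of Lemmas \ref{lemma:Sbounds}--\ref{lemma:Sprimebounds}. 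The trajectories $y=y_0e^{\tau-\bar\tau}$ are fixed physical points and map $e^{\bar\tau}\mathbb{T}^3_{10}$ onto $e^{\tau}\mathbb{T}^3_{10}$. They therefore give the two-sided pointwise bounds on $\Clss$ and the decay of $\nabla\Clss$ and $\nabla U$ that keep $\mathcal{F}_{\rm dis}$ perturbative.
\end{itemize}

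The only genuinely new terms are boundary integrals on $\partial(e^{\tau}\mathbb{T}^3_{10})$. Differentiating an energy over the growing box produces $5e^{\tau}\int_{\partial(e^{\tau}\mathbb{T}^3_{10})}(\cdots)\,\mathrm{d}S$. Integrating the non-periodic transport term $y\cdot\nabla$ by parts produces exactly its negative, so the two cancel; this reflects the fact that $\partial_\tau+y\cdot\nabla$ is $\partial_t$ on the fixed torus. With this observation, the initial data of Appendix \ref{appendix D} and the argument of \S\ref{Section8} give the theorem directly.
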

In what follows, we provide a detailed proof for Theorem \ref{Thm1.1}. 
The proof of Theorem \ref{Thm1.2peordic} follows similar arguments, and we therefore only include 
some remarks on the proof of this situation at the end of this paper.

We now make some remarks on the results in Theorems \ref{Thm1.1}--\ref{Thm1.2peordic}.
\begin{Remark}
We make some remarks on the set $\mathcal{J}$ appearing in condition $(P_2)$ of {\rm Theorem~\ref{Thm1.1}}.
In \cite{merle1}, it is shown that, for $\gamma \in (1, \infty)\setminus \mathcal{J}_0$, 
there exists a discrete sequence $\{\Lambda_n\}_{n=1}^{\infty}$ satisfying \eqref{range of Lambda} such that system \eqref{Profilen} admits a smooth, spherically symmetric profile $(\overline{\Clss}_n, \overline{U}_n)$ for each $\Lambda_n$, where $\mathcal{J}_0$ is a {\rm (}possibly empty{\rm )} exceptional set containing at most countably many points{\rm ;} 
see {\rm Theorem~1.1 in \cite{merle1}}. 
Moreover, it was shown in \cite{shao} and \cite{buckmaster} that such sequences $\{\Lambda_n\}_{n=1}^{\infty}$ 
also exist for $\gamma=\frac{5}{3}$ and $\gamma =\frac{7}{5}$, respectively.
Based on the arguments presented in {\rm \S 8}, we can show that $\frac{5}{3}, \frac{7}{5} \notin \mathcal{J}$.
\end{Remark}

\begin{Remark}
The constant, $\delta^*(\gamma)$, in {\rm{Theorems~\ref{Thm1.1}--\ref{Thm1.2peordic}}} 
has the following asymptotic behavior{\,\rm:}
\begin{equation*}
    \delta^*(\gamma)\to \frac{1}{2} \quad \text{as $\gamma \to 1$}, \qquad \quad \delta^*(\gamma)\to 0 \quad 
    \text{as $\gamma \to 1+\frac{2}{\sqrt{3}}$}.
\end{equation*}
\end{Remark}
 
\begin{Remark}
    It follows from the properties of profile $(\overline{\Clss}, \overline{U})$ shown in {\rm Lemma \ref{lem:existofselfsimilar}} that $\overline{\Clss}(0) >0$, and there exists some finite $y^*$ with $|\overline{U}(y
    ^*)|>0$. Then \eqref{limitrelation} implies that, for any $\varepsilon > 0$, 
    $$
    \lim_{t\to T}\rho(t,0)=\infty,
\qquad    
    \lim_{t\to T}\sup_{|x|\leq \varepsilon}|u(t,x)| \geq \lim_{t\to T}\frac{1}{\Lambda}(T-t)^{\frac{1}{\Lambda}-1}|\overline{U}(y^*)|= \infty.
    $$  
\end{Remark}

\begin{Remark}
{\rm{Theorems~\ref{Thm1.1}--\ref{Thm1.2peordic}}} can be applied to several physical models. For example, we consider a class of barotropic cut-off inverse power force models for which $\delta = (\frac{1}{2} + b)(\gamma-1)$ with $b\in [0, \infty)$. 
We list below three representative choices of parameter $b${\rm:}
    \begin{enumerate}[leftmargin=2.5em,
  align=left,
  labelsep=0.2em,
  labelwidth=1.1em]
        \item[\rm(i)] for rigid elastic spherical molecules: $b=0$ and $\delta =\frac{1}{2}(\gamma-1)$, 
        it is required that $1< \gamma < 7-4\sqrt{2}${\rm ;}
        \item[\rm(ii)] for Maxwellian molecules: $b=\frac{1}{2}$ and $\delta =\gamma-1$, it is required that $1< \gamma < 1+\frac{2}{9}${\rm ;}
        \item[\rm(iii)] for the ionized gas:  $b=2$ and $\delta =\frac{5}{2}(\gamma-1)$, 
        it is required that $1< \gamma < 1+\frac{22-4\sqrt{10}}{81}$.
    \end{enumerate}
\end{Remark}

The rest of this paper is organized as follows{\rm:} In \S \ref{Section2}, we introduce the notations, present a new reformulation of \eqref{eq:1.1}, and outline the proof of the main results. 
In \S \ref{Section3}, we establish the desired local-in-time well-posedness for the 3-D reformulated problem. 
In \S \ref{Section4}--\S\ref{Section7}, we establish the global-in-time well-posedness 
for the reformulated 
system with smooth initial data chosen from a finite-codimensional manifold by a bootstrap argument.
In \S \ref{Section8}, we analyze the singularity formation for \textbf{CNS} in $\mathbb{R}^3$, 
as stated in Theorem \ref{Thm1.1}. 
In \S \ref{remarkperiodic}, we provide some remarks on the proof of Theorem \ref{Thm1.2peordic},
concerning the development of implosions of solutions of the  periodic problems. 
Finally, in Appendices A-D,  we present several auxiliary lemmas used throughout the analysis, 
as well as some properties of the self-similar profile $(\overline{\Clss}, \overline{U})$ of the steady Euler equations, as verified in \cites{buckmaster, shijia, merle1,shao}.

\section{Notations, Reformulations, and Outline of the Proof}\label{Section2}

In this section, we first introduce the notations in \S\ref{section-notaions}, 
which will be frequently used throughout this paper. 
Next, in \S\ref{section-reformulation}, we present a suitable reformulation of the Cauchy problem \eqref{eq:1.1}--\eqref{viscoeff}, which renders the problem amenable to analysis.
Finally, in \S\ref{section-outline}, we outline the main strategies and key new ideas 
of our analysis, based on this reformulation. 

\subsection{Notations}\label{section-notaions}
The following notations will be frequently used in this paper.
\subsubsection{Notations on coordinates and operators}\label{operator}
\begin{itemize}\smallskip
\item 
Let $f$ be any function defined on a measurable subset of $\mathbb{R}^3$ with independent variables $z=(z_1,z_2,z_3)^\top$. For any integer $k\geq 1$ and an ordered multi-index   
$\beta=(\beta_1,\beta_2,\cdots,\beta_k)$ with 
$\beta_i\in\{1,2,3\}$ for $i=1,\cdots,k$, we  define the length of $\beta$ by $|\beta|=k$, and denote
\begin{equation*}
\begin{split}
 & \partial^{z}_{\beta}f =\partial_{\beta_1, \beta_2, \cdots, \beta_k} f=\frac{\partial^k}{\partial z_{\beta_1}\partial{z_{\beta_2}}\cdots \partial{z_{\beta_k}}} f,\\
 & \nabla_{z} f=(\partial_{z_1} f,\partial_{z_2} f,\partial_{z_3} f)^\top,\quad \Delta_{z} f=\sum_{i=1}^3 \frac{\partial^2}{\partial^2 _{z_i}}f,\\
&   \nabla^k_{z} f \ \text{denotes one generic} \ \partial^{z}_{\beta} f \ \text{with} \      |\beta|=k  \text{ for integer }k\geq 2,\\
&   |\nabla^k_{z} f|=\Big(\sum_{|\beta|=k}|\partial^{z}_{\beta}f|^2\Big)^\frac{1}{2} \quad\text{ for integer } k\geq 1.
  \end{split}
\end{equation*}
Moreover, we define the  $\beta$--related, ordered multi-index $\beta^{(j)}$ ($j=1,\cdots, k$):
\begin{equation}\label{indexbetai}
\beta^{(j)}=(\beta_1, \beta_2, \cdots,\beta_{j-1}, \beta_{j+1}, \cdots, \beta_k),
\end{equation}
and then 
\begin{equation}\label{indexbetajdaoshu}
\begin{split}
\partial^{z}_{\beta^{(j)}}f
=&\frac{\partial^{k-1}}{\partial z_{\beta_1}\partial z_{\beta_2} \cdots\partial z_{\beta_{j-1}} \partial z_{\beta_{j+1}} \cdots \partial z_{\beta_k}} f\quad \text{for}~ j=1,\cdots,k.
\end{split}
\end{equation}

\smallskip
\item Let $f=(f_1,f_2,f_3)^\top: \Omega\subset \mathbb{R}^3 \to \mathbb{R}^3$ 
($\Omega$ is a measurable set) be a vector function with independent 
variables $z=(z_1,z_2,z_3)^\top$ and $\mathcal X \in \{\partial^{z}_{\beta}, \partial^{z}_{\beta^{(j)}},\Delta_{z},\nabla_{z}^k\}$.
Then, for any integer $k\geq 1$ and an ordered multi-index   
$\beta=(\beta_1, \beta_2, \cdots,\beta_k)$ with 
$\beta_i\in\{1,2,3\}$ for $i=1,\cdots,k$,
\begin{align*}
&\mathcal Xf=\big(\mathcal Xf_1,\mathcal Xf_2,\mathcal Xf_3\big)^\top, \ \quad \nabla_{z}f=\begin{pmatrix} 
\partial_{z_1} f_1 & \partial_{z_2} f_1 & \partial_{z_3} f_1\\[2mm]
\partial_{z_1} f_2 & \partial_{z_2} f_2 &  \partial_{z_3} f_2 \\[2mm]
\partial_{z_1} f_3 & \partial_{z_2} f_3 & \partial_{z_3} f_3
\end{pmatrix},\\
&|\nabla^k_{z} f|=\Big(\sum_{i=1}^3\sum_{|\beta|=k}|\partial_{\beta}^{z}f_i|^2\Big)^\frac{1}{2}.
\end{align*}
Moreover, it follows that $\dive_{z}\,f= \partial_{z_1}f_1+\partial_{z_2}f_2+\partial_{z_3}f_3$.

In particular, for the derivatives with respect to the self-similar spatial 
coordinates  $y=(y_1,y_2,y_3)^\top$ introduced in \eqref{scaling-coordinat1}, 
we use the notation: 
$$
(\partial_{\beta}, \partial_{\beta^{(j)}},\nabla,\Delta,\nabla^k,\dive)
=(\partial^{y}_{\beta}, \partial^{y}_{\beta^{(j)}},\nabla_{y},\Delta_y,\nabla_y^k, \dive_{y}).
$$

\smallskip
\item
Let $\beta$ be a multi-index, and let $\mathcal A$ be a linear operator.
For any sufficiently smooth function $f$ defined on a measurable subset of $\mathbb{R}^3$
with independent variables $z=(z_1,z_2,z_3)^\top$, we define the commutator between
$\partial^{z}_\beta$ and $\mathcal A$ by
\[
[\partial^{z}_\beta,\,\mathcal A]f
:= \partial^{z}_\beta(\mathcal A f) - \mathcal A(\partial^{z}_\beta f).
\]
Additionally, if $f=(f_1,f_2,f_3)^\top: \Omega\subset \mathbb{R}^3 \to \mathbb{R}^3$ ($\Omega$ is a measurable set) is a vector function with the independent variables $z=(z_1,z_2,z_3)^\top$, then
$$
[\partial_\beta^{z},\,\mathcal A]f=([\partial_\beta^{z}, \,\mathcal A]f_1,[\partial_\beta^{z}, \,\mathcal A]f_2,[\partial_\beta^{z}, \,\mathcal A]f_3)^\top.
$$
\end{itemize}

\subsubsection{Notations on function spaces}\label{function}

\begin{itemize}\smallskip
\item For $R>0$, we denote by
$B(0,R):=\{z\in\mathbb{R}^3 : \, |z|<R\}$
the open ball in $\mathbb{R}^3$ centered at the origin with radius $R$,  $B^c(0,R):=\{z\in\mathbb{R}^3 : \, |z|\geq R\}$  the corresponding exterior region of $B(0,R)$, and $\partial B(0,R):=\{z\in\mathbb{R}^3 : \, |z|=R\}$ the boundary of $B(0,R)$.

\smallskip
\item 
For any (scalar or vector valued) functions $f$ and $g$ defined in $\mathbb{R}^3$ and for some function spaces $X_1$ and $X_2$, if the independent variables of $(f,g)$ are $z=(z_1,z_2,z_3)^\top$, we adopt the following simplified notation for the standard  Sobolev spaces:
\begin{equation*}\begin{split}
 & \quad \ \ \|f\|_{H^s}=\|f\|_{H^s(\mathbb{R}^3)},\ \  \|f\|_{L^p}=\|f\|_{L^p(\mathbb{R}^3)},\ \ 
 \|f\|_{X_1 \cap X_2}=\|f\|_{X_1}+\|f\|_{X_2},\ \  
 \\[6pt]
 &\quad  \ \ \ D^{k,r}=\big\{f\in L^1_{\rm{loc}}(\mathbb{R}^3): \,\|f\|_{D^{k,r}(\mathbb{R}^3)}=\|\nabla^k_{z}f\|_{L^r(\mathbb{R}^3)}<\infty\big\},\ \ D^k=D^{k,2},   \\[6pt] 
 &\quad  \ \ \ \|f\|_{D^{k,r}}=\|f\|_{D^{k,r}(\mathbb{R}^3)},\quad \|(f,g)\|_{X_1}=\|f\|_{X_1}+
 \|g\|_{X_1}.
 \end{split}
\end{equation*}

\smallskip
\item For an open set $\Omega\subset\mathbb{R}^3$ and any integer $m\ge 0$, we define 
\begin{align*}
  \quad  \ \  H_0^m(\Omega; \mathbb{R}) :=H_0^m(\Omega), \quad
H_0^m(\Omega; \mathbb{R}^3) :=\{\, f=(f_1,f_2,f_3)^\top:\, f_i\in H_0^m(\Omega),\ i=1,2,3 \}, 
\end{align*} 
where $H_0^m(\Omega)$ denotes the closure of $C_c^\infty(\Omega)$ in $H^m(\Omega)$.

\smallskip

\item We define the following function space{\rm:}
\begin{align}
    X&:=H_0^m(B(0, 3C_0); \mathbb{R}) \times H_0^m(B(0, 3C_0); \mathbb{R}^3)\label{X space} \\
    &~ =\{(Q(z), U(z)): \, \Clss(z)\in H_0^m(B(0, 3C_0);\mathbb{R}), U(z) \in H_0^m(B(0, 3C_0);\mathbb{R}^3)\},\nonumber
\end{align}
where $m$ is a sufficiently large positive integer that will be  determined  later (see Lemma~\ref{prop:maxdissmooth} in Appendix~\ref{appendix C}),  and $C_0$ is a sufficiently large constant that will be determined later (see Lemma~\ref{C0choice}).
For $(\Clss_1,U_1)$, $(\Clss_2,U_2)\in X$, the corresponding  inner product  is given by 
\begin{equation*}
\begin{split}
&\langle (\Clss_1, U_1), (\Clss_2, U_2) \rangle_X \\\
&=\int_{B(0, 3C_0)} \big( \nabla^m_{z} \Clss_1 \cdot \nabla^m_{z} \Clss_2 +  \nabla^m_{z} U_1 \cdot \nabla^m_{z} U_2 + \Clss_1 \Clss_2+ U_1 \cdot U_2  \big)\,\text{d}z\\
&=\int_{B(0, 3C_0)} \big(\sum_{|\beta|=m}(\partial_\beta^{z} \Clss_1  \partial_\beta^{z} \Clss_2 +  \partial_\beta^{z} U_1 \cdot \partial_\beta^{z} U_2) + \Clss_1 \Clss_2+ U_1 \cdot U_2  \big)\,\text{d}z.
\end{split}
\end{equation*}
We obtain the inherited norm{\rm:}
\begin{equation}\label{spacexnorm}
\begin{split}
 \| (\Clss, U) \|_X^2 &= \int_{B(0, 3C_0)} \big(|\nabla^m_{z}\Clss |^2 + |\nabla^m_{z}U|^2 + \Clss^2  +  |U|^2\big)\,\text{d}z\\
&=\int_{B(0, 3C_0)} \big(\sum_{|\beta|=m}(|\partial_\beta^{z} \Clss|^2 +  |\partial_\beta^{z} U|^2) + \Clss^2+ |U|^2  \big)\,\text{d}z.
\end{split}
\end{equation}

\item We recall that the Hilbert space $X$ in \eqref{X space} admits the invariant decomposition
\begin{equation}\label{Xdecomposition}
X = V_{\mathrm{sta}} \oplus V_{\mathrm{uns}},
\end{equation}
where $V_{\mathrm{uns}}$ is finite-dimensional and consists of smooth functions.
This decomposition is established in Lemma~\ref{lemma:abstract_result} in Appendix~\ref{appendix C}.
We denote by
\[
P_{\mathrm{sta}}: X \to V_{\mathrm{sta}},
\qquad
P_{\mathrm{uns}}: X \to V_{\mathrm{uns}},
\]
the corresponding projections onto the stable subspace $V_{\mathrm{sta}}$ and the unstable subspace  $V_{\mathrm{uns}}$, respectively.
\end{itemize}

\subsubsection{Other notations}\label{othernotation}

\begin{itemize}
\item The \emph{Japanese bracket} $\langle \bullet\rangle$ is defined by $\langle z\rangle := \sqrt{1+|z|^2}\text{ for } z\in\mathbb{R}^3.$
\smallskip
\item  For any  integrable function $f(z)$ in $\mathbb{R}^3$,
$$ \int_{\mathbb{R}^3}  f \,{\rm d}z   =\int f \,{\rm d}z.$$

\smallskip
\item We write $\mathsf{a} \ll \mathsf{b}$ if $\mathsf{a}$ is far smaller than $\mathsf{b}$,
and write
$\mathsf{a} \gg \mathsf{b}$ if $\mathsf{a}$ is far larger than $\mathsf{b}$.
\smallskip

    \item 
The function, $\mathscr{X}(z)$, defined in $\mathbb{R}^3$ denotes a smooth spherically symmetric cutoff function satisfying
\begin{equation}\label{cutoff function}
\operatorname{supp}\mathscr{X} \subset B(0,1), \qquad\,
\mathscr{X}(z) = 1 \,\,\text{ on $B(0,\tfrac{1}{2})$}, \qquad\,
|\nabla_{z} \mathscr{X}| \le 10.
\end{equation}
For convenience, we define  $\widehat{X}(z,\tau) := \mathscr{X}(e^{-\tau}z)$.\

\smallskip
\item We introduce two smooth, spherically symmetric cutoff functions. Let $\chi_1(\varsigma): \mathbb R \rightarrow [0, 1]$ be supported on $[ -\frac32 C_0, \frac32 C_0 ]$ and satisfy $\chi_1 (\varsigma) = 1$ for $|\varsigma| \leq C_0$, where $C_0$ is a sufficiently large constant that will be determined later (see Lemma~\ref{C0choice}). Let $\chi_2(\varsigma) : \mathbb R \rightarrow [0, 1]$ be  supported on $[ -\frac52 C_0, \frac52 C_0 ]$ and satisfy $\chi_2 (\varsigma) = 1$ for $|\varsigma| \leq 2 C_0$, with $\chi_2(\varsigma)>0 $ in $(-\frac52 C_0, \frac52 C_0 )$.

\begin{figure}[h]
\centering
\includegraphics[width=0.4\textwidth]{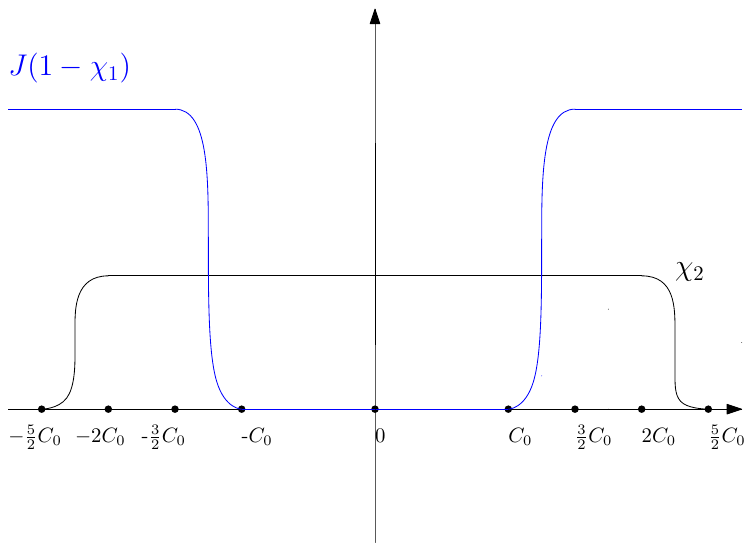}
\caption{The graph of $J(1-\chi_1)$ and $\chi_2$, where $J$ is a sufficient large constant determined in Lemma~\ref{prop:maxdissmooth}.}
\label{fig:cutoff}
\end{figure}

\item We also employ a spherically symmetric weight function $\phi \in C^1(\mathbb{R}^3)$ defined as
\begin{align}\label{weightdefi}
\phi(z)=\begin{cases}
1 & \text{ for } |z|\leq R_0,\\[6pt]
\frac{|z|^{2(1-\eta)}}{2R_0^{2(1-\eta)}} &\text{ for } |z|\geq 4R_0.
\end{cases}
\end{align}
Here $0<\eta \ll 1$ is a small constant to be fixed later in the bootstrap argument (see \S \ref{Section6}), 
and constant $R_0>0$ will be chosen later (see  \eqref{Rochoice estimate}). 
\end{itemize}

\subsection{Reformulation}\label{section-reformulation}
Throughout \S \ref{Section2}--\S \ref{remarkperiodic},   $C\geq 1$ denotes a generic constant depending only on $(\Lambda,\gamma, \delta, a_1, a_2)$, and $C(\nu_1,\cdots\!,\nu_k) \geq 1$ denotes a generic  constant depending on $C$
and parameters $\nu_1,\cdots\!,\nu_k$, which may be different at each occurrence.

\subsubsection{Reformulation based on the self-similar scaling}
First, denote  $\alpha =\frac{\gamma-1}{2}$ and  $\lss = \frac{1}{\alpha}\rho^{\alpha}$.
Then the Cauchy problem \eqref{eq:1.1}{\rm--}\eqref{viscoeff} can be rewritten as  
\begin{equation}\label{eq:1.2a}
\begin{cases}
\partial_t\lss=-u\cdot \nabla_x \lss -\alpha \lss \, \dive_x u,\\[4pt]
 \partial_tu = -u\cdot \nabla_x u -\alpha \lss \nabla_x \lss+ \alpha^{\frac{\delta-1}{\alpha}}\lss^{\frac{\delta-1}{\alpha}}L_xu+\frac{\delta}{\alpha}\alpha^{\frac{\delta-1}{\alpha}}\lss^{\frac{\delta-1-\alpha}{\alpha}}\nabla_x \lss \cdot \md_x(u),\\[4pt]
(c,u)(0,x)=(c_0,u_0)(x):=(\alpha^{-1}\rho^\alpha_0,u_0)(x)
\qquad \ \   \text{for $x \in \mathbb{R}^3$},\\[4pt]
\displaystyle
(c,u)(t,x)
\to (\bar{c},0):=(\alpha^{-1}\bar\rho^\alpha,0)
\qquad \ \    \qquad\qquad  \text{as $|x|\to \infty$\, for $t\ge 0$},
\end{cases}
\end{equation}
where
\begin{equation}\label{Lame operator}
    L_x u=a_1 \Delta_x u + (a_1+a_2)\nabla_x \dive_x u,\quad 
    \md_x(u)=a_1\big(\nabla_x u+(\nabla_x u)^\top\big)+a_2\,\dive_x u\,\mathbb{I}_3.
\end{equation}

Second, for fixed constants  $T>0$ and $\Lambda>1$, we  introduce the self-similar scaling{\rm:}
\begin{equation}\label{scaling-coordinat}
    \tau=-\frac{\log (T-t)}{\Lambda},\quad y= \frac{x}{(T-t)^{\frac{1}{\Lambda}}}=e^{\tau}x,\quad\tau_0=-\frac{\log T}{\Lambda},
\end{equation}
and 
\begin{equation}\label{scaling}
\begin{aligned}
    \lss(x,t)& = \frac{(T-t)^{\frac{1}{\Lambda}-1}}{\Lambda}\Clss (-\frac{\log (T-t)}{\Lambda} , \frac{x}{(T-t)^{\frac{1}{\Lambda}}}),\\
    u(x,t)&= \frac{(T-t)^{\frac{1}{\Lambda}-1}}{\Lambda}U( -\frac{\log (T-t)}{\Lambda} , \frac{x}{(T-t)^{\frac{1}{\Lambda}}}).
\end{aligned}    
\end{equation}
Then the reformulated system for $(Q,U)$ is given by
\begin{equation}\label{selfsimilar eq}
    \begin{cases}
    \displaystyle
        \partial_\tau \Clss =- (\Lambda-1)\Clss-(y+U)\cdot\nabla \Clss- \alpha \Clss \,\dive U,\\[4pt]
       \displaystyle
        \partial_\tau U =- (\Lambda-1)U-(y+U)\cdot\nabla U- \alpha \Clss \nabla \Clss\\[4pt]
        \qquad\quad\,+\, C_{\rm{dis}}e^{-\delta_{\rm{dis}}\tau}\big(\Clss^{\frac{\delta-1}{\alpha}}L(U)+\frac{\delta}{\alpha}\Clss^{\frac{\delta-1-\alpha}{\alpha}}\nabla \Clss \cdot \md(U)
        \big),
    \end{cases}
\end{equation}
where 
\begin{equation}\label{Lame operator-y}
    L(U)=a_1 \Delta U + (a_1+a_2)\nabla \dive U,\quad\,\,\,
    \md(U)=a_1\big(\nabla U+(\nabla U)^\top\big)+a_2\,\dive U\,\mathbb{I}_3,
\end{equation}
and constants $C_{\rm{dis}}$ and $\delta_{\rm{dis}}$ are defined by
\begin{equation}\label{S-para}
    C_{\rm{dis}}= \alpha^{\frac{\delta-1}{\alpha}}\Lambda^{\frac{1-\delta+\alpha}{\alpha}}>0, \quad \quad \delta_{\rm{dis}}= \frac{(1-\delta)(\Lambda-1)}{\alpha}+(\Lambda-2)>0.
\end{equation}
Next, we consider the Cauchy problem for \eqref{selfsimilar eq}--\eqref{S-para} with the  initial and far-field conditions{\rm:}
\begin{align}
(\Clss, U)(\tau_0,y)
&= (\Clss_0, U_0)(y)
:= \big(\Lambda e^{-(\Lambda-1)\tau_0}\lss_0,\,
       \Lambda e^{-(\Lambda-1)\tau_0}u_0\big)(e^{-\tau_0}y)
\quad &\text{for} \,\,\, y\in \mathbb{R}^3,
\label{SSinitial}\\[3pt]
(\Clss, U)(\tau,y)
&\longrightarrow \big(\Clss^*,0\big)
:= \big(\Lambda e^{-(\Lambda-1)\tau}\bar{\lss},\,0\big)
\quad \ \ \ \ \ \ \ \ \ \ \ \  \text{as } |y|\to\infty \ &\text{for} \,\,\, \tau \geq \tau_0.
\label{SSfar}
\end{align}

\begin{Remark}
    The explicit form of the threshold exponent \eqref{def:delta*}, derived from the condition that $\delta_{\rm{dis}}>0$, is imposed to guarantee that, after the reformulation, the dissipative terms exhibit the required temporal decay, thereby allowing {\rm \textbf{CNS}} to develop an implosion analogous to that of inviscid flows. 
\end{Remark}

\subsubsection{Reformulation  based on $C^\infty$ smooth spherically symmetric profiles of the steady Euler equations}\label{reformulationsteady}

Let $(\overline{\Clss}, \overline{U})$ be the $C^\infty$  self-similar profile  solving \eqref{Profile} obtained 
in {\rm Lemma \ref{lem:existofselfsimilar}} (see {\rm Appendix \ref{appendix B}}) 
with the scaling parameter $\Lambda \in (1, \Lambda^*(\gamma))$ introduced in {\rm Lemma~\ref{thm:existence_profiles}}.
Since $(\overline{\Clss}, \overline{U})$ 
is spherically symmetric,  then there exists a scalar function $\overline{\mathcal{U}}$ such that 
\[
\overline{\Clss}(y)=\overline\Clss(r),\quad 
\overline{U}(y)=\overline{\mathcal U}(r)\frac{y}{r} \qquad\,\, \mbox{for $r=|y|$}.
\]
In fact, in the proof for Theorem \ref{Thm1.1},  
\begin{itemize}
\item
the study on  the development of implosions for the original Cauchy problem of \textbf{CNS} can be reduced to a stability analysis around the cutoff self-similar profile $(\widehat{X}\overline \Clss, \widehat{X}\overline U)$,
\end{itemize}
where $\widehat{X}$ denotes the cutoff function introduced in \eqref{cutoff function}.
Motivated by this viewpoint, the key issue is to analyze the mathematical structure satisfied by the corresponding perturbation:
\begin{equation}\label{truncated profile}
     \widetilde{\Clss}= \Clss- \widehat{X}\overline \Clss, \quad \quad  \widetilde{U}= U- \widehat{X}\overline U.
\end{equation}
According to  \eqref{Profile} and \eqref{selfsimilar eq}, 
 $(\widetilde{\Clss}, \widetilde{U})$ satisfies the following
equations{\rm:}
\begin{align}
    \partial_\tau \widetilde{\Clss} =&\underbrace{-(\Lambda-1)\widetilde{\Clss}- (y + \widehat{X}\overline{U})\cdot \nabla \widetilde{\Clss}- \alpha(\widehat{X}\overline{\Clss})\,\dive \,\widetilde{U}- \widetilde{U}\cdot \nabla (\widehat{X}\overline{\Clss}) - \alpha\widetilde{\Clss}\,\dive (\widehat{X}\overline{U})}_{\mathcal{L}_\subclss^e(\widetilde{\Clss}, \widetilde{U})}\nonumber\\
    & \underbrace{- \widetilde{U}\cdot \nabla \widetilde{\Clss} - \alpha \widetilde{\Clss}\,\dive \,\widetilde{U}}_{\mathcal{N}_\subclss(\widetilde{\Clss}, \widetilde{U})}\label{eq: PerS}\\
    &\underbrace{-(\widehat{X}^2- \widehat{X})\overline{U}\cdot \nabla \overline{\Clss}- (1+\alpha)\widehat{X}\overline{U}\cdot \nabla\widehat{X}\,\overline{\Clss}-(\widehat{X}^2- \widehat{X})\overline{\Clss}\,\dive \,\overline{U}}_{\mathcal{E}_\subclss},  \nonumber\\
    \partial_\tau \widetilde{U} =&\underbrace{-(\Lambda-1)\widetilde{U}- (y + \widehat{X}\overline{U})\cdot \nabla \widetilde{U}- \alpha(\widehat{X}\overline{\Clss})\nabla \widetilde{\Clss}- \widetilde{U}\cdot \nabla (\widehat{X}\overline{U}) - \alpha\widetilde{\Clss}\nabla (\widehat{X}\overline{\Clss})}_{\mathcal{L}_u^e(\widetilde{\Clss}, \widetilde{U})}\nonumber\\
    &\underbrace{-\widetilde{U}\cdot \nabla \widetilde{U} - \alpha \widetilde{\Clss}\nabla \widetilde{\Clss}}_{\mathcal{N}_u(\widetilde{\Clss}, \widetilde{U})}\label{eq: PerU}\\
    &\underbrace{-(\widehat{X}^2- \widehat{X})\overline{U}\cdot \nabla \overline{U}-\widehat{X}\overline{U}\cdot \nabla\widehat{X}\,\overline{U}-\alpha(\widehat{X}^2- \widehat{X})\, \overline{\Clss} \nabla \overline{\Clss} -\alpha \widehat{X}\overline{\Clss}\, \nabla\widehat{X}\overline{\Clss} }_{\mathcal{E}_u} \nonumber \\
    &\underbrace{+C_{\rm{dis}}e^{-\delta_{\rm{dis}}\tau}\big(\Clss^{\frac{\delta-1}{\alpha}}L(U)+\frac{\delta}{\alpha}\Clss^{\frac{\delta-1-\alpha}{\alpha}}\nabla \Clss \cdot \md(U)
        \big)}_{\mathcal{F}_{\rm{dis}}}, \nonumber
\end{align}
where we have used the fact that $\partial\tau\widehat{X}= -y\cdot \nabla \widehat{X}$. Here 
\begin{itemize}
\item
$\mathcal{L}^{e}=(\mathcal{L}_{\subclss}^{e}, \mathcal{L}_{u}^{e})$ denotes the linear part in 
the time evolution equations of $(\widetilde \Clss, \widetilde U)$; 
\smallskip
\item $\mathcal{N}=(\mathcal{N}_{\subclss}, \mathcal{N}_{u})$ collects the nonlinear terms (at least quadratic in 
$(\widetilde \Clss, \widetilde U)$); 
\smallskip
\item $\mathcal{E}=(\mathcal{E}_{\subclss}, \mathcal{E}_{u})$ contains all the terms 
that depend only on the  known 
profile and \\
time-independent coefficients (\textit{e.g.} $\widehat X$, $\overline \Clss $, $\overline U$);

\item $\mathcal{F}_{\mathrm{dis}}$ denotes the exponentially decaying 
dissipative term produced by the regularization.
\end{itemize}

The properties of the linear operator $\mathcal{L}^{e}$ play an important role in our analysis. 
In Appendix  \ref{appendix C}, we will see that $\mathcal{L}^{e}$ exhibits some dissipative properties in bounded spatial regions.   
To capture this feature and facilitate precise estimates, we fix a constant
$C_0>0$, depending only on the self-similar profile
$(\overline{\Clss}, \overline{U})$ and  $\Lambda$, and then decompose $\mathbb{R}^3$ into  $B(0,C_0)$ and $B^c(0,C_0)$.

However, such dissipative properties fail in the far-fields. To further localize the analysis, we define the truncated linear operator $\mathcal L:=(\mathcal L_\subclss, \mathcal L_u)$ by
\begin{equation} \label{eq:cutoffL}
\begin{cases}
\displaystyle
    \mathcal L_\subclss(\widetilde{\Clss}, \widetilde{U}) = \chi_2 \mathcal{L}_\subclss^e(\widetilde{\Clss}, \widetilde{U}) - J(1-\chi_1)\widetilde{\Clss}, \\[6pt]
    \displaystyle
    \mathcal L_u(\widetilde{\Clss}, \widetilde{U}) = \chi_2 \mathcal{L}_u^e(\widetilde{\Clss}, \widetilde{U}) - J(1-\chi_1)\widetilde{U},
\end{cases}
\end{equation}
where $\chi_1$ and $\chi_2$ are two smooth cutoff functions defined in \S \ref{othernotation} and $J$ is a sufficiently large constant defined in Lemma~\ref{prop:maxdissmooth} so as to enforce the  uniform dissipation of $\mathcal L$ in the exterior region $B^c(0,C_0)$. According to the definition of $\mathcal L$ in \eqref{eq:cutoffL}, $\mathcal L=\mathcal{L}^{e}$ in  $B(0,C_0)$. 
 
As shown in \cite{shijia}, for $m$  determined in Lemma \ref{prop:maxdissmooth}, in the Hilbert
space $H_0^m(B(0, 3C_0); \mathbb{R})\times H_0^m(B(0, 3C_0); \mathbb{R}^3)$ 
defined in \eqref{X space} based on the choice of $(m,C_0)$,  operator $\mathcal{L}$
 is maximally dissipative, modulo a finite-rank perturbation corresponding to the unstable modes.  
 
Now we are ready to state the desired global stability theory of problem \eqref{selfsimilar eq}--\eqref{SSfar}.

\begin{Theorem}\label{globalexist}
We assume that $1< \gamma <1+\frac{2}{\sqrt{3}}$, $\Lambda \in (1, \Lambda^*(\gamma))$ is the scaling parameter introduced in {\rm Lemma~\ref{thm:existence_profiles}} such that there exists a smooth, spherically symmetric self-similar profile $(\overline{\Clss}, \overline{U})$ solving \eqref{Profile}, 
$\delta\in (0,\frac{1}{2})$ satisfies $\delta_{\rm dis}>0$, 
$(m, J)$ are  two sufficiently large constants determined in {\rm Lemma~\ref{prop:maxdissmooth}}, 
and  $C_0>1$ is  a sufficiently large constant depending on profile $(\overline{\Clss}, \overline{U})$ and parameter $\Lambda$. 
Then there exist positive constants $\smallc_1, E$, and $K$, which depend only on $(m, J)$ 
and satisfy 
  $$
  K\geq 4, \qquad \smallc_1 \ll \frac{1}{E} \ll \frac{1}{K}\ll 1,
  $$ 
  such that, if the initial data function $(\Clss_0,U_0)$ satisfies
    $$
  \inf_{y\in \mathbb{R}^3}\Clss_0>0,\quad   \Clss_0-\Clss^* \in   H^K(\mathbb{R}^3), \quad U_0\in  H^K(\mathbb{R}^3),     $$
    and 
    \begin{equation}\label{initialdata1}
        \max\big\{\|\Clss_0-\widehat{X}\overline{\Clss}\|_{L^{\infty}}, \ \|U_0-\widehat{X}\overline{U}\|_{L^{\infty}}\big\} \leq \smallc_1, \quad   \Clss_0 \geq \frac{\smallc_1}{2}, \quad  \langle y \rangle^{\Lambda}(|\nabla \Clss_0|+|\nabla U_0|)< \infty,
    \end{equation}
\begin{equation}\label{initialdata2}
       \max\big\{\|P_{\rm sta}(\chi_2(\Clss_0-\widehat{X}\overline{\Clss}))\|_{X}, \  \|P_{\rm sta}(\chi_2(U_0-\widehat{X}\overline{U}))\|_{X}\} \leq \smallc_1,
    \end{equation}
    \begin{equation}\label{initialdata3}
        E_K(\tau_0)=\int\big(|\nabla^K \Clss(\tau_0,y)|^2+|\nabla^K U(\tau_0,y)|^2\big)
        \phi^K(y) \,\dy \leq \frac{E}{2},
    \end{equation}
    where the function space $X$ is defined in \eqref{X space} based on the choice of $(m,C_0)$, 
    and the  weight function $\phi \in C^1(\mathbb{R}^3)$ is defined in \eqref{weightdefi},
    then  the Cauchy problem \eqref{selfsimilar eq}{\rm--}\eqref{SSfar} admits a unique global-in-time smooth solution $(\Clss, U)$ satisfying
    \begin{equation}
        \begin{split}
            &\inf_{ y\in \mathbb{R}^3} \Clss(\tau,y)>0, \quad 
\Clss-\Clss^*\in C([\tau_0,\tau];H^K(\mathbb{R}^3)),\\
&\partial_\tau \Clss\in C([\tau_0,\tau];H^{K-1}(\mathbb{R}^3)),\ \ U\in C([\tau_0,\tau];H^K(\mathbb{R}^3))\cap L^2([\tau_0,\tau];H^{K+1}(\mathbb{R}^3)),\\[2pt]
& \partial_\tau U\in C([\tau_0,\tau];H^{K-2}(\mathbb{R}^3))\cap L^2([\tau_0,\tau];H^{K-1}(\mathbb{R}^3)),
\end{split}
    \end{equation}
and
    \begin{equation}
  \max\big\{\|\Clss-\widehat{X}\overline{\Clss}\|_{L^\infty}, \  \|U-\widehat{X}\overline{U}\|_{L^\infty}\big\}\leq \frac{\smallc_0}{100}e^{-\varepsilon(\tau-\tau_0)}, 
  \qquad   E_K(\tau) \leq E,
    \end{equation}
for all $\tau>\tau_0$, where $\smallc_0>0$ is a sufficiently small positive constant depending only on $E$, and $\varepsilon>0$ is a small positive constant depending only on $\delta_{\rm dis}$, so that they satisfy $$\smallc_0^{3/2}\ll \smallc_1 \ll \smallc_0 \ll \frac{1}{E}\ll \frac{1}{K}\ll \frac{1}{m}\ll \varepsilon \ll 1.$$
    Moreover, there exists a finite-codimension set of initial data satisfying the above conditions {\rm(}see {\rm{Appendix \ref{appendix D} }}for more details{\rm)}.
\end{Theorem}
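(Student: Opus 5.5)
The plan is a bootstrap (continuity) argument combined with a topological/Brouwer-type selection of the unstable modes, in the spirit of \cite{shijia,MPI}.

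First, the local theory of \S\ref{Section3}: since $\inf \Clss_0>0$ and $\Clss_0-\Clss^*,U_0\in H^K$, the system \eqref{selfsimilar eq} is symmetric hyperbolic--parabolic away from vacuum. This gives a unique local smooth solution on $[\tau_0,\tau_1]$ with the stated regularity. It continues as long as $\inf\Clss>0$ and $\|(\Clss-\Clss^*,U)\|_{H^K}$ stays finite.

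Second, set up the bootstrap. On a maximal interval $[\tau_0,\tau^*)$ I assume:
\begin{itemize}
\item the $L^\infty$ bound $\max\{\|\widetilde\Clss\|_{L^\infty},\|\widetilde U\|_{L^\infty}\}\le \smallc_0 e^{-\varepsilon(\tau-\tau_0)}$;
\item $E_K(\tau)\le 2E$;
\item a weighted decay $\langle y\rangle^{\Lambda-\eta}(|\nabla\Clss|+|\nabla U|)\le C$;
\item a lower bound $\Clss\ge \tfrac{\smallc_1}{4}e^{-(\Lambda-1)(\tau-\tau_0)}$-type on the density;
\item smallness of $P_{\rm uns}(\chi_2(\widetilde\Clss,\widetilde U))$, which holds only for well-chosen unstable initial components.
\end{itemize}
Then I improve each bound.

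Third, the improvements themselves.
\begin{enumerate}
\item \emph{Density bounds.} I get pointwise upper and lower bounds on $\Clss$ by integrating the transport equation along the characteristics $\dot y=y+U$. I split space into three regions: $|y|\le C_0$ where $\Clss\approx\overline\Clss$, an intermediate annulus, and the far field where $\Clss\to \Clss^*=\Lambda e^{-(\Lambda-1)\tau}\bar c$. The regions are handled separately. In the far field, characteristics move outward like $e^{\tau}$, so $\Clss$ decays no faster than $e^{-(\Lambda-1)\tau}$.
\item \emph{Control of $\mathcal F_{\rm dis}$.} The coefficient $\Clss^{(\delta-1)/\alpha}$ is then at most $e^{(\Lambda-1)(1-\delta)\tau/\alpha}$. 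Combined with $e^{-\delta_{\rm dis}\tau}$ this leaves $e^{-(\Lambda-2)\tau}$, which is still not decaying if $\Lambda<2$. So a gain must come from spatial decay of $\nabla U,\nabla^2U$ in the far field, where the density is small. I obtain this decay from the weighted $E_K$ energy: the weight $\phi^K\sim|y|^{2K(1-\eta)}$ and the damping $-(y\cdot\nabla)$ give a coercive term of size roughly $K(1-\eta)-\tfrac32-\dots$, which is positive for $K$ large. The dissipative top-order term has a good sign up to commutators involving $\nabla\Clss^{(\delta-1)/\alpha}$. I then interpolate (Gagliardo--Nirenberg with weights) between $E_K\le 2E$ and the $L^\infty$ smallness to get $|\nabla^j U|\lesssim \smallc_0^{1-j/K}\langle y\rangle^{-j(1-\eta)}$-type decay for low $j$. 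The point is that this polynomial decay, evaluated where $|y|\sim e^{\tau}$ (the region where $\Clss$ is small), beats the remaining growth. This yields $\|\mathcal F_{\rm dis}\|_{L^\infty}\lesssim e^{-\varepsilon'\tau}$. The condition $\delta<\tfrac12$ enters here, when matching exponents.
\item \emph{$L^\infty$ bound.} In $B(0,3C_0)$ I use the semigroup of $\mathcal L$ on $X$, which by Lemma~\ref{prop:maxdissmooth} is maximally dissipative on $V_{\rm sta}$. Duhamel then gives exponential decay of the stable part, treating $\mathcal N,\mathcal F_{\rm dis}$ and the cutoff errors as forcing. $\mathcal E$ is supported where $|y|\sim e^\tau$ and is exponentially small there. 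Outside $B(0,C_0)$ I use the outgoing transport $y\cdot\nabla$ with damping $-(\Lambda-1)$, giving decay along characteristics. $H^m$ on the ball controls $L^\infty$ by Sobolev.
\end{enumerate}

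Fourth, the unstable modes. The unstable components $P_{\rm uns}$ evolve by a finite-dimensional ODE with expanding linear part plus small forcing. A standard Brouwer/shooting argument (the exit map is a retraction of a ball onto its boundary, which is impossible) yields initial unstable components for which the bootstrap never exits. These choices form the finite-codimension set described in Appendix~\ref{appendix D}. Closing the bootstrap gives $\tau^*=\infty$, uniqueness, and the stated bounds.

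The main obstacle is step (2): controlling $\mathcal F_{\rm dis}$ uniformly in time despite the singular factor $\Clss^{(\delta-1)/\alpha}$. This needs the weighted top-order estimate, whose commutators involving derivatives of $\Clss^{(\delta-1)/\alpha}$ are themselves singular. My first attempt is to absorb them with the $K$-dependent coercivity from $\phi^K$, using $\delta_{\rm dis}>0$ to make the time factors integrable.
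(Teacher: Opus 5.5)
Your overall architecture (local theory, bootstrap, semigroup on $V_{\rm sta}$, transport in the exterior, Brouwer selection of the unstable coefficients) matches the paper's. However, two steps as you describe them would fail.

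\textbf{The top-order weighted estimate.} The coercivity bookkeeping is wrong. Commuting $\partial_\beta$ ($|\beta|=K$) with $y\cdot\nabla$ gains the damping $+K$. Integrating $y\cdot\nabla$ against $\phi^K$ then \emph{costs} $\frac32+\frac K2\frac{y\cdot\nabla\phi}{\phi}$, which is $\frac32+K(1-\eta)$ in the far field. So the weight consumes almost all of the damping and leaves a margin of order $K\eta$, not $K(1-\eta)-\frac32$.

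Near the origin, where $\phi=1$, the terms $\sum_j\partial_{y_{\beta_j}}(\widehat X\overline U)\cdot\nabla\partial_{\beta^{(j)}}U$ (and the analogous ones with $\alpha\,\partial_{y_{\beta_j}}(\widehat X\overline Q)$) are top order and come in $K$ copies. They are therefore of the same size as the damping and cannot be treated as commutator errors. The paper isolates them (Lemma~\ref{lemma:6.6}) and splits them into radial and angular parts. It closes $E_K\le E/2$ only through the repulsivity properties \eqref{radial repulsivity}--\eqref{angular repulsivity}, which give the net margin $K\min\{\eta,\tilde\eta\}$ in Lemma~\ref{boosstrapestimate3}. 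Repulsivity does not appear anywhere in your plan.

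Likewise, the singular viscous commutators are not absorbed by coercivity from $\phi^K$. They are absorbed by the dissipation $\mathcal D_3$ together with pointwise bounds such as $Q^{\frac{\delta-1}{\alpha}}|\nabla Q|^2Q^{-2}\lesssim\langle y\rangle^{\delta_{\rm dis}-\Lambda}$. These require the time-independent spatial lower bound $Q\ge\frac{\nu_1}{C}\langle y/R_0\rangle^{-(\Lambda-1)}$ of Lemma~\ref{lemma:Sbounds}. A lower bound of your form, $Q\gtrsim\nu_1e^{-(\Lambda-1)(\tau-\tau_0)}$ uniformly in $y$, makes $e^{-\delta_{\rm dis}\tau}Q^{\frac{\delta-1}{\alpha}}$ grow like $e^{(2-\Lambda)\tau}$ at bounded $|y|$, where no spatial decay is available. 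Moreover, $Q$ is polynomially small on all of $|y|\gg R_0$, not only where $|y|\sim e^{\tau}$. The exponent matching must therefore be done pointwise in $y$, with $e^{-\delta_{\rm dis}\tau}$ alone supplying the time decay.

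\textbf{The exterior $L^\infty$ bound.} Transport alone does not close it. In $B^c(0,C_0)$ the linear terms $\widehat X\overline U\cdot\nabla\widetilde Q$, $\alpha\widehat X\overline Q\,\mathrm{div}\,\widetilde U$ and $\alpha\widehat X\overline Q\nabla\widetilde Q$ require $|\nabla(\widetilde Q,\widetilde U)|\lesssim\nu_0e^{-\varepsilon(\tau-\tau_0)}$, with the same size and rate as the bootstrap bound. Interpolating the $L^\infty$ bound with $E_K$ only gives $\nu_0^{1-\theta}E^{\theta/2}e^{-(1-\theta)\varepsilon(\tau-\tau_0)}$, since $\phi=1$ on $C_0\le|y|\le R_0$. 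That loses a power of $\nu_0$ and part of the rate. The profile smallness $\frac{1}{100C_1}$ on $B^c(0,C_0)$ cannot compensate, because it is fixed before $\nu_0$ is chosen.

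The paper handles this with an extra bootstrap quantity, $\|\nabla^4(\widetilde Q,\widetilde U)\|_{L^2(B^c(0,C_0))}\le\nu_0e^{-\varepsilon(\tau-\tau_0)}$. It closes this by a separate exterior energy estimate, with the boundary flux on $\partial B(0,C_0)$ controlled by the interior semigroup bound (Lemmas~\ref{linfitnityinside} and~\ref{boosstrapestimate2}), and then interpolates from it.

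\textbf{Two further loose ends.} Your bootstrap assumption $\langle y\rangle^{\Lambda-\eta}|\nabla(Q,U)|\le C$ is never recovered. Interpolation only yields $\langle y\rangle^{-(1-\eta)}$. The paper instead proves $|\nabla(Q,U)|\le C\langle y/R_0\rangle^{-\Lambda}$ (Lemma~\ref{lemma:Sprimebounds}) by integrating the differentiated equations along $y_0e^{\tau-\bar\tau}$. There the viscous terms are controlled using the lower bound on $Q$ and an improved bound on $\nabla^2Q$.

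Finally, the semigroup argument must be applied to the truncated system \eqref{eq:truncated}. That system coincides with $(\widetilde Q,\widetilde U)$ only on $B(0,C_0)$ (Lemma~\ref{truncatedestimate}), so you cannot apply the semigroup to $\widetilde Q$ itself on $B(0,3C_0)$.
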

\begin{Remark}[Choice and Hierarchy of Parameters]\label{choiceofparameters}
Throughout the proof of {\rm{Theorem~\ref{globalexist}}}, the parameters are chosen in the following order
and satisfy the relations below{\rm :}

\begin{enumerate}[leftmargin=2.5em,
  align=left,
  labelsep=0.2em,
  labelwidth=1.1em]
\item[\rm{(i)}] The constant $\varepsilon>0$ depends only on $\delta_{\rm dis}$ and is chosen such that
\[
\varepsilon < \frac{1}{4}\,\delta_{\rm dis}=O(1).
\]

\item[\rm{(ii)}] The integer $K$ is chosen sufficiently large so that
\[
K > \max\{3^m\, C(J,m), \,500\},
\]
where $J$ and $m$ are fixed as in {\rm Lemma~\ref{prop:maxdissmooth}}.

\item[\rm{(iii)}] The energy bound $E$ is taken large enough to satisfy
\[
E > C3^{2K^2}.
\]

\item[\rm{(iv)}] The constant $\smallc_0>0$ is chosen sufficiently small, depending on $E$, such that
\[
\smallc_0 < E^{-1}.
\]

\item[\rm{(v)}] The constant $\smallc_1>0$ is chosen to satisfy 
\[
\smallc_0^{\frac{3}{2}} \ll \smallc_1 \ll \smallc_0;
\]
for instance, one may take $\smallc_1 = \smallc_0^{\frac{4}{3}}$.
\end{enumerate}

It is worth noting that all parameters are chosen independently of time $\tau$.
\end{Remark}

\subsection{Outline of the Proof}\label{section-outline}
As mentioned in \S \ref{reformulationsteady}, 
\begin{itemize}
\item the study on the finite-time implosion of the smooth solution $(\rho,u)(t,x)$ of the Cauchy problem \eqref{eq:1.1}--\eqref{viscoeff} for the original  \textbf{CNS}  can be reduced to a global-in-time stability analysis around the cutoff self-similar profile $(\widehat{X}\overline \Clss, \widehat{X}\overline U)(y)$ for solution $(Q, U)(\tau,y)$ of the reformulated problem \eqref{selfsimilar eq}{\rm--}\eqref{SSfar}.
\end{itemize}
 Thus, in order to prove Theorem \ref{Thm1.1},  
\begin{itemize}
\item [\rm{(i)}]
First, based on the  self-similar solutions to the steady Euler equations, we apply the same self-similar scaling to \textbf{CNS}, 
thus obtaining the reformulated system~\eqref{selfsimilar eq}. 

\item [\rm{(ii)}] Second,  we establish a global-in-time stability analysis around profile $(\widehat{X}\overline \Clss, \widehat{X}\overline U)$ 
for solution $(Q,U)$ of the reformulated problem \eqref{selfsimilar eq}{\rm--}\eqref{SSfar} (see Theorem \ref{globalexist}). Moreover,  we  can  recover a solution $(\rho,u)$ to  \textbf{CNS} \eqref{eq:1.1} on $[0,T)$ (see \S \ref{Section3}--\S \ref{Section7}). 

\item [\rm{(iii)}] Finally, based on the global stability result obtained above,  
in the original $(t, x)$--variables, solution $(\rho,u)$ to \eqref{eq:1.1} satisfies the perturbative bounds:
\begin{equation*}
    \begin{aligned}
       \Big|\Lambda(T-t)^{1-\frac{1}{\Lambda}}\lss(t, x)- \mathscr{X}(x)\overline{\Clss}(\frac{x}{(T-t)^{\frac{1}{\Lambda}}})\Big|\leq \frac{\smallc_0}{100}\Big(\frac{T-t}{T}\Big)^{\frac{\varepsilon}{\Lambda}},\\
        \Big|\Lambda(T-t)^{1-\frac{1}{\Lambda}} u(t, x)- \mathscr{X}(x)\overline{U}(\frac{x}{(T-t)^{\frac{1}{\Lambda}}})\Big|\leq \frac{\smallc_0}{100}\Big(\frac{T-t}{T}\Big)^{\frac{\varepsilon}{\Lambda}},
    \end{aligned}
\end{equation*}
for all $(t,x) \in (0,T)\times \mathbb{R}^3$, where 
\((\lss,u)\)=$(\alpha^{-1}\rho^\alpha,u)$.
These estimates show that the  solution to \textbf{CNS} remains a small perturbation of the self-similar Euler profile, thereby showing the implosions of the smooths solutions to the original \textbf{CNS} \eqref{eq:1.1} in finite time and then  completing the proof of Theorem~\ref{Thm1.1} (see \S \ref{Section8}). 
\end{itemize}

According to the above discussion, the main part of the proof for Theorem \ref{Thm1.1} 
is to prove Theorem \ref{globalexist}.
In fact, for establishing such kind of global-in-time stability results \textbf{CNS}, 
motivated by the classical work in  Matsumura--Nishida \cite{MN}, Wang--Xu \cite{weike}, among others, 
the following two points are crucial: 
Within the life span of the solutions of problem \eqref{selfsimilar eq}{\rm--}\eqref{SSfar},
 \begin{enumerate}
\item [\rm{(i)}]    $Q$ is uniformly lower and upper bounded;

\item  [\rm{(ii)}]  $(Q,U)$ can maintain the smallness condition as that of the initial data.
 \end{enumerate}
Such information can usually be obtained by using the so-called bootstrap argument; see \cites{MN,weike}. 

Then we show how the above two points can be obtained. 
First, we notice the following:
\begin{enumerate}
\item [\rm{(i)}]the truncated linear operator $\mathcal{L}$  generates a continuous semigroup with uniform decay estimates on the stable subspace $V_{\rm{sta}}$ defined in \eqref{Xdecomposition};

\item  [\rm{(ii)}] by the definition of cutoff functions $\chi_1$ and $\chi_2$, the linear operator $\mathcal{L}^e$ in \eqref{eq: PerS}--\eqref{eq: PerU} agrees exactly with $\mathcal{L}$ in $B(0,C_0)$, 
 \end{enumerate}
 and  henceforth carry out the analysis at the level of 
 the perturbation system \eqref{eq: PerS}--\eqref{eq: PerU}. 
To overcome the difficulties caused by the degeneracy  and strong coupling  between the velocity and the density 
in the spatial dissipation, the main ingredients of our analysis consist of the following:
\begin{enumerate}
\item [\S\ref{subsub:2.3.1}:]
establishing spatial decay estimates for $(\Clss, U)$ by the region segmentation method and 
some carefully designed  weighted energy estimates (see \S \ref{Section5});

\item[\S\ref{subsub:2.3.2}:] establishing the  lower-order temporal decay estimates of perturbation $(\widetilde{\Clss}, \widetilde U)$ and establishing uniform-in-time bounds for the associated lower-order norms by  the region segmentation method and a bootstrap argument (see \S \ref{subsection6.1});

\item[\S\ref{subsub:2.3.3}:] establishing higher-order weighted energy estimates 
through some carefully designed weighted energy estimates  and the repulsive structure of the self-similar profile (see \S \ref{subsection6.2}); 

\item[\S\ref{subsub:2.3.4}:] controlling the unstable modes by selecting suitable initial data through a fixed-point argument (see \S \ref{subsection6.3}).
\end{enumerate}

\subsubsection{Spatial decay estimates} \label{subsub:2.3.1}
First, based on  the properties of $(\overline{\Clss}, \overline{U})$, 
the weighted energy estimate for perturbation $(\widetilde \Clss, \widetilde U)$ is established:
$$
\widetilde{E}_K(\tau):=\int \big(|\nabla^{K}\widetilde{\Clss}(\tau, y)|^2+|\nabla^{K}\widetilde{U}(\tau,y)|^2\big)\phi^{K}(y)\,\text{d}y\leq 2E.
$$

Second, the spatial decay estimates for $\Clss$ and $(\nabla \Clss,\nabla \widetilde \Clss)$ are derived by a region segmentation method.  
We choose $R_0>0$, depending only on $\smallc_0$ and profile
$(\overline{\Clss},\overline U)$, such that $|\overline{\Clss}|+|\overline U|\leq C\smallc_0$ in the exterior region $B^c(0,R_0)$. 
For any $|y_0|\ge R_0$, define
\[
\omega_{\Clss,y_0}(\tau)
:=e^{(\Lambda-1)(\tau-\bar\tau)}
\Clss(\tau,\,y_0e^{(\tau-\bar\tau)}),
\]
where $y_0$ and $\bar\tau$ are chosen such that either $|y_0|=R_0$ or $\bar\tau=\tau_0$.
Then, by \eqref{selfsimilar eq}$_1$, $\omega_{\Clss, y_0}(\tau)$ satisfies
\begin{align}\label{jieduanQ}
    \partial_{\tau}\omega_{\Clss,y_0}(\tau)
    = -e^{(\Lambda-1)(\tau-\bar\tau)}(U \cdot \nabla \Clss+\alpha \Clss \, \dive \, U)(\tau,\,y_0 e^{(\tau-\bar\tau)}).
\end{align}
Note that $(\Clss, U)=(\widehat{X}\overline{\Clss}+\widetilde{\Clss},\, \widehat{X}\overline{U}+\widetilde{U})$.
By applying the interpolation inequalities, the right-hand side of \eqref{jieduanQ} 
can be controlled by the upper bounds of the cutoff profile $(\widehat{X}\overline \Clss,\widehat{X}\overline U)$ and perturbation $(\widetilde \Clss, \widetilde U)$ in $B^c(0,R_0)$ together with $K$-th order weighted energy estimates for $(\widetilde{\Clss},\widetilde{U})$.
Integrating \eqref{jieduanQ} with respect to $\tau $ yields the pointwise upper and lower bounds for $\Clss$ along the trajectories $y=y_0e^{\tau-\bar\tau}$ so that,
in the region $B^c(0, R_0)$,
$$ 
\frac{\smallc_1}{C}\big\langle \frac{y}{R_0} \big\rangle^{-(\Lambda-1)} 
\leq \Clss \leq C\smallc_0 \max \Big\{ \big\langle \frac{y}{R_0} \big\rangle^{-(\Lambda-1)}, \,e^{-(\tau-\tau_0)(\Lambda-1)} \Big\}. 
$$
Applying $\nabla $ to \eqref{selfsimilar eq}$_1$ and repeating the above argument, we further obtain
$$
|\nabla \Clss|+|\nabla \widetilde{\Clss}|
\le C\big\langle \frac{y}{R_0}\big\rangle^{-\Lambda},
\qquad |y|\ge R_0.
$$
In particular, this decay rate is faster than the one directly  obtained by
interpolation between $\|\Clss\|_{L^\infty}$ and the $K$-th order weighted energy estimates for $(\Clss, U)$.
In the interior region $B(0,R_0)$, the bounds follow from interpolation between the upper bounds and the $K$-th order weighted energy estimates for $(\widetilde{\Clss},\widetilde{U})$, together with the properties of $(\overline{\Clss},\overline{U})$ and $\widehat{X}$.

Finally, we turn to the spatial decay of $\nabla U$. In the interior region $B(0, R_0)$, the derivation of the desired estimates for $\nabla U$ is identical to that for $\nabla \Clss$. 
In the exterior region $B^c(0,R_0)$, additional difficulties arise. Applying $\partial_{y_i}$ to \eqref{selfsimilar eq}$_2$, 
the viscous term takes the form:
\begin{equation*}
\begin{aligned}
     \partial_{y_i} \mathcal{F}_{\rm{dis}}=C_{\rm{dis}}e^{-\delta_{\rm{dis}}\tau}\Big(&\Clss^{\frac{\delta-1}{\alpha}}L(\partial_{y_i} U)+ \frac{\delta-1}{\alpha} \Clss^{\frac{\delta-1}{\alpha}-1}\partial_{y_i} \Clss L(U)+ \frac{\delta}{\alpha}\Clss^{\frac{\delta-1}{\alpha}-1}\nabla \Clss \cdot \md(\partial_{y_i} U)\Big.\\
     & + \Big.\frac{\delta}{\alpha} \Clss^{\frac{\delta-1}{\alpha}-1}\nabla (\partial_{y_i} \Clss) \cdot\md(U) +\frac{\delta}{\alpha}\big(\frac{\delta-1}{\alpha}-1\big) \Clss^{\frac{\delta-1}{\alpha}-2}\,\partial_{y_i} \Clss\,\nabla \Clss \cdot \md(U) \Big).
\end{aligned}
\end{equation*}
Introducing $\omega_{U,y_0,i}(\tau) := e^{\Lambda(\tau-\bar{\tau})} \partial_{y_i} U(\tau,  y_0 e^{\tau-\bar\tau})$, we derive 
\begin{equation}\label{eq:omega-U}
\partial_{\tau} \omega_{U,y_0,i} = -e^{\Lambda(\tau-\bar{\tau})} \big( (\partial_{y_i} U \cdot \nabla) U + (U\cdot \nabla )(\partial_{y_i} U) + \alpha \partial_{y_i}  \Clss \nabla \Clss + \alpha \Clss \nabla (\partial_{y_i} \Clss) \big)+ \partial_{y_i} \mathcal{F}_{\rm{dis}}.
\end{equation}
For estimating the right-hand side of \eqref{eq:omega-U}, the main difficulties
arise from the viscous terms containing factors of form
$\Clss^{-1}\nabla^k\Clss$, with $k=1,2$.
A direct interpolation between $\Clss$ itself and its $K$-th order weighted energy does not provide a sufficiently fast decay rate. However, we have already shown that $|\nabla\Clss|$ exhibits faster spatial decay than $\Clss$ itself.
Then, according to  the Gagliardo-Nirenberg interpolation  inequality \eqref{eq:GNresultinfty}, we obtain the pointwise bound
$$|\Clss^{-1}\nabla^k\Clss|\leq C(\smallc_0)\big\langle \frac{y}{R_0}\big\rangle^{-1+\eta-\Lambda}\qquad \text{for}\ \ k=1,2.$$
All remaining terms in \eqref{eq:omega-U} can be handled
by the lower bound of $\Clss$, as well as the upper bounds and the $K$-th order weighted energy for $(\Clss,U)$.
Combining all the above bounds, we obtain
$$ |\nabla U|+ |\nabla \widetilde U| \leq C\big\langle \frac{y}{R_0} \big\rangle^{-\Lambda} 
\qquad \text{for all $y \in B^c (0, R_0)$.}
$$

\subsubsection{Lower-order temporal decay estimates}\label{subsub:2.3.2}
First, the spatial decay estimates for $(\Clss,U)$ established above, together with the properties of the cutoff function $\widehat X$,
yield the temporal decay bounds for terms
$\mathcal E$, $\mathcal N$, and
$\mathcal F_{\mathrm{dis}}$.
These bounds provide the basic preparatory estimates for the temporal decay
analysis of the perturbation $(\widetilde{\Clss},\widetilde U)$ itself and  its fourth-order spatial derivatives.

Next, we treat the interior and exterior regions separately.  Recall that 
$V_{\mathrm{sta}}$ and $V_{\mathrm{uns}}$ are invariant subspaces of  
$\mathcal L$.  
On the stable space $V_{\mathrm{sta}}$,  $\mathcal L$ is dissipative.
On the unstable space $V_{\mathrm{uns}}$, which is finite-dimensional with
$\dim V_{\mathrm{uns}}=N$, we assume that coefficients $\{\hat{k}_i\}$ are chosen so that the evolution of the unstable initial component 
$\sum_{i=1}^{N}\hat{k}_i(\varphi_{i,\subclss},\varphi_{i,u})$ remains controlled:
$$
\Big\|\sum_{i=1}^{N}k_i(\tau)(\varphi_{i,\subclss}, \varphi_{i,u})\Big\|_{X}
   \le \smallc_1 e^{-\varepsilon(\tau-\tau_0)} \qquad \text{for all}\,\,\, \tau\ge \tau_0.
$$
It is observed that the linear operator $\mathcal L^e$ appearing in
\eqref{eq: PerS}--\eqref{eq: PerU} agrees with $\mathcal L$ in  $B(0,C_0)$. As a consequence, the above properties of dissipativity and boundedness
remain valid for $\mathcal L^e$ when restricted to $B(0,C_0)$. This allows us to identify the interior region as $B(0, C_0)$.
Combining these properties with the decay estimates  obtained for 
$\mathcal E$, $\mathcal N$, and $\mathcal F_{\mathrm{dis}}$, 
it follows that, for $0\leq j\leq 4$, 
\begin{equation*}
\|\nabla^{j}\widetilde{\Clss}\|_{L^{\infty}(B(0,C_0))}+\|\nabla^{j}\widetilde{U}\|_{L^{\infty}(B(0,C_0))}\leq C\frac{\smallc_1}{\smallc_g}e^{-\varepsilon(\tau-\tau_0)}.
\end{equation*}

On the other hand, in the exterior region $B^c(0, C_0)$, we rewrite the perturbed  system \eqref{eq: PerS}--\eqref{eq: PerU} 
along the trajectories: $y = y_0 e^{\tau-\bar{\tau}}$. Following the same strategy as the one used in the derivation of the spatial decay estimates for  $\Clss$  in \S \ref{subsub:2.3.1}, and combining the  Gagliardo-Nirenberg interpolation inequality with  the  bounds for $\mathcal E$, $\mathcal N$, and $\mathcal F_{\mathrm{dis}}$ obtained previously, we can obtain 
the corresponding temporal decay bounds in the exterior region $B^c(0, C_0)$: 
\begin{equation*}
\max\{|\widetilde{\Clss}|, \ |\widetilde{U}|\} \leq \frac{{\smallc_0}}{200}e^{-\varepsilon(\tau-\tau_0)}\qquad \text{for all}\,\,\, \tau\ge \tau_0.
\end{equation*} 
We now turn to the temporal decay estimates for the fourth-order spatial derivatives of 
$(\widetilde{\Clss},\widetilde{U})$. The main difficulty arises from the 
derivatives of the nonlinear terms $\mathcal N$ and the dissipative term 
$\mathcal F_{\mathrm{dis}}$. For $\mathcal N$, a Gagliardo-Nirenberg interpolation inequality is employed to obtain the required bounds.  
For $\mathcal F_{\mathrm{dis}}$, the spatial decay estimates established earlier ensure the 
integrability with respect to $y$, while the factor
$e^{-\delta_{\mathrm{dis}}\tau}$ provides the necessary temporal decay.  As a consequence, we conclude
\begin{equation*}
\max\big\{\|\nabla^4\widetilde{\Clss}\|_{L^2(B^c(0, C_0))},\, \|\nabla^4\widetilde{U}\|_{L^2(B^c(0, C_0))}\big\}
\leq  \frac{\smallc_0}{2}e^{-\varepsilon(\tau-\tau_0)}\qquad \text{for all}\,\,\, \tau\ge \tau_0.
\end{equation*}

\subsubsection{$K$-th order weighted  estimates}\label{subsub:2.3.3}
Now we give the $K$-th order weighted energy estimate:
$$
E_K(\tau)= \int \big(|\nabla^K \Clss(\tau, y)|^2+|\nabla^K U(\tau, y)|^2\big)
\phi^K(y)\,\dy\leq E,
$$
where the  weight function $\phi \in C^1(\mathbb{R}^3)$ is  defined in \eqref{weightdefi}.
Applying $\partial_\beta$ ($\beta=(\beta_1, \beta_2, \cdots,\beta_K)$ with $|\beta|=K$) to
\eqref{selfsimilar eq}
and denoting  $W_\beta := (\partial_\beta\Clss, \partial_\beta U)^\top$ yield
\begin{equation}\label{eq:sym-K}
(\partial_\tau+\Lambda-1+K)W_\beta+\sum_{j=1}^3 A_j(\Clss,U;y)\,\partial_{y_j}W_\beta
=\mathcal{C}_\beta+\binom{0}{\partial_\beta\mathcal F_{\rm dis}},
\end{equation}
where $A_j(\Clss, U; y), j=1,2,3$, are symmetric matrices, 
and $\mathcal{C}_\beta$ collects the lower-order terms:
\[
\mathcal{C}_\beta:=
-\binom{[\partial_\beta,\,U\cdot\nabla]\Clss}{[\partial_\beta,\,U\cdot\nabla]U}
-\alpha\binom{[\partial_\beta,\,\Clss]\,\dive U}{[\partial_\beta,\,\Clss]\nabla\Clss}.
\]
By  the symmetric structure of  \eqref{eq:sym-K}, the construction  of the weight function $\phi$, 
and the repulsivity of the radial and angular
components of profile $(\overline\Clss,\overline U)$, the convective terms can be controlled by  the weighted energy $E_K$.
More precisely, the required repulsivity is guaranteed by the conditions:
\begin{align*}
  -\partial_r (\widehat{X}\overline{\mathcal{U}})
  + \alpha |\partial_r (\widehat{X}\overline \Clss) |
  + \frac{\nabla \phi \cdot (y+\widehat{X}\overline U)
  + | \nabla \phi | \widehat{X}\overline \Clss}{2\phi}
  &\leq 1 - \frac{\min \{ \tilde \eta, \eta \}}{2}, \\
  -\frac{\widehat{X}\overline{\mathcal{U}}}{r}
  + \alpha |\partial_r (\widehat{X}\overline \Clss) |
  + \frac{\nabla \phi \cdot (y+\widehat{X}\overline U)
  + | \nabla \phi | \widehat{X}\overline \Clss}{2\phi}
  &\leq 1 - \frac{\min \{ \tilde \eta , \eta \}}{2},
\end{align*}
where $r=|y|$, and $0<\tilde{\eta}, \eta<1$ are two small constants arising from the repulsivity properties \eqref{radial repulsivity}--\eqref{angular repulsivity} and the definition of the weight function in \eqref{weightdefi}, respectively.
Moreover, the terms $\mathcal{C}_\beta$ and $\partial_\beta \mathcal{F}_{\rm{dis}}$ can be handled by direct calculations  or  integration by parts.

As a result, the weighted energy $E_K(\tau)$ satisfies the differential inequality:
\begin{equation*}
\Big(\frac12\frac{\mathrm d}{\mathrm d\tau} + \Lambda-1+K\Big)E_K
\;\le\;
 K\Big(1-\frac{\min\{\tilde\eta,\eta\}}{2}\Big)E_K+ CE
+ e^{-\delta_{\rm dis}(\tau-\tau_0)},
\end{equation*}
which, along with  the Gr\"onwall inequality,  yields the desired bound for $E_K(\tau)$.

\subsubsection{Control of the unstable modes of $(\overset{\approx}{\Clss},\overset{\approx}{U})$ by the selection of  initial data.}\label{subsub:2.3.4}  
Since the linear operator $\mathcal L$ generates a continuous semigroup on space $X$ consisting of functions supported in region $B(0, 3C_0)$,
we consider the truncated equations \eqref{eq:truncated} with the initial data $( \overset{\approx}{\Clss}_{0},\overset{\approx}{U}_{0})$. Under the decomposition $X = V_{\mathrm{sta}} \oplus V_{\mathrm{uns}}$, the initial data for  \eqref{eq:truncated} can be decomposed  
into a stable component and an unstable one:
\begin{equation}
\label{trun-inidata}
\overset{\approx}
{\Clss}_{0}=\chi_2\widetilde{\Clss}_0^{*}+\sum_{i=1}^{N}\hat{k}_i
\varphi_{i,\subclss},\qquad 
\overset{\approx}
{U}_{0}=\chi_2\widetilde{U}_0^{*}+\sum_{i=1}^{N}\hat{k}_i\varphi_{i,u},
\end{equation}
where $\{(\varphi_{i,\subclss}, \varphi_{i,u})\}_{i=1}^{N}$ is a normalized basis of the unstable  subspace $V_{\mathrm{uns}}$ constructed in Lemma~\ref{prop:maxdissmooth} in Appendix~\ref{appendix C} and coefficients $\{\hat k_i\}_{i=1}^N$ are to be determined.
Since the stable component of $(\overset{\approx}{\Clss}, \overset{\approx}{U})$ has already been controlled as above, it remains to analyze the unstable modes.
We therefore focus on the unstable modes associated with operator $\mathcal L$.
Denote
$$
k(\tau)=P_{\rm{uns}}(\overset{\approx}{\Clss}(\cdot,\tau), \overset{\approx}{U}(\cdot,\tau))=\sum_{i=1}^{N}k_i(\tau)(\varphi_{i,\subclss},\varphi_{i,u}).
$$
Since $V_{\mathrm{uns}}$ is invariant with respect to the linear operator $\mathcal L$, the unstable coefficients satisfy
\begin{equation}\label{ODEofki}
\displaystyle
\partial_\tau k(\tau) = \mathcal L k(\tau) + P_{\rm{uns}}(\chi_2 \mathcal F), \qquad 
\displaystyle
k(\tau_0) = \big(\sum_{i=1}^{N}\hat{k}_i
\varphi_{i,\subclss},\,\sum_{i=1}^{N}\hat{k}_i
\varphi_{i,u}\big),
\end{equation}
where $\mathcal{F}= \mathcal{N}+\mathcal{E}+\mathcal{F}_{\mathrm{dis}}$.
This differential equation is generally unstable, and solutions with arbitrary initial data may grow exponentially.
For the metric, $\Upsilon$, determined in Lemma~\ref{prop:maxdissmooth}, let 
$$
\widetilde{\mathcal R}(\tau):=\big\{k(\tau)\in V_{\mathrm{uns}} : \, \|k(\tau)\|_{\Upsilon}\leq \smallc_1^{\frac{11}{10}} e^{-\frac{4}{3}\varepsilon(\tau-\tau_0)}\big\}.
$$
By the spectral properties of $\mathcal L$ restricted to $V_{\mathrm{uns}}$,
Lemma~\ref{outgoingpro} shows that any trajectory reaching the boundary of $\widetilde{\mathcal R}(\tau)$ immediately exits the region.
This outgoing property provides  a topological selection argument based on Brouwer’s fixed point theorem, which yields at least one choice of initial unstable coefficients $\{\hat k_i\}_{i=1}^N$ for which the corresponding solution remains in $\widetilde{\mathcal R}(\tau)$ for all $\tau\ge\tau_0$.

\section{Local-In-Time Well-Posedness of the Three-Dimensional  Smooth Solutions}\label{Section3}

This section is devoted to establishing the desired local-in-time well-posedness of smooth solutions of the  Cauchy problem \eqref{selfsimilar eq}--\eqref{SSfar}. In particular, we provide a higher-order weighted regularity estimate for the corresponding solutions, which plays a crucial role in initiating the bootstrap argument and in controlling the degenerate dissipative structure introduced by the reformulation. Moreover, throughout \S \ref{Section3}--\S \ref{remarkperiodic}, we always assume that   $C\geq 1$ denotes  a generic constant depending only on $(\Lambda,\gamma, \delta, a_1, a_2)$, and $C(\nu_1,\cdots\!,\nu_k) \geq 1$ denotes  a generic  constant depending on $C$
and parameters $\nu_1,\cdots\!,\nu_k$, which may be different at each occurrence.

Our main result in this section can be stated as follows:

\begin{Theorem}\label{Local(U,S)}
Assume that parameters $(\Lambda,\alpha,\delta, a_1,a_2)$ satisfy 
\begin{equation}\label{physical}
\Lambda>1,\quad \alpha>0, \quad 0<\delta <1, \quad a_1>0, \quad 2a_1+3a_2\geq 0,
\end{equation}
and  $K\geq 4$ is an integer.   
Let the initial data $(\Clss_0,U_0)$ satisfy 
\begin{equation}\label{initial'}
\inf_{y\in \mathbb{R}^3} \Clss_0(y)>0, \quad  \Clss_0-\Clss^*\in H^s(\mathbb{R}^3),\quad  U_0\in H^s(\mathbb{R}^3),
\end{equation}
for some integer $s\geq K$.
Then there exists a time $\tau_*>\tau_0$ such that the Cauchy 
problem \eqref{selfsimilar eq}{\rm--}\eqref{SSfar} admits 
a unique smooth  solution $(\Clss, U)(\tau, y)$ in $[\tau_0,\tau_*]\times\mathbb{R}^3$  
satisfying 
\begin{equation}\label{regularity'}
\begin{split}
&\inf_{(\tau, y)\in [\tau_0,\tau_*]\times\mathbb{R}^3} \Clss(\tau,y)>0, \ \ 
\Clss-\Clss^*\in C([\tau_0,\tau_*];H^s(\mathbb{R}^3)),\\
&\partial_\tau \Clss \in C([\tau_0,\tau_*];H^{s-1}(\mathbb{R}^3)),\ \ U\in C([\tau_0,\tau_*];H^s(\mathbb{R}^3))\cap L^2([\tau_0,\tau_*];H^{s+1}(\mathbb{R}^3)),\\[2pt]
& \partial_\tau U\in C([\tau_0,\tau_*];H^{s-2}(\mathbb{R}^3))\cap L^2([\tau_0,\tau_*];H^{s-1}(\mathbb{R}^3)). \end{split}
\end{equation}
Moreover, if $E_{K}(\tau_0)< \infty$, 
\begin{equation}\label{weightenergy}
   \text{\rm ess}\!\!\!\sup_{\tau_0\leq \tau \leq \tau_*} E_K(\tau) < \infty.
\end{equation}
\end{Theorem}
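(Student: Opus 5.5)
The plan is to treat \eqref{selfsimilar eq} on $[\tau_0,\tau_0+1]$ as a symmetric hyperbolic system for $\Clss$ coupled to a uniformly parabolic system for $U$. This works because, as long as $\Clss$ stays bounded away from zero and bounded above, the coefficient $C_{\rm dis}e^{-\delta_{\rm dis}\tau}\Clss^{\frac{\delta-1}{\alpha}}$ of the Lam\'e operator $L$ is bounded above and below by positive constants. Ellipticity of $L$ follows from $a_1>0$ and $2a_1+3a_2\ge 0$, since $a_1\Delta+(a_1+a_2)\nabla\dive$ satisfies the Legendre--Hadamard condition with constants $a_1$ and $2a_1+a_2>0$. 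The unbounded coefficient $y$ in the transport field $y+U$ is harmless. Integrating by parts gives $\int (y\cdot\nabla f) f=-\frac32\int f^2$, and the commutators $[\partial_\beta, y\cdot\nabla]=|\beta|\partial_\beta$ are of zeroth order. So the far-field growth only produces terms like $(\Lambda-1+K)$ in \eqref{eq:sym-K}. I will work with $\Clss-\Clss^*(\tau)$, noting that $\Clss^*(\tau)=\Lambda e^{-(\Lambda-1)\tau}\bar c$ solves the $\Clss$-equation exactly with $U=0$.

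First I construct approximate solutions by linearization and iteration. Given $(\Clss^{k},U^{k})$ in the solution class, with $\Clss^{k}\ge c_*/2$ where $c_*=\inf\Clss_0$, I proceed in two steps.
\begin{enumerate}
\item Solve the linear transport equation for $\Clss^{k+1}$ with velocity $y+U^k$ and divergence coefficient $\dive U^k$. This is done by characteristics, which exist globally because the field grows at most linearly.
\item Solve the linear parabolic system for $U^{k+1}$ with diffusion coefficient $\Clss^{k\,\frac{\delta-1}{\alpha}}$, first-order coefficient $\Clss^{k\,\frac{\delta-1-\alpha}{\alpha}}\nabla\Clss^k$, and forcing $-\alpha\Clss^k\nabla\Clss^k$. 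Standard Galerkin/energy theory applies.
\end{enumerate}

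The uniform bounds come from $H^s$ energy estimates. For $\Clss$ I close at level $s$ using the Moser/commutator estimates, where the top-order term $\Clss\nabla\partial^s\dive U$ is controlled by $\|U\|_{H^{s+1}}$ from the parabolic smoothing. For $U$, testing $\partial^s$ of the momentum equation against $\partial^sU$ gives dissipation $\int \Clss^{\frac{\delta-1}{\alpha}}|\nabla\partial^sU|^2$. Lower-order commutators involving $\partial^{s}\Clss^{\frac{\delta-1}{\alpha}}\nabla^2U$ are handled by integrating by parts once and absorbing into the dissipation. The pointwise positivity of $\Clss$ is propagated along characteristics, since
$$\frac{d}{d\tau}\Clss=-(\Lambda-1+\alpha\dive U)\Clss .$$
This gives $\Clss\ge c_*e^{-C\tau}$ on a short interval via $\|\dive U\|_{L^\infty}\lesssim\|U\|_{H^s}$. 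Choosing $\tau_*-\tau_0$ small makes the iterates bounded in the high norm and contracting in a low norm such as $L^2$. Interpolation then gives convergence, uniqueness, the time continuity (from the energy equality and weak continuity), and the regularity of $\partial_\tau\Clss$ and $\partial_\tau U$ read off from the equations.

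For \eqref{weightenergy} I redo the $K$-th order estimate with weight $\phi^K$. Since $\phi$ grows like $|y|^{2(1-\eta)}$, I first truncate the weight, $\phi_R=\min\{\phi,R^{2(1-\eta)}\}$ smoothed, so that all integrals are finite for the $H^s$ solution. I then derive a Gronwall inequality uniform in $R$ and let $R\to\infty$. The transport terms contribute $\frac{\nabla\phi\cdot(y+U)}{\phi}K$ times the energy. This factor is bounded because $|\nabla\phi|\,|y|\lesssim\phi$ and $\nabla\phi\cdot U/\phi$ is bounded. The viscous terms produce $\nabla\phi^K\cdot\nabla\partial_\beta U$ cross terms, and these are absorbed by the weighted dissipation using $|\nabla\phi|\lesssim\phi^{1/2}$, which holds for $|y|\ge 4R_0$ because $1-\eta<1$.

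\textbf{Main obstacle.} I expect the weighted commutators to be the hardest part: $\partial_\beta$ falling on $\Clss^{\frac{\delta-1}{\alpha}}$ and on $\Clss^{\frac{\delta-1-\alpha}{\alpha}}\nabla\Clss$ produces terms with $\nabla^{K+1}\Clss$ multiplying $\nabla U$. My first attempt is to move one derivative off the $\Clss$ factor by integration by parts onto $\phi^K\partial_\beta U$, giving a weighted $\|\nabla^{K}\Clss\,\phi^{K/2}\|_{L^2}$ against weighted $\|\nabla^{K+1}U\|$ dissipation. The weight derivative is again controlled by $|\nabla\phi|\lesssim\phi^{1/2}$. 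The lower-order products are bounded by unweighted $H^s$ norms times $L^\infty$ bounds, valid because $s\ge K$ and the solution is already in $H^s$ by the first part.
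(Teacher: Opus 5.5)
There is a genuine gap in your proof of \eqref{weightenergy}. It sits in the sentence asserting that the lower-order products are bounded by unweighted $H^s$ norms times $L^\infty$ bounds.

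Commuting $\partial_\beta$ ($|\beta|=K$) through $U\cdot\nabla U$, $U\cdot\nabla\Clss$, $\Clss\,\dive U$ and $\Clss\nabla\Clss$ produces terms such as $\int\phi^K\,\partial_\beta U\cdot\nabla^{\ell}U\,\nabla^{K+1-\ell}U$ with $2\le\ell\le K-1$. The viscous commutators produce products of $\Clss^{\frac{\delta-1}{\alpha}}\nabla^{j_0}U$ with several factors $\Clss^{-1}\nabla^{j_i}\Clss$, where $\sum_i j_i=K+2$ and no factor is of top order. After Cauchy--Schwarz against $E_{K,R}^{1/2}$ (your energy with the capped weight $\phi_R$), you must bound, e.g., $\|\phi_R^{K/2}\nabla^{\ell}U\,\nabla^{K+1-\ell}U\|_{L^2}$. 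So the growing weight has to be carried by intermediate derivatives, and the $H^s$ bound says nothing about their decay. Bounding these factors in unweighted norms costs $\sup\phi_R^{K/2}= R^{K(1-\eta)}$. The Gronwall inequality is then not uniform in $R$, and letting $R\to\infty$ yields nothing.

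The missing idea is to interpolate against the weighted energy itself. Set $\bar r_j=\frac{2(K+1)}{j}$ and $\theta_j=\frac{j-3/\bar r_j}{K-3/2}$. Then $\theta_\ell+\theta_{K+1-\ell}=\frac{K-1/2}{K-3/2}>1$ and $\phi_R\ge1$, so you may put $\phi_R^{\frac K2\theta_j}$ on the factor $\nabla^jU$. Each factor is then estimated by a weighted Gagliardo--Nirenberg inequality of the type of Lemma~\ref{lemma:GN_general}, between $\|(\Clss-\Clss^*,U)\|_{L^\infty}$ and $E_{K,R}^{1/2}$, with constants independent of $R$; this is the role of \eqref{interpol1} and \eqref{Est:lowerorder}. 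Your capped weight suits this, since it is dyadically comparable uniformly in $R$; the paper's exponential cutoff $f_R$ is not. The outcome is an $R$-independent, slightly superlinear inequality for $E_{K,R}$. It closes on a possibly shorter interval, which suffices for \eqref{weightenergy} after shrinking $\tau_*$.

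Do not copy the last step of \eqref{interpol2}. There the weighted quantity is bounded by $\|(\Clss,U)\|_{D^K}$, which is only valid with a constant of size $\sup\phi^Kf_R^{1/2}$; that constant grows with $R$, as does the one in \eqref{601}.

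For existence you take a genuinely different route. The paper never solves \eqref{selfsimilar eq} directly. It undoes the scaling, invokes Theorem~\ref{zth1-po} for the Eulerian system \eqref{eq:1.2a}, and transfers existence, uniqueness and $H^s$-continuity through the strongly continuous dilation group $\mathcal S_\tau f=f(e^{-\tau}\,\cdot)$. This removes the unbounded drift entirely and is much shorter.

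Your direct iteration is workable: the drift gives only zeroth-order commutators, and $\Clss$ is bounded above and below on a short interval. However, the linear solves, and the identity $\int(y\cdot\nabla f)f=-\frac32\int f^2$ for $H^s$ functions, need a regularization or cutoff. The cleanest way to carry out your linear steps is precisely the change of variables $x=e^{-\tau}y$.

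One step fails as written. The regularity of $\partial_\tau\Clss$ and $\partial_\tau U$ cannot be read off from the equations, because they contain $y\cdot\nabla(\Clss-\Clss^*)$ and $y\cdot\nabla U$. These need not lie in $L^2$ for $H^s$ data: take $U_0=0$ and $\Clss_0-\Clss^*=\epsilon_1\sum_k2^{-k}\chi(\cdot-2^ke_1)$ with $\epsilon_1$ small and $\chi$ a fixed bump. In addition, $\partial_\tau\Clss$ contains the constant $-(\Lambda-1)\Clss^*\neq0$. These memberships hold only for $\partial_\tau(\Clss-\Clss^*)$ and $\partial_\tau U$ locally in $y$, or with a weight $\langle y\rangle^{-1}$. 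The paper's transfer argument has the same defect, so that part of the statement should be read in this weaker sense.
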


\subsection{The Local Well-Posedness of Smooth Solutions of  the Cauchy Problem  \eqref{eq:1.2a}}
In order to study the local-in-time well-posedness of smooth solutions of  the Cauchy problem  \eqref{selfsimilar eq}--\eqref{SSfar}, we first consider  the Cauchy problem  \eqref{eq:1.2a}.

\begin{Theorem}\label{zth1-po}
Assume that  the physical parameters $(\gamma,\delta, a_1,a_2)$ satisfy 
\begin{equation}\label{physical'}
\gamma>1, \quad 0<\delta<1, \quad a_1>0, \quad 2a_1+3a_2\geq 0,
\end{equation}
and $K\geq 4$ is an integer.
If the initial data $(\lss_0,u_0)$ satisfy 
\begin{equation}\label{initial}
\inf_{x\in \mathbb{R}^3} \lss_0(x)>0, \quad  \lss_0-\bar{\lss}\in H^s(\mathbb{R}^3),\quad  u_0\in H^s(\mathbb{R}^3),
\end{equation}
for some integer $s\geq K$ and constant $\bar{\lss}>0$, then there exists 
$T_*>0$ such that  the Cauchy problem \eqref{eq:1.2a} admits a unique smooth solution $(\lss,u)(t,x)$ in $[0,T_*]\times\mathbb{R}^3$  satisfying 
\begin{equation}\label{regularity}
\begin{split}
&\inf_{(t,x)\in [0,T_*]\times\mathbb{R}^3} \lss(t,x)>0, \ \ 
\lss-\bar{\lss}\in C([0,T_*];H^s(\mathbb{R}^3)),\\
&\partial_t\lss\in C([0,T_*];H^{s-1}(\mathbb{R}^3)),\ \ u\in C([0,T_*];H^s(\mathbb{R}^3))\cap L^2([0,T_*];H^{s+1}(\mathbb{R}^3)),\\[2pt]
& \partial_t u\in C([0,T_*];H^{s-2}(\mathbb{R}^3))\cap L^2([0,T_*];H^{s-1}(\mathbb{R}^3)). 
\end{split}
\end{equation}
\end{Theorem}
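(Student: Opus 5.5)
This is a standard local existence result for a hyperbolic–parabolic system away from vacuum. I would prove it by linearization plus Picard iteration, working directly with the $(\lss,u)$ formulation \eqref{eq:1.2a}. The positive lower bound $\inf \lss_0>0$ makes the viscosity coefficient $\alpha^{\frac{\delta-1}{\alpha}}\lss^{\frac{\delta-1}{\alpha}}$ uniformly elliptic as long as $\lss$ stays between two positive constants. Fix
\[
0<\underline{\lss}\le \lss_0\le \overline{\lss},
\qquad
\mathcal{M}_0=\|\lss_0-\bar\lss\|_{H^s}+\|u_0\|_{H^s}.
\]
Given a known pair $(\hat\lss,\hat u)$ in a ball of the solution class \eqref{regularity} with $\tfrac12\underline{\lss}\le\hat\lss\le 2\overline{\lss}$, I solve the linear problem
\[
\partial_t\lss+\hat u\cdot\nabla\lss+\alpha\lss\,\dive \hat u=0,
\]
\[
\partial_t u-\alpha^{\frac{\delta-1}{\alpha}}\hat\lss^{\frac{\delta-1}{\alpha}}L_x u
=-\hat u\cdot\nabla \hat u-\alpha\lss\nabla\lss+\tfrac{\delta}{\alpha}\alpha^{\frac{\delta-1}{\alpha}}\hat\lss^{\frac{\delta-1-\alpha}{\alpha}}\nabla\hat\lss\cdot\md_x(\hat u).
\]
The first equation is a linear transport equation, solved by characteristics. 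Its solution stays positive because it is the initial value times an exponential of $\int \dive\hat u$. The second is a linear parabolic system with variable, bounded, uniformly positive coefficients. It has a unique solution in $C([0,T];H^s)\cap L^2(H^{s+1})$ by standard Galerkin or semigroup theory. The Lamé structure with $a_1>0$ and $2a_1+3a_2\ge0$ gives
\[
\int L_xu\cdot u=-a_1\|\nabla u\|^2-(a_1+a_2)\|\dive u\|^2,
\]
which controls $\tfrac13\min\{a_1,2a_1+3a_2\}\|\nabla u\|_{L^2}^2$ plus a nonnegative part. Strictly, $a_1+a_2$ could be negative, so I would use the equivalent form $2a_1\|D u\|^2+a_2\|\dive u\|^2\ge \frac{2a_1+3a_2}{3}\|\dive u\|^2+\dots$ together with Korn's identity $\|\nabla u\|^2\sim 2\|Du\|^2-\|\dive u\|^2$ on $\mathbb{R}^3$.

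Next come uniform a priori bounds for the map $(\hat\lss,\hat u)\mapsto(\lss,u)$. Apply $\partial^x_\beta$ with $|\beta|\le s$ and take $L^2$ energies.
\begin{itemize}
\item For $\lss$, the standard commutator estimates (Moser/Kato–Ponce) give
\[
\frac{d}{dt}\|\lss-\bar\lss\|_{H^s}\le C(\|\hat u\|_{H^{s+1}})(1+\|\lss-\bar\lss\|_{H^s}).
\]
The factor $\|\hat u\|_{H^{s+1}}$ is only $L^2$ in time, which is acceptable after integration, with a gain of $T^{1/2}$.
\item For $u$, the parabolic term yields dissipation of $\|\nabla^{s+1}u\|_{L^2}^2$ weighted by $\hat\lss^{\frac{\delta-1}{\alpha}}\ge c(\overline{\lss})>0$.
\item The commutators from the variable coefficient, including the term with $\nabla\hat\lss\cdot\md_x(\hat u)$, are absorbed after one integration by parts. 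This uses smoothness of $z\mapsto z^{p}$ on $[\tfrac12\underline{\lss},2\overline{\lss}]$ and $s\ge4>\frac32+1$.
\item The pressure term $\alpha\lss\nabla\lss$ sits in $H^{s-1}$. Via duality against $\nabla^{s+1}u$ it is controlled by $\|\lss-\bar\lss\|_{H^s}\|u\|_{H^{s+1}}$ and absorbed.
\end{itemize}
This closes a bound of size $2\mathcal{M}_0$ on a short interval $T_*$. By the sup-norm control of $\int_0^t\|\dive \hat u\|_{L^\infty}$, it also keeps $\lss$ inside $[\tfrac12\underline\lss,2\overline\lss]$. The time-derivative regularity then follows directly from the equations.

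Finally, I would prove contraction in the lower norm $C([0,T];L^2)$ for $\lss$ and $C(L^2)\cap L^2(H^1)$ for $u$ (or $H^1$ if the $\nabla\hat\lss$ term demands it). I expect this to be the main obstacle. The term $\nabla\hat\lss\cdot\md_x(\hat u)$ and the coefficient difference $(\hat\lss_1^{p}-\hat\lss_2^{p})L_xu$ involve a derivative of the density difference, and the hyperbolic $\lss$ equation does not gain derivatives. I would estimate these terms in $H^{-1}$ against the $L^2(H^1)$ dissipation of the velocity difference, integrating by parts to move the derivative off $\hat\lss_1-\hat\lss_2$. High-norm boundedness plus low-norm contraction then gives a limit. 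Interpolation recovers strong convergence in $H^{s'}$ for $s'<s$. Weak-star compactness and time-continuity through the usual Bona–Smith/energy-equality argument give the full regularity \eqref{regularity'}. Uniqueness follows from the same difference estimate.
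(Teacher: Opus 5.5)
Your proposal is correct and follows essentially the route the paper indicates, which it only sketches: positivity of $\lss$ from the transport equation, then a Matsumura--Nishida-type linearization and iteration, with high-norm bounds for the parabolic velocity equation and contraction in a low norm. One small correction: under $a_1>0$ and $2a_1+3a_2\ge 0$ one always has $a_1+a_2\ge a_1/3>0$, so the form $a_1\|\nabla u\|_{L^2}^2+(a_1+a_2)\|\dive u\|_{L^2}^2$ is coercive with constant $a_1$ directly, and your Korn detour is unnecessary (its constant $\frac13\min\{a_1,2a_1+3a_2\}$ would in fact vanish in the admissible case $2a_1+3a_2=0$).
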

Since one can obtain the positive lower bound of $\lss$ via its transport equation $\eqref{eq:1.2a}_1$ in some finite time, $\eqref{eq:1.2a}_2$ becomes a parabolic system for $u$ with the source term $\lss^{\frac{\delta-1-\alpha}{\alpha}}\nabla_x \lss \cdot \md_x(u)$.  
Then, by adapting the classical method in Matsumura-Nishida \cite{MN} and exploiting the equations in \eqref{eq:1.2a}, 
one can establish the desired local well-posedness of smooth solutions when $\bar{\lss}>0$. For brevity, we omit the details of proof.

\subsection{The Local-In-Time Well-Posedness of Smooth Solutions of  the Cauchy  
Problem  \eqref{selfsimilar eq}--\eqref{SSfar}}
Now we are ready to give the proof for Theorem \ref{Local(U,S)}.

\begin{proof} We divide the proof into three steps. 
In what follows, we focus on the case $s=K$, since the other cases 
follow by analogous arguments.

\smallskip
\textbf{1.} By the assumption of Theorem~3.1, the initial data $(\Clss_0,U_0)$ satisfy
\begin{equation}\label{eq:SS-initial-assumption}
\inf_{y\in \mathbb{R}^3} \Clss_0(y)>0, \qquad  \Clss_0-\Clss^*\in H^K(\mathbb{R}^3),\qquad  U_0\in H^K(\mathbb{R}^3),
\end{equation}
where $\Clss^*=\Clss^*(\tau_0)$ is given by \eqref{SSfar}.

It follows from \eqref{SSinitial} that
\begin{equation}\label{eq:initial-transform-inverse}
c_0(x)=\Lambda^{-1}e^{(\Lambda-1)\tau_0}Q_0(e^{\tau_0}x),
\qquad
u_0(x)=\Lambda^{-1}e^{(\Lambda-1)\tau_0}U_0(e^{\tau_0}x).
\end{equation}
Based on the relation: $x=e^{-\tau_0}y$, we obtain
\begin{equation*}
    \inf_{x\in \mathbb{R}^3}c_0(x)=\inf_{y \in \mathbb{R}^3}c_0(e^{-\tau_0}y) = \Lambda^{-1}e^{(\Lambda-1)\tau_0}\inf_{y\in \mathbb{R}^3}Q_0(y)>0.
\end{equation*}
 Let $\beta=(\beta_1,\beta_2,\cdots,\beta_k)$ be a multi-index with 
$\beta_i\in\{1,2,3\}$ for $i=1,\cdots,k$, where $k\in [0,K]$ is an integer. By the chain rule and the fact that 
$\partial_{x_i}y_j = e^{\tau_0}\delta_{ij}$, one has
$$
\partial^x_\beta u_0(x)
= \Lambda^{-1} e^{(\Lambda-1)\tau_0} e^{|\beta|\tau_0}\,
  \,\partial_\beta^y U_0(y).
$$
Since $\text{d}x=e^{-3\tau_0}\,\text{d}y$, it follows that
\begin{equation*}
\begin{aligned}
\|\partial^x_\beta u_0\|_{L^2_x}^2
&= \Lambda^{-2} e^{2(\Lambda-1)\tau_0} e^{2|\beta|\tau_0}
\int |\partial^y_\beta U_0(y)|^2 \, e^{-3\tau_0} \text{d}y \\
&= \Lambda^{-2} e^{2(\Lambda-1)\tau_0} e^{(2|\beta|-3)\tau_0}\,
\|\partial^y_\beta U_0\|_{L^2_y}^2.
\end{aligned}
\end{equation*}
Summing over $|\beta|\leq K$ yields
\begin{equation*}
\|u_0\|_{H^K_x}
\leq C(K, \tau_0)\, \|U_0\|_{H^K_y},
\end{equation*}
where
\begin{equation*}
C(K,\tau_0)=\Lambda^{-1} e^{(\Lambda-1)\tau_0}
\Big(\sum_{|\beta|\leq K} e^{\frac{2|\beta|-3}{2}\tau_0}\Big).
\end{equation*}
Thus, $u_0 \in H^K(\mathbb{R}^3)$. Similarly, we can also  show  that $\lss_0-\bar{\lss}\in H^K(\mathbb{R}^3)$.

\smallskip
\textbf{2.}
According to Step 1 and Theorem \ref{zth1-po}, there exist both $T_*>0$ and 
a unique smooth solution $(\lss,u)$ in $[0,T_*]\times \mathbb{R}^3$ of  \eqref{eq:1.2a}. 
Now we introduce the self-similar variables for some $T>0$:
\[
\tau=-\frac{\log(T-t)}{\Lambda},
\qquad
y=\frac{x}{(T-t)^{\frac1\Lambda}}=e^\tau x.
\]
Denote   $\tau_*=-\frac{\log (T-T_*)}{\Lambda}$. 
According to \eqref{scaling}, the vector function $(\Clss,U)(\tau, y)$ is well-defined 
in $[\tau_0,\tau_*]\times \mathbb{R}^3$.
It is direct to check that $(\Clss,U)$
satisfies the Cauchy problem \eqref{selfsimilar eq}--\eqref{SSfar} pointwise. 
Next, we need to show that \eqref{regularity'}--\eqref{weightenergy} hold.

Let $\beta=(\beta_1,\beta_2,\cdots,\beta_k)$ be a multi-index with 
$\beta_i\in\{1,2,3\}$ for $i=1,\cdots,k$, where $k\in [0,K]$ is an integer.
By the chain rule and the identity $\partial_{y_j}x_i=e^{-\tau}\delta_{ij}$,
one has 
\[
\partial^y_\beta U(\tau, y)
=\Lambda e^{-(\Lambda-1)\tau}e^{-|\beta|\tau}
(\partial^x_\beta u)(t, x),
\qquad x=e^{-\tau}y.
\]
Since $\text{d}y=e^{3\tau}\,\text{d}x$, it follows that
\[
\begin{aligned}
\|\partial^y_\beta U(\tau, \cdot)\|_{L^2_y}^2
&=\Lambda^2 e^{-2(\Lambda-1)\tau}e^{-2|\beta|\tau}
\int|\partial^x_\beta u(t, x)|^2 e^{3\tau}\,\text{d}x \\
&=\Lambda^2 e^{-2(\Lambda-1)\tau}e^{(3-2|\beta|)\tau}
\|\partial^x_\beta u(t, \cdot)\|_{L^2_x}^2 .
\end{aligned}
\]
Summing over all $|\beta|\le K$, we obtain
\[
\|U(\tau, \cdot)\|_{H^K_y}
\le C(K, \tau)\,\|u(t, \cdot)\|_{H^K_x},
\qquad t=T-e^{-\Lambda\tau},
\]
where
\[
C(K, \tau)
=\Lambda e^{-(\Lambda-1)\tau}
\Big(\sum_{|\beta|\le K}e^{\frac{3-2|\beta|}{2}\tau}\Big)
\]
is bounded on the finite interval $[\tau_0,\tau_*]$. 

Define the scaling operator $\mathcal{S}_{\tau}f(y):=f(e^{-\tau}y)$. Then
$$
\|\mathcal S_\tau f\|_{H^K} \leq \Big(\sum_{|\beta|\le K}e^{\frac{3-2|\beta|}{2}\tau}\Big)\|f\|_{H^K}, \qquad\,\,\text{ for all $f\in H^K(\mathbb R^3)$ and $\tau \in [\tau_0, \tau_*]$}. 
$$
Moreover, the family, $\{\mathcal S_\tau\}_{\tau\in\mathbb R}$, satisfies the group property:
$$
\mathcal S_{\tau_2}f(y)=f(e^{-\tau_2}y)=f(e^{-\tau_1-(\tau_2-\tau_1)}y)=\mathcal S_{\tau_2-\tau_1}(\mathcal S_{\tau_1}f(y)),\qquad \mathcal S_0f(y)=f(y),
$$
for all $\tau_1, \tau_2 \in \mathbb{R}$.

Indeed, for $f\in C_c^\infty(\mathbb R^3)$, the map: $\tau\mapsto \mathcal S_\tau f$
is continuous in $H^K(\mathbb R^3)$ by dominated convergence.
Since $C_c^\infty(\mathbb R^3)$ is dense in $H^K(\mathbb R^3)$ and $\mathcal S_\tau$ is locally bounded,
the strong continuity extends to all $f\in H^K(\mathbb R^3)$.
Consequently, $\{\mathcal S_\tau\}_{\tau\in\mathbb R}$ forms a strongly continuous one-parameter group on $H^{K}(\mathbb{R}^3).$
Therefore, for $\tau \in [\tau_0, \tau_*]$
\[
U(\tau,\cdot)=\Lambda e^{-(\Lambda-1)\tau}\mathcal S_{\tau}(u(t(\tau),\cdot))
\qquad \mbox{with $t(\tau):=T-e^{-\Lambda\tau}$}.
\]

Since the map: $t\mapsto u(t,\cdot)$ is continuous in $H^K(\mathbb R^3)$,
the change of variables in time $\tau\mapsto t(\tau)$ is continuous,
and $\{\mathcal S_\tau\}_{\tau\in\mathbb R}$ is a strongly continuous group on $H^K(\mathbb R^3)$,
we conclude 
\[
U\in C([\tau_0,\tau_*];H^K(\mathbb R^3)).
\]
Moreover, it follows from  the relation: $\text{d}t=\Lambda(T-t)\,\text{d}\tau$,
\eqref{scaling}, and  
$u\in L^2([0,T_*];H^{K+1}(\mathbb R^3))$ that 
\[
U\in L^2([\tau_0,\tau_*];H^{K+1}(\mathbb R^3)).
\]

The analogous arguments can be applied to $\Clss$ and the corresponding  time derivatives to 
further obtain
\begin{equation}\label{eq:reg-SS}
\begin{aligned}
&\inf_{(\tau,y)\in[\tau_0,\tau_*]\times\mathbb R^3}\Clss(\tau,y)>0, 
\quad  \Clss-\Clss^*\in C([\tau_0,\tau_*];H^K(\mathbb R^3)),\\
&\partial_\tau \Clss\in C([\tau_0,\tau_*];H^{K-1}(\mathbb R^3)),
 \quad 
 \partial_\tau U\in
C([\tau_0,\tau_*];H^{K-2}(\mathbb R^3))
\cap L^2([\tau_0,\tau_*];H^{K-1}(\mathbb R^3)).
\end{aligned}
\end{equation}

\smallskip
\textbf{3.}
Let $f: \mathbb{R}^{+} \to \mathbb{R}$ be a non-increasing $C^2$ function satisfying
\begin{align*}
f(s)=\begin{cases}
1 & s \in [0,\frac{1}{2}],\\[2pt]
\text{non-negative polynomial} & s \in [\frac{1}{2}, 1],\\[2pt]
e^{-s} & s \ge 1.
\end{cases}
\end{align*}
For any $R>1$, we define $f_R: \mathbb{R}^3 \to \mathbb{R}$ by $f_R(y)=f (\frac{|y|}{R})$. 
Clearly, there exists a constant $C>0$ such that 
\begin{align*}
|f'(s)| \le C f(s).
\end{align*}
Then, for any $p\ge 0$, 
\begin{align}\label{601}
|y|^p f_R(y) \leq C,  \qquad
\lim_{|y|\to \infty} |y|^p f_R(y) =0.
\end{align}

Let $\beta=(\beta_1, \beta_2, \cdots,\beta_K)$ be a multi-index with 
$\beta_i\in\{1,2,3\}$ for $i=1,\cdots,K$. Applying $\partial_\beta$ to \eqref{selfsimilar eq}  
and using the identity: $\partial_\beta ((y \cdot \nabla) f) = K \partial_\beta f + (y \cdot \nabla) \partial_\beta f$, 
we have 
\begin{equation}\label{eq:K-thderivative'}
\begin{aligned}
    &(\partial_\tau +\Lambda-1+K)\partial_{\beta}\Clss +(y\cdot \nabla) \partial_{\beta}\Clss + \partial_{\beta}(U\cdot \nabla \Clss)+\alpha\partial_{\beta}(\Clss\,\dive U)=0,\\[0.3em]
     &(\partial_\tau +\Lambda-1+K)\partial_{\beta}U +(y\cdot \nabla) \partial_{\beta}U + \partial_{\beta}(U\cdot \nabla U)+\alpha\partial_{\beta}(\Clss \nabla \Clss)= \partial_{\beta}\mathcal{F}_{\rm dis},
\end{aligned}
\end{equation}
where 
$$\mathcal{F}_{\rm dis}= C_{\rm{dis}}e^{-\delta_{\rm{dis}}\tau}\big(\Clss^{\frac{\delta-1}{\alpha}}L(U)+\frac{\delta}{\alpha}\Clss^{\frac{\delta-1-\alpha}{\alpha}}\nabla \Clss \cdot \md(U)
        \big).$$
 Now, we define a new energy
\begin{equation}
    E_{K,R}(\tau)= \int 
    \big(| \nabla^K \Clss |^2+ | \nabla^K U |^2  \big) \phi^K f_R  \,\dy= \sum_{|\beta|=K}  \int 
    \big( | \partial_\beta \Clss |^2+| \partial_\beta U |^2 \big) \phi^K f_R \,\dy,
\end{equation}
where $\partial_\beta U=(\partial_\beta U_1, \partial_\beta U_2, \partial_\beta U_3)^\top$, and  the  weight function $\phi \in C^1(\mathbb{R}^3)$ is  defined in \eqref{weightdefi}.

Multiplying each equation of \eqref{eq:K-thderivative'} by $\phi^K f_R \,\partial_\beta \Clss $ and $\phi^K f_R \,\partial_\beta U$ respectively, summing over all $|\beta|=K$, and integrating over $\mathbb{R}^3$, we obtain
\begin{align}\label{eq:EKR}
        &\big(\frac{1}{2}\frac{\mathrm d}{\mathrm d\tau} + \Lambda-1+K\big)E_{K,R} \nonumber \\
       &= -\sum_{|\beta| =K}\int  \big(\partial_{\beta} \Clss\, (y\cdot\nabla) \partial_{\beta} \Clss + \sum_{i=1}^3 \partial_{\beta} U_i\, (y\cdot\nabla) \partial_{\beta}U_i \big)\phi^Kf_R \,\dy
 \nonumber \\
        &\quad -\sum_{|\beta| =K} \int  \big(\partial_{\beta} \Clss \, \partial_{\beta}(U\cdot \nabla \Clss)+ \sum_{i=1}^3 \partial_{\beta} U_i\, \partial_{\beta}(U \cdot \nabla U_i) \big) \phi^Kf_R \,\dy  \\
        &\quad - \sum_{|\beta| =K} \int \alpha \big(\partial_{\beta} \Clss \,\partial_{\beta}(\Clss \dive\, U) + \sum_{i=1}^3 \partial_{\beta} U_i\, \partial_{\beta}(\Clss \partial_{y_i} \Clss) \big)\phi^K f_R \,\dy \nonumber \\
       & \quad +\sum_{|\beta| =K}C_{\rm{dis}}e^{-\delta_{\rm{dis}}\tau}\int \partial_{\beta}U\cdot \partial_{\beta}\Big( \Clss^{\frac{\delta-1}{\alpha}}L(U) +\frac{\delta}{\alpha}\Clss^{\frac{\delta-1}{\alpha}-1}\nabla \Clss \cdot \md(U) \Big)\phi^K f_R\,\dy \nonumber \\
      & := -J_1-J_2-J_3+C_{\rm{dis}}e^{-\delta_{\rm{dis}}\tau}J_4. \nonumber
\end{align}
Integrating by parts,  we obtain
\begin{align}\label{eq:EstJ1}
        -J_1  &=-\frac{1}{2}\sum_{|\beta| =K}\int  y 
        \cdot \big( \nabla |\partial_\beta \Clss|^2 + \sum_{i=1}^{3} \nabla|\partial_\beta U_i|^2\big)\phi^K f_R\,\dy \nonumber \\
        & =\frac{1}{2}\sum_{|\beta| =K}\int \frac{\dive(y \phi^K f_R  )}{\phi^K f_R}\big( |\partial_\beta \Clss|^2 + |\partial_\beta U|^2\big)\phi^K f_R\,\dy \nonumber \\
        & =  \frac{1}{2} \sum_{|\beta| =K} \int \Big(3+\frac{Ky \cdot \nabla\phi }{\phi}+ \frac{|y|}{Rf_R}f'(\frac{|y|}{R})\Big)\big(  |\partial_\beta \Clss|^2 +|\partial_\beta U|^2\big)\phi^K f_R\,\dy  \\
        & \leq C E_{K,R}+ C(K)\int \big(|\nabla^K \Clss|^2 + |\nabla^K U|^2\big)|y|\phi^K f_R \,\dy \nonumber \\
        & \leq CE_{K,R}+ C(K)\|(\Clss, U)\|_{D^K}^2.\nonumber
\end{align}

 We now turn to the estimates for  $J_2$ and $J_3$. 
 First, combining the results obtained in {Steps 1--2}, 
  we conclude that there exists a constant $C(K, \Clss_0, U_0, \tau_*)$ such that $(\Clss, U)$ satisfies
 \begin{align}
     \inf_{(\tau,y)\in[\tau_0,\tau_*]\times\mathbb R^3}\Clss(\tau,y) \ge C^{-1}(K, \Clss_0, U_0, \tau_*), \label{loc:lowerboundQ}\\ \sup_{\tau\in[\tau_0, \tau_*]}\|(\Clss-\Clss^*, U)\|_{H^K} \leq C(K, \Clss_0, U_0, \tau_*)\label{loc:HKbound}.
 \end{align}
 It follows from \eqref{loc:HKbound}, $K\ge 4$, and the Sobolev embedding that
 \begin{align}\label{loc:Linftybound}
    \sum_{j=0}^2\|(\nabla^j \Clss, \nabla^j U)\|_{L^\infty} \leq C(K, \Clss_0, U_0, \tau_*) \qquad \text{for all $\tau \in [\tau_0, \tau_*]$}.
 \end{align}
 Second, in the  Gagliardo-Nirenberg inequality \eqref{eq:GNresult}, denoting
 $$ 
 \varphi = f_R^{\frac{1}{4K}}\phi^{\frac{1}{2}}, \  \ \psi = 1, \ \ p=\infty, \ \ q=2,\ \  l=K, \ \ \bar{r} = \frac{2(K+1)}{\ell}, \ \ \theta = \frac{\ell - 3/\bar{r}}{K - 3/2}, 
 $$ 
 we obtain 
 \begin{equation}\label{interpol1}
    \begin{aligned}
        \|\langle y \rangle^{-\varepsilon}f_R^{\frac{\theta}{4}}\phi^{\frac{K\theta}{2}}\nabla^{\ell} U\|_{L^{\bar{r}}} 
        \leq C(\varepsilon, K)\big(\|U\|_{L^{\infty}}^{1-\theta} \|f_R^{\frac 14}\phi^{\frac K2}\nabla^K U\|_{L^{2}}^{\theta} + \|U\|_{L^{\infty}}\big).
    \end{aligned}
\end{equation}
Then, for $\ell +\ell^* =K+1$ with $\ell,\ \ell^*\ge 1$, and $r^*=\frac{2(K+1)}{\ell^*}$, according to \eqref{601}, \eqref{interpol1},  
and the H\"older inequality, we have
    \begin{align}\label{interpol2}
      &\|f_R^{\frac 1 2}\phi^{\frac K2}|\nabla^{\ell}U||\nabla^{\ell^*}U|\|_{L^2} \nonumber \\
      &=\big\||\nabla^{\ell}U||\nabla^{\ell^*}U|(f_R^{\frac{1}{4}}\phi^{\frac{K}{2}})^ \frac{\ell-3/{\bar{r}}}{K-3/2}(f_R^{\frac{1}{4}}\phi^{\frac{K}{2}})^\frac{\ell^*-3/{r^*}}{K-3/2}\phi^{-\frac{K}{2(K-3/2)}}f_R^{\frac{K-5/2}{4(K-3/2)}}\big\|_{L^2}\nonumber \\
       &  \leq C \big\|\langle y \rangle^{-\frac{1}{10}}(f_R^{\frac{1}{4}}\phi^{\frac{K}{2}})^\frac{\ell-3/{\bar{r}}}{K-3/2}\nabla^{\ell}U\big\|_{L^{\frac{2(K+1)}{\ell}}}  \big\|\langle y \rangle^{-\frac{1}{10}}(f_R^{\frac{1}{4}}\phi^{\frac{K}{2}})^\frac{\ell^*-3/{r^*}}{K-3/2}\nabla^{\ell^*}U\big\|_{L^{\frac{2(K+1)}{\ell^*}}}  \\
        & \leq  C(K,\Clss_0, U_0,\tau_*)+C(K,\Clss_0, U_0,\tau_*)\Big(\int \big(|\nabla^K \Clss|^2 + |\nabla^K U|^2\big)\phi^K f_R^{\frac12} \,\dy\Big)^\frac{K-1/2}{2K-3} \nonumber \\
         & \leq  C(K,\Clss_0, U_0,\tau_*)\big(1+ \|(\Clss, U)\|_{D^K}^\frac{K-1/2}{K-3/2}\big).\nonumber
\end{align}
Therefore, it follows from the H\"older inequality, \eqref{loc:Linftybound}, and \eqref{interpol2} that  
    \begin{align}\label{Est:J21}
        & -\sum_{|\beta| =K} \int f_R\phi^K \sum_{i=1}^3 \partial_\beta 
       (U \cdot \nabla U_i) \partial_\beta U_i \,\dy  \nonumber \\
       &\leq C(K,\Clss_0, U_0,\tau_*) \big\|f_R^{\frac12}\phi^{\frac K2}\nabla^K U\big\|_{L^2}\big(\|U\|_{L^\infty}\big\|f_R^{\frac 12}\phi^{\frac K2}\nabla^{K+1} U\big\|_{L^2} +\|\nabla U\|_{L^\infty}\big\|f_R^{\frac 12}\phi^{\frac K2}\nabla^K U\big\|_{L^2} \big)\nonumber \\
       &\quad  + C(K,\Clss_0, U_0,\tau_*)\big\|f_R^{\frac 12}\phi^{\frac K2}\nabla^K U\big\|_{L^2} \Big(\sum_{\ell,\ell^* \geq 2, \, \ell + \ell^* = K+1}   \big\|f_R^{\frac 12}\phi^{\frac K2}|\nabla^{\ell}U||\nabla^{\ell^*}U|\big\|_{L^2} \Big)  \\
       & \leq C(K,\Clss_0, U_0,\tau_*)\Big( 1+E_{K,R}+ E_{K,R}^{\frac 12} \big( \|f_R^{\frac 12}\phi^{\frac K2}\nabla^{K+1} U\|_{L^2} +\|(\Clss, U)\|_{D^K}^\frac{K-1/2}{K-3/2}\big)\Big).\nonumber
    \end{align}
By the same argument as in \eqref{Est:J21}, we derive 
\begin{equation}
\begin{aligned}
        &- \sum_{|\beta| =K} \int \alpha f_R\phi^K \partial_{\beta} \Clss \,\partial_{\beta}(\Clss\,\dive\, U)  \,\dy  \\
        & \leq C(K,\Clss_0, U_0,\tau_*) \Big(1+E_{K,R}+ E_{K,R}^{\frac 12} \big( \|f_R^{1/2}\phi^{K/2}\nabla^{K+1}U \|_{L^2} +\|(\Clss, U)\|_{D^K}^\frac{K-1/2}{K-3/2}\big)\Big).\label{Est:J31}
\end{aligned}
\end{equation}
Moreover, we obtain from integration by parts, the H\"older inequality, \eqref{loc:Linftybound}, and \eqref{interpol2} that 
    \begin{align}\label{Est:J22}
        & -\sum_{|\beta| =K} \int f_R \phi^K  \partial_\beta 
       (U \cdot \nabla \Clss) \partial_\beta \Clss \,\dy \nonumber \\
        &\leq - \frac{1}{2}\sum_{|\beta| =K}\int f_R \phi^K 
       U \cdot \nabla |\partial_\beta \Clss|^2 \,\dy+ C(K,\Clss_0, U_0,\tau_*)\|\nabla U\|_{L^\infty}\big\|f_R^{\frac 12}\phi^{\frac K2}\nabla^K \Clss\big\|_{L^2}^2 \nonumber \\
       & \quad +C(K,\Clss_0, U_0,\tau_*)\|\nabla \Clss\|_{L^\infty}\big\|f_R^{\frac 12}\phi^{\frac K2}\nabla^K U\big\|_{L^2} \big\|f_R^{\frac 12}\phi^{\frac K2}\nabla^K \Clss\big\|_{L^2}  \nonumber \\
       & \quad + C(K,\Clss_0, U_0,\tau_*)\big\|f_R^{\frac 12}\phi^{\frac K2}\nabla^K \Clss\big\|_{L^2} \Big(\sum_{\ell, \ell^* \geq 2, \ \ell + \ell^* = K+1}   \big\|f_R^{\frac 12}\phi^{\frac K2}|\nabla^{\ell}\Clss||\nabla^{\ell^*} U|\big\|_{L^2} \Big) \\
       & \leq \frac{1}{2} \sum_{|\beta| =K} \int \Big( \dive\,U + K\frac{\nabla \phi \cdot U}{\phi}+\frac{U\cdot y}{R|y|f_R}f'(\frac{|y|}{R})\Big)  |\partial_\beta \Clss|^2 \phi^K f_R\,\text{d}y \nonumber \\
       & \quad\, +C(K,\Clss_0, U_0,\tau_*)\Big(E_{K,R}+E_{K,R}^{\frac{1}{2}}\|(\Clss, U)\|_{D^K}^\frac{K-1/2}{K-3/2}\Big) \nonumber \\
       & \leq C(K,\Clss_0, U_0,\tau_*)\Big( E_{K,R}+E_{K,R}^{\frac{1}{2}}\| (\Clss, U) \|_{D^K}^\frac{K-1/2}{K-3/2}\Big),\nonumber
    \end{align}
and
    \begin{align}\label{Est:J32}
        & -\sum_{|\beta| =K} \int f_R\phi^K  \partial_{\beta} U\cdot \partial_{\beta}(\Clss \nabla \Clss) \,\dy \nonumber \\
       & \leq C(K)\sum_{|\beta| =K} \int \big|\nabla(f_R\phi^K  \partial_{\beta} U)\big|\, \big| \nabla^{K-1}(\Clss \nabla \Clss)\big| \,\dy  \\
       & \leq C(K,\Clss_0, U_0,\tau_*)\big( E_{K,R} + (1+\|(\Clss, U)\|_{D^K}) \big\|f_R^{\frac 12}\phi^{\frac K2}\nabla^{K+1} U\big\|_{L^2} \big).\nonumber
    \end{align}
Consequently, combining \eqref{Est:J21}--\eqref{Est:J32} yields
\begin{equation}\label{Est:J2,3}
\begin{aligned}
    -J_2-J_3 &\leq  C(K,\Clss_0, U_0,\tau_*)\Big(1+E_{K,R}+ E_{K,R}^{\frac 12} \big( \big\|f_R^{\frac 12}\phi^{\frac K2}\nabla^{K+1} U\big\|_{L^2} +\|(\Clss, U)\|_{D^K}^\frac{K-1/2}{K-3/2}\big)\Big) \\
    &\quad +\, C(K,\Clss_0, U_0,\tau_*)\big(1+\|(\Clss, U)\|_{D^K}\big) \|f_R^{\frac 12}\phi^{\frac K2}\nabla^{K+1} U\|_{L^2}.
\end{aligned}
\end{equation}
To estimate $J_4$, we decompose it into five terms:
    \begin{align}\label{EqJ4}
        J_4 =& \sum_{|\beta| =K}\int \partial_{\beta}U \cdot  \partial_{\beta}\Big( \Clss^{\frac{\delta-1}{\alpha}}L(U) +\frac{\delta}{\alpha}\Clss^{\frac{\delta-1}{\alpha}-1}\nabla \Clss \cdot \md(U) \Big)\phi^Kf_R \,\dy \nonumber \\
        =& \sum_{|\beta| =K} \int \Clss^{\frac{\delta-1}{\alpha}}\partial_{\beta}U\cdot  \partial_\beta L(U)\phi^K f_R \,\dy\,+ \sum_{|\beta| =K} \int \partial_{\beta}U\cdot  \big([\partial_\beta,\Clss^{\frac{\delta-1}{\alpha}}] L(U)\big)\phi^K f_R \,\dy \nonumber\\
         &+\frac{\delta}{\alpha}\sum_{|\beta| =K}\int  \partial_\beta U \cdot  \Big([\partial_\beta, \Clss^{\frac{\delta-1}{\alpha}-1}] \big(\nabla \Clss \cdot \md(U)\big)\Big) \phi^K f_R \,\dy \nonumber\\
        &+\frac{\delta}{\alpha}\sum_{|\beta| =K}\int  \partial_\beta U \cdot \big( \Clss^{\frac{\delta-1}{\alpha}-1}(\partial_{\beta}\nabla \Clss) \cdot \md(U)\big) \phi^K f_R \,\dy \\
        &+\frac{\delta}{\alpha}\sum_{|\beta| =K}\int  \partial_\beta U \cdot \Clss^{\frac{\delta-1}{\alpha}-1}\, \big( \partial_\beta(\nabla \Clss\cdot \md(U))-(\partial_{\beta}\nabla \Clss) \cdot \md(U)\big) \phi^K f_R \,\dy \nonumber\\
        :=& \, J_{41}+J_{42}+J_{43}+J_{44}+J_{45}.\nonumber
    \end{align}
For $J_{41}$, it follows from integration by parts, \eqref{601}, \eqref{loc:lowerboundQ}, \eqref{loc:Linftybound}, and the H\"older inequality that  
\begin{align}\label{Est:J41}
    J_{41}&=  \sum_{|\beta| =K} \int \Clss^{\frac{\delta-1}{\alpha}}\partial_{\beta}U\cdot  \partial_\beta L(U)\phi^K f_R \,\dy \nonumber\\
    & =-\sum_{|\beta| =K} \int   \Clss^{\frac{\delta-1}{\alpha}}\big(a_1|\nabla \partial_\beta U|^2+(a_1+a_2)|\dive \partial_\beta U|^2\big) \phi^K f_R\,\dy \nonumber\\
    & \quad -\sum_{|\beta| =K} \int \big(\nabla(\Clss^{\frac{\delta-1}{\alpha}}\phi^K\,f_R) \cdot \partial_\beta U\big)\cdot \partial_{\beta}\md(U) \,\dy \nonumber\\
    & \leq -\sum_{|\beta| =K} \int   \Clss^{\frac{\delta-1}{\alpha}}\big(a_1|\nabla \partial_\beta U|^2+(a_1+a_2)|\dive \partial_\beta U|^2\big) \phi^K f_R \,\dy \\
    & \quad +C\sum_{|\beta| =K} \int \Clss^{\frac{\delta-1}{\alpha}}\big((\frac{\nabla \Clss}{\Clss}+\frac{K\nabla \phi}{\phi}+ \frac{y}{R|y|f_R}f'(\frac{|y|}{R}))\cdot \partial_\beta U\big)\cdot \partial_{\beta}\md(U)  \phi^K f_R \,\dy \nonumber\\
    & \leq -\int   \Clss^{\frac{\delta-1}{\alpha}}\big(a_1|\nabla^{K+1} U|^2+(a_1+a_2)|\dive \nabla^K U|^2\big) \phi^K f_R \,\dy \nonumber\\
    &\quad +C(K, \Clss_0, U_0, \tau_*)E_{K,R}^{\frac{1}{2}}\|f_R^{\frac 12}\phi^{\frac K2}\nabla^{K+1}U\big\|_{L^2}.\nonumber
\end{align}

To facilitate the estimates of the lower-order derivative terms, we introduce 
a family of multi-indices $\boldsymbol{\bar\beta}^i$, $i=0,1,\cdots\!,\ell$ for some   integer  $\ell\in [1,K]$, which satisfy 
\begin{equation}\label{range:barbetai}
\begin{split}
&|\boldsymbol{\bar\beta}^0| + | \boldsymbol{\bar\beta}^1 |  + \cdots + | \boldsymbol{\bar\beta}^\ell |= K+2,
\qquad 1\le |\boldsymbol{\bar\beta}^i|\le K-1 \,\,\,\,\, \text{for all $0\leq i \leq \ell$}.
\end{split}
\end{equation}
Here $|\boldsymbol{\bar\beta}^i|$ denotes the length of multi-indices $\boldsymbol{\bar\beta}^i$ as defined in \S \ref{operator} for each $i=0,1,\cdots\!,\ell$. 

It follows from the H\"older inequality, \eqref{loc:lowerboundQ}, and  \eqref{interpol1}  that 
\begin{equation}
\begin{aligned}\label{Est:lowerorder}
        &\big\|\Clss^{\frac{\delta-1}{\alpha}}\,\partial_{\boldsymbol{\bar\beta}^0} U \,\frac{\partial_{\boldsymbol{\bar\beta}^1} \Clss}{\Clss} \frac{\partial_{\boldsymbol{\bar\beta}^2} \Clss}{\Clss} \cdots \frac{\partial_{\boldsymbol{\bar\beta}^\ell} \Clss}{\Clss}  \phi^{\frac K2}f_R^{\frac 12} \big\|_{L^2} \\
        & \leq  C(K,\Clss_0, U_0,\tau_*)\Big\|\langle y \rangle^{-\frac{|\boldsymbol{\bar\beta}^0|}{2(K+2)}}\big(f_R^{\frac{1}{4}}\phi^{\frac{K}{2}}\big)^\frac{|\boldsymbol{\bar\beta}^0|-3/r_{0}}{K-3/2}\nabla^{|\boldsymbol{\bar\beta}^0|}U\Big\|_{L^{\frac{2(K+2)}{|\boldsymbol{\bar\beta}^0|}}} \\
        &\quad \times \prod_{1\leq j\leq \ell}\Big\|\langle y \rangle^{-\frac{|\boldsymbol{\bar\beta}^j|}{2(K+2)}}\big(f_R^{\frac{1}{4}}\phi^{\frac{K}{2}}\big)^\frac{|\boldsymbol{\bar\beta}^j|-3/r_{j}}{K-3/2}\nabla^{|\boldsymbol{\bar\beta}^j|}\Clss\Big\|_{L^{\frac{2(K+2)}{|\boldsymbol{\bar\beta}^j|}}}\\
        & \leq C(K,\Clss_0, U_0,\tau_*)+C(K,\Clss_0, U_0,\tau_*)\|(\Clss, U)\|_{D^K}^\frac{K+1/2}{K-3/2},
        \end{aligned}
    \end{equation}
where $r_j = \frac{2(K+2
)}{|\boldsymbol{\bar\beta}^j|}, j=0,1, \cdots,\ell$.
Then, based on the H\"older inequality, \eqref{loc:lowerboundQ}, \eqref{loc:Linftybound}, and \eqref{range:barbetai}--\eqref{Est:lowerorder}, we have
\begin{align}\label{Est:J42}
    J_{42} &=  \sum_{|\beta| =K} \int \partial_{\beta}U\cdot  \big([\partial_\beta,\Clss^{\frac{\delta-1}{\alpha}}] L(U)\big)\phi^K f_R \,\dy \nonumber  \\
    &\leq C(K,\Clss_0, U_0,\tau_*)E_{K,R}
       +C(K,\Clss_0, U_0,\tau_*)
       E_{K,R}^{\frac 12}\big\|f_R^{\frac 12}\phi^{\frac K2}\nabla^{K+1}U\big\|_{L^2}\nonumber\\[1mm]
    & \quad +C(K,\Clss_0, U_0,\tau_*)E_{K,R}^{\frac 12}\,\,\max_{\substack{\sum_{j=0}^{\ell}|\boldsymbol{\bar\beta}^j|
    = K+2 \\  | \boldsymbol{\bar\beta}^0 |\ge 3 }} \Big\|\Clss^{\frac{\delta-1}{\alpha}} \partial_{\boldsymbol{\bar\beta}^0} U \,\frac{\partial_{\boldsymbol{\bar\beta}^1} \Clss}{\Clss} \frac{\partial_{\boldsymbol{\bar\beta}^2} \Clss}{\Clss} \cdots \frac{\partial_{\boldsymbol{\bar\beta}^\ell} \Clss}{\Clss}  \phi^{\frac K2}f_R^{\frac 12} \Big\|_{L^2}\Big) \nonumber\\
    &\leq C(K,\Clss_0, U_0,\tau_*)\Big(1+E_{K,R} +E_{K,R}^{\frac 12}\big(\big\|f_R^{\frac 12}\phi^{\frac K2}\nabla^{K+1}U\big\|_{L^2}+\|(\Clss, U)\|_{D^K}^\frac{K+1/2}{K-3/2}\big)\Big).
\end{align} 
Following the argument used in \eqref{Est:J42}, we obtain from the H\"older inequality, \eqref{loc:lowerboundQ}, \eqref{loc:Linftybound}, and \eqref{range:barbetai}--\eqref{Est:lowerorder} that
\begin{align}\label{Est:J43}
    J_{43} &=   \frac{\delta}{\alpha}\sum_{|\beta| =K}\int  \partial_\beta U \cdot  \big([\partial_\beta, \Clss^{\frac{\delta-1}{\alpha}-1}] (\nabla \Clss \cdot \md(U))\big) \phi^K f_R \,\dy \nonumber\\
    &\leq C(K,\Clss_0, U_0,\tau_*)\Big(E_{K,R} 
    +E_{K,R}^{\frac 12}\,\,\max_{ \sum_{j=0}^\ell|\boldsymbol{\bar\beta}^j|
   = K+2 } \Big\|\Clss^{\frac{\delta-1}{\alpha}} \partial_{\boldsymbol{\bar\beta}^0} U \frac{\partial_{\boldsymbol{\bar\beta}^1} \Clss}{\Clss} \frac{\partial_{\boldsymbol{\bar\beta}^2} \Clss}{\Clss} \cdots \frac{\partial_{\boldsymbol{\bar\beta}^\ell} \Clss}{\Clss}  \phi^{\frac K2} f_R^{\frac 12}\Big\|_{L^2}\Big) \nonumber \\
    &\leq C(K,\Clss_0, U_0,\tau_*)\Big(1+ E_{K,R}+E_{K,R}^{\frac 12}\big(\big\|f_R^{\frac 12}\phi^{\frac K2}\nabla^{K+1}U\big\|_{L^2}+\|(\Clss, U)\|_{D^K}^\frac{K+1/2}{K-3/2}\big)\Big). 
    \end{align} 
For $J_{44}$, it follows from integration  by parts,  \eqref{loc:lowerboundQ}, \eqref{loc:Linftybound}, and the H\"older inequality that 
\begin{align}\label{Est:J44}
J_{44}&= \frac{\delta}{\alpha}\sum_{|\beta| =K}\int  \partial_\beta U \cdot \big( \Clss^{\frac{\delta-1}{\alpha}-1}(\partial_{\beta}\nabla \Clss) \cdot \md(U)\big) \phi^K f_R \,\dy \nonumber\\
&=-\frac{\delta}{\alpha}\sum_{|\beta| =K}\int \partial_{\beta} \Clss \,\dive \big( \partial_\beta U \cdot \md(U) \Clss^{\frac{\delta-1}{\alpha}-1}\phi^K f_R\big)  \,\dy \nonumber\\
& \leq C(K,\Clss_0, U_0,\tau_*)\Big( E_{K,R}+E_{K,R}^{\frac 12}\big\|f_R^{\frac 12}\phi^{\frac K2}\nabla^{K+1}U\big\|_{L^2}\Big).
\end{align}
For $J_{45}$, let $\ell=2$ in \eqref{range:barbetai}. Then it follows from \eqref{loc:lowerboundQ}, \eqref{loc:Linftybound}, 
and \eqref{Est:lowerorder} that
\begin{align}\label{Est:J45}
    J_{45}&=\frac{\delta}{\alpha}\sum_{|\beta| =K}\int  \partial_\beta U \cdot \Clss^{\frac{\delta-1}{\alpha}-1}\, \big( \partial_\beta(\nabla \Clss\cdot \md(U))-(\partial_{\beta}\nabla \Clss) \cdot \md(U)\big) \phi^K f_R \,\dy \nonumber\\
    &\leq C(K, \Clss_0, U_0, \tau_*)\Big( E_{K,R}+E_{K,R}^{\frac 12}\big\|f_R^{\frac 12}\phi^{\frac K2}\nabla^{K+1}U\big\|_{L^2}\Big)\\
    & \quad +C(K, \Clss_0, U_0, \tau_*)E_{K,R}^{\frac 12}\max_{\substack{ |\boldsymbol{\bar\beta}^0| + | \boldsymbol{\bar\beta}^1 | = K+2 \\ 3\le | \boldsymbol{\bar\beta}^0 |\leq K-1}} \Big\|\Clss^{\frac{\delta-1}{\alpha}} \partial_{\boldsymbol{\bar\beta}^0} U \frac{\partial_{\boldsymbol{\bar\beta}^1} \Clss}{\Clss} \phi^{\frac K2} f_R^{\frac 12}\Big\|_{L^2}\nonumber\\
    & \leq  C(K,\Clss_0, U_0,\tau_*)\Big(1+ E_{K,R}+E_{K,R}^{\frac 12}\big(\big\|f_R^{\frac 12}\phi^{\frac K2}\nabla^{K+1}U\big\|_{L^2}+\|(\Clss, U)\|_{D^K}^\frac{K+1/2}{K-3/2}\big)\Big).\nonumber
\end{align}
Consequently, substituting \eqref{eq:EstJ1}, \eqref{Est:J2,3}, and \eqref{Est:J41}--\eqref{Est:J45} into \eqref{eq:EKR}, together with \eqref{loc:HKbound} and the Young inequality, yields
\begin{equation}
\begin{aligned}
    \frac{\mathrm d}{\mathrm d\tau} E_{K,R}+ \|f_R^{\frac 12}\phi^{\frac K2}\nabla^{K+1}U\|_{L^2}^2 &\leq C(K,\Clss_0, U_0,\tau_*)\Big(1+E_{K,R}+ \|(\Clss, U)\|_{D^K}^\frac{2K+1}{K-3/2}\Big)\\
    & \leq C(K,\Clss_0, U_0,\tau_*)\big(1+E_{K,R}\big).
\end{aligned}
\end{equation}
Applying the Gr\"onwall inequality, we deduce 
$$
E_{K,R}(\tau)\leq e^{C(K,\Clss_0, U_0,\tau_*)(\tau-\tau_0)}E_{K,R}(\tau_0)
+e^{C(K,\Clss_0, U_0,\tau_*)(\tau-\tau_0)}-1.
$$
By the definition of $E_{K, R}$, we see that $E_{K, R}(\tau_0)\leq E_{K}(\tau_0)$. Therefore, for any $R>1$,
\begin{equation}\label{EKRbound}
E_{K,R}(\tau)\leq e^{C(K,\Clss_0, U_0,\tau_*)(\tau-\tau_0)}E_{K}(\tau_0)
+e^{C(K,\Clss_0, U_0,\tau_*)(\tau-\tau_0)}-1.
\end{equation}
Moreover, since 
$$|\nabla^K U|^2\phi^K f_R(y)  \to |\nabla^K U|^2\phi^K \qquad \mbox{as $R\to\infty$}$$
for all $y\in \mathbb{R}^3$,  Fatou's lemma yields
\begin{align}\label{EKbound}
\text{ess}\!\!\!\sup_{\tau_0\leq \tau \leq \tau_*} E_K(\tau) 
\le \text{ess}\!\!\!\sup_{\tau_0\leq t \leq \tau_*}  \liminf_{R \to \infty}E_{K,R}(\tau) \le C(K,\Clss_0, U_0,\tau_*).
\end{align}

This completes the proof of Theorem \ref{Local(U,S)}.
\end{proof}

\section{Choices of the Related Parameters and 
Assumptions in Bootstrap Argument} \label{Section4}
In \S \ref{Section4}--\S \ref{Section6},
we 
establish 
the global-in-time energy estimates for the smooth 
solutions of the Cauchy problem \eqref{selfsimilar eq}--\eqref{SSfar},
restricted to a finite-codimensional manifold of smooth initial data, via a bootstrap argument. 
Let $(\Clss,U)$ be the unique smooth solution 
of the Cauchy problem \eqref{selfsimilar eq}--\eqref{SSfar} on $[\tau_0,\tau_*]\times\mathbb{R}^3$,
whose 
local well-posedness has been established in Theorem \ref{Local(U,S)}. 
In this section, we first fix several parameters determined by 
profile $(\overline{\Clss}, \overline{U})$, 
which is the smooth spherically symmetric solution to the steady Euler equations  
constructed in {\rm Lemma \ref{lem:existofselfsimilar}} (see {\rm Appendix \ref{appendix B}}).
We then introduce the \textit{a priori}  assumptions in the bootstrap argument.

\subsection{Choices of the Related Parameters}

First, in the rest of this paper, we always assume that 
$1< \gamma <1+\frac{2}{\sqrt{3}}$, $\Lambda \in (1, \Lambda^*(\gamma))$ is introduced in {\rm Lemma~\ref{thm:existence_profiles}}, and $\delta\in (0,\frac{1}{2})$ satisfies $\delta_{\rm dis}>0$.
We emphasize that the local well-posedness result established earlier
only imposes a requirement on parameter $K$, \textit{i.e.} $K\ge 4$.

Second, we choose some related  parameters as follows{\rm:}
\begin{equation}\label{Para-ineq}
    \frac{1}{\tau_0} \ll \smallc_0^{\frac{3}{2}} \ll \smallc_1 \ll \smallc_0 \ll\frac{1}{E} \ll \frac{1}{K} \ll \frac{1}{m}\ll \eta \ll \smallc_g = \frac{25}{12}\varepsilon \ll \delta_{\rm{dis}} = O(1),
\end{equation}
and parameter $K$ is also  chosen sufficiently large so that
\begin{equation}\label{Para-ineq1}
    \quad   \frac{1}{K} \ll \tilde{\eta},  
\end{equation}
where  $\varepsilon>0$ is the small parameter appearing in Theorem~\ref{globalexist}, $\widetilde{\eta}$ is the constant appearing in Lemma~\ref{prop:maxdissmooth}, and $\eta$ arises from the weight function
defined in \eqref{weightdefi}. These parameters $\varepsilon, K, E, \smallc_0$, and $\smallc_1$ are chosen with the hierarchy stated in Remark~\ref{choiceofparameters}. In particular, we take 
$$
K > \max\{3^m\, C(J,m), \,500\},
$$
which ensures $K\ge 4$. Therefore, the above assumptions automatically satisfy 
the local existence conditions,
and the local solution constructed previously remains valid.

Next, we choose parameter $R_0>0$ appearing in 
\eqref{weightdefi} sufficiently large so that 
\begin{equation}\label{Rochoice estimate}
\begin{split}
\overline{\Clss}(y)\geq 2\smallc_0  &\qquad\text{ for all $y \in B(0, R_0)$,}\\
|\nabla \overline{\Clss}(y)|+ |\nabla \overline{U}(y)|\leq \frac{\smallc_1}{2}, \,\,\,
\max\{\overline{\Clss}(y), |\overline{U}|(y)\}\leq C\smallc_0 &\qquad\text{ for all $y \in B^c(0, R_0).$}
\end{split}
\end{equation} 
The existence of $R_0$ follows from $\smallc_0^{\frac{3}{2}}\ll \smallc_1\ll \smallc _0 \ll 1 $ 
and the decay of profile \eqref{profile decay}. Therefore, $R_0$
may be chosen so that
\begin{align}\label{choiceR0}
C^{-1} R_0^{1-\Lambda}\leq \smallc_0 \leq C R_0^{1-\Lambda}.
\end{align}
Moreover,  the initial time $\tau_0$ is chosen sufficiently
large so as to ensure
$$
C(\smallc_0)\leq e^{\delta_{\rm dis}\tau_0} .
$$

Finally, parameter $C_0$ is determined in the following lemma, which plays a key role in the definition of the Hilbert space $X$  in \S \ref{X space}.

\begin{Lemma}\label{C0choice}
Let parameter  $C_0$  be sufficiently large, which  depends only on $\Lambda, \alpha$,
the self-similar profile $(\overline{\Clss}, \overline{U})$, and the smooth cutoff function $\widehat{X}$.
Then  profile $(\overline{\Clss},\overline{U})$ satisfies the following conditions for all $\tau\geq \tau_0 ${\rm:}  
\begin{align}\label{profilecondition1}
|\alpha\widehat{X}\overline{\Clss}|+ |\widehat{X}\overline{U}|+ |\alpha\nabla(\widehat{X}\overline{\Clss})|+ |\nabla(\widehat{X}\overline{U})|&\leq\frac{1}{100 C_1} \quad\, \text{ for $y \in B^c(0, C_0)$},\\
\label{profilecondition4}
\|\nabla(\widehat{X}\overline{\Clss})\|_{L^{\infty}(B^c(0, C_0))}+\|\nabla(\widehat{X}\overline{U})\|_{L^{\infty}(B^c(0, C_0))} &\leq \frac{1}{ 20000}, \\
\label{profilecondition2}
\|\nabla^{2}(\widehat{X}\overline{\Clss})\|_{L^{8}(B^c(0, C_0))}+\|\nabla^2(\widehat{X}\overline{U})\|_{L^{8}(B^c(0, C_0))} &\leq \frac{1}{ 20000},\\
\label{profilecondition3}
\sum_{j=3}^{5}(\|\nabla^j(\widehat{X}\overline{\Clss})\|_{L^{2}(B^c(0, C_0))}+\|\nabla^j(\widehat{X}\overline{U})\|_{L^{2}(B^c(0, C_0))})&\leq \frac{1}{ 20000},
\end{align}
where $C_1$ is determined by \begin{equation}\label{parameterchosen}
\frac{1}{100}=\frac{32}{\Lambda-1}\frac{1}{C_1}\Big(\frac{1}{100}\Big)^{\frac{1}{20}}.
\end{equation}
\end{Lemma}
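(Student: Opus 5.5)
The lemma is a statement about the decay of the (time-independent-in-$y$) cutoff profile. The plan is to reduce each of \eqref{profilecondition1}--\eqref{profilecondition3} to tail bounds for $(\overline{\Clss},\overline{U})$ and $\widehat{X}$, uniformly in $\tau\ge\tau_0$. First I fix $C_1$ through \eqref{parameterchosen}; it depends only on $\Lambda$. Next I recall from Lemma~\ref{lem:existofselfsimilar} (Appendix~\ref{appendix B}) the decay of the profile, \eqref{profile decay}:
\[
|\nabla^j\overline{\Clss}(y)|+|\nabla^j\overline{U}(y)|\le C_j\langle y\rangle^{-(\Lambda-1)-j}\qquad\text{for all } j\ge 0 .
\]
For the cutoff, $\widehat{X}(y,\tau)=\mathscr{X}(e^{-\tau}y)$ gives
\[
|\nabla^j\widehat{X}|\le C_j e^{-j\tau}\mathbf 1_{\{|y|\le e^{\tau}\}}\le C_j\langle y\rangle^{-j},
\]
since $|y|\le e^\tau$ on the support of $\nabla\widehat{X}$. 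The bound is uniform in $\tau$.

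By the Leibniz rule,
\[
|\nabla^j(\widehat{X}\overline{\Clss})|+|\nabla^j(\widehat{X}\overline{U})|\le C_j\langle y\rangle^{-(\Lambda-1)-j}
\]
for all $\tau\ge\tau_0$, with $C_j$ depending only on the profile and $\mathscr{X}$. From this the pointwise estimates \eqref{profilecondition1}--\eqref{profilecondition4} follow. The left-hand sides are bounded by $C\langle C_0\rangle^{-(\Lambda-1)}$ on $B^c(0,C_0)$, and this is at most $\min\{\frac{1}{100C_1},\frac1{20000}\}$ once $C_0$ is large. For \eqref{profilecondition2} I integrate $\langle y\rangle^{-8(\Lambda+1)}$ over $|y|\ge C_0$. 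This converges because $8(\Lambda+1)>3$, and it gives a bound of order $C_0^{3-8(\Lambda+1)}$. For \eqref{profilecondition3}, with $3\le j\le5$, I integrate $\langle y\rangle^{-2(\Lambda-1+j)}$, and $2(\Lambda+2)>3$, which gives a bound of order $C_0^{3-2(\Lambda-1+j)}$. Both tails tend to $0$ as $C_0\to\infty$, so I choose $C_0$ so large that all four bounds hold simultaneously.

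The only delicate point is uniformity in $\tau$: as $\tau$ varies the cutoff region moves, and derivatives falling on $\widehat{X}$ must not spoil the decay. This is handled by the observation that each derivative on $\widehat{X}$ costs $e^{-\tau}\le\langle y\rangle^{-1}$ on its support. So if \eqref{profile decay} supplies these derivative decay rates, the main obstacle is purely bookkeeping. If \eqref{profile decay} gives only the weaker rate $\langle y\rangle^{-(\Lambda-1)}$ for derivatives, the $L^2$ tails with $j\ge3$ would need $2(\Lambda-1)>3$, which fails. In that case I would first derive derivative decay from the profile ODE \eqref{Profile}: for large $r$ it is a regular perturbation of $(\Lambda-1)\overline{\Clss}+r\partial_r\overline{\Clss}\approx0$, whose smooth solutions are expansions in $r^{-(\Lambda-1)}$, and differentiating such an expansion gains a factor $r^{-1}$ per derivative.
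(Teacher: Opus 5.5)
Your argument is correct. It differs from the paper's in how the terms where derivatives fall on the cutoff are made small.

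\textbf{The paper's route.} It splits $\nabla^j(\widehat{X}\overline{\Clss})$ by the Leibniz rule into two groups.
\begin{itemize}
\item The terms with no derivative on $\widehat{X}$ are handled by the tail of \eqref{profile decay} over $B^c(0,C_0)$.
\item The terms with at least one derivative on $\widehat{X}$ are estimated only on the annulus $\{e^\tau/2\le |y|\le e^\tau\}$. This gives $\||\nabla^j\widehat{X}||\nabla^k\overline{\Clss}|\|_{L^q}^q\le C e^{-(2q-3)\tau}\le Ce^{-\tau}$, which is small because $\tau\ge\tau_0\gg1$.
\end{itemize}
So in the paper, the smallness of the cutoff-derivative terms comes from $\tau_0$, not from $C_0$.

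\textbf{Your route.} You note that $e^{-\tau}\le\min\{1,|y|^{-1}\}\lesssim\langle y\rangle^{-1}$ on the support of $\nabla\widehat{X}$. This gives one bound, uniform in $\tau$: $|\nabla^j(\widehat{X}\overline{\Clss})|+|\nabla^j(\widehat{X}\overline{U})|\le C_j\langle y\rangle^{-(\Lambda-1)-j}$. All four estimates then reduce to tail bounds over $|y|\ge C_0$.

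\textbf{Comparison.} In your version the smallness depends only on $C_0$. This matches the stated dependence of $C_0$, and it holds for every $\tau\ge 0$. The paper's version also uses that $\tau_0$ is large. That is harmless, since $\tau_0$ is taken large at the end of the hierarchy \eqref{Para-ineq}.

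\textbf{Two small remarks.} The derivative decay \eqref{profile decay} is stated in Lemma~\ref{prop:profiles-R}, not in Lemma~\ref{lem:existofselfsimilar}. It already includes the gain $\langle y\rangle^{-j}$, so your fallback argument through the profile ODE is not needed.
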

\begin{proof}
    Since $|\widehat{X}|\leq 1$ and $|\nabla \widehat{X}|\leq 2e^{-\tau}$, \eqref{profilecondition1} and \eqref{profilecondition4} follow directly from the decay properties of $(\overline{\Clss},\overline{U})$ stated in \eqref{profile decay}.

    We next verify \eqref{profilecondition2} and \eqref{profilecondition3}. For those terms in which no derivative falls on the cutoff function $\widehat{X}$, the desired decay again follows immediately from \eqref{profile decay}.
     Moreover, from the definition of $\widehat{X}$, we have
    \begin{equation*}
      \text{supp} (\nabla\widehat{X}) \subset B ( 0, e^\tau ) \cap B^c \big( 0,\frac{e^\tau}{2} \big), \qquad   |\nabla^{j}\widehat{X}|\leq C(j) e^{-j\tau}.
    \end{equation*}
    Then, for $|\nabla^j\widehat{X}||\nabla^k \overline{\Clss}|$, 
    \begin{align*}
        \||\nabla^j\widehat{X}||\nabla^k \overline{\Clss}|\|^q_{L^q(\mathbb{R}^3)} 
        & \leq C(j,k,q)e^{-jq\tau }\int_{\frac{1}{2}e^\tau}^{e^\tau} r^{-q(\Lambda-1)-qk+2}\,\mathrm{d}r \\
        & \leq C(j,k,q)e^{-jq\tau }e^{(-q(\Lambda-1)-qk+3)\tau}\\
        & \leq C(j,k,q)e^{-(2q-3)\tau} \leq C(j,k,q)e^{-\tau}.
    \end{align*}
    Since $j\ge 1$, $k\ge 0$, $2\leq j+k \leq 5$,  $q\in \{2,8\}$, 
    and $\tau \ge \tau_0 \gg 1$, 
    those terms with at least one derivative on $\widehat{X}$ are sufficiently small.
\end{proof}
 Here the $L^{8}$--estimate \eqref{profilecondition2} will be used in the energy estimate of 
 $$\max\big\{ \|\nabla^4\widetilde{\Clss}\|_{L^{2}(B^c(0, C_0))}, \   \|\nabla^4\widetilde{U}\|_{L^{2}(B^c(0, C_0))}\big\}$$
through the  H\"older inequality.

\subsection{Assumptions in the Bootstrap Argument}
\label{subsection4.2}

We first  introduce  the initial perturbation
$$
(\widetilde{\Clss}_0, \widetilde{U}_0):=(\Clss_0-\widehat{X}\overline{\Clss}, \, U_0-\widehat{X}\overline{U}),
$$
which will be used throughout the subsequent analysis, 
where $\widehat{X}$ denotes the cutoff function introduced in \eqref{cutoff function}.

According to Lemma~\ref{prop:maxdissmooth}, the Hilbert space $X$ admits the direct sum decomposition
\begin{equation*}
 X = V_{\rm sta}\oplus V_{\rm uns},   
\end{equation*}
where $V_{\rm sta}$ and $V_{\rm uns}$ denote the stable and unstable subspaces associated with the truncated linear operator $\mathcal L$, respectively.

Since space $X$ consists of functions supported in ball $B(0,3C_0)\subset\mathbb R^3$, whereas perturbation
$(\widetilde{\Clss}_0,\widetilde U_0)$ is defined on the whole space
$\mathbb R^3$, we first localize the perturbation by using the cutoff
function $\chi_2$ that is defined in \S \ref{othernotation}. We then write the initial perturbation
$(\widetilde{\Clss}_0,\widetilde U_0)$ as the sum of a stable part $( \widetilde{\Clss}_0^*,\widetilde U_0^*)$
and an unstable component, where the stable part is chosen such that
\begin{equation}\label{eq:tilde_is_stable}
(\chi_2 \widetilde{\Clss}_0^*,\,\chi_2 \widetilde U_0^*) \in V_{\rm sta}.
\end{equation}

On the other hand, the unstable subspace $V_{\rm uns}$ is finite-dimensional.
Let $\{(\varphi_{i,\subclss},\varphi_{i,u})\}_{i=1}^N$ be an orthonormal basis of $V_{\rm uns}$ constructed in Lemma~\ref{prop:maxdissmooth} in Appendix~\ref{appendix C}. The initial perturbation can be expressed as
\begin{align}\label{eq:tildeplusunstable}
\begin{cases}
\displaystyle
\widetilde{\Clss}_0=\widetilde{\Clss}_0^{*}+\sum_{i=1}^{N}\hat{k}_i
\varphi_{i,\subclss},\\
\displaystyle
\widetilde{U}_0=\widetilde{U}_0^{*}+\sum_{i=1}^{N}\hat{k}_i
\varphi_{i,u},
\end{cases}
\end{align}
where coefficients $\{\hat{k}_i\}$ parameterize the unstable component of the initial data and will be chosen later so as to control the growth of the unstable modes.  

Moreover, the stable component
$(\widetilde{\Clss}_0^*,\widetilde U_0^*)$
is chosen such that the full initial perturbation
$(\widetilde{\Clss}_0,\widetilde U_0)$
satisfies the following conditions{\rm:}
\begin{equation} \label{initialdatacondition1}
\begin{split}
\max\big\{\|\widetilde{\Clss}_0\|_{L^{\infty}},\,\|\widetilde{U}_0\|_{L^{\infty}}, \,\|\chi_2\widetilde{\Clss}^*_0\|_{X},\,\|\chi_2\widetilde{U}^*_0\|_{X}\big\}\leq  \smallc_1, \\
E_{K}(\tau_0)\leq\frac{E}{2},  \quad\, \widetilde{\Clss}_0+\widehat{X}\overline{\Clss}\geq  \frac{\smallc_1}{2} \big\langle\frac{y}{R_0} \big\rangle^{1-\Lambda}.
\end{split}
\end{equation}
In addition, the initial data are assumed to satisfy the spatial decay condition:
\begin{equation}\label{initialdatacondition2}
\langle y\rangle^{\Lambda}\big(|\nabla(\widetilde{\Clss}_0+\widehat{X} \overline{\Clss})|+|\nabla(\widetilde{U}_0+\widehat{X} \overline{U})|\big) < \infty.
\end{equation}

In order to use the analysis for the truncated linear operator 
$$
\mathcal{L}=(\mathcal{L}_\subclss, \mathcal{L}_u) = \chi_2 (\mathcal{L}_\subclss^{e}, \mathcal{ L}_u^{e}) - J(1-\chi_1),
$$ 
we consider an additional system, which we call the \textit{truncated equations}, \textit{i.e.},  $\eqref{eq:truncated}_1$--$\eqref{eq:truncated}_2$. Now we consider the corresponding  Cauchy problem:
\begin{equation}\label{eq:truncated}
\begin{cases}
\displaystyle
\partial_\tau  \overset{\approx}{\Clss}= \mathcal L_\subclss(\overset{\approx}{\Clss},\overset{\approx}{U}) + \chi_2 \mathcal F_\subclss(\widetilde{\Clss},\widetilde{U}),  \\
\displaystyle
\partial_\tau \overset{\approx}{U} = \mathcal L_u(\overset{\approx}{\Clss},\overset{\approx}{U}) + \chi_2 \mathcal F_u(\widetilde{\Clss},\widetilde{U}),\\
\displaystyle
(\overset{\approx}{\Clss},\overset{\approx}{U})(\tau_0, y)= (\overset{\approx}{\Clss}_{0},\overset{\approx}{U}_{0})(y)\qquad \text{for $y \in \mathbb{R}^3$},\\
(\overset{\approx}{\Clss},\overset{\approx}{U})(\tau, y)\to (0, 0)\qquad \ \ \ \   \ \ \ \ \  
\text{as $|y|\to \infty$ \,  for $\tau \ge \tau_0$},
\end{cases}
\end{equation} 
with $\mathcal F_\subclss = \mathcal{N}_\subclss + \mathcal{E}_\subclss$ and $\mathcal F_u = \mathcal{N}_u + \mathcal{E}_u + \mathcal{F}_{\rm dis}$, 
where $(    
\overset{\approx}{\Clss}_{0},
\overset{\approx}{U}_{0})$ are given  as follows{\rm:}
\begin{align}\label{initialdatatruncated}
\begin{cases}
\displaystyle
\overset{\approx}{\Clss}_{0}=\chi_2\widetilde{\Clss}_0^{*}+\sum_{i=1}^{N}\hat{k}_i
\varphi_{i,\subclss},\\
\displaystyle
\overset{\approx}
{U}_{0}=\chi_2\widetilde{U}_0^{*}+\sum_{i=1}^{N}\hat{k}_i\varphi_{i,u}.
\end{cases}
\end{align}

\begin{Remark}\label{local-truncation}
    Equations \eqref{eq:truncated}$_1${\rm--}\eqref{eq:truncated}$_2$ constitute a first-order symmetric hyperbolic system. Since {\rm Theorem~\ref{Local(U,S)}} ensures that $(\widetilde{\Clss},\widetilde{U})=(\Clss-\widehat{X}\overline{\Clss}, \, U-\widehat{X}\overline{U})$ is a smooth vector function on 
    interval $[\tau_0, \tau_*]$, the standard well-posedness theory for symmetric hyperbolic systems {\rm(}see {\rm \S 7.3} in \cite{evans}{\rm)} implies that  the Cauchy problem  \eqref{eq:truncated}{\rm--}\eqref{initialdatatruncated} admits a unique smooth solution $(\overset{\approx}{\Clss},\overset{\approx}{U})$ in $[\tau_0, \tau_*]\times \mathbb{R}^3$.
\end{Remark}

An important relation between $(\overset{\approx}{\Clss}, \overset{\approx}{U})$ and $(\widetilde{\Clss},\widetilde{U})$ is observed  in the following lemma:

\begin{Lemma}\label{truncatedestimate} 
Suppose that $(\overset{\approx}{\Clss}, \overset{\approx}{U})$ is a smooth solution to  \eqref{eq:truncated} 
in $[\tau_0, \tau_*]\times \mathbb{R}^3$ obtained in {\rm Remark~\ref{local-truncation}}. 
If, at $\tau = \tau_0$, 
$$
(\overset{\approx}{\Clss}_0, \overset{\approx}{U}_0) = (\widetilde{\Clss}_0,\widetilde{U}_0) \qquad \text{for all $y \in B(0, C_0)$},
$$ 
then 
$$
(\overset{\approx}{\Clss}, \overset{\approx}{U}) = (\widetilde{\Clss},\widetilde{U})\qquad 
\text{  for all $(\tau, y)\in [\tau_0, \tau_*]\times B(0, C_0)$}.
$$
\end{Lemma}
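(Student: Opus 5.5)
The plan is a finite-speed-of-propagation (domain of dependence) argument applied to the difference
\[
(\mathcal{D}_\subclss,\mathcal{D}_u):=(\overset{\approx}{\Clss}-\widetilde{\Clss},\ \overset{\approx}{U}-\widetilde{U}).
\]
First I will compare the two evolution laws on $B(0,C_0)$, where $\chi_1=\chi_2=1$. There $\mathcal L=\mathcal L^e$, the truncated forcing $\chi_2\mathcal F$ coincides with $\mathcal N+\mathcal E$ (resp.\ $\mathcal N+\mathcal E+\mathcal F_{\rm dis}$), and this forcing is evaluated at the true perturbation $(\widetilde{\Clss},\widetilde U)$ in both systems. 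Subtracting \eqref{eq: PerS}--\eqref{eq: PerU} from \eqref{eq:truncated}, the difference therefore solves, on $[\tau_0,\tau_*]\times B(0,C_0)$, the homogeneous linear first-order system $\partial_\tau \mathcal D=\mathcal L^e(\mathcal D)$. Explicitly,
\[
\partial_\tau\mathcal D_\subclss+(y+\widehat X\overline U)\cdot\nabla \mathcal D_\subclss+\alpha\widehat X\overline\Clss\,\dive\mathcal D_u=\text{(zeroth order in }\mathcal D),
\]
with an analogous equation for $\mathcal D_u$. This system is symmetric hyperbolic with smooth coefficients, and its initial data vanish on $B(0,C_0)$. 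The viscous term causes no trouble here, because it sits entirely inside the forcing that both systems share.

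Next I will run a weighted local energy estimate on a time-dependent ball $B(0,\rho(\tau))$ with $\rho(\tau_0)=C_0$, contracting or expanding at the rate dictated by the characteristic speeds. The crucial observation is that the outward normal velocity of the characteristics on $\partial B(0,r)$ is
\[
r+\widehat X\overline{\mathcal U}\pm\alpha\widehat X\overline\Clss \;\geq\; r-\tfrac{1}{50C_1}>0 \qquad\text{for } r\ge C_0,
\]
by \eqref{profilecondition1}. All characteristics on $|y|=C_0$ therefore point strictly outward, since the $y\cdot\nabla$ transport dominates. So $B(0,C_0)$ is a domain of dependence of itself, and we may take $\rho\equiv C_0$.

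Concretely, I will set $e(\tau)=\int_{B(0,C_0)}(|\mathcal D_\subclss|^2+|\mathcal D_u|^2)\,\mathrm dy$, multiply the equations by $\mathcal D$, and integrate by parts. Symmetry of the principal part turns the flux into the boundary integral
\[
-\tfrac12\int_{\partial B(0,C_0)} \mathcal D^\top\Big(\sum_j A_j n_j\Big)\mathcal D\,\mathrm dS,\qquad \sum_j A_jn_j=\begin{pmatrix}(y+\widehat X\overline U)\cdot n & \alpha\widehat X\overline\Clss\, n^\top\\ \alpha\widehat X\overline\Clss\, n & (y+\widehat X\overline U)\cdot n\,\mathbb I_3\end{pmatrix}.
\]
The boundary matrix has eigenvalues $(y+\widehat X\overline U)\cdot n\pm\alpha\widehat X\overline\Clss>0$, so it is positive definite, and the boundary term is nonpositive and can be dropped. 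The interior terms are bounded by $C\,e(\tau)$, since the coefficients $\nabla(\widehat X\overline\Clss)$ and $\nabla(\widehat X\overline U)$ are bounded. This gives $e'(\tau)\le Ce(\tau)$ with $e(\tau_0)=0$, and Gr\"onwall yields $e\equiv0$, hence $\mathcal D\equiv 0$ on $B(0,C_0)$.

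The main obstacle is the boundary sign. It requires the inequality $C_0-\|\widehat X\overline U\|_{L^\infty}-\alpha\|\widehat X\overline\Clss\|_{L^\infty(\partial B(0,C_0))}>0$, which holds comfortably because $C_0$ is large and \eqref{profilecondition1} gives smallness of the profile there. The remaining points are routine but need a brief check: the smoothness of both solutions on $[\tau_0,\tau_*]$ (Theorem~\ref{Local(U,S)} and Remark~\ref{local-truncation}) justifies the integrations by parts, and the cutoffs genuinely equal $1$ on the closed ball, since $\chi_1=1$ for $|y|\le C_0$.
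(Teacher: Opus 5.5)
Your proposal is correct and follows essentially the paper's proof. In both, the difference solves the homogeneous linear system $\partial_\tau\mathcal D=\mathcal L^e\mathcal D$ on $B(0,C_0)$; an $L^2$ energy estimate on the fixed ball gives a boundary flux with coefficient $C_0+\widehat X\overline{\mathcal U}-\alpha\widehat X\overline\Clss>0$, which is dropped, and Gr\"onwall with zero initial data closes the argument. The only cosmetic difference is that you justify the boundary sign via \eqref{profilecondition1}, whereas the paper invokes the outgoing bound $r+\overline{\mathcal U}-\alpha\overline\Clss>(r-1)\tilde\eta$ from Lemma~\ref{prop:profiles-R}.
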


\begin{proof}
     Denote 
     $$(\delta_\Clss, \delta_U)=(\widetilde{\Clss}-\overset{\approx}{\Clss}, \,\widetilde{U}-\overset{\approx}{U}).$$
    Then we have
    \begin{equation}\label{eq:difference}
    \begin{aligned}
       \partial_\tau \delta_\Clss+(\Lambda-1)\delta_\Clss &= -(y+\widehat{X}\overline{U})\cdot \nabla\delta_\Clss-\alpha(\widehat{X}\overline{S})\,\dive \, \delta_U-\delta_U\cdot\nabla(\widehat{X}\overline{\Clss}) -\alpha \delta_\Clss\,\dive(\widehat{X}\overline{U}),\\
       \partial_\tau\delta_U +(\Lambda-1)\delta_U &= -(y+\widehat{X}\overline{U})\cdot \nabla\delta_U-\alpha(\widehat{X}\overline{\Clss})\nabla \delta_\Clss-\delta_U\cdot\nabla(\widehat{X}\overline{U}) -\alpha \delta_\Clss\nabla(\widehat{X}\overline{\Clss})
       \end{aligned}
    \end{equation}
    in a ball $B(0, C_0)$ and $(\delta_\Clss, \delta_U)|_{\tau=\tau_0}=(0, 0)$. 

    Notice that profile $(\overline \Clss(y), \overline U(y))$ is spherically symmetric.
    Then there is a scalar function $\overline{\mathcal{U}}(|y|)$ such that 
    \begin{equation}\label{radialrep}
(\overline \Clss, \overline U)(y)=(\overline{\Clss}(|y|),\,\overline{\mathcal{U}}(|y|)\frac{y}{|y|}).
\end{equation}
 Multiplying \eqref{eq:difference}$_1$ and \eqref{eq:difference}$_2$ by $\delta_\Clss$ and $\delta_{U}$ respectively, summing them up and integrating over $B(0, C_0)$, 
it follows from \eqref{radialrep}
and the main properties of $(\overline \Clss(y), \,\overline U(y), \,\overline{\mathcal{U}}(|y|))$ 
shown in Lemma \ref{prop:profiles-R}  that 
\begin{align*}
        &\frac{1}{2}\frac{\text{d}}{\text{d}\tau}\int_{B(0, C_0)} \big(\delta_\Clss^2 +|\delta_U|^2 \big)\,\text{d}y\\
        &= \int_{B(0, C_0)}\Big(-\frac{1}{2}(y+\overline{U})\cdot\nabla(\delta_\Clss^2)-\alpha\overline{\Clss}\delta_\Clss\,\dive\,\delta_U -(\Lambda-1)\delta_\Clss^2-\delta_\Clss\delta_U\cdot \nabla\overline{\Clss} 
        -\alpha\delta_\Clss^2\,\dive\,\overline{U}\Big)\,\dy\\
        &\quad+\int _{B(0, C_0)}\Big(-\frac{1}{2}(y+\overline{U})\cdot \nabla(|\delta_U|^2)-\alpha\overline{\Clss}\nabla \delta_\Clss\cdot \delta_U -(\Lambda-1)|\delta_U|^2\\
        &\qquad\qquad\qquad\quad   -(\delta_U\cdot \nabla)\overline{U}\cdot \delta_U 
           -\alpha\delta_\Clss\nabla\overline{\Clss}\cdot \delta_U \Big)\, \dy\\
        &=\int_{B(0, C_0)}\Big(\frac{3+\dive\,\overline{U}}{2}\big(\delta_\Clss^2+ |\delta_U|^2\big)+\alpha \nabla \overline{\Clss}\cdot(\delta_\Clss\delta_U)-(\Lambda-1)\big(\delta_\Clss^2+|\delta_U|^2\big)\\
        &\qquad\qquad\quad\quad - \delta_U\cdot \nabla\overline{U}\cdot \delta_U-(\alpha+1)\delta_\Clss\delta_U\cdot \nabla\overline{\Clss}- \alpha\delta_\Clss^2\,\dive\,\overline{U}\Big)\,\dy \\
        &\quad\,\, -\int_{\partial B(0, C_0)}\Big(\frac{|y|+\overline{\mathcal U}}{2}\big(\delta_\Clss^2+|\delta_U|^2\big) +\alpha\overline{\Clss}\delta_\Clss(\delta_U \cdot \bm{n}) \Big)\,\text{d}S\\
        &\leq  C \int_{B(0, C_0)}\big(\delta_\Clss^2+|\delta_U|^2\big)\,\dy
        - \int_{\partial B(0, C_0)}\frac{|y|+\overline{\mathcal U}-\alpha\overline{\Clss}}{2}\big(\delta_\Clss^2+|\delta_U|^2\big)\, \text{d}S\\
        &\leq  C \int_{B(0, C_0)}\big(\delta_\Clss^2+|\delta_U|^2\big)\,\dy,
\end{align*}
where $\bm{n}$ is  the outward unit  normal vector to $\partial B(0,C_0)$.
 It follows from the Gr\"onwall inequality and $(\delta_\Clss,\,\delta_U)|_{\tau=\tau_0}=(0, 0)$ in $B(0, C_0)$ that 
$$
(\delta_\Clss, \,\delta_U)(\tau, y)=(0, 0)\qquad  
\text{  for all $(\tau, y)\in [\tau_0, \tau_*]\times B(0, C_0)$}.
$$
\end{proof}

To establish all the required estimates, it suffices to prove Proposition~\ref{prop:bootstrap}.

\begin{Proposition} \label{prop:bootstrap}
For the initial data satisfying \eqref{eq:tilde_is_stable}{\rm--}\eqref{initialdatacondition2},
assume that \eqref{Para-ineq}{\rm--}\eqref{choiceR0} hold and that, for all $\tau\in [\tau_0, \tau_*]$, the following estimates hold{\rm :}
\begin{align}\label{unsestimate1}
\|P_{\rm{uns}}( \overset{\approx}{\Clss}, \overset{\approx}{U})\|_{X} &\leq \smallc_1 e^{-\varepsilon(\tau-\tau_0)},\\
\label{lowerestimate1}
\max\big\{|\widetilde{\Clss}|, \  |\widetilde{U}|\big\} &\leq \frac{\smallc_0 }{100}e^{-\varepsilon(\tau-\tau_0)}, \\ 
\label{lowerestimate2}
\max\big\{\|\nabla^4\widetilde{\Clss}\|_{L^2(B^c(0, C_0))}, \ \|\nabla^4\widetilde{U}\|_{L^2(B^c(0, C_0))}\big\} &\leq \smallc_0 e^{-\varepsilon(\tau-\tau_0)}, \\
\label{higherestimate3}
E_{K}=\int \big(|\nabla^{K}\Clss|^2+|\nabla^{K}U|^2\big)\phi^{K}\,{\rm d}y &\leq E.
\end{align}
Then,  for $\tau\in [\tau_0,\tau_*]$,
\begin{align}\label{unsestimate1'}
\|P_{\rm{uns}}( \overset{\approx}{\Clss}, \overset{\approx}{U})\|_{X} &\leq \smallc_1^{\frac{21}{20}} e^{-\frac{4\varepsilon}{3}(\tau-\tau_0)},\\
\label{lowerestimate1'}
\max\big\{|\widetilde{\Clss}|,\  |\widetilde{U}|\big\} &\leq  \frac{\smallc_0}{200}e^{-\varepsilon(\tau-\tau_0)}, \\ 
\label{lowerestimate2'}
\max\big\{\|\nabla^4\widetilde{\Clss}\|_{L^2(B^c(0, C_0))}, \  \|\nabla^4\widetilde{U}\|_{L^2(B^c(0, C_0))}\big\} &\leq \frac{\smallc_0}{2} e^{-\varepsilon(\tau-\tau_0)}, \\
\label{higherestimate3'}
E_{K}=\int\big(|\nabla^{K}\Clss|^2+|\nabla^{K}U|^2\big)\phi^{K}\,{\rm d}y &\leq \frac{E}{2}.
\end{align}
Here $P_{\rm{uns}}$ is a projection onto $V_{\rm{uns}}$ described in \rm{\S~\ref{function}}.
\end{Proposition}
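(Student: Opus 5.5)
The proof follows the four-step scheme of \S\ref{section-outline}, carried out on $[\tau_0,\tau_*]$ under the bootstrap assumptions \eqref{unsestimate1}--\eqref{higherestimate3}. Each step uses only the hypotheses and the estimates from the earlier steps.

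\textbf{Step 1: spatial decay of $\Clss$, $\nabla\Clss$ and $\nabla U$.} The assumption \eqref{lowerestimate1}, together with \eqref{Rochoice estimate} and $\widehat X\le 1$, gives $|\Clss|+|U|\le C\smallc_0$ on $B^c(0,R_0)$. The assumption \eqref{higherestimate3} controls $\nabla^K$ with the weight $\phi^{K}\sim |y|^{2K(1-\eta)}$. Interpolating these by Gagliardo--Nirenberg gives polynomial decay of $\nabla^j(\Clss,U)$ for $j\le 4$.

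I then integrate along the rays $y=y_0e^{\tau-\bar\tau}$ using \eqref{jieduanQ}, with the factor $e^{(\Lambda-1)(\tau-\bar\tau)}$. Starting from the lower bound in \eqref{initialdatacondition1}, this yields the two-sided bound
\[
\smallc_1 C^{-1}\langle y/R_0\rangle^{1-\Lambda}\le \Clss\le C\smallc_0\max\{\langle y/R_0\rangle^{1-\Lambda},\,e^{-(\Lambda-1)(\tau-\tau_0)}\}.
\]
Repeating the argument for $\nabla\Clss$, and using \eqref{initialdatacondition2}, gives $|\nabla\Clss|\lesssim\langle y/R_0\rangle^{-\Lambda}$. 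This rate is faster than interpolation alone provides. Interpolating it against $E_K$ then gives
\[
|\Clss^{-1}\nabla^k\Clss|\lesssim\langle y\rangle^{-1+\eta-\Lambda},\qquad k=1,2.
\]
With this in hand I treat \eqref{eq:omega-U} for $\nabla U$. The viscous terms carry the prefactor $C_{\rm dis}e^{-\delta_{\rm dis}\tau}\Clss^{(\delta-1)/\alpha}$, and the lower bound on $\Clss$ costs at most $\smallc_1^{-C}\langle y\rangle^{(1-\delta)(\Lambda-1)/\alpha}$. The condition $\delta_{\rm dis}>0$ is exactly what makes this growth, weighted by $e^{\Lambda(\tau-\bar\tau)}$ along the ray, integrable in $\tau$. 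The same condition also absorbs $C(\smallc_0)\le e^{\delta_{\rm dis}\tau_0}$. \textbf{I expect this step to be the main obstacle.} The exponent bookkeeping, the rate $\langle y\rangle^{(1-\delta)(\Lambda-1)/\alpha}e^{-\delta_{\rm dis}\tau}$ along rays $|y|\sim e^{\tau}$, must close with no loss, uniformly in $\tau$.

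\textbf{Step 2: lower-order bounds \eqref{lowerestimate1'} and \eqref{lowerestimate2'}.} First, Step 1 gives decay of $\mathcal E$ (supported where $|y|\sim e^\tau$), of $\mathcal N$ (quadratic, hence bounded by $\smallc_0^2e^{-2\varepsilon(\tau-\tau_0)}$), and of $\mathcal F_{\rm dis}$ (bounded by $e^{-\delta_{\rm dis}\tau}$ times decaying weights).

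Inside $B(0,C_0)$, Lemma~\ref{truncatedestimate} identifies $(\widetilde\Clss,\widetilde U)$ with the truncated solution. I apply Duhamel with the semigroup bounds on $V_{\rm sta}$ from Lemma~\ref{prop:maxdissmooth} (decay rate $\smallc_g>\varepsilon$), and use \eqref{unsestimate1} for the unstable part. With $X\hookrightarrow W^{4,\infty}$ for $m$ large, this gives bounds of size $C\smallc_1/\smallc_g\ll\smallc_0$.

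Outside $B(0,C_0)$, I integrate along the rays again. The terms $\mathcal L^e$ are small there by \eqref{profilecondition1}, and the inward data come from $\partial B(0,C_0)$, giving $\smallc_0/200$. For $\nabla^4$ I run an $L^2(B^c(0,C_0))$ energy estimate. The boundary flux has a good sign because $|y|+\overline{\mathcal U}-\alpha\overline\Clss>0$. The damping $\Lambda-1+4-\tfrac32$ beats $\varepsilon$, and \eqref{profilecondition2}--\eqref{profilecondition3} handle the profile terms; the $L^8$ bound enters through H\"older.

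\textbf{Step 3: $E_K\le E/2$, i.e.\ \eqref{higherestimate3'}.} I repeat the computation of Theorem~\ref{Local(U,S)} without $f_R$. This time the first-order terms are estimated via the repulsivity inequalities of \S\ref{subsub:2.3.3} rather than crudely, which gives
\[
\bigl(\tfrac12\tfrac{d}{d\tau}+\Lambda-1+K\bigr)E_K\le K\bigl(1-\tfrac{\min\{\tilde\eta,\eta\}}2\bigr)E_K+C(K)E^{1-1/K}+Ce^{-\delta_{\rm dis}(\tau-\tau_0)}.
\]
Here the commutator terms are lower order by interpolation. In the dissipative terms, the factors $\Clss^{-1}\nabla^k\Clss$ are controlled by Step 1, and the top-order $\nabla^{K+1}\Clss$ is removed by integration by parts as in $J_{44}$. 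Gr\"onwall then gives $E_K\le E/2$, since $K\min\{\tilde\eta,\eta\}/2\gg 1$ and $E$ is large.

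\textbf{Step 4: the unstable projection \eqref{unsestimate1'}.} Project \eqref{eq:truncated} onto $V_{\rm uns}$ to obtain \eqref{ODEofki}. Steps 1--2 give $\|P_{\rm uns}\chi_2\mathcal F\|_X\lesssim \smallc_0^2e^{-2\varepsilon(\tau-\tau_0)}+e^{-\delta_{\rm dis}\tau}$, which is at most $\smallc_1^{11/10}e^{-\frac43\varepsilon(\tau-\tau_0)}$ because $\smallc_0^{3/2}\ll\smallc_1$. Choosing $\{\hat k_i\}$ via the outgoing property (Lemma~\ref{outgoingpro}) and Brouwer's theorem keeps $k(\tau)$ in $\widetilde{\mathcal R}(\tau)$. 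This gives \eqref{unsestimate1'}, and the bootstrap closes.
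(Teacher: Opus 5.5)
Your four steps follow the paper's own route: \S\ref{Section5}, then Lemmas~\ref{linfitnityinside}--\ref{boosstrapestimate3}, then Lemma~\ref{outgoingpro} with the Brouwer selection. One step, however, fails as written: the sign of the boundary flux in the exterior $\nabla^4$ estimate.

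For $B^c(0,C_0)$, the sphere $\partial B(0,C_0)$ is an \emph{inflow} boundary. The outer normal of the exterior region there is $-y/|y|$. The transport field $y+\widehat X\overline U$ points radially outward, i.e.\ into the exterior region, and all radial characteristic speeds $|y|+\overline{\mathcal U}\pm\alpha\overline Q$ are positive. Integrating by parts the transport term and the $\alpha\widehat X\overline Q$ coupling therefore adds
\[
\int_{\partial B(0,C_0)}\Big((C_0+\widehat X\overline{\mathcal U})\big(|\partial_\beta\widetilde Q|^2+|\partial_\beta\widetilde U|^2\big)+2\alpha\widehat X\overline Q\,\partial_\beta\widetilde Q\,\partial_\beta\widetilde U\cdot\tfrac{y}{|y|}\Big)\,\mathrm dS
\]
to $\frac{\mathrm d}{\mathrm d\tau}\|\nabla^4(\widetilde Q,\widetilde U)\|^2_{L^2(B^c(0,C_0))}$. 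Since $|y|+\overline{\mathcal U}-\alpha\overline Q>0$, this term is bounded \emph{below} by a nonnegative quantity, so it has the unfavorable sign and cannot be discarded. That inequality gives a good sign only for the interior ball, where $\partial B(0,C_0)$ is an outflow boundary; this is its role in Lemma~\ref{truncatedestimate}.

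The paper keeps the term and bounds it with the interior estimate of Lemma~\ref{linfitnityinside} for $j=4$, which extends to the sphere by smoothness. This gives $C(C_0)(\nu_1/\nu_g)^2e^{-2\varepsilon(\tau-\tau_0)}\le(\nu_1/\nu_g)^{17/10}e^{-2\varepsilon(\tau-\tau_0)}$, which enters the Gr\"onwall argument as a small source. The closure at $\nu_0/2$ survives because $\nu_1/\nu_g\ll\nu_0$. You already have the needed $W^{4,\infty}(B(0,C_0))$ bound from $X\hookrightarrow W^{4,\infty}$, so the repair is available. The interior estimate must serve as boundary data here too, not only in the $L^\infty$ ray argument.

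Two points in Step~1, the step you flagged, are also misstated.

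First, the rate $\langle y\rangle^{-1+\eta-\Lambda}$ is the bound for $|\nabla^2Q|$ itself, not for $Q^{-1}\nabla^kQ$. Using the lower bound on $Q$, one only gets $Q^{-1}|\nabla Q|\le C(\nu_0)\langle y\rangle^{-1}$ and $Q^{-1}|\nabla^2Q|\le C(\nu_0)\langle y\rangle^{-2+\eta}$. These weaker bounds are what the argument actually uses.

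Second, $\delta_{\rm dis}>0$ is not what makes the viscous forcing integrable along rays. Since $\frac{(1-\delta)(\Lambda-1)}{\alpha}=\delta_{\rm dis}+2-\Lambda$, the factor $\langle y\rangle^{\delta_{\rm dis}}$ in the growth of $Q^{(\delta-1)/\alpha}$ exactly compensates $e^{-\delta_{\rm dis}(\tau-\bar\tau)}$ along $y=y_0e^{\tau-\bar\tau}$. Together with $|\nabla^3U|\lesssim\langle y\rangle^{-3+3\eta}$, the integrand behaves like $e^{-\delta_{\rm dis}\bar\tau}\langle y_0/R_0\rangle^{\delta_{\rm dis}-1+3\eta-\Lambda}e^{(-1+3\eta)(\tau-\bar\tau)}$. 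This is integrable in $\tau$ whatever the sign of $\delta_{\rm dis}$. The condition $\delta_{\rm dis}>0$ only lets $e^{-\delta_{\rm dis}\bar\tau}\le e^{-\delta_{\rm dis}\tau_0}$ absorb $C(\nu_0)\nu_1^{-C}$. Closing $|\nabla U|\lesssim\langle y/R_0\rangle^{-\Lambda}$ uniformly over rays issued at $\tau_0$ from arbitrarily large $|y_0|$ instead needs the \emph{upper} bound $\delta_{\rm dis}<1$, with $\eta\le\frac{1}{10}(1-\delta_{\rm dis})$. The paper derives this upper bound from $\Lambda<\Lambda^*(\gamma)$ and $\gamma<1+\frac{2}{\sqrt{3}}$.
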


    The proof of Proposition~\ref{prop:bootstrap} is carried out via a bootstrap argument, whose details are presented in \S\ref{Section6}. Specifically, estimate \eqref{lowerestimate1'} follows from Lemmas~\ref{linfitnityinside}--\ref{boosstrapestimate1}, while \eqref{lowerestimate2'} is a consequence of Lemmas~\ref{linfitnityinside} and \ref{boosstrapestimate2}. The higher-order estimate \eqref{higherestimate3'} is established in Lemma~\ref{boosstrapestimate3}. Finally, the control of the unstable modes \eqref{unsestimate1'} is proved in \S\ref{subsection6.3}.

\section{Spatial Decay Estimates}\label{Section5}
In this section, we derive the spatial decay estimates under the bootstrap assumptions introduced above. 
The derived decay rates allow us to complete the bootstrap argument in the next section.  
In this section, the initial data function $(\widetilde{\Clss}_0, \widetilde{U}_0)$ under consideration 
is constructed in  \eqref{eq:tilde_is_stable}--\eqref{initialdatacondition2}, 
$1< \gamma <1+\frac{2}{\sqrt{3}}$, $\Lambda \in (1, \Lambda^*(\gamma))$ is introduced in {\rm Lemma~\ref{thm:existence_profiles}}, $\delta\in (0,\frac{1}{2})$ satisfies $\delta_{\rm dis}>0$,
and assumptions \eqref{Para-ineq}--\eqref{choiceR0} and \eqref{unsestimate1}--\eqref{higherestimate3} 
always hold.

We first obtain an energy estimate of perturbation $(\widetilde{\Clss}, \widetilde{U})$.

\begin{Lemma}\label{per-U,S}  
For all $\tau\in[\tau_0, \tau_*]$,
\begin{equation}\label{perturbationenergy}
\widetilde{E}_{K}(\tau):=\int\big(|\nabla^{K}\widetilde{\Clss}|^2+|\nabla^{K}\widetilde{U}|^2\big)\phi^{K}{\rm d}y\leq 2E.
\end{equation}
\end{Lemma}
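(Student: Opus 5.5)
Since $\widetilde{\Clss}=\Clss-\widehat X\overline{\Clss}$ and $\widetilde U=U-\widehat X\overline U$, we will use $|a-b|^2\le (1+\kappa)|a|^2+(1+\kappa^{-1})|b|^2$ pointwise to split
\[
\widetilde E_K(\tau)\le (1+\kappa)E_K(\tau)+(1+\kappa^{-1})\int\big(|\nabla^K(\widehat X\overline{\Clss})|^2+|\nabla^K(\widehat X\overline U)|^2\big)\phi^K\,\dy .
\]
By the bootstrap assumption \eqref{higherestimate3}, the first term is at most $(1+\kappa)E$. Choosing $\kappa=\frac12$, it therefore suffices to show that the profile contribution is bounded by a constant $C(K,R_0)$ with $3C(K,R_0)\le \frac{E}{2}$. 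This is consistent with the hierarchy $\frac1E\ll\frac1K$ in \eqref{Para-ineq}, since $E$ is chosen after $K$. The remaining issue is that $R_0$ depends on $\smallc_0$, which is chosen after $E$; see the last paragraph.

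For the profile term, the Leibniz rule gives $\nabla^K(\widehat X\overline{\Clss})=\sum_{j\le K}\binom{K}{j}\nabla^j\widehat X\,\nabla^{K-j}\overline{\Clss}$. We use the decay of the profile from \eqref{profile decay}, namely $|\nabla^k\overline{\Clss}|+|\nabla^k\overline U|\le C(k)\langle y\rangle^{-(\Lambda-1)-k}$. We also use $|\nabla^j\widehat X|\le C(j)e^{-j\tau}$, with support in $\frac12e^\tau\le|y|\le e^\tau$ for $j\ge1$, so that there $|\nabla^j\widehat X|\le C(j)|y|^{-j}$. Together these give the pointwise bound $|\nabla^K(\widehat X\overline{\Clss})|\le C(K)\langle y\rangle^{-(\Lambda-1)-K}$, uniformly in $\tau$. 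By \eqref{weightdefi}, $\phi^K\le C(K)R_0^{-2K(1-\eta)}\langle y\rangle^{2K(1-\eta)}$ for $|y|\ge 4R_0$, and $\phi$ is bounded by $C$ on $B(0,4R_0)$. The integrand is then bounded by $C(K)\langle y\rangle^{-2(\Lambda-1)-2K\eta}$ at infinity.

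Integrability in $\mathbb R^3$ requires $2(\Lambda-1)+2K\eta>3$. This is the main obstacle: $\Lambda-1$ is small, so the bound depends on $K\eta$ being large. If the hierarchy $\frac1K\ll\eta$ is not enough to guarantee this, we will instead use the faster decay available on the support of the cutoff derivatives together with the sharper profile asymptotics. The main alternative is to run the estimate on the $y$-region where the weight $\phi^K$ grows more slowly than the decay, adjusting $\eta$ as needed. Once integrability holds, the integral is bounded by $C(K)R_0^{3}$ at worst, or by $C(K)$ if $R_0$ is absorbed into the normalization of $\phi$.

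We then have to verify that this constant is below $E/6$. If it depends on $R_0$, and hence on $\smallc_0$, we will exploit the normalization $R_0^{-2K(1-\eta)}$ in $\phi$. With it, the far-field contribution becomes $C(K)R_0^{-2(\Lambda-1)-2K\eta+3}$, which is bounded uniformly in $R_0\ge1$, while the near region contributes $C(K)$ through the smoothness of the profile. This yields $\widetilde E_K\le \frac32E+3C(K)\le 2E$ by the choice $E>C3^{2K^2}$.
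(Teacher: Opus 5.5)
Your proposal is correct and is essentially the paper's argument: you reduce via \eqref{higherestimate3} to a $C(K)$ bound on $\int\big(|\nabla^K(\widehat X\overline{\Clss})|^2+|\nabla^K(\widehat X\overline U)|^2\big)\phi^K\,\mathrm{d}y$, which follows from \eqref{profile decay}, the cutoff bounds, and the integrability of $\langle y\rangle^{-2(\Lambda-1)-2K\eta}$; the only differences are that you fold the cutoff-derivative terms into one pointwise bound $C(K)\langle y\rangle^{-(\Lambda-1)-K}$ (the paper integrates them separately over the annulus $\frac12e^{\tau}\le|y|\le e^{\tau}$), and that you check explicitly the uniformity in $R_0\ge1$, which the paper leaves implicit. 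Your hedging in the third paragraph is unnecessary, since $\frac1K\ll\eta$ in \eqref{Para-ineq} is exactly the statement $K\eta\gg1$ that the paper uses to get $2(\Lambda-1)+2K\eta>3$, so no fallback argument is needed.
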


\begin{proof}
Since $(\widetilde{\Clss},\widetilde{U})=(\Clss,U)-(\widehat{X}\overline{\Clss},\widehat{X}\overline{U})$ 
and $K\ll E$, from \eqref{higherestimate3}, we only need to show
\begin{equation}\label{profilekestimate}
\int\big(|\nabla^{K}\big(\widehat{X}\overline{\Clss})|^2+|\nabla^{K}(\widehat{X}\overline{U})|^2\big)\phi^{K}\text{d}y\leq C(K).
\end{equation}
The decay of profile \eqref{profile decay} and the definition of $\phi$ \eqref{weightdefi} lead to
$$
\big(|\nabla^{K} \overline{\Clss} |^2+|\nabla^{K} \overline{U} |^2 \big) \phi^{K}\leq C(K)\langle y\rangle^{-2(\Lambda-1)-2K\eta}.
$$
Since $\frac{1}{K}\ll \eta$, 
we claim 
\begin{equation}\label{profilekestimate1}
\int\big( |\nabla^{K}\overline{\Clss}|^2+|\nabla^{K}\overline{U}|^2 \big) \phi^{K}\text{d}y\leq  C(K).
\end{equation}
The definition of $\widehat{X}$ gives
\begin{equation}\label{estimatehatX}
\text{supp}\, \widehat{X} \subset B ( 0, e^\tau ), \,\,\,\,  
\text{supp}\, (1-\widehat{X}) \subset B^c \big( 0,\frac{e^\tau}{2} \big), \,\,\,\, 
\text{supp}\, \nabla\widehat{X} \subset B ( 0, e^\tau ) \cap B^c \big( 0,\frac{e^\tau}{2} \big), 
\end{equation}
and
\begin{equation}\label{estimatehatX2}
\|\nabla^{j}\widehat{X}\|_{L^{\infty}}\leq C(j) e^{-j\tau}.
\end{equation}
Applying \eqref{weightdefi}, \eqref{estimatehatX}--\eqref{estimatehatX2}, and the decay properties of $(\overline{\Clss},\overline{U})$ stated in \eqref{profile decay} yields 
\begin{align}\label{profilekestimate2}
&\sum_{0\leq j\leq K-1}\int |\nabla^{K-j} \widehat{X}|^2 \big( |\nabla^{j} \overline{\Clss} |^2+|\nabla^{j} \overline{U} |^2 \big)\phi^{K}\text{d}y \nonumber \\
&\leq C(K) \sum_{0\leq j\leq K-1}e^{-2(K-j)\tau}\int_{\frac{1}{2}e^{\tau}}^{e^\tau}r^{-2(\Lambda-1)-2j}r^{2(1-\eta)K}r^{2}\text{d}r\\
&\leq C(K) \sum_{0\leq j\leq K-1}e^{-2(K-j)\tau+(3+2(1-\eta)K-2(\Lambda-1)-2j)\tau}\nonumber\\
&\leq C(K) e^{-(2K\eta-3+2(\Lambda-1))\tau}\leq C(K).\nonumber
\end{align}
Thus, \eqref{profilekestimate} follows from \eqref{profilekestimate1} and \eqref{profilekestimate2}.
\end{proof}

We now establish pointwise lower and upper bounds for $\Clss$.

\begin{Lemma} \label{lemma:Sbounds} When $y \in \mathbb{R}^3$,
\begin{equation*}
    \Clss \geq \frac{\smallc_1}{C}\big\langle \frac{y}{R_0} \big\rangle^{-(\Lambda-1)}\qquad \text{for } \tau\in[\tau_0, \tau_*].
\end{equation*}
When $|y| \ge R_0$, 
\begin{equation*}
\Clss \leq C\smallc_0 \max \big\{ \big\langle \frac{y}{R_0} \big\rangle^{-(\Lambda-1)}, 
\,e^{-(\tau-\tau_0)(\Lambda-1)} \big\}\qquad \text{for } \tau\in[\tau_0, \tau_*].
\end{equation*}
\end{Lemma}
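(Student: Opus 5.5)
We would split $\mathbb{R}^3$ into $B(0,R_0)$ and $B^c(0,R_0)$ and follow the characteristic strategy sketched in \S\ref{subsub:2.3.1}.

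\emph{Interior region $|y|\le R_0$.} Here \eqref{Rochoice estimate} gives $\overline{\Clss}\ge 2\smallc_0$. For $\tau\ge\tau_0$ with $\tau_0$ large we have $\widehat{X}=1$ on $B(0,R_0)$, since $R_0\le \frac12 e^{\tau_0}$. The bootstrap bound \eqref{lowerestimate1} then yields
\[
\Clss\ge 2\smallc_0-\tfrac{\smallc_0}{100}\ge \smallc_0\ge \smallc_1 \ge \tfrac{\smallc_1}{C}\big\langle \tfrac{y}{R_0}\big\rangle^{-(\Lambda-1)}
\]
there. No upper bound is needed in this region.

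\emph{Exterior region $|y|\ge R_0$.} Fix $(\tau,y)$ and trace the backward characteristic of the transport field $y$. Set $y_0=ye^{-(\tau-\bar\tau)}$, where either $\bar\tau=\tau_0$ (if $|y|e^{-(\tau-\tau_0)}\ge R_0$) or $\bar\tau$ is chosen so that $|y_0|=R_0$. Along $s\mapsto y_0e^{s-\bar\tau}$ the quantity $\omega(s)=e^{(\Lambda-1)(s-\bar\tau)}\Clss(s,y_0e^{s-\bar\tau})$ satisfies \eqref{jieduanQ}. Rather than treating the right-hand side as an additive error, we would write it as
\[
-e^{(\Lambda-1)(s-\bar\tau)}(U\cdot\nabla\Clss)-\alpha(\dive U)\,\omega .
\]
For the linear part, Gronwall gives the factor $\exp\big(\pm\alpha\int|\dive U|\,ds\big)$. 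For the first term we need smallness and integrability in $s$ of $|U|\,|\nabla\Clss|$ and $|\dive U|$ along the trajectory. The tools are:
\begin{itemize}
\item $|U|\le|\widehat X\overline U|+|\widetilde U|\le C\smallc_0$ from \eqref{Rochoice estimate} and \eqref{lowerestimate1};
\item interpolation (Gagliardo--Nirenberg, \eqref{eq:GNresultinfty}) between the $L^\infty$ bounds on $(\Clss,U)$ and the weighted energy \eqref{higherestimate3}, with weight $\phi^K\sim|y|^{2K(1-\eta)}$.
\end{itemize}
The interpolation should give $|\nabla\Clss|+|\nabla U|\le C(E)\smallc_0^{1-1/K}\langle y\rangle^{-(1-\eta)+O(1/K)}$ for $|y|\ge R_0$, and combining with the profile decay gives $|\nabla\Clss|+|\nabla U|\le C\langle y/R_0\rangle^{-1+2\eta}\cdot(\text{small})$. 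Since $|y_0e^{s-\bar\tau}|$ grows exponentially, $\int_{\bar\tau}^{\tau}\langle y_0e^{s-\bar\tau}\rangle^{-1+2\eta}\,ds$ is bounded by $C R_0^{-1+2\eta}$, which is small. Hence
\[
\exp\Big(\alpha\int_{\bar\tau}^{\tau}|\dive U|\,ds\Big)\le 2 .
\]
The inhomogeneous term, weighted by $e^{(\Lambda-1)(s-\bar\tau)}$, is handled similarly. In the integral $\int e^{(\Lambda-1)(s-\bar\tau)}\smallc_0\,(R_0e^{s-\bar\tau})^{-1+2\eta}\,ds$ the exponent satisfies $\Lambda-1-(1-2\eta)<0$ (since $\Lambda<2$ in the admissible range), so the integral converges and is $\ll\smallc_1$ after using \eqref{choiceR0}.

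\emph{Initial data of the characteristic.} If $|y_0|=R_0$, the interior bound gives $\Clss(\bar\tau,y_0)\in[\smallc_0,C\smallc_0]$. If $\bar\tau=\tau_0$, then \eqref{initialdatacondition1} gives $\Clss_0(y_0)\ge\frac{\smallc_1}{2}\langle y_0/R_0\rangle^{1-\Lambda}$, and the profile bound with \eqref{lowerestimate1} gives $\Clss_0(y_0)\le C\smallc_0$.

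\emph{Conclusion.} Divide $\omega$ by $e^{(\Lambda-1)(\tau-\bar\tau)}$. In the case $|y_0|=R_0$ we have $e^{-(\tau-\bar\tau)}=R_0/|y|$, so the factor is $\langle y/R_0\rangle^{-(\Lambda-1)}$. In the case $\bar\tau=\tau_0$ it is $e^{-(\Lambda-1)(\tau-\tau_0)}$, and the lower bound follows from $\langle y_0/R_0\rangle^{1-\Lambda}e^{-(\Lambda-1)(\tau-\tau_0)}\gtrsim\langle y/R_0\rangle^{1-\Lambda}$. This produces both the claimed lower bound (with constant $\smallc_1/C$) and the upper bound $C\smallc_0\max\{\cdot,\cdot\}$.

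\emph{Main obstacle.} The hard step is getting a gradient decay rate along characteristics good enough that the factor $e^{(\Lambda-1)(s-\bar\tau)}$ is beaten, while keeping the total contribution much smaller than $\smallc_1$ and not merely smaller than $\smallc_0$. If a single interpolation is insufficient, the fallback is to bound $|U\cdot\nabla\Clss|\le C\smallc_0|\nabla\Clss|$ and absorb it into the multiplicative Gronwall factor via $|\nabla\Clss|/\Clss$. This keeps the lower bound proportional to the initial value and avoids any additive error.
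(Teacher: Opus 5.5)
Your architecture is the paper's. You use the same characteristics $y=y_0e^{\tau-\bar\tau}$ with the two choices of $\bar\tau$, the same quantity $\omega_{\Clss,y_0}$, and the same interpolation of $\nabla\widetilde\Clss,\nabla\widetilde U$ between \eqref{lowerestimate1} and \eqref{perturbationenergy}. Treating $\alpha(\dive U)\,\omega$ through a Gronwall factor rather than additively is a harmless variant; the paper simply uses $\Clss\le C\smallc_0$ on $B^c(0,R_0)$. Your interior step and your upper bound are fine.

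\textbf{The gap.} It is in the lower bound on the branch $\bar\tau=\tau_0$ with $|y_0|\gg R_0$.

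\emph{Why the step fails.} There the starting value is only $\omega(\tau_0)\ge\frac{\smallc_1}{2}\langle y_0/R_0\rangle^{1-\Lambda}$, which tends to $0$ as $|y_0|\to\infty$. An additive error that is merely $\ll\smallc_1$, uniformly in $y_0$, therefore does not give $\omega(\tau)\ge\frac{\smallc_1}{C}\langle y_0/R_0\rangle^{1-\Lambda}$. Your conclusion paragraph silently needs exactly that.

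\emph{Where the loss occurs.} You write the integrand as $\smallc_0(R_0e^{s-\bar\tau})^{-1+2\eta}$, replacing $|y_0|$ by its lower bound $R_0$. That absolute-in-$\langle y\rangle$ decay is also not what the interpolation gives. Since $\phi$ is normalized by $R_0$, \eqref{eq:GNresultinfty_simplified} only yields $|\nabla\widetilde\Clss|+|\nabla\widetilde U|\le\smallc_0^{9/10}\phi^{-1/2}$, which is of size $\smallc_0^{9/10}$ near $|y|=R_0$, not $R_0^{-1+2\eta}$.

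\emph{The repair.} Keep $|y_0|$. On the trajectory, using $|U|\le C\smallc_0$ and \eqref{phiClss1},
\[
e^{(\Lambda-1)(s-\bar\tau)}\,|U\cdot\nabla\Clss|(s,y_0e^{s-\bar\tau})\le C\smallc_0^{\frac{19}{10}}\big\langle\tfrac{y_0}{R_0}\big\rangle^{-1+\eta}e^{(\Lambda-2+\eta)(s-\bar\tau)}.
\]
Because $\Lambda<2-\eta$, the time integral converges and is at most $C\smallc_0^{19/10}\langle y_0/R_0\rangle^{1-\Lambda}$. This is $\ll\smallc_1\langle y_0/R_0\rangle^{1-\Lambda}$, since $\smallc_0^{3/2}\ll\smallc_1$. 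This relative, $y_0$-weighted smallness, rather than smallness against $\smallc_1$, is what \eqref{dtauwClss}--\eqref{eq:cadiz} provide in the paper.

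With this bound your fallback through $|\nabla\Clss|/\Clss$ is unnecessary. As stated it is also circular: bounding $|\nabla\Clss|/\Clss$ presupposes the lower bound on $\Clss$ you are trying to prove, so it would need a separate continuity argument.
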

\begin{proof}
We divide the proof into three steps.

\smallskip
\textbf{1.} For $|y|< R_0$, it follows from \eqref{Rochoice estimate} and \eqref{lowerestimate1} that $\Clss\geq 2\smallc_0-\|\widetilde{\Clss}\|_{L^\infty} \geq \smallc_0$.

\smallskip
\textbf{2.} Define $\omega_{\Clss,y_0}(\tau):=e^{(\Lambda-1)(\tau-\bar\tau)}\Clss(\tau, y_0e^{(\tau-\bar\tau)})$. It follows from \eqref{selfsimilar eq} that
\begin{equation}\label{eq: wy_01}
\begin{aligned}
    \partial_{\tau}\omega_{\Clss,y_0}(\tau)&= (\Lambda-1)e^{(\Lambda-1)(\tau-\bar\tau)}\Clss(\tau, y_0e^{(\tau-\bar\tau)})+e^{(\Lambda-1)(\tau-\bar\tau)}\partial_{\tau}\Clss(\tau, y_0e^{(\tau-\bar\tau)})\\
    &\quad + e^{(\Lambda-1)(\tau-\bar\tau)}y_0e^{(\tau-\bar\tau)} \cdot \nabla \Clss(\tau, y_0e^{(\tau-\bar\tau)})\\
    &= -e^{(\Lambda-1)(\tau-\bar\tau)}(U \cdot \nabla \Clss)(\tau, y_0e^{(\tau-\bar\tau)})-\alpha e^{(\Lambda-1)(\tau-\bar\tau)}(\Clss \,\dive \, U)(\tau, y_0e^{(\tau-\bar\tau)}).
\end{aligned}
\end{equation}
It follows from \eqref{Para-ineq}, \eqref{lowerestimate1}, \eqref{perturbationenergy}, and the Gagliardo-Nirenberg inequality \eqref{eq:GNresultinfty_simplified} that
\begin{equation*}
 |\nabla \widetilde{\Clss}| \leq C(K)\big( \|\widetilde{\Clss}\|_{L^\infty}^{1-\frac{1}{K-3/2}}\|\phi^{\frac{K}{2}}\nabla^K \widetilde{\Clss}\|_{L^2}^{\frac{1}{K-3/2}}\phi^{\frac{-K}{2K-3}} + \|\widetilde{\Clss}\|_{L^\infty}\, \langle y \rangle^{-1}\big),
\end{equation*}
which leads to
\begin{equation}\label{ES:weighted 1derPerS}
   \|\phi^{\frac{1}{2}}\nabla \widetilde{\Clss}\|_{L^{\infty}} 
   \leq C(K)\big(\smallc_0^{1-\frac{1}{K-3/2}}E^{\frac{1}{2K-3}}+ \smallc_0\big) 
   \leq \smallc_0^{\frac{9}{10}}.
\end{equation}
Moreover, for $|y|\ge R_0$, we obtain from \eqref{profile decay}, \eqref{choiceR0}, and \eqref{estimatehatX}--\eqref{estimatehatX2} that
\begin{equation}\label{ES:weighted 1derS}
\begin{aligned}
\phi^{\frac{1}{2}} | \nabla (\widehat X \overline \Clss) | 
&\leq C  \Big( \frac{|y|}{R_0} \Big)^{1-\eta} \big( |\nabla \widehat X  \overline \Clss| + |\widehat X  \nabla \overline \Clss |\big) \\
&\leq C\frac{|y|}{R_0}\frac{1}{|y|^\Lambda} \leq \frac{C}{R_0^{\Lambda}} \leq C\smallc_0.
\end{aligned}
\end{equation}
Combining \eqref{ES:weighted 1derPerS}--\eqref{ES:weighted 1derS} gives  
\begin{equation}\label{phiClss1}
\| \phi^{\frac{1}{2}} \nabla \Clss \|_{L^\infty (B^c(0, R_0))}\leq \smallc_0^{\frac{9}{10}}.
\end{equation}
By the same argument, it also follows that
\begin{equation}\label{phiU1}
\| \phi^{\frac{1}{2}} \nabla U \|_{L^\infty (B^c(0, R_0))} \leq \smallc_0^{\frac{9}{10}}. 
\end{equation}
Thus, we obtain from \eqref{def:Lambda*}, \eqref{Para-ineq}, \eqref{Rochoice estimate}, \eqref{lowerestimate1}, and \eqref{phiClss1}--\eqref{phiU1} that
\begin{align*}
    &\big| e^{(\Lambda-1)(\tau-\bar{\tau})} U(\tau, y_0 e^{(\tau-\bar{\tau})}) \nabla \Clss (\tau , y_0 e^{(\tau-\bar{\tau})}) \big| \\
    &\leq C \big( \frac{ y_0 e^{(\tau-\bar{\tau})} }{R_0} \big)^{-1+\eta} e^{(\Lambda-1) (\tau-\bar{\tau})} \smallc_0^{\frac{19}{10}} \\
    &\leq  C \smallc_0^{\frac{19}{10}} \big\langle \frac{y_0}{R_0} \big\rangle^{-1+\eta} e^{(\tau-\bar{\tau})(\eta+\Lambda-2)}\leq \frac{1}{2}\smallc_0^{\frac{9}{5}} \big\langle \frac{y_0}{R_0} \big\rangle^{1-\Lambda}e^{-\frac{\tau-\bar{\tau}}{10}},
\end{align*}
and
\begin{equation*}
| \alpha e^{(\Lambda-1)(\tau-\bar{\tau})}\Clss(\tau , y_0 e^{(\tau-\bar{\tau})})\, \dive U(\tau , y_0 e^{(\tau-\bar{\tau})}) | \leq  \frac{1}{2}\smallc_0^{\frac 95} \big\langle \frac{y_0}{R_0} \big\rangle ^{1-\Lambda}e^{-\frac{\tau-\bar{\tau}}{10}}.
\end{equation*}
Collecting the above two inequalities yields
\begin{equation}\label{dtauwClss}
    |\partial_\tau \omega_{\Clss,y_0}(\tau) |\leq  \smallc_0^{\frac 95} \big\langle \frac{y_0}{R_0} \big\rangle^{1-\Lambda}e^{-\frac{\tau-\bar{\tau}}{10}}.
\end{equation}
Integrating \eqref{dtauwClss} over $[\bar\tau, \tau]$, we obtain
\begin{equation} \label{eq:cadiz}
  |  \omega_{\Clss,y_0}(\tau) - \omega_{\Clss,y_0}(\bar{\tau}) | \leq  \smallc_0^{\frac 95} \big\langle \frac{y_0}{R_0} \big\rangle^{1-\Lambda},
\end{equation}
which holds uniformly for all $\tau \geq \bar\tau$.

\smallskip
\textbf{3.} Since $\overline \Clss$ is smooth in $\mathbb{R}^3$, it follows from \eqref{Rochoice estimate} that
$$ 
2 \smallc_0 \leq \overline \Clss \leq C\smallc_0 \qquad \text{ for $|y| = R_0$}.
$$
Consequently,
$$
\smallc_0\leq \omega_{\Clss,y_0}(\bar{\tau}) \leq C \smallc_0 \qquad\text{ for $|y_0|=R_0$}.
$$ 
By \eqref{eq:cadiz}, we obtain that, for all $\tau \ge \bar\tau$, 
$$
\frac12\smallc_0 \leq\omega_{\Clss,y_0}(\tau) \leq C\smallc_0 \qquad\text{ for $|y_0|=R_0$}.
$$ 
Equivalently,
\begin{equation} \label{eq:almeria}
\frac1C\smallc_0e^{-(\Lambda-1)(\tau-\bar{\tau})} \leq \Clss ( \tau, y_0 e^{(\tau-\bar{\tau})}  ) \leq C\smallc_0 e^{-(\Lambda-1)(\tau-\bar{\tau})}.
\end{equation}
Noting that $\frac{y}{y_0}=e^{\tau -\bar \tau}$, we have
\begin{equation} \label{eq:Sbound1}
\frac{\smallc_0}{C} \big\langle
\frac{y}{R_0}\big\rangle^{-(\Lambda-1)}\leq \Clss (\tau , y  ) \leq C\smallc_0 \big\langle
\frac{y}{R_0}\big\rangle^{-(\Lambda-1)}.
\end{equation}
Next, taking $\bar \tau =\tau_0$ and considering $|y_0| \geq R_0$, it follows 
from the definition of the initial data \eqref{initialdatacondition1} that
$$
\frac{\smallc_1}{2}\big\langle \frac{y_0}{R_0}\big\rangle^{1-\Lambda} \leq \Clss(\tau_0, y_0) \leq C\smallc_0. 
$$
Therefore, we have
\begin{equation} \label{eq:Sbound2}
e^{-(\Lambda-1)(\tau-\tau_0)}\frac{\smallc_1}{C}\big\langle \frac{y_0}{R_0}\big\rangle^{1-\Lambda} \leq \Clss(\tau , y)=e^{-(\Lambda-1)(\tau-\tau_0)}w_{\Clss,y_0}(\tau) \leq C\smallc_0e^{-(\Lambda-1)(\tau-\tau_0)}.
\end{equation}
The upper bound follows directly. For the lower bound, we use the identity $e^{(1-\Lambda)(\tau-\tau_0)}=(\frac{y}{y_0})^{1-\Lambda}$. This completes the proof.
\end{proof}

We next derive pointwise decay estimates for the first-order spatial derivatives of solution $(\Clss, U)$.
\begin{Lemma} \label{lemma:Sprimebounds} For all $(\tau,y)\in[\tau_0, \tau_*]\times \mathbb{R}^3$, 
\begin{align}  
    &|\nabla \widetilde \Clss | + |\nabla \Clss |  \leq   C\big\langle \frac{y}{R_0} \big\rangle^{-\Lambda}, \label{eq:voltaire1} \\[1mm]
    &|\nabla \widetilde U |+ |\nabla U |  \leq  C\big\langle \frac{y}{R_0} \big\rangle^{-\Lambda}. \label{eq:voltaire2}
  \end{align}
\end{Lemma}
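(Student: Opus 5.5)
We follow the strategy of Lemma~\ref{lemma:Sbounds}: an interior bound from interpolation, and an exterior bound obtained by integrating along the rays $y=y_0e^{\tau-\bar\tau}$ with $|y_0|=R_0$ or $\bar\tau=\tau_0$.

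\textbf{Interior region $|y|<R_0$.} Here $\langle y/R_0\rangle^{-\Lambda}\ge 2^{-\Lambda/2}$, so a uniform bound suffices. By \eqref{ES:weighted 1derPerS} we have $|\nabla\widetilde\Clss|\le \smallc_0^{9/10}$, and the same argument gives this bound for $\nabla \widetilde U$. The profile gradients $\nabla(\widehat X\overline\Clss)$ and $\nabla(\widehat X\overline U)$ are bounded by \eqref{profile decay} and \eqref{estimatehatX2}. The exterior bounds for $\nabla\widetilde\Clss$ and $\nabla\widetilde U$ then follow from those for $\nabla\Clss$ and $\nabla U$, since $|\nabla(\widehat X\overline\Clss)|+|\nabla(\widehat X\overline U)|\le C\langle y\rangle^{-\Lambda}$ by \eqref{profile decay}, using $|\nabla\widehat X|\le Ce^{-\tau}\le C|y|^{-1}$ on its support.

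\textbf{Exterior bound for $\nabla\Clss$.} Apply $\partial_{y_i}$ to \eqref{selfsimilar eq}$_1$. The commutator with $y\cdot\nabla$ produces an extra $-\partial_i\Clss$. Setting $\omega_{i}(\tau)=e^{\Lambda(\tau-\bar\tau)}\partial_i\Clss(\tau,y_0e^{\tau-\bar\tau})$, we get
\[
\partial_\tau\omega_i=-e^{\Lambda(\tau-\bar\tau)}\big(\partial_iU\cdot\nabla\Clss+U\cdot\nabla\partial_i\Clss+\alpha\partial_i\Clss\,\dive U+\alpha\Clss\,\dive\partial_iU\big).
\]
The terms $\partial_i U\cdot\nabla\Clss$ and $\partial_i\Clss\,\dive U$ are controlled using \eqref{phiClss1}--\eqref{phiU1}. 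The second-order terms $U\cdot\nabla^2\Clss$ and $\Clss\nabla^2U$ are controlled by a weighted Gagliardo--Nirenberg bound: interpolating $\widetilde\Clss,\widetilde U$ in $L^\infty$ against $\widetilde E_K\le 2E$ gives $|\nabla^2\widetilde{\Clss}|+|\nabla^2\widetilde U|\le \smallc_0^{9/10}\phi^{-1+O(1/K)}$, and the profile parts are estimated directly. Combined with $|U|,|\Clss|\le C\smallc_0\langle y/R_0\rangle^{1-\Lambda}$ from Lemma~\ref{lemma:Sbounds} and \eqref{lowerestimate1} (along a ray $|y|$ grows like $e^{\tau-\bar\tau}$), each right-hand term is bounded by $C\smallc_0^{\kappa}\langle y_0/R_0\rangle^{-\Lambda}e^{-c(\tau-\bar\tau)}$ for some $c>0$. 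This decay uses $\Lambda<2$ and the fact that $\phi^{-1}$ gives $|y|^{-2(1-\eta)}$. Integrating in time, the initial value at $|y_0|=R_0$ is bounded by the interior estimate. For $\bar\tau=\tau_0$ we use \eqref{initialdatacondition2}: there $|\omega_i(\tau_0)|\le C\langle y_0\rangle^{-\Lambda}$, which converts to $\langle y/R_0\rangle^{-\Lambda}$ because $e^{-\Lambda(\tau-\tau_0)}=(y_0/y)^\Lambda$. This yields \eqref{eq:voltaire1}.

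\textbf{Exterior bound for $\nabla U$: the main obstacle.} We use $\omega_{U,y_0,i}$ and \eqref{eq:omega-U}. The inviscid terms are handled exactly as above. The difficulty is $\partial_i\mathcal F_{\rm dis}$, which contains $\Clss^{(\delta-1)/\alpha}$ multiplied by factors $\Clss^{-1}\nabla^k\Clss$, $k=1,2$, and by $\nabla^3U$, $\nabla^2 U$. The lower bound $\Clss\ge \smallc_1 C^{-1}\langle y/R_0\rangle^{1-\Lambda}$ makes $\Clss^{(\delta-1)/\alpha}$ grow like $\langle y\rangle^{(1-\delta)(\Lambda-1)/\alpha}$. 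To compensate, we would first use the already proven \eqref{eq:voltaire1} and interpolate it against the $K$-th weighted energy. This gives $|\nabla^2\Clss|\lesssim C(\smallc_0)\langle y/R_0\rangle^{-\Lambda-1+\eta}$ up to $O(1/K)$ losses, and hence $|\Clss^{-1}\nabla^k\Clss|\le C(\smallc_0)\langle y/R_0\rangle^{-1+\eta-\Lambda}$, as announced in \S\ref{subsub:2.3.1}. Similarly, $\nabla^2U$ and $\nabla^3U$ gain powers of $\phi^{-1/2}\sim \langle y\rangle^{-(1-\eta)}$ per derivative.

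Multiplying by $e^{\Lambda(\tau-\bar\tau)}$ and expressing $e^{-\delta_{\rm dis}\tau}$ along the ray, the viscous contribution becomes
\[
C(\smallc_0)e^{-\delta_{\rm dis}\tau}\langle y_0/R_0\rangle^{-\Lambda}e^{(\tau-\bar\tau)\kappa'},\qquad \kappa'=\tfrac{(1-\delta)(\Lambda-1)}{\alpha}-(2-\Lambda)-\text{(gain)}+O(\eta).
\]
The condition $\delta_{\rm dis}>0$ is then exactly what makes $e^{-\delta_{\rm dis}\tau}e^{\kappa'(\tau-\bar\tau)}$ integrable in $\tau$, with small prefactor $C(\smallc_0)e^{-\delta_{\rm dis}\tau_0}\le 1$ by the choice of $\tau_0$. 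Checking this exponent bookkeeping, in particular that the gains from $\nabla^3 U$ and the extra derivative are enough, is where I expect the real work. Once it is done, integrating along the ray as for $\Clss$ yields \eqref{eq:voltaire2}.
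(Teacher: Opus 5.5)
Your overall architecture matches the paper's proof. You use Gagliardo--Nirenberg in $B(0,R_0)$, then the ray ODE for $e^{\Lambda(\tau-\bar\tau)}\partial_iQ$ closed with the rough bounds \eqref{eq:rough_decay2} and $\Lambda<2$. You then run the ray ODE for $\partial_iU$ using the improved bound $|\nabla^2Q|\lesssim\langle y/R_0\rangle^{-1+\eta-\Lambda}$, obtained by interpolating \eqref{eq:voltaire1} against $E_K$ (this is \eqref{Order2-S}). One small inaccuracy: $|U|\le C\smallc_0\langle y/R_0\rangle^{1-\Lambda}$ is not available, since Lemma~\ref{lemma:Sbounds} concerns $Q$ only and \eqref{lowerestimate1} gives $\widetilde U$ no spatial decay. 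You do not need it, though: $|Q|+|U|\le C\smallc_0$ together with $\phi^{-1}$ and $\Lambda<2$ suffices.

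The gap is in the step you defer, and the mechanism you describe for it is not the right one. Every term of $\partial_i\mathcal F_{\rm dis}$ is bounded by $C(\smallc_0,\smallc_1)\,Q^{\frac{\delta-1}{\alpha}}\langle y/R_0\rangle^{-3+3\eta}$. The three powers of $\langle y/R_0\rangle^{-1+\eta}$ come from $\nabla^3U$, $Q^{-1}\nabla Q\,\nabla^2U$, $Q^{-1}\nabla^2Q\,\nabla U$ or $(Q^{-1}\nabla Q)^2\nabla U$. Meanwhile $Q^{\frac{\delta-1}{\alpha}}\lesssim\langle y/R_0\rangle^{\delta_{\rm dis}+2-\Lambda}$. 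Along $y=y_0e^{s}$ with $s=\tau-\bar\tau$, multiply by $e^{\Lambda s}$ and write $e^{-\delta_{\rm dis}\tau}=e^{-\delta_{\rm dis}\bar\tau}e^{-\delta_{\rm dis}s}$. This gives
\[
C\,e^{-\delta_{\rm dis}\bar\tau}\,\big\langle y_0/R_0\big\rangle^{\delta_{\rm dis}-1-\Lambda+3\eta}\,e^{(-1+3\eta)s}.
\]

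Two consequences follow.
\begin{itemize}
\item Time integrability is automatic and comes from the third derivative, not from $\delta_{\rm dis}>0$. Indeed your $\kappa'$ equals $\delta_{\rm dis}-\text{(gain)}+O(\eta)$, so $e^{-\delta_{\rm dis}\tau}e^{\kappa'(\tau-\bar\tau)}$ decays if and only if the gain is positive.
\item The condition $\delta_{\rm dis}>0$ only supplies the small prefactor $e^{-\delta_{\rm dis}\tau_0}$, which absorbs $C(\smallc_0,\smallc_1)$.
\end{itemize}

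What your display takes for granted is the spatial factor $\langle y_0/R_0\rangle^{-\Lambda}$. For rays with $\bar\tau=\tau_0$ and $|y_0|\to\infty$, this needs $\delta_{\rm dis}-1+3\eta\le0$, i.e.\ an \emph{upper} bound $\delta_{\rm dis}<1$, which you never establish. The paper derives it from $\Lambda<\Lambda^*(\gamma)$ in \eqref{eq:-1+delta_dis1}--\eqref{eq:-1+delta_dis2}, and then takes $\eta\le\frac{1}{10}(1-\delta_{\rm dis})$. Without this, the ray argument only gives $|\nabla U|\lesssim\langle y/R_0\rangle^{-\Lambda+\delta_{\rm dis}-1+O(\eta)}$, which is weaker than \eqref{eq:voltaire2}. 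With it, your argument closes exactly as in the paper.
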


\begin{proof}
We divide the proof into three steps.

\smallskip
\textbf{1}. {\it Bounds of $\Clss$, $\widetilde{\Clss}$, $U$, and $\widetilde{U}$ when $|y| \le R_0$.} 
For $|y|\leq R_0$, it follows from \eqref{Para-ineq} and the Gagliardo-Nirenberg inequality \eqref{eq:GNresultinfty_simplified} between
\eqref{lowerestimate1} and
\eqref{perturbationenergy} with
$$
\varphi= \phi^{\frac{1}{2}},\,\,\, l = K, \,\,\,   i=1, \,\,\, \theta = \frac{1}{K-3/2},
$$
that
\begin{equation*}
     | \nabla \widetilde{\Clss} | + | \nabla  \widetilde{U} | \leq C(K)\big(\smallc_0^{1 - \frac{1}{K-3/2}} E^{\frac{1}{2K-3}} \phi^{- \frac{K}{2(K-3/2)} } + \smallc_0 \langle y \rangle^{-j}\big) \leq C(K)\smallc_0^{\frac{9}{10}} \leq C \leq C\big\langle \frac{y}{R_0} \big\rangle^{-\Lambda}.
\end{equation*}
This, together with the properties of
$(\widehat{X}\overline{\Clss},\, \widehat{X}\overline{U})$ stated in \eqref{profile decay}, yields \eqref{eq:voltaire2}.

\smallskip
\textbf{2}. {\it Bounds of $\Clss$ and $\widetilde{\Clss}$ when $|y| \geq R_0$.} 
In the exterior region $B^c(0, R_0)$, \eqref{Rochoice estimate}, and \eqref{lowerestimate1} yield 
 $\max\{|\Clss|, |U|\} \leq C\smallc_0$. We then interpolate these bounds with \eqref{higherestimate3}. More precisely, applying the Gagliardo-Nirenberg inequality \eqref{eq:GNresultinfty_simplified} with
$$
\varphi= \phi^{\frac{1}{2}}, \,\,\, l = K, \,\,\,   i=j, \,\,\, \theta = \frac{j}{K-3/2},
$$
yields that, for $j \in \{ 1, 2, 3\}$,
\begin{equation*}
     | \nabla^j \Clss | + | \nabla^j  U | \leq C(K)\big(\smallc_0^{1 - \frac{j}{K-3/2}} E^{\frac{j}{2K-3}} \phi^{- \frac{Kj}{2(K-3/2)} } + \smallc_0 \langle y \rangle^{-j}\big).
\end{equation*}
We obtain from \eqref{Para-ineq} that
\begin{equation} \label{eq:rough_decay2}
    | \nabla \Clss | + | \nabla U | \leq \smallc_0^{\frac{9}{10}} \phi^{-\frac{1}{2}}, \quad
    | \nabla^2 \Clss | + | \nabla^2 U | \leq \smallc_0^{\frac{9}{10}} \phi^{-1}, \quad
    | \nabla^3 \Clss | + | \nabla^3 U | \leq \smallc_0^{\frac{9}{10}} \phi^{-\frac{3}{2}} .
\end{equation}
Applying $\partial_{y_i}$ to $\eqref{selfsimilar eq}_1$ yields
\begin{equation*}
    \partial_\tau \partial_{y_i}  \Clss = -\Lambda \partial_{y_i} \Clss - y\cdot \nabla \partial_{y_i} \Clss - \partial_{y_i} (U \cdot \nabla \Clss ) - \alpha \partial_{y_i} ( \Clss \,\dive \,U).
\end{equation*}
    For any $|y| \geq R_0$, we consider $y = y_0 e^{(\tau-\bar{\tau})}$, 
    where either $\bar{\tau} = \tau_0$ or $|y_0| = R_0$. Fix $i \in \{1, 2, 3\}$, define   
    $$\omega_{\Clss,y_0,i}(\tau) := e^{\Lambda(\tau-\bar{\tau})} \partial_{y_i} \Clss( \tau,  y_0 e^{\tau-\bar\tau} ).$$
    Then $\omega_{\Clss,y_0,i}$ satisfies
    \begin{equation*}
       \partial_{\tau} \omega_{\Clss,y_0,i} = -e^{\Lambda(\tau-\bar{\tau})} \big( \partial_{y_i} U\cdot \nabla \Clss + U \cdot \nabla \partial_{y_i} \Clss + \alpha \partial_{y_i}  \Clss \,\dive \,U + \alpha \Clss  \,\dive (\partial_{y_i} U) \big),
    \end{equation*}
    where all the terms are evaluated at $( \tau, \,y_0 e^{\tau-\bar\tau})$. We obtain from $| \Clss |, | U | \leq C\smallc_0$ and \eqref{eq:rough_decay2} that 
    \begin{equation*}
        | \partial_\tau \omega_{\Clss,y_0,i} | \leq C\smallc_0^{\frac 95} \phi ( y_0 e^{\tau-\bar{\tau}} )^{-1} e^{\Lambda(\tau-\bar{\tau})} \leq C\smallc_0^{\frac 95}e^{(\Lambda-2+2\eta)(\tau-\bar{\tau})} \big\langle\frac{y_0}{R_0} \big\rangle^{-2+2\eta}.
    \end{equation*}
    Consequently,
    \begin{equation*} \begin{split}
  & \big|\partial_{y_i}\Clss (\tau, y_0e^{\tau- \bar\tau}) - e^{-\Lambda(\tau-\bar{\tau})}\partial_{y_i}\Clss (\bar{\tau}, y_0))\big|\\
  &= e^{-\Lambda(\tau-\bar{\tau})}|  \omega_{\Clss,y_0,i}(\tau) - \omega_{\Clss,y_0,i}(\bar{\tau}) | \leq C\smallc_0^{\frac 95}e^{-\Lambda(\tau-\bar{\tau})} \big\langle \frac{y_0}{R_0} \big\rangle^{-\Lambda}.
   \end{split}
\end{equation*}
It follows from \eqref{choiceR0} and \eqref{initialdatacondition2} that, for all $|y_0|\ge R_0$, 
    $$
    |\partial_{y_i}\Clss(\tau_0, y_0)|\leq C  \langle y_0 \rangle^{-\Lambda} \leq C\langle\frac{y_0}{R_0} \rangle^{-\Lambda}.
    $$ 
Moreover, by the result obtained in Step 1, we have 
$$
|\partial_{y_i}\Clss(\bar{\tau}, y_0)|\leq C \qquad\,  \text{for  $|y_0|=R_0$ and $\bar\tau \ge \tau_0$}.
$$ 
Combining these estimates, we derive that, for all $|y|\geq R_0$, 
\begin{equation}\label{Order1-S}
    |\nabla \Clss|\leq C\big\langle\frac{y}{R_0} \big\rangle^{-\Lambda}.
\end{equation}
     
For $\nabla \widetilde{\Clss}$, it suffices to prove $$|\nabla(\widehat{X}\overline{\Clss})|\leq C\langle\frac{y}{R_0} \rangle^{-\Lambda}.$$
Since $$\nabla(\widehat{X}\overline{\Clss})=\nabla\overline{\Clss}\widehat{X}+\nabla\widehat{X}\overline{\Clss},$$
the first term inherits the decay of $\overline{\Clss}$, while the second term is controlled using the definition of $\widehat{X}$, which yields the desired decay.

\smallskip
\textbf{3}. {\it Bounds of $U$ and $\widetilde{U}$ when $|y| \geq R_0$.}      Applying $\partial_{y_i}$ to $\eqref{selfsimilar eq}_2$ gives
\begin{equation*}
\begin{aligned}
     \partial_\tau \partial_{y_i} U =& -\Lambda \partial_{y_i} U - y\cdot \nabla \partial_{y_i} U - \partial_{y_i} U\cdot \nabla U -U\cdot \nabla (\partial_{y_i} U) - \alpha \partial_{y_i} \Clss \nabla \Clss -\alpha  \Clss \nabla (\partial_{y_i} \Clss)\\
     &+C_{\rm{dis}}e^{-\delta_{\rm{dis}}\tau}\Big(\Clss^{\frac{\delta-1}{\alpha}}L(\partial_{y_i} U)+ \frac{\delta-1}{\alpha} \Clss^{\frac{\delta-1}{\alpha}-1}\partial_{y_i} \Clss L(U)+ \frac{\delta}{\alpha}\Clss^{\frac{\delta-1}{\alpha}-1}\nabla \Clss \cdot \md(\partial_{y_i} U)\Big.\\
     & \qquad\qquad\qquad\,\,+ \Big.\frac{\delta}{\alpha} \Clss^{\frac{\delta-1}{\alpha}-1}\nabla (\partial_{y_i} \Clss)\cdot \md(U) +\frac{\delta}{\alpha}\big(\frac{\delta-1}{\alpha}-1\big) \Clss^{\frac{\delta-1}{\alpha}-2}\,\partial_{y_i} \Clss\,\nabla \Clss \cdot \md(U) \Big).
\end{aligned}
\end{equation*}
    For any $|y| \geq R_0$, we consider $y = y_0 e^{(\tau-\bar{\tau})}$ for either $\bar{\tau} = \tau_0$ or $|y_0| = R_0$. Fix $i \in \{1, 2, 3\}$, define 
    $$\omega_{U,y_0,i}(\tau) := e^{\Lambda(\tau-\bar{\tau})} \partial_{y_i} U(\tau,  y_0 e^{\tau-\bar\tau} ).$$ 
    Then $\omega_{U,y_0,i}$ satisfies
    \begin{equation*}
    \begin{aligned}
       \partial_{\tau} \omega_{U,y_0,i} =& -e^{\Lambda(\tau-\bar{\tau})} \big( (\partial_{y_i} U \cdot \nabla) U + (U\cdot \nabla )(\partial_{y_i} U) + \alpha \partial_{y_i}  \Clss \nabla \Clss + \alpha \Clss \nabla(\partial_{y_i} \Clss) \big)\\
       &+C_{\rm{dis}}e^{-\delta_{\rm{dis}}\tau}e^{\Lambda(\tau-\bar\tau)}\Big(\Clss^{\frac{\delta-1}{\alpha}}L(\partial_{y_i} U)+ \frac{\delta-1}{\alpha} \Clss^{\frac{\delta-1}{\alpha}-1}\partial_{y_i} \Clss L(U)\\
       &\qquad \qquad \qquad \qquad \quad\,\, + \frac{\delta}{\alpha}\Clss^{\frac{\delta-1}{\alpha}-1}\nabla \Clss \cdot \md(\partial_{y_i} U)\Big.+ \Big.\frac{\delta}{\alpha} \Clss^{\frac{\delta-1}{\alpha}-1}\nabla (\partial_{y_i} \Clss)\cdot \md(U) \\
       &\qquad \qquad \qquad \qquad \quad\,\, +\frac{\delta}{\alpha}\big(\frac{\delta-1}{\alpha}-1\big) \Clss^{\frac{\delta-1}{\alpha}-2}\,\partial_{y_i} \Clss\,\nabla\Clss \cdot \md(U) \Big).
     \end{aligned}
\end{equation*}
 It follows from \eqref{S-para}, \eqref{Rochoice estimate}, \eqref{lowerestimate1}, Lemma \ref{lemma:Sbounds}, and  \eqref{eq:rough_decay2}--\eqref{Order1-S} that
\begin{align}
&e^{\Lambda(\tau-\bar{\tau})}\big|(\partial_{y_i} U \cdot \nabla) U + (U\cdot \nabla )(\partial_{y_i} U) + \alpha \partial_{y_i}  \Clss \nabla \Clss + \alpha \Clss \nabla (\partial_{y_i} \Clss)\big|\nonumber\\
         & \,\,\,\leq  C\smallc_0^{\frac{9}{5}}e^{(\Lambda-2+2\eta)(\tau-\bar{\tau})} \big\langle\frac{y_0}{R_0} \big\rangle^{-2+2\eta},\label{Order1U_1}\\[1mm]
        &e^{-\delta_{\rm{dis}}\tau}e^{\Lambda(\tau-\bar{\tau})}\big|\Clss^{\frac{\delta-1}{\alpha}}L(\partial_{y_i} U)\big|\nonumber\\
        &\,\,\, \leq  Ce^{-\delta_{\rm{dis}}\bar\tau}\smallc_1^{\frac{\delta-1}{\alpha}}\smallc_0^{\frac{9}{10}}\big\langle\frac{y_0}{R_0} \big\rangle^{-1+3\eta+\delta_{\rm{dis}}-\Lambda}e^{(-1+3\eta)(\tau-\bar{\tau})}, \label{Order1U_2}\\[1mm] 
         &e^{-\delta_{\rm{dis}}\tau}e^{\Lambda(\tau-\bar{\tau})}\big| \Clss^{\frac{\delta-1}{\alpha}-1}\partial_{y_i} \Clss L(U)+ \Clss^{\frac{\delta-1}{\alpha}-1} \nabla \Clss \cdot \md(\partial_{y_i} U) \big|\nonumber\\
         &\,\,\,\leq   Ce^{-\delta_{\rm{dis}}\tau}e^{\Lambda(\tau-\bar{\tau})}\smallc_1^{\frac{\delta-1}{\alpha}-1}\smallc_0^{\frac{9}{10}+1}\big\langle\frac{y_0e^{\tau-\bar\tau}}{R_0} \big\rangle^{-3+2\eta+(\frac{1-\delta}{\alpha})(\Lambda-1)}\nonumber\\
         &\,\,\, \leq  Ce^{-\delta_{\rm{dis}}\bar\tau}\smallc_1^{\frac{\delta-1}{\alpha}-1}\smallc_0^{\frac{19}{10}}\big\langle\frac{y_0}{R_0} \big\rangle^{-1+2\eta+\delta_{\rm{dis}}-\Lambda}e^{(-1+2\eta)(\tau-\bar{\tau})},\label{Order1U_3}\\[1mm]
 &e^{-\delta_{\rm{dis}}\tau} e^{\Lambda(\tau-\bar{\tau})}\big| \Clss^{\frac{\delta-1}{\alpha}-2}\,\partial_{y_i} \Clss\,\nabla \Clss \cdot \md(U) \big|\nonumber\\
         & \,\,\,\leq  Ce^{-\delta_{\rm{dis}}\tau}e^{\Lambda(\tau-\bar{\tau})}\smallc_1^{\frac{\delta-1}{\alpha}-2}\smallc_0^{\frac{9}{10}+2}\big\langle\frac{y_0e^{\tau-\bar\tau}}{R_0} \big\rangle^{-3+\eta+(\frac{1-\delta}{\alpha})(\Lambda-1)}\nonumber\\
         &\,\, \,\leq  Ce^{-\delta_{\rm{dis}}\bar\tau}\smallc_1^{\frac{\delta-1}{\alpha}-2}\smallc_0^{\frac{29}{10}}\big\langle\frac{y_0}{R_0} \big\rangle^{-1+\eta+\delta_{\rm{dis}}-\Lambda}e^{(-1+\eta)(\tau-\bar\tau)}. \label{Order1U_4}
  \end{align}       
For the remaining term, we apply \eqref{Para-ineq}, \eqref{choiceR0}, \eqref{higherestimate3}, \eqref{Order1-S}, and the Gagliardo-Nirenberg inequality \eqref{eq:GNresultinfty} between $\|\langle y \rangle^{\Lambda}\nabla \Clss\|_{L^{\infty}}$ and $\|\phi^{\frac{K}{2}}\nabla^K \Clss\|_{L^2}$ with
$$
\varphi =\phi^{\frac{K}{2(K-1)}}, \  \ \psi =\langle y\rangle^{\frac{\Lambda}{K-1}},\   \  i=1,\  \  p=\infty, \   \ q=2, \   \  l=K-1, \   \  \theta =\frac{1}{K-5/2},
$$
to obtain
    \begin{align}\label{Order2-S}
        |\nabla^2 \Clss| &\leq   C(K)\|\langle y \rangle^{\Lambda}\nabla \Clss\|^{1-\frac{1}{K-5/2}}_{L^{\infty}} \|\phi^{\frac{K}{2}}\nabla^K \Clss\|^{\frac{1}{K-5/2}}_{L^2}\langle y \rangle^{-\Lambda(1-\frac{1}{K-5/2})}\phi^{-\frac{K}{2(K-5/2)}} \nonumber \\
        &\quad + C(K)\|\langle y \rangle^{\Lambda}\nabla \Clss\|_{L^{\infty}}\,\langle y \rangle^{-1-\Lambda}  \nonumber \\
        & \leq C(K)\Big(E^{\frac{1}{2K-5}}\big\langle \frac{y}{R_0} \big\rangle^{-\Lambda(1-\frac{1}{K-5/2})}\big\langle \frac{y}{R_0} \big\rangle^{-\frac{K(1-\eta)}{K-5/2}}+ \big\langle \frac{y}{R_0} \big\rangle^{-1-\Lambda}\Big)\\
        &= C(K) \Big(E^{\frac{1}{2K-5}}\langle \frac{y}{R_0} \rangle^{-1+\eta-\Lambda}\langle \frac{y}{R_0} \rangle^{\frac{2\Lambda-5(1-\eta)}{2K-5}}+ \big\langle \frac{y}{R_0} \big\rangle^{-1-\Lambda}\Big) \nonumber \\
        &\leq  C(K)\smallc_0^{-\frac{1}{20}}\big\langle \frac{y}{R_0} \big\rangle^{-1+\eta-\Lambda} \leq \smallc_0^{-\frac{1}{10}}\big\langle \frac{y}{R_0} \big\rangle^{-1+\eta-\Lambda}. \nonumber
    \end{align}    
Therefore, it follows from Lemma~\ref{lemma:Sbounds}, \eqref{eq:rough_decay2}, and \eqref{Order2-S} that
\begin{equation}
    \begin{aligned}
        &e^{-\delta_{\rm{dis}}\tau}e^{\Lambda(\tau-\bar{\tau})}\big|\frac{\delta}{\alpha}\Clss^{\frac{\delta-1}{\alpha}-1}\nabla(\partial_{y_i} \Clss)\cdot \md(U) \big|  \\
        & \leq  C e^{-\delta_{\rm{dis}}\bar\tau}\smallc_1^{\frac{\delta-1}{\alpha}-1}\smallc_0^{\frac{4}{5}}\big\langle\frac{y_0}{R_0} \big\rangle^{-1+2\eta+\delta_{\rm{dis}}-\Lambda}e^{(-1+2\eta)(\tau-\bar{\tau})}.\label{Order1U_5}
    \end{aligned}
\end{equation}    
Moreover, when $1< \gamma < \frac{5}{3}$, we obtain
\begin{equation}\label{eq:-1+delta_dis1}
    \begin{aligned}
        -1+\delta_{\rm{dis}} &= \big(\frac{1-\delta}{\alpha}+1\big)(\Lambda-1)-2
        \leq \frac{2}{(1+\sqrt{\frac{2}{\gamma-1}})^2}\,\frac{2(1-\delta)+\gamma-1}{\gamma-1}-2\\
        & =\frac{2\gamma+2-4\delta}{\gamma+1+2\sqrt{2(\gamma-1)}}-2 <0.
    \end{aligned}
\end{equation}
When $\frac{5}{3}< \gamma < 1+\frac{2}{\sqrt{3}}$, we similarly have
\begin{equation}\label{eq:-1+delta_dis2}
    \begin{aligned}
        -1+\delta_{\rm{dis}} &= \big(\frac{1-\delta}{\alpha}+1\big)(\Lambda-1)-2\leq \frac{(3-\sqrt{3})(\gamma-1)+(6-2\sqrt{3})(1-\delta)}{2+\sqrt{3}(\gamma-1)}-2\\
        & =\frac{(3-3\sqrt{3})(\gamma-1)+(6-2\sqrt{3})(1-\delta)-4}{2+\sqrt{3}(\gamma-1)} <0.
    \end{aligned}
\end{equation}
Consequently, combining \eqref{range of Lambda}--\eqref{def:Lambda*}, \eqref{Order1U_1}--\eqref{Order1U_4}, and \eqref{Order1U_5}--\eqref{eq:-1+delta_dis2}, we choose $\eta \leq \frac{1}{10}(1-\delta_{\rm{dis}})$ such that
\begin{equation}\label{Order1-U'}
    \begin{aligned}
        &\big|\partial_{y_i}U (y_0e^{\tau-\bar \tau},\tau) - e^{-\Lambda(\tau-\bar{\tau})}\partial_{y_i}U (y_0,\bar{\tau}) |=e^{-\Lambda(\tau-\bar{\tau})}\big|  \omega_{U,y_0,i}(\tau) - \omega_{U,y_0,i}(\bar{\tau}) |\\
        &\leq C e^{-\Lambda(\tau-\bar{\tau})}\smallc_0^{\frac 95}\big\langle \frac{y_0}{R_0} \big\rangle^{-2+2\eta}+ Ce^{-\Lambda(\tau-\bar{\tau})}e^{-\delta_{\rm{dis}}\bar\tau}\smallc_1^{\frac{\delta-1}{\alpha}}\big\langle\frac{y_0}{R_0} \big\rangle^{-\Lambda}\big(\smallc_0^{\frac{9}{10}}+\smallc_1^{-1}\smallc_0^{\frac{9}{5}}+\smallc_1^{-2}\smallc_0^{\frac{29}{10}}\big)\\
        &\quad +Ce^{-\Lambda(\tau-\bar{\tau})}e^{-\delta_{\rm{dis}}\bar\tau}\smallc_1^{\frac{\delta-1}{\alpha}-1}\smallc_0^{\frac{4}{5}}\big\langle \frac{y_0}{R_0}\big\rangle^{-\Lambda}\\
        &\leq  \big\langle\frac{y_0e^{\tau-\bar\tau}}{R_0} \big\rangle^{-\Lambda}.
    \end{aligned}
\end{equation}
Here we used the choice of $\eta$ together with the fact that $\delta_{\rm dis}>0$ and $\bar{\tau} \ge \tau_0$
 is sufficiently large to absorb all the large constants appearing in the above estimates.
For $U$, we also have 
$$
|\partial_{y_i}U(\bar{\tau}, y_0)|\leq C \qquad\, \text{for  $|y_0|=R_0$ and $\bar\tau \ge \tau_0$},
$$
and 
$$
|\partial_{y_i}U(\tau_0, y_0)|\leq C \langle\frac{y_0}{R_0} \rangle^{-\Lambda} \qquad 
\text{for  $|y_0|\ge R_0$},
$$
which, along with \eqref{Order1-U'} and $y=y_0e^{\tau-\bar\tau}$, yields that, for all $|y| \geq R_0$,
\begin{equation}\label{Order1-U}
    |\nabla U|\leq C\big\langle\frac{y}{R_0} \big\rangle^{-\Lambda}.
\end{equation}
Analogously to the proof of $\nabla \widetilde{\Clss}$, we obtain the decay of $\nabla \widetilde{U}$.

This completes the proof of Lemma \ref{lemma:Sprimebounds}.
\end{proof}

In addition, we derive estimates for the higher-order derivatives of  $(\Clss,U)$.

\begin{Lemma}\label{higher-order}
    For every $1 \leq j \leq K-2$ and for all $ (\tau, y)\in[\tau_0, \tau_*]\times \mathbb{R}^3$, 
    \begin{equation} \label{eq:lessK-2}
         | \nabla^j  \Clss |+| \nabla^j U | \leq C(\smallc_0)  \phi^{-\frac j2} \langle y \rangle^{-(\Lambda-1)}.
    \end{equation}
    For $ j = K-1$, $\bar{\varepsilon}>0$, and for all $ (\tau, y)\in[\tau_0, \tau_*]\times \mathbb{R}^3$, 
    \begin{equation}\label{eq:K-1}
\big\| \langle y \rangle^{K(1-\eta)\frac{K-2}{K-1}-\bar{\varepsilon}}|\nabla^{K-1}\Clss|
\big\|_{L^{2+\frac{2}{K-2}}}+ \big\|\langle y \rangle^{K(1-\eta)\frac{K-2}{K-1}
-\bar{\varepsilon}}|\nabla^{K-1}U| \big\|_{L^{2+\frac{2}{K-2}}}\leq C(\smallc_0,\bar{\varepsilon}).
\end{equation}
\end{Lemma}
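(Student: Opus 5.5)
The plan is to interpolate between a pointwise bound on $(\Clss,U)$ itself and the weighted top-order energy $E_K\le E$ from \eqref{higherestimate3}, using the weighted Gagliardo--Nirenberg inequalities \eqref{eq:GNresultinfty_simplified} and \eqref{eq:GNresult} (Appendix A). This is the same scheme already used for \eqref{eq:rough_decay2}, but now the zeroth-order input carries the decay $\langle y\rangle^{-(\Lambda-1)}$ instead of a plain $L^\infty$ bound.

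The first step is a weighted zeroth-order bound. By Lemma~\ref{lemma:Sbounds} we have $\Clss\le C\smallc_0\langle y/R_0\rangle^{-(\Lambda-1)}$ for $|y|\ge R_0$, apart from the competing term $e^{-(\tau-\tau_0)(\Lambda-1)}$. Since we only need decay away from the far-field state, it is more convenient to work with the perturbation. We have $|\widetilde\Clss|+|\widetilde U|\le \smallc_0 e^{-\varepsilon(\tau-\tau_0)}$, while the profile part satisfies $|\nabla^j(\widehat X\overline\Clss)|\le C\langle y\rangle^{-(\Lambda-1)-j}$ by \eqref{profile decay} and \eqref{estimatehatX2}. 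However, $\widetilde\Clss$ has no spatial decay rate by itself. I would therefore interpolate $\nabla\Clss$ instead: by Lemma~\ref{lemma:Sprimebounds}, $\|\langle y\rangle^{\Lambda}\nabla\Clss\|_{L^\infty}+\|\langle y\rangle^{\Lambda}\nabla U\|_{L^\infty}\le C(R_0)=C(\smallc_0)$. This already gives the case $j=1$ of \eqref{eq:lessK-2}, because $\langle y\rangle^{-\Lambda}\le C\phi^{-1/2}\langle y\rangle^{-(\Lambda-1)}$ up to the harmless factor $\langle y\rangle^{-\eta}$.

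For $2\le j\le K-2$, I would apply \eqref{eq:GNresultinfty} exactly as in \eqref{Order2-S}. The inputs are $\psi=\langle y\rangle^{\Lambda/(K-1)}$, $\varphi=\phi^{K/(2(K-1))}$, $l=K-1$ derivatives of $\nabla\Clss$, index $i=j-1$, $p=\infty$, $q=2$, and $\theta=\frac{j-1}{K-5/2}$. This yields
$$|\nabla^j\Clss|\le C(K)\Big(E^{\frac{\theta}{2}}\langle y\rangle^{-\Lambda(1-\theta)}\phi^{-\frac{K\theta}{2}}+\langle y\rangle^{-\Lambda-(j-1)}\Big).$$
We then compare exponents. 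Using $\phi\sim\langle y\rangle^{2(1-\eta)}$, the first term decays like $\langle y\rangle^{-\Lambda-(j-1)(1-\eta)+O(j/K)}$. We need this to be at most $\phi^{-j/2}\langle y\rangle^{-(\Lambda-1)}\sim\langle y\rangle^{-(\Lambda-1)-j(1-\eta)}$. The difference is $1-(1-\eta)+O(j/K)=\eta-O(j/K)$, so the required gain is only $O(j/K)$. Making this rigorous uniformly up to $j=K-2$, where $\theta$ approaches $1$, is the main obstacle. Near the top, the factor $\phi^{-K\theta/2}$ gives $\langle y\rangle^{-(1-\eta)K\theta}$. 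One checks that $\Lambda(1-\theta)+(1-\eta)K\theta\ge \Lambda-1+j(1-\eta)$ for $\theta=(j-1)/(K-5/2)$, since $K\theta\ge j-1+\tfrac32\theta$. The inequality reduces to $1+\tfrac32\theta(1-\eta)\ge\Lambda\theta+\eta$, which holds since $\Lambda<2$ and $\theta\le1$ is bounded away from the bad range for $\Lambda$ close to $2$. If the margin fails at $j=K-2$ for some $\Lambda$, I would fall back to interpolating from $\Clss$ itself via the weighted $L^\infty$ bound $\langle y\rangle^{\Lambda-1}|\widehat X\overline\Clss|$ plus the exponentially small $\widetilde\Clss$ part. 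This costs only a constant $C(\smallc_0)$, which is allowed.

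For \eqref{eq:K-1}, I would use the $L^{\bar r}$ form \eqref{eq:GNresult} with $\ell=K-1$, $\bar r=\frac{2(K-1)}{K-2}=2+\frac{2}{K-2}$, $\theta=\frac{\ell-3/\bar r}{K-3/2}$, and weight $\phi^{K\theta/2}$, together with an extra $\langle y\rangle^{-\bar\varepsilon}$ to make the lower-order term integrable, just as in \eqref{interpol1}. With $\phi^{K\theta/2}\gtrsim\langle y\rangle^{K(1-\eta)\theta}$ and $\theta\ge\frac{K-2}{K-1}$ up to the $\bar\varepsilon$ slack, this produces the stated weight $\langle y\rangle^{K(1-\eta)\frac{K-2}{K-1}-\bar\varepsilon}$. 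The bound is in terms of $\|\langle y\rangle^\Lambda\nabla\Clss\|_{L^\infty}^{1-\theta}E^{\theta/2}$ plus lower order, hence is at most $C(\smallc_0,\bar\varepsilon)$. The same argument applies to $U$.
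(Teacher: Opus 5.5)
Your proposal is correct and is essentially the paper's proof. For \eqref{eq:lessK-2} the paper applies \eqref{eq:GNresultinfty} to $\nabla U$ with exactly your choice $\varphi=\phi^{\frac{K}{2(K-1)}}$, $\psi=\langle y\rangle^{\frac{\Lambda}{K-1}}$, $l=K-1$, $i=j-1$, $\theta=\frac{j-1}{K-5/2}$. For \eqref{eq:K-1} it applies \eqref{eq:GNresult} to the same pair with $i=K-2$, $\bar r=2+\frac{2}{K-2}$, $\theta=\frac{K-2}{K-1}$, using $K\eta\gg1$ to verify \eqref{eq:GN_extracond}; the parameters you literally wrote (the \eqref{interpol1}-type choice with $\psi=1$, interpolating from $\|\Clss\|_{L^\infty}$) also work. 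The exponent comparison is not a real obstacle: $K\theta=j-1+\frac52\theta$ holds exactly, so the margin is $\eta+\theta\big(\frac52(1-\eta)-\Lambda\big)>0$ for all $j$. Your fallback is therefore never needed, and it would not have worked anyway, since $\widetilde\Clss$ carries no spatial decay.
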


\begin{proof}
    For \eqref{eq:lessK-2},  in the Gagliardo-Nirenberg inequality \eqref{eq:GNresultinfty}, choose 
    $$
    \varphi =\phi^{\frac{K}{2(K-1)}}, \  \  \psi= \langle y \rangle^{\frac{\Lambda}{K-1}},\  \   l=K-1, \ \  i=j-1,\ \ p=\infty, \ \ q=2, \  \ \theta = \frac{j-1}{K- 5/2}. 
    $$
Then it follows from   \eqref{choiceR0}, \eqref{higherestimate3}, and \eqref{Order1-U} that 
\begin{equation}\label{decayestimatederj}
\begin{aligned}
       | \nabla^j U | &\leq   C(K)\big(\| \langle y \rangle^{\Lambda}\nabla U\|^{1-\theta}_{L^{\infty}} \|\phi^{\frac{K}{2}}\nabla^K U\|^{\theta}_{L^2}\langle y \rangle^{-\Lambda(1-\theta)}\phi^{-\frac{K\theta}{2}}\\
       &\qquad\qquad\,\, + \|\langle y \rangle^{\Lambda}\nabla U\|_{L^{\infty}}\langle y \rangle^{-j+1-\Lambda}\big)\\
        &\leq  C(K)\big(R_0^{\Lambda} E^{\frac{\theta}{2}}\langle y \rangle^{-\Lambda(1-\theta)}\phi^{-\frac{K}{2}\theta} + R_0^{\Lambda}\langle y \rangle^{-j+1-\Lambda}\big)\\
        &\leq C(\smallc_0)  \phi^{-\frac{j}{2}} \langle y \rangle^{-(\Lambda-1)}\phi^{-\frac{K\theta}{2} + \frac{j}{2}}  \langle y \rangle^{-\Lambda(1-\theta)+\Lambda-1}\\
        &\leq C(\smallc_0)\phi^{-\frac{j}{2}} \langle y \rangle^{-(\Lambda-1)}\phi^{\frac{K - 5j/2 }{2(K-5/2)}} \langle y \rangle^{\frac{\Lambda j-\Lambda - K + 5/2}{K-5/2}} \\ 
        &\leq C(\smallc_0) \phi^{-\frac{j}{2}} \langle y \rangle^{-(\Lambda-1)} \langle y \rangle^{\frac{\Lambda j-\Lambda - K + 5/2 + (1-\eta) (K - 5j/2 )}{K-5/2}}.
 \end{aligned}
\end{equation}
For $1\leq j \leq K-2$, using $\frac{1}{K} \ll \eta \ll 1$ yields
\begin{equation*}
    \Lambda j-\Lambda - K + \frac 52 + (1-\eta) (K - \frac52 j)=(\Lambda-\frac{5}{2})(j-2)+\eta(\frac{5}{2}j-K) <0.
\end{equation*}
The estimate for $\Clss$ is analogous. The proof of  \eqref{eq:lessK-2} is complete.

For \eqref{eq:K-1}, we use the assumption that $1 \ll K\eta $, 
together with the Gagliardo-Nirenberg inequality \eqref{eq:GNresult} applied to $\|\nabla U \langle y \rangle^\Lambda\|_{L^{\infty}}$ and $\|\phi^{\frac{K}{2}}\nabla^K U \|_{L^{2}}$. Choosing  
\begin{equation*}
\begin{split}
&\varphi =\phi^{\frac{K}{2(K-1)}}, \  \ \psi= \langle y \rangle^{\frac{\Lambda}{K-1}},\  \ l=K-1,\  \  i=K-2, \\
&  \bar{r}=2+\frac{2}{K-2},   \  \ p=\infty,\  \   q=2, \  \  \theta=\frac{K-2}{K-1},
\end{split}
\end{equation*}
we obtain
\begin{equation}
    \| \langle y \rangle^{-\bar\varepsilon+\frac{\Lambda}{K-1}+K(1-\eta)\frac{K-2}{K-1}}\nabla^{K-1}U\|_{L^{2+\frac{2}{K-2}}} \leq C(\smallc_0,\bar{\varepsilon}). 
\end{equation}
The same bound holds for $\Clss$ by an identical argument. 
Therefore, \eqref{eq:K-1} follows from $\Lambda>1$.
\end{proof}

\section{Global-In-Time Energy Estimates }\label{Section6}
This section is devoted to establishing refined estimates that strengthen the \textit{a priori} assumptions in the bootstrap argument. These refinements allow us to close the bootstrap argument 
and thereby obtain the desired global-in-time energy estimates of the solution.

\subsection{Proof of Proposition~\ref{prop:bootstrap}}\label{pro41proof}
In \S \ref{pro41proof},     the initial data function 
$(\widetilde{\Clss}_0, \widetilde{U}_0)$  under consideration is constructed in  \eqref{eq:tilde_is_stable}--\eqref{initialdatacondition2}, $1< \gamma <1+\frac{2}{\sqrt{3}}$, $\Lambda \in (1, \Lambda^*(\gamma))$ is introduced in {\rm Lemma~\ref{thm:existence_profiles}}, $\delta\in (0,\frac{1}{2})$ satisfies $\delta_{\rm dis}>0$,
and assumptions \eqref{Para-ineq}--\eqref{choiceR0} and \eqref{unsestimate1}--\eqref{higherestimate3} always hold.

\subsubsection{Lower order temporal decay estimates}\label{subsection6.1}
With the help of Lemmas \ref{per-U,S}--\ref{higher-order}, we first obtain the 
estimates for $\mathcal{E}$, $ \mathcal{N}$, and $\mathcal{F}_{\rm dis}$.

\begin{Lemma}\label{Forcingterm1} For all  $\tau\in[\tau_0, \tau_*]$, 
\begin{equation}\label{eq:est-E}
    \begin{aligned}
        &\chi_2 \mathcal E_{\subclss}=0,\ \  \chi_2 \mathcal E_{u}=0,\\
&\max\big\{\|\mathcal{E}_\subclss\|_{L^{\infty}},\ \  \|\nabla^4\mathcal{E}_\subclss\|_{L^{2}(B^c(0,C_0))}\big\} \leq \smallc_1 e^{-(\tau-\tau_0)},\\   
        &\max\big\{\|\mathcal{E}_u\|_{L^{\infty}},\ \ \|\nabla^4\mathcal{E}_u\|_{L^{2}(B^c(0,C_0))}\big\} \leq \smallc_1 e^{-(\tau-\tau_0)}.
    \end{aligned}
\end{equation}   
\end{Lemma}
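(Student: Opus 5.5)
The plan is to use that every term of $\mathcal E=(\mathcal E_\subclss,\mathcal E_u)$ carries either the factor $\widehat X^2-\widehat X$ or the factor $\nabla\widehat X$. Both vanish wherever $\widehat X\equiv 1$, i.e. on $B(0,\tfrac12 e^{\tau})$. By \eqref{estimatehatX}, $\widehat X^2-\widehat X$ and $\nabla\widehat X$ are supported in the annulus $\mathcal A_\tau:=B(0,e^\tau)\cap B^c(0,\tfrac12 e^\tau)$. Since $\tau\ge\tau_0\gg1$ gives $\tfrac12 e^{\tau}\ge\tfrac12 e^{\tau_0}>\tfrac52 C_0$, and $\chi_2$ is supported in $B(0,\tfrac52 C_0)$, the supports of $\chi_2$ and $\mathcal E$ are disjoint. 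This gives $\chi_2\mathcal E_\subclss=\chi_2\mathcal E_u=0$.

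For the $L^\infty$ bounds I will use the profile decay \eqref{profile decay}, namely $|\nabla^k\overline\Clss|+|\nabla^k\overline U|\le C(k)\langle y\rangle^{-(\Lambda-1)-k}$, together with $|\nabla^j\widehat X|\le C(j)e^{-j\tau}$ from \eqref{estimatehatX2}.

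On $\mathcal A_\tau$ one has $|y|\sim e^\tau$. A typical term such as $(\widehat X^2-\widehat X)\overline U\cdot\nabla\overline\Clss$ is then bounded by $Ce^{-(2\Lambda-1)\tau}$. Terms of the form $\widehat X\overline U\cdot\nabla\widehat X\,\overline\Clss$ are bounded by $Ce^{-\tau}e^{-2(\Lambda-1)\tau}$. Hence $\|\mathcal E\|_{L^\infty}\le Ce^{-(2\Lambda-1)\tau}\le Ce^{-\tau}$.

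To reach $\smallc_1e^{-(\tau-\tau_0)}$, I write $Ce^{-\tau}=Ce^{-\tau_0}e^{-(\tau-\tau_0)}$ and invoke the choice $\tau_0\gg1$ from \eqref{Para-ineq} (namely $1/\tau_0\ll\smallc_0^{3/2}\ll\smallc_1$, plus the freedom to take $\tau_0$ as large as needed depending on $\smallc_1$). This gives $Ce^{-\tau_0}\le\smallc_1$. Strictly, $1/\tau_0\ll\smallc_1$ only yields $e^{-\tau_0}\ll\smallc_1$ with room to spare, since $e^{-\tau_0}\le 1/\tau_0$.

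For $\nabla^4\mathcal E$ in $L^2(B^c(0,C_0))$, I distribute the four derivatives by Leibniz. Each resulting term is a product of $\nabla^{j}$ acting on cutoff factors ($\widehat X$, $\widehat X^2-\widehat X$, $\nabla\widehat X$) with $\nabla^{k}$ of profile quantities, and is still supported in $\mathcal A_\tau$. Pointwise, each is bounded by $Ce^{-(2\Lambda-1)\tau}e^{-4\tau}$ up to harmless rearrangements, because every derivative on either factor gains $e^{-\tau}$ (equivalently $\langle y\rangle^{-1}$) on the annulus.

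Integrating over $\mathcal A_\tau$, whose volume is $\le Ce^{3\tau}$, gives
\[
\|\nabla^4\mathcal E\|_{L^2}\le C e^{-(2\Lambda-1)\tau-4\tau+\frac32\tau}\le Ce^{-\tau},
\]
exactly as in the computation in the proof of Lemma~\ref{C0choice}. The same absorption via $\tau_0$ large then finishes the estimate.

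The only delicate point is bookkeeping. I must check that every term of $\mathcal E_u$, $\mathcal E_\subclss$ indeed contains a cutoff-derivative or $\widehat X^2-\widehat X$ factor; this holds by inspection of \eqref{eq: PerS}--\eqref{eq: PerU}. I must also make sure the constant $C$ depends only on the profile, so that the absorption into $\smallc_1$ through $\tau_0$ is legitimate.
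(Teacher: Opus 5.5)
Your proposal is correct and follows essentially the paper's argument. The profile decay \eqref{profile decay}, together with $|\nabla^j\widehat X|\le C(j)e^{-j\tau}$ on the annulus $\tfrac12 e^{\tau}\le|y|\le e^{\tau}$, gives $\|\mathcal E\|_{L^\infty}\le Ce^{(1-2\Lambda)\tau}$ and $\|\nabla^4\mathcal E\|_{L^2}\le Ce^{-(2\Lambda+\frac32)\tau}$; both are then absorbed into $\smallc_1e^{-(\tau-\tau_0)}$ by taking $\tau_0$ large. Your disjoint-support argument for $\chi_2\mathcal E=0$ (using $\tfrac12e^{\tau_0}>\tfrac52C_0$) makes explicit a point that the paper states in the lemma but does not argue in its proof.
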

\begin{proof}
    The decay of profile \eqref{profile decay} gives
    \begin{equation}\label{OrderJ-profile}
        |\nabla^j \overline{\Clss}|+|\nabla^j \overline{U}| \leq C\langle y \rangle^{1-\Lambda-j}.
    \end{equation}
It follows from \eqref{estimatehatX}--\eqref{estimatehatX2} that
\begin{equation*}
    \|(\widehat{X}^2-\widehat{X})\overline{U}\cdot \nabla \overline{U}\|_{L^{\infty}} \leq C \|(\widehat{X}^2-\widehat{X})\langle y \rangle^{1-2\Lambda}\|_{L^{\infty}}  \leq C e^{(1-2\Lambda)\tau},
\end{equation*}
and
\begin{equation*}
    \|\widehat{X}\overline{U}\cdot \nabla\widehat{X}\,\overline{U}\|_{L^{\infty}} \leq  C\||\widehat{X}| |\nabla \widehat{X}| \langle y \rangle^{2-2\Lambda} \|_{L^{\infty}} \leq Ce^{(1-2\Lambda)\tau}.
\end{equation*}
All the remaining terms in $\mathcal E_{u}$ and $\mathcal E_{\subclss}$ can be estimated in a similar way. 
Since $\Lambda >1$, and $\tau_0 \gg \smallc_1^{-1}\gg 1$, we deduce 
\begin{equation*}
    \|\mathcal{E}_\subclss\|_{L^{\infty}}+ \|\mathcal{E}_u\|_{L^{\infty}} \leq Ce^{(1-2\Lambda)\tau} \leq Ce^{-\tau_0}e^{-(\tau-\tau_0)} \leq \frac{C}{\tau_0}e^{-(\tau-\tau_0)} \leq \smallc_1 e^{-(\tau-\tau_0)}.
\end{equation*}
We obtain from \eqref{estimatehatX}--\eqref{estimatehatX2} and \eqref{OrderJ-profile} that
\begin{equation*}
    \big|\nabla^4\big((\widehat{X}^2-\widehat{X})\overline{U}\cdot\nabla\overline{U}\big)\big|
    \leq C 
\sum_{0\leq j \leq 4} |\nabla^{4-j} (\widehat{X}^2-\widehat{X})||\nabla^{j}(\overline{U}\cdot\nabla\overline{U})|\leq C\sum_{0\leq j \leq 4}e^{-(4-j) \tau}\langle y \rangle^{1-2\Lambda-j}.
\end{equation*}
Consequently, 
\begin{equation*}
\begin{aligned}
\big\|\nabla^4 \big((\widehat{X}^2-\widehat{X})\overline{U}\cdot \nabla \overline{U}\big)\big\|_{L^{2}}^2
&\leq C \sum_{0\le j \le 4}e^{-2(4-j)\tau}\int_{\frac{1}{2}e^\tau}^{e^\tau}r^{2(1-2\Lambda-j)}r^2 \text{d}r\\
&\leq C\sum_{0\le j \le 4}e^{(-2(4-j)+5-4\Lambda-2j)\tau}\\
&\leq Ce^{(-4\Lambda-3)\tau}\leq Ce^{-2\tau}.
\end{aligned}
\end{equation*}
The rest of the terms in $\nabla^4\mathcal{E}_\subclss$ and $\nabla^4\mathcal{E}_u$ can be estimated 
in a similar way. We obtain from \eqref{Para-ineq} and $\tau_0 \gg 1$ that
\begin{equation*}
    \|\nabla^4\mathcal{E}_\subclss\|_{L^{2}(B^c(0,C_0))}+ \|\nabla^4\mathcal{E}_u\|_{L^{2}(B^c(0,C_0))} \leq Ce^{-\tau} \leq Ce^{-\tau_0}e^{-(\tau-\tau_0)}\leq \frac{C}{\tau_0}e^{-(\tau-\tau_0)}\leq \smallc_1 e^{-(\tau-\tau_0)}.
\end{equation*}
\end{proof}

\begin{Lemma}\label{eq:est-NF}
    For all  $\tau\in[\tau_0, \tau_*]$, 
    \begin{equation}
\max\big\{\|\mathcal{N}_\subclss\|_{L^{\infty}}, \   \|\chi_2\mathcal{N}_\subclss\|_{X},\  \|\mathcal{N}_u\|_{L^{\infty}},  \  \|\chi_2\mathcal{N}_u\|_{X}\big\}  \leq \smallc_1^{\frac{6}{5}} e^{-\frac{3}{2}\varepsilon(\tau-\tau_0)},
    \end{equation}
    and
    \begin{equation}
        \max\big\{\|\mathcal{F}_{\rm dis}\|_{L^{\infty}},\    \|\chi_2\mathcal{F}_{\rm dis}\|_{X}\big\} \leq \smallc_1^{\frac{6}{5}} e^{-\delta_{\rm{dis}}\frac{\tau}{2}}.
    \end{equation}
\end{Lemma}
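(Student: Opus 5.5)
We bound $\mathcal N$ and $\mathcal F_{\rm dis}$ separately, using the bootstrap assumptions \eqref{lowerestimate1}--\eqref{higherestimate3} together with the spatial decay Lemmas~\ref{lemma:Sbounds}--\ref{higher-order}.

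\emph{The nonlinear term $\mathcal N$.} Its components are quadratic and of the form $\widetilde U\cdot\nabla\widetilde\Clss$, $\widetilde\Clss\,\dive\widetilde U$, $\widetilde U\cdot\nabla\widetilde U$ and $\widetilde\Clss\nabla\widetilde\Clss$.

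For the $L^\infty$ bound, one factor is an undifferentiated perturbation, controlled by \eqref{lowerestimate1}:
\[
|\widetilde\Clss|,\ |\widetilde U|\le \tfrac{\smallc_0}{100}e^{-\varepsilon(\tau-\tau_0)}.
\]
The other factor is a first derivative. We interpolate it via \eqref{eq:GNresultinfty_simplified} between \eqref{lowerestimate1} and $\widetilde E_K\le 2E$ from Lemma~\ref{per-U,S}, exactly as in \eqref{ES:weighted 1derPerS}. This gives
\[
|\nabla\widetilde\Clss|+|\nabla\widetilde U|\le C(K)\,\smallc_0^{1-\frac1{K-3/2}}E^{\frac1{2K-3}}e^{-(1-\frac1{K-3/2})\varepsilon(\tau-\tau_0)}.
\]
The product is therefore bounded by $\smallc_0^{19/10}e^{-\frac{19}{10}\varepsilon(\tau-\tau_0)}$. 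Since $\smallc_0^{3/2}\ll\smallc_1$, we have $\smallc_0^{19/10}\le\smallc_1^{6/5}$, which is the required bound.

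For the $X$-norm of $\chi_2\mathcal N$, we need $H^m$ on $B(0,3C_0)$ with $m\ll K$. Every derivative up to order $m+1$ is interpolated between the $L^\infty$ smallness $\smallc_0e^{-\varepsilon(\tau-\tau_0)}$ and $E_K$. This costs only a fraction $\theta\sim m/K\ll1$ of the smallness and the decay, which is why $K>3^mC(J,m)$ and $E^{\theta}$ is harmless. Each quadratic product then carries a factor $\smallc_0^{2(1-\theta)}E^{\theta}e^{-2(1-\theta)\varepsilon(\tau-\tau_0)}$, which is at most $\smallc_1^{6/5}e^{-\frac32\varepsilon(\tau-\tau_0)}$.

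\emph{The dissipative term $\mathcal F_{\rm dis}$.} Here the prefactor $C_{\rm dis}e^{-\delta_{\rm dis}\tau}$ supplies the decay, and the task is a uniform bound on the bracket. Split $\mathbb R^3$ into two regions.

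\textbf{In $B(0,R_0)$:} we have $\Clss\ge\smallc_0$, and all derivatives up to order $m+2$ are bounded by $C(\smallc_0,E)$ by interpolation. The bracket is therefore at most $C(\smallc_0)$.

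\textbf{In $B^c(0,R_0)$:} the negative powers $\Clss^{\frac{\delta-1}{\alpha}}$ grow at most like $\smallc_1^{\frac{\delta-1}{\alpha}}\langle y/R_0\rangle^{\frac{(1-\delta)(\Lambda-1)}{\alpha}}$ by the lower bound in Lemma~\ref{lemma:Sbounds}. This growth is offset by the decay of $\nabla U$ and $\nabla^2U$ from Lemmas~\ref{lemma:Sprimebounds}--\ref{higher-order}, and of $\nabla\Clss/\Clss$ from \eqref{Order2-S}. The same exponent count as in \eqref{Order1U_2}--\eqref{Order1U_4} shows that the net power of $\langle y\rangle$ is bounded by $-1+3\eta+\delta_{\rm dis}-\Lambda<0$ after accounting; this uses $\delta_{\rm dis}<1$ from \eqref{eq:-1+delta_dis1}--\eqref{eq:-1+delta_dis2}.

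Hence the bracket is at most $C(\smallc_0,\smallc_1)$ uniformly, and
\[
\|\mathcal F_{\rm dis}\|_{L^\infty}\le C(\smallc_0,\smallc_1)\,e^{-\delta_{\rm dis}\tau}.
\]
Write $e^{-\delta_{\rm dis}\tau}=e^{-\delta_{\rm dis}\tau/2}e^{-\delta_{\rm dis}\tau/2}$. Since $\tau_0$ is chosen so large that $C(\smallc_0,\smallc_1)e^{-\delta_{\rm dis}\tau_0/2}\le\smallc_1^{6/5}$, we get $\|\mathcal F_{\rm dis}\|_{L^\infty}\le\smallc_1^{6/5}e^{-\delta_{\rm dis}\tau/2}$.

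For $\|\chi_2\mathcal F_{\rm dis}\|_X$, only the compact set $B(0,3C_0)$ matters, so no spatial weights are needed there. On this set $\Clss$ is bounded below by $C(R_0,C_0)^{-1}\smallc_1$ by Lemma~\ref{lemma:Sbounds}. We need $H^m$ bounds on $\Clss^{\frac{\delta-1}{\alpha}}L(U)$ and $\Clss^{\frac{\delta-1}{\alpha}-1}\nabla\Clss\cdot\md(U)$, that is, derivatives of order up to $m+2\le K-2$. These follow from Lemma~\ref{higher-order} together with Fa\`a di Bruno for the negative powers of $\Clss$, giving a constant $C(\smallc_0,\smallc_1,m)$. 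This constant is again absorbed by $e^{-\delta_{\rm dis}\tau_0/2}$.

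\emph{Main obstacle.} The delicate step is the exterior $L^\infty$ bound on $\mathcal F_{\rm dis}$: we must check that the worst term, $\Clss^{\frac{\delta-1}{\alpha}-1}\nabla^2\Clss\,\md(U)$, has a nonpositive net power of $\langle y\rangle$. We would verify this by reusing \eqref{Order2-S} with the $\eta$ chosen via $\eta\le\frac1{10}(1-\delta_{\rm dis})$.
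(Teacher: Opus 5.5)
Your proposal is correct and follows the paper's proof essentially step for step. For $\mathcal N$, you interpolate between \eqref{lowerestimate1} and the $K$-th order weighted energy with an interpolation exponent of order $m/K$, in both $L^\infty$ and $X$. For $\mathcal F_{\rm dis}$, you obtain a uniform pointwise bound on the bracket from Lemmas~\ref{lemma:Sbounds}--\ref{higher-order}, absorb the constant into half of $e^{-\delta_{\rm dis}\tau}$ via the choice of $\tau_0$, and handle the $X$-norm using a lower bound on $\Clss$ over $B(0,3C_0)$.

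One bookkeeping slip, which does not affect the conclusion. $\mathcal F_{\rm dis}$ itself contains no $\nabla^2\Clss$. That term, and the exponent $-1+3\eta+\delta_{\rm dis}-\Lambda$ you quote from \eqref{Order1U_2}, belong to $\partial_{y_i}\mathcal F_{\rm dis}$ along trajectories. Also, $\nabla\Clss/\Clss$ is controlled by Lemmas~\ref{lemma:Sbounds}--\ref{lemma:Sprimebounds}, not by \eqref{Order2-S}. The correct count for the bracket is the paper's $\delta_{\rm dis}-2\Lambda+1+2\eta$ in \eqref{eq:Fdis1}--\eqref{eq:Fdis2}, which is still negative.
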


\begin{proof}
    First, using Lemmas \ref{lemma:Sbounds}--\ref{higher-order} yields
  \begin{equation}\label{eq:Fdis1}
        \begin{aligned}
            e^{-\delta_{\rm{dis}}\tau}\bigl| \Clss^{\frac{\delta-1}{\alpha}}L(U)\bigr| &\leq  C(\smallc_0) e^{-\delta_{\rm{dis}}\tau}\big\langle \frac{y}{R_0}\big\rangle^{-2(1-\eta)-(\Lambda-1)+\frac{(1-\delta)(\Lambda-1)}{\alpha}}   \\
            & = C(\smallc_0)e^{-\delta_{\rm{dis}}\tau}\big\langle \frac{y}{R_0}\big\rangle^{\delta_{\rm{dis}}-2\Lambda+1+2\eta} \leq  C(\smallc_0)e^{-\delta_{\rm{dis}}\tau}, 
        \end{aligned}
   \end{equation}     
  and
  \begin{equation}\label{eq:Fdis2}
        \begin{aligned}
            e^{-\delta_{\rm{dis}}\tau}\big| \Clss^{\frac{\delta-1}{\alpha}}\frac{\nabla \Clss}{\Clss}\md(U)\big| &\leq C(\smallc_0) e^{-\delta_{\rm{dis}}\tau}\big\langle \frac{y}{R_0}\big\rangle^{-2+\eta-(\Lambda-1)+\frac{(1-\delta)(\Lambda-1)}{\alpha}}  \\
            & = C(\smallc_0) e^{-\delta_{\rm{dis}}\tau}\big\langle \frac{y}{R_0}\big\rangle^{\delta_{\rm{dis}}-2\Lambda+1+\eta} \leq  C(\smallc_0)e^{-\delta_{\rm{dis}}\tau},
        \end{aligned}
  \end{equation}
where we have used the facts that $\delta_{\rm{dis}}-2\Lambda+1+2\eta<0$ and $\eta>0$.

It follows from \eqref{Para-ineq}, $\tau\ge \tau_0 \gg 1$, and \eqref{eq:Fdis1}--\eqref{eq:Fdis2} that
 \begin{equation}\label{eq:Fdis-inf}
 \begin{aligned}
        \|\mathcal{F}_{\rm dis}\|_{L^{\infty}}  &\leq  Ce^{-\delta_{\rm{dis}}\tau}\big\|\Clss^{\frac{\delta-1}{\alpha}}L(U)+\Clss^{\frac{\delta-1}{\alpha}}\frac{\nabla \Clss}{\Clss}\md(U)\big\|_{L^{\infty}}\\ &\leq C(\smallc_0) e^{-\delta_{\rm{dis}}\tau} \leq C(\smallc_0)\smallc_0^{-\frac{9}{5}}e^{-\delta_{\rm dis}\frac{\tau}{2}}\smallc_0^{\frac{9}{5}}e^{-\delta_{\rm dis}\frac{\tau}{2}} \\
        &\leq C(\smallc_0)e^{-\delta_{\rm dis}\frac{\tau_0}{2}} \smallc_1^{\frac{6}{5}}e^{-\delta_{\rm{dis}}\frac{\tau}{2}} \leq \smallc_1^{\frac{6}{5}}e^{-\delta_{\rm{dis}}\frac{\tau}{2}}.
\end{aligned}
\end{equation}
By the choice of $C_0$ in Lemma \ref{C0choice}, the decay estimate of profile \eqref{profile decay}, \eqref{lowerestimate1}, and $\smallc_0\ll 1$, we have
 $$
 \Clss \geq \overline{\Clss}-|\widetilde{\Clss}| \ge C^{-1}-\smallc_0 \geq C^{-1} \qquad \text{for $|y|\leq 3C_0$}.
 $$ 
Thus, we obtain from \eqref{Para-ineq} and Lemmas \ref{lemma:Sbounds}--\ref{higher-order} that
\begin{align}\label{eq:Fdis-X}
        \|\chi_2\mathcal{F}_{\rm dis}\|_{H^{m}} &\leq C(m)\big(  \|\mathcal{F}_{\rm dis}\|_{L^{\infty}(B(0, 3C_0))} + \|\nabla^m\mathcal{F}_{\rm dis}\|_{L^{\infty}(B(0, 3 C_0))}\big) \nonumber \\ 
        &\leq C(m)e^{-\delta_{\rm{dis}}\tau}\big\|\Clss^{\frac{\delta-1}{\alpha}}L(U)+\Clss^{\frac{\delta-1}{\alpha}}\frac{\nabla \Clss}{\Clss}\md(U)\big\|_{L^{\infty}(B(0, 3C_0))} \nonumber \\
        & \quad + C(m) e^{-\delta_{\rm{dis}}\tau}\big\|\nabla^m(\Clss^{\frac{\delta-1}{\alpha}}L(U))+\frac{\delta}{\alpha}\nabla^m(\Clss^{\frac{\delta-1}{\alpha}}\frac{\nabla \Clss}{\Clss}\md(U))\big\|_{L^{\infty}(B(0, 3C_0))}  \\
        &\leq C(\smallc_0) e^{-\delta_{\rm{dis}}\tau} \leq C(\smallc_0)\smallc_0^{-\frac{9}{5}}e^{-\delta_{\rm dis}\frac{\tau}{2}}\smallc_0^{\frac{9}{5}}e^{-\delta_{\rm dis}\frac{\tau}{2}} \nonumber\\
        &\leq C(\smallc_0)e^{-\delta_{\rm dis}\frac{\tau_0}{2}} \smallc_1^{\frac{6}{5}}e^{-\delta_{\rm{dis}}\frac{\tau}{2}} \leq \smallc_1^{\frac{6}{5}}e^{-\delta_{\rm{dis}}\frac{\tau}{2}}.\nonumber
\end{align}
Next, we obtain from the Gagliardo-Nirenberg inequality \eqref{eq:GNresultinfty_simplified}, \eqref{lowerestimate1}, and Lemma \ref{per-U,S} that, for any integer $0\leq j\leq m$,
\begin{align*}
\big\|\nabla^{j}\big((\widetilde{U}\cdot \nabla) \widetilde{U}\big)\big\|_{L^{\infty}}
&\leq C(m) \sum_{0\leq \ell \leq j\leq m}\|\nabla^{\ell}\widetilde{U}\|_{L^{\infty}}\|\nabla^{j-\ell+1}\widetilde{U}\|_{L^{\infty}}\\
&\leq C(m) E^{\frac{1}{20}}\smallc_0
^{\frac{39}{20}}e^{-\frac{39}{20}\varepsilon(\tau-\tau_0)}\\
&\leq C(m)\smallc_0
^{\frac{19}{10}}e^{-\frac{39}{20}\varepsilon(\tau-\tau_0)}.
\end{align*}  
A similar estimate holds for $\widetilde{U}\cdot \nabla\widetilde{\Clss}$, $\widetilde{\Clss} \nabla\widetilde{\Clss}$,  and $\widetilde{\Clss}\,\dive \widetilde{U}${\rm:}
\begin{align}
&\|\nabla^{j}(\widetilde{U}\cdot\nabla \widetilde{\Clss})\|_{L^{\infty}}+\|\nabla^{j}(\widetilde{\Clss}\nabla \widetilde{\Clss})\|_{L^{\infty}}+\|\nabla^{j}(\widetilde{\Clss}\,\dive \widetilde{U})\|_{L^{\infty}}\nonumber \\
&\leq C(m) \sum_{0\leq \ell \leq j\leq m}
\big(\|\nabla^{\ell}\widetilde{U}\|_{L^{\infty}}\|\nabla^{j-\ell +1}\widetilde{\Clss}\|_{L^{\infty}}+\|\nabla^{\ell}\widetilde{\Clss}\|_{L^{\infty}}\|\nabla^{j-\ell +1}\widetilde{\Clss}\|_{L^{\infty}}\big)  \nonumber\\
&\quad + C(m) \sum_{0\le \ell \leq j\leq m}\|\nabla^{\ell}\widetilde{\Clss}\|_{L^{\infty}}\|\nabla^{j-\ell+1}\widetilde{U}\|_{L^{\infty}} \nonumber\\
&\leq C(m)E^{\frac{1}{20}} \smallc_0
^{\frac{39}{20}}e^{-\frac{39}{20}\varepsilon(\tau-\tau_0)}\leq C(m)\smallc_0
^{\frac{19}{10}}e^{-\frac{39}{20}\varepsilon(\tau-\tau_0)}\nonumber.
\end{align}
Consequently, it follows from \eqref{Para-ineq} that
\begin{equation*}
    \|\mathcal{N}_\subclss\|_{L^{\infty}}+ \|\mathcal{N}_u\|_{L^{\infty}}\leq C(m) \smallc_0
^{\frac{19}{10}}e^{-\frac{39}{20}\varepsilon(\tau-\tau_0)}\leq \smallc_1^{\frac{6}{5}} e^{-\frac{3}{2}\varepsilon(\tau-\tau_0)}.
\end{equation*}
Moreover, $\text{supp}\, \chi_2\subset B(0,3C_0)$ and \eqref{Para-ineq} imply 
\begin{equation*}
\|\chi_2\mathcal{N}_u\|_{H^{m}}\leq C(m)\sum_{j\leq m}\|\nabla^j(\mathcal{N}_u)\|_{L^{\infty}(B(0, 3C_o))}\leq C(m)\smallc_0
^{\frac{19}{10}}e^{-\frac{39}{20}\varepsilon(\tau-\tau_0)}\leq \smallc_1^{\frac{6}{5}} e^{-\frac{3}{2}\varepsilon(\tau-\tau_0)}.
\end{equation*}
$\|\chi_2\mathcal{N}_\subclss\|_{H^{m}}$ can be estimated analogously.

This completes the proof.
\end{proof}

We now perform the bootstrap argument. We first carry out the proof in region $B(0, C_0)$.

\begin{Lemma}\label{linfitnityinside}
For any integer $0 \leq j\leq 4$ and all $\tau \in [\tau_0, \tau_*]$,
\begin{equation*}
\|\nabla^{j}\widetilde{\Clss}\|_{L^{\infty}(B(0,C_0))}+\|\nabla^{j}\widetilde{U}\|_{L^{\infty}(B(0,C_0))}\leq C\frac{\smallc_1}{\smallc_g}e^{-\varepsilon(\tau-\tau_0)}.
\end{equation*}
\end{Lemma}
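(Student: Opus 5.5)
By Lemma~\ref{truncatedestimate}, $(\widetilde{\Clss},\widetilde U)=(\overset{\approx}{\Clss},\overset{\approx}{U})$ on $[\tau_0,\tau_*]\times B(0,C_0)$. This holds because \eqref{initialdatatruncated} and \eqref{eq:tildeplusunstable} agree on $B(0,C_0)$, where $\chi_2\equiv 1$. The plan is therefore to estimate the truncated solution in the Hilbert space $X$ and then transfer the bound to $\nabla^j$ in $L^\infty(B(0,C_0))$. Since $m$ is large (in particular $m\ge 7$), the Sobolev embedding $H^m_0(B(0,3C_0))\hookrightarrow C^4$ gives
\[
\sum_{j\le 4}\big(\|\nabla^j\overset{\approx}{\Clss}\|_{L^\infty}+\|\nabla^j\overset{\approx}{U}\|_{L^\infty}\big)\le C\|(\overset{\approx}{\Clss},\overset{\approx}{U})\|_X .
\]
It thus suffices to show $\|(\overset{\approx}{\Clss},\overset{\approx}{U})(\tau)\|_X\le C\smallc_g^{-1}\smallc_1e^{-\varepsilon(\tau-\tau_0)}$.

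We split the solution as $P_{\rm sta}+P_{\rm uns}$. The unstable part is bounded directly by the bootstrap assumption \eqref{unsestimate1}: $\|P_{\rm uns}(\overset{\approx}{\Clss},\overset{\approx}{U})\|_X\le \smallc_1e^{-\varepsilon(\tau-\tau_0)}$. For the stable part we use Duhamel's formula with the semigroup $e^{(\tau-s)\mathcal L}$ restricted to $V_{\rm sta}$. By Lemma~\ref{prop:maxdissmooth} this semigroup satisfies $\|e^{(\tau-s)\mathcal L}P_{\rm sta}\|_{X\to X}\le Ce^{-\smallc_g(\tau-s)}$ with the spectral gap $\smallc_g=\frac{25}{12}\varepsilon$. (Working in the equivalent metric $\Upsilon$ and converting back costs only a constant.) This gives
\[
\|P_{\rm sta}(\overset{\approx}{\Clss},\overset{\approx}{U})(\tau)\|_X\le Ce^{-\smallc_g(\tau-\tau_0)}\|(\chi_2\widetilde{\Clss}_0^*,\chi_2\widetilde U_0^*)\|_X+C\int_{\tau_0}^{\tau}e^{-\smallc_g(\tau-s)}\|\chi_2\mathcal F(s)\|_X\,{\rm d}s .
\]
The initial term is at most $C\smallc_1e^{-\smallc_g(\tau-\tau_0)}$ by \eqref{initialdatacondition1}.

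For the forcing we use Lemma~\ref{Forcingterm1}, which gives $\chi_2\mathcal E=0$, and Lemma~\ref{eq:est-NF}, which gives
\[
\|\chi_2\mathcal N\|_X+\|\chi_2\mathcal F_{\rm dis}\|_X\le \smallc_1^{6/5}\big(e^{-\frac32\varepsilon(s-\tau_0)}+e^{-\delta_{\rm dis}s/2}\big).
\]
Because $\varepsilon<\delta_{\rm dis}/4$, we have $e^{-\delta_{\rm dis}s/2}\le e^{-\frac32\varepsilon(s-\tau_0)}$. The remaining integral is
\[
\int_{\tau_0}^\tau e^{-\smallc_g(\tau-s)}e^{-\frac32\varepsilon(s-\tau_0)}\,{\rm d}s\le \frac{C}{\smallc_g}e^{-\frac32\varepsilon(\tau-\tau_0)},
\]
using $\smallc_g>\frac32\varepsilon$. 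Combining the pieces and noting $e^{-\smallc_g(\tau-\tau_0)}\le e^{-\varepsilon(\tau-\tau_0)}$ yields the total bound $C\big(\smallc_1+\smallc_g^{-1}\smallc_1^{6/5}\big)e^{-\varepsilon(\tau-\tau_0)}\le C\smallc_g^{-1}\smallc_1e^{-\varepsilon(\tau-\tau_0)}$, which is the claim.

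The main point requiring care is the semigroup decay on $V_{\rm sta}$ with rate at least $\smallc_g$ in the $X$-norm, uniformly on the full time interval. This rests on the maximal dissipativity of $\mathcal L$ modulo finite rank, and on the invariance of $V_{\rm sta}$ and $V_{\rm uns}$ under $\mathcal L$, which is what lets the projections commute with the Duhamel formula. A secondary check is the regularity needed to apply Duhamel in $X$: $\chi_2\mathcal F_{\rm dis}$ involves two derivatives of $U$, so its $H^m$ bound requires control of $\nabla^{m+2}U$ on $B(0,3C_0)$. This is available from Lemma~\ref{higher-order}, since $m+2\le K-2$, and from the positive lower bound of $\Clss$ there, exactly as already used in Lemma~\ref{eq:est-NF}.
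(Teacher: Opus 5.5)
Your argument is the paper's proof. You identify $(\widetilde{\Clss},\widetilde U)$ with the truncated solution on $B(0,C_0)$ via Lemma~\ref{truncatedestimate}, embed $X$ into $C^4$, bound $P_{\rm uns}$ by the bootstrap assumption \eqref{unsestimate1}, and treat $P_{\rm sta}$ by Duhamel with the forcing bounds of Lemmas~\ref{Forcingterm1}--\ref{eq:est-NF}.

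One correction: Lemma~\ref{prop:maxdissmooth} gives $\|e^{t\mathcal L}v\|_X\le e^{-t\smallc_g/2}\|v\|_X$ on $V_{\rm sta}$, so the decay rate is $\smallc_g/2$, not $\smallc_g$, and the metric $\Upsilon$ only concerns $V_{\rm uns}$. Since $\frac{\smallc_g}{2}=\frac{25}{24}\varepsilon<\frac32\varepsilon$, the Duhamel integral is bounded by $\frac{C}{\varepsilon}e^{-\frac{\smallc_g}{2}(\tau-\tau_0)}$ rather than your $\frac{C}{\smallc_g}e^{-\frac32\varepsilon(\tau-\tau_0)}$. The conclusion still holds precisely because $\frac{\smallc_g}{2}=\frac{25}{24}\varepsilon>\varepsilon$ and $\frac{1}{\varepsilon}=\frac{25}{12\smallc_g}$.
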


\begin{proof}
It follows from Lemma~\ref{truncatedestimate} and the definition of $X$ in \eqref{X space} that
$$
\|\nabla^{j}\widetilde{U}\|_{L^{\infty}(B(0,C_0))}=\|\nabla^{j}\overset{\approx}{U}\|_{L^{\infty}(B(0,C_0))}\leq C \|\overset{\approx}{U}\|_{X}.
$$
Since $(\overset{\approx}{\Clss},\overset{\approx}{U})$ satisfies
\begin{equation*}
\partial_{\tau}(\overset{\approx}{\Clss},\overset{\approx}{U})=(\overset{\approx}{\Clss},\overset{\approx}{U})+\chi_2\mathcal{F}(\widetilde{\Clss},\widetilde{U})
\end{equation*}
with $
\chi_2\mathcal{F}_{\subclss}=\chi_2\mathcal{N}_\subclss$ and $\chi_2\mathcal{F}_{u}=\chi_2\mathcal{F}_{\rm dis}+\chi_2\mathcal{N}_{u}$, we project the equation onto the stable subspace to obtain 
\begin{align*}
\partial_{\tau}P_{\rm{sta}}(\overset{\approx}{\Clss},\overset{\approx}{U})=\mathcal{L}P_{\rm{sta}}(\overset{\approx}{\Clss},\overset{\approx}{U})+P_{\rm{sta}}(\chi_2\mathcal{F}(\widetilde{\Clss},\widetilde U)).
\end{align*}
Applying the Duhamel formula yields 
\begin{align*}
P_{\rm{sta}}(\overset{\approx}{\Clss},\overset{\approx}{U})(\tau)=\aleph(\tau-\tau_0)P_{\rm{sta}}(\overset{\approx}{\Clss},\overset{\approx}{U})(\tau_0)+\int_{\tau_0}^{\tau}\aleph(\tau-\bar{\tau})(P_{\rm{sta}}(\chi_2\mathcal{F}(\widetilde{\Clss},\widetilde{U})))(\bar{\tau})\,{\rm d}\bar{\tau},
\end{align*}
where $\aleph$ is the semigroup generated by $\mathcal{L}$. Based on Lemmas~\ref{Forcingterm1}--\ref{eq:est-NF}, $\smallc_g = \frac{25}{12}\varepsilon$, and the fact that $\aleph$ is a contraction semigroup in $V_{\rm{sta}}$ determined in Lemma~\ref{prop:maxdissmooth}, we have
\begin{align*}
\|P_{\rm{sta}}(\overset{\approx}{\Clss},\overset{\approx}{U})\|_{X} &\leq \smallc_1 e^{-\frac{\smallc_g}{2}(\tau-\tau_0)}+\int_{\tau_0}^{\tau}2 \smallc_1 e^{-\frac{\smallc_g}{2}(\tau-\bar{\tau})}e^{-\frac{3}{2}\varepsilon(\bar{\tau}-\tau_0)}\,{\rm d}\bar{\tau}\\
&\leq C\frac{\smallc_1}{\smallc_g}e^{-\frac{\smallc_g}{2}(\tau-\tau_0)} \leq C\frac{\smallc_1}{\smallc_g}e^{-\varepsilon(\tau-\tau_0)} .
\end{align*}
The unstable part of the solution is controlled directly from the assumption in Proposition \ref{prop:bootstrap}.
\end{proof}

Next, we derive the estimates in  $B^c(0,C_0)$.

\begin{Lemma}\label{boosstrapestimate1}
 For $\tau \in [\tau_0, \tau_*]$ and $|y|\geq C_0$,
\begin{equation*}
\max\big\{|\widetilde{\Clss}|, \    |\widetilde{U}|\big\} \leq \frac{{\smallc_0}}{200}e^{-\varepsilon(\tau-\tau_0)}.
\end{equation*}
\end{Lemma}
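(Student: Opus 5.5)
We prove Lemma~\ref{boosstrapestimate1} by the same trajectory (Lagrangian-along-$y_0e^{\tau-\bar\tau}$) argument used in Lemma~\ref{lemma:Sbounds}, but now applied to the perturbation equations \eqref{eq: PerS}--\eqref{eq: PerU}. For $|y|\ge C_0$ we write $y=y_0e^{\tau-\bar\tau}$ with either $|y_0|=C_0$ (and $\bar\tau\ge\tau_0$) or $\bar\tau=\tau_0$ (and $|y_0|\ge C_0$). We then set
\[
\omega(\tau):=e^{(\Lambda-1)(\tau-\bar\tau)}\big(\widetilde\Clss,\widetilde U\big)(\tau,y_0e^{\tau-\bar\tau}),
\]
so that the terms $-(\Lambda-1)(\widetilde\Clss,\widetilde U)-y\cdot\nabla(\widetilde\Clss,\widetilde U)$ cancel. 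What remains in $\partial_\tau\omega$ is $e^{(\Lambda-1)(\tau-\bar\tau)}$ times three groups of terms: the profile-coupled linear terms $\widehat X\overline U\cdot\nabla\widetilde\Clss$, $\alpha\widehat X\overline\Clss\,\dive\widetilde U$, $\widetilde U\cdot\nabla(\widehat X\overline\Clss)$, $\alpha\widetilde\Clss\,\dive(\widehat X\overline U)$ (and their analogues for $U$); the nonlinear terms $\mathcal N$; and the forcing terms $\mathcal E+\mathcal F_{\rm dis}$.

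\textbf{Bounding the source terms.} On $B^c(0,C_0)$, \eqref{profilecondition1} makes the profile coefficients at most $\frac1{100C_1}$, and they decay like $\langle y\rangle^{-(\Lambda-1)}$ and $\langle y\rangle^{-\Lambda}$ by \eqref{profile decay}. The first derivatives $\nabla\widetilde\Clss,\nabla\widetilde U$ are controlled by Lemma~\ref{lemma:Sprimebounds} ($\le C\langle y/R_0\rangle^{-\Lambda}$), or alternatively by Gagliardo--Nirenberg interpolation of \eqref{lowerestimate1} against Lemma~\ref{per-U,S}, which gives $\smallc_0^{9/10}e^{-\varepsilon'(\tau-\tau_0)}\phi^{-1/2}$.

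\textbf{Integrating along the trajectory.} We use Lemma~\ref{Forcingterm1} for $\mathcal E$ and Lemma~\ref{eq:est-NF} for $\mathcal N$ and $\mathcal F_{\rm dis}$, each of size $\smallc_1^{6/5}$ times an exponentially decaying factor. Integrating $\partial_\tau\omega$ from $\bar\tau$ to $\tau$ then yields
\[
|(\widetilde\Clss,\widetilde U)(\tau,y)|\le e^{-(\Lambda-1)(\tau-\bar\tau)}|\omega(\bar\tau)|+\int_{\bar\tau}^{\tau}e^{-(\Lambda-1)(\tau-s)}\,\big|\text{RHS}(s)\big|\,ds .
\]
Since $\varepsilon<\Lambda-1$ (recall $\varepsilon\ll1$ in \eqref{Para-ineq}), the forcing contribution is at most $C\smallc_1^{6/5}e^{-\varepsilon(\tau-\tau_0)}\ll\frac{\smallc_0}{1000}e^{-\varepsilon(\tau-\tau_0)}$. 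The initial-value term is handled in two cases. If $\bar\tau=\tau_0$, then $|\omega(\tau_0)|\le\smallc_1$ by \eqref{initialdatacondition1}. If $|y_0|=C_0$, then Lemma~\ref{linfitnityinside} gives $|\omega(\bar\tau)|\le C\frac{\smallc_1}{\smallc_g}e^{-\varepsilon(\bar\tau-\tau_0)}$. Both are $\ll\smallc_0$ because $\smallc_1\ll\smallc_0$ and $\smallc_g=O(\varepsilon)$ is fixed before $\smallc_1$. In either case the factor $e^{-(\Lambda-1)(\tau-\bar\tau)}\le e^{-\varepsilon(\tau-\bar\tau)}$ converts this into decay from $\tau_0$.

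\textbf{Main obstacle: the linear coupling.} The terms $\alpha\widehat X\overline\Clss\,\dive\widetilde U$ and $\widehat X\overline U\cdot\nabla\widetilde\Clss$ involve derivatives of the perturbation and are not small in time a priori, and neither term can be absorbed by a Grönwall factor. I would proceed as follows.
\begin{enumerate}
\item Bound the derivatives of the perturbation by interpolation between the bootstrap bound \eqref{lowerestimate1} ($\frac{\smallc_0}{100}e^{-\varepsilon(\tau-\tau_0)}$) and the bound \eqref{lowerestimate2} on $\|\nabla^4\cdot\|_{L^2(B^c(0,C_0))}$. This gives $|\nabla\widetilde\Clss|+|\nabla\widetilde U|\le C\smallc_0 e^{-\varepsilon(\tau-\tau_0)}$ on $B^c(0,C_0)$ with an $O(1)$ constant, and this is exactly where the $L^8$ and $L^2$ smallness conditions \eqref{profilecondition2}--\eqref{profilecondition3} enter.
\item Multiply by the profile coefficients, which are bounded by $\frac1{100C_1}$, to get a contribution of size at most $\frac{C\smallc_0}{100C_1}e^{-\varepsilon(\tau-\tau_0)}$.
\item After integrating against $e^{-(\Lambda-1)(\tau-s)}$, an extra factor $\frac{1}{\Lambda-1-\varepsilon}$ appears. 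The constant $C_1$ in \eqref{parameterchosen}, which contains $\frac{32}{\Lambda-1}$, is tuned precisely so that this total is $\le\frac{\smallc_0}{400}e^{-\varepsilon(\tau-\tau_0)}$.
\end{enumerate}
If this interpolation step loses too much, the fallback is to use the spatial decay of the profile coefficients together with $e^{(\Lambda-1)(s-\bar\tau)}$ to gain integrability along the trajectory. Summing all contributions then gives the claimed bound $\frac{\smallc_0}{200}e^{-\varepsilon(\tau-\tau_0)}$.
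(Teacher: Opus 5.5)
Your argument in the \emph{Main obstacle} part is essentially the paper's proof. You integrate along $y=y_0e^{\tau-\bar\tau}$ and control $\nabla\widetilde\Clss,\nabla\widetilde U$ in $L^\infty(B^c(0,C_0))$ by the unweighted Gagliardo--Nirenberg inequality of Lemma~\ref{lemma:GN_generalnoweightwholespace} (with $\theta=\frac25$) between \eqref{lowerestimate1} and \eqref{lowerestimate2}. You then multiply by the small profile coefficients \eqref{profilecondition1} with $C_1$ fixed by \eqref{parameterchosen}, handle $\mathcal E,\mathcal N,\mathcal F_{\rm dis}$ by Lemmas~\ref{Forcingterm1}--\ref{eq:est-NF}, and take the data at $\bar\tau$ from \eqref{initialdatacondition1} or Lemma~\ref{linfitnityinside}.

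There are two caveats. First, neither option in your \emph{Bounding the source terms} paragraph would close. Lemma~\ref{lemma:Sprimebounds} carries neither a factor $\smallc_0$ nor time decay on $C_0\le|y|\le R_0$. Interpolation against Lemma~\ref{per-U,S} gives only $\smallc_0^{9/10}$ there, at a slower rate, because $\phi=1$ on that region. Second, \eqref{profilecondition2}--\eqref{profilecondition3} play no role in this lemma; they are used in Lemma~\ref{boosstrapestimate2}.
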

\begin{proof}
The perturbation $(\widetilde{\Clss}, \widetilde{U})$ satisfies
\begin{equation*}
    \begin{aligned}
        \partial_{\tau} \widetilde{\Clss}&= -(\Lambda-1)\widetilde{\Clss} -y\cdot \nabla\widetilde{\Clss} -(\widehat{X}\overline{U})\cdot \nabla\widetilde{\Clss}-\alpha(\widehat{X}\overline{\Clss})\,\dive \,\widetilde{U}-\widetilde{U} \cdot \nabla(\widehat{X}\overline{\Clss})\\
        & \quad\,\, -\alpha \widetilde{\Clss}  \,\dive(\widehat{X}\overline{U})+\mathcal{N}_\subclss+\mathcal{E}_\subclss,\\
        \partial_{\tau} \widetilde{U}&= -(\Lambda-1)\widetilde{U} -y\cdot \nabla\widetilde{U}- (\widehat{X}\overline{U})\cdot \nabla\widetilde{U}-\alpha(\widehat{X}\overline{\Clss})\nabla\widetilde{\Clss}-\widetilde{U} \cdot \nabla(\widehat{X}\overline{U})\\
        &\quad\,\, -\alpha \widetilde{\Clss} \nabla(\widehat{X}\overline{\Clss})+\mathcal{N}_u+\mathcal{E}_u+ \mathcal{F}_{\rm dis}.    \end{aligned}
\end{equation*}
    Applying Lemma \ref{lemma:GN_generalnoweightwholespace} between $\|(\widetilde{\Clss}, \widetilde{U})\|_{L^{\infty}}$ and $\|(\nabla^4\widetilde{\Clss}, \nabla^4\widetilde{U})\|_{L^2(B^c(0,C_0))}$ with 
$$
p=\infty,\quad q=2,\quad \theta=\frac{2}{5},\quad \bar r=\infty,\quad i=1,\quad l=4,
$$
 we obtain
    \begin{equation*}
        \max\big\{\|\nabla\widetilde{\Clss}\|_{L^\infty(B^c(0,C_0))},\   
\|\nabla\widetilde{U}\|_{L^\infty(B^c(0,C_0))} \big\}       
        \leq \smallc_0 \big(\frac{1}{100} \big)^{\frac{1}{20}}e^{-\varepsilon(\tau- \tau_0)}.
    \end{equation*}
   Combining this bound with \eqref{profilecondition1} and \eqref{lowerestimate1} yields
   \begin{equation*}
   \begin{aligned}
& \big|\widehat{X}\overline{U} \cdot\nabla\widetilde{\Clss}\big|+\big|\alpha(\widehat{X}\overline{\Clss})\,\dive \widetilde{U}\big|+\big|\widetilde{U} \cdot \nabla(\widehat{X}\overline{\Clss})\big|+\big|\alpha \widetilde{\Clss}\, \dive(\widehat{X}\overline{U})\big| \leq \smallc_0\frac{1}{C_1} \big(\frac{1}{100} \big)^{\frac{1}{20}}e^{-\varepsilon(\tau- \tau_0)},\\
&\big|\widehat{X}\overline{U}\cdot\nabla\widetilde{U}\big|+\big|\alpha(\widehat{X}\overline{\Clss})\nabla\widetilde{\Clss}\big|+\big|\widetilde{U} \cdot \nabla(\widehat{X}\overline{U})\big|+\big|\alpha \widetilde{\Clss}\nabla(\widehat{X}\overline{\Clss})\big|\leq \smallc_0\frac{1}{C_1} \big(\frac{1}{100} \big)^{\frac{1}{20}}e^{-\varepsilon(\tau- \tau_0)}.
    \end{aligned}
\end{equation*}
Then, according to Lemmas \ref{Forcingterm1}--\ref{eq:est-NF}, for $1\leq i \leq 3$ and $\tau_0 \leq \bar{\tau} \leq \tau$, we have   
\begin{equation}\label{eq:Uitau}
\begin{split}
    &|\partial_\tau \widetilde{U}_i(e^{\tau-\bar\tau}y_0)+(\Lambda-1)\widetilde{U}_i(e^{\tau-\bar\tau}y_0)|\\
    &\leq  \smallc_0\frac{1}{C_1} \big(\frac{1}{100} \big)^{\frac{1}{20}}e^{-\varepsilon(\tau- \tau_0)}+\smallc_1e^{-(\tau- \tau_0)}+\smallc_1^{\frac{6}{5}} e^{-\frac{3\varepsilon}{2}(\tau-\tau_0)}+\smallc_1^{\frac{6}{5}} e^{-\delta_{\rm{dis}}\frac{\tau}{2}},
\end{split}
\end{equation}
and
\begin{equation}\label{eq:Clsstau}
\begin{split}
    &|\partial_\tau \widetilde{\Clss}(e^{\tau-\bar\tau}y_0)+(\Lambda-1)\widetilde{\Clss}(e^{\tau-\bar\tau}y_0)|\\
    &\leq \smallc_0\frac{1}{C_1} \big(\frac{1}{100} \big)^{\frac{1}{20}}e^{-\varepsilon(\tau- \tau_0)}+\smallc_1e^{-(\tau- \tau_0)}+\smallc_1^{\frac{6}{5}} e^{-\frac{3\varepsilon}{2}(\tau-\tau_0)}.
\end{split}
\end{equation}
It follows from \eqref{eq:Uitau}, $\smallc_g\ll \delta_{\rm dis}$,  and $\varepsilon =\frac{12 \smallc_g}{25}$ that
\begin{equation*}
\begin{aligned}
        &|\partial_\tau \big(\widetilde{U}_i(e^{\tau-\bar\tau}y_0)e^{\varepsilon(\tau-\tau_0)}\big)+(\Lambda-1-\varepsilon)\widetilde{U}_i(e^{\tau-\bar\tau}y_0)e^{\varepsilon(\tau-\tau_0)}|\\
        &=|e^{\varepsilon(\tau-\tau_0)}\big(\partial_\tau \widetilde{U}_i(e^{\tau-\bar\tau}y_0)+(\Lambda-1)\widetilde{U}_i(e^{\tau-\bar\tau}y_0)\big)|\\
        &\leq  \smallc_0 \frac{1}{C_1} \big(\frac{1}{100} \big)^{\frac{1}{20}}+ \smallc_1 e^{-(1-\varepsilon)(\tau- \tau_0)}+ \smallc_1^{\frac{6}{5}} e^{-\frac{\varepsilon}{2}(\tau-\tau_0)}+ \smallc_1^{\frac{6}{5}} e^{-(\delta_{\rm{dis}}\frac{\tau}{2}-\varepsilon(\tau-\tau_0))}\\
        &\leq  2\smallc_0\frac{1}{C_1} \big(\frac{1}{100} \big)^{\frac{1}{20}}.
\end{aligned}
\end{equation*}
Multiplying the above by $e^{(\Lambda-1-\varepsilon)(\tau-\tau_0)}$ leads to
\begin{equation}\label{eq:Uitau1}
    |\partial_\tau \big(\widetilde{U}_i(e^{\tau-\bar\tau}y_0)e^{(\Lambda-1)(\tau-\tau_0)}\big)| \leq 2\smallc_0\frac{1}{C_1} \big(\frac{1}{100} \big)^{\frac{1}{20}}e^{(\Lambda-1-\varepsilon)(\tau-\tau_0)}.
\end{equation}
Integrating \eqref{eq:Uitau1} over $[\bar\tau, \tau]$ and then multiplying by $e^{-(\Lambda-1-\varepsilon)(\tau-\tau_0)}$ yield
\begin{equation*}
    \begin{aligned}
        &|\widetilde{U}_i(e^{\tau-\bar\tau}y_0, \tau)e^{\varepsilon(\tau-\tau_0)}|\\
        &\leq |\widetilde{U}_i(y_0, \bar\tau)e^{(\Lambda-1)(\bar\tau-\tau_0)}e^{-(\Lambda-1-\varepsilon)(\tau-\tau_0)}| + \frac{2\smallc_0}{\Lambda-1-\varepsilon}\frac{1}{C_1} \big(\frac{1}{100} \big)^{\frac{1}{20}}(1-e^{-(\Lambda-1-\varepsilon)(\tau-\bar\tau)})\\
        &\leq |\widetilde{U}_i(y_0, \bar\tau)e^{-(\Lambda-1-\varepsilon)(\tau-\bar\tau)}e^{\varepsilon(\bar\tau-\tau_0)}|+\frac{8\smallc_0}{\Lambda-1}\frac{1}{C_1} \big(\frac{1}{100} \big)^{\frac{1}{20}}(1-e^{-(\Lambda-1-\varepsilon)(\tau-\bar\tau)})\\
        &=  e^{-(\Lambda-1-\varepsilon)(\tau-\bar\tau)}\Big(|\widetilde{U}_i(y_0, \bar\tau)e^{\varepsilon(\bar\tau-\tau_0)}|-\frac{8\smallc_0}{\Lambda-1}\frac{1}{C_1} \big(\frac{1}{100} \big)^{\frac{1}{20}}\Big)+ \frac{8\smallc_0}{\Lambda-1}\frac{1}{C_1} \big(\frac{1}{100} \big)^{\frac{1}{20}}.
    \end{aligned}
\end{equation*}
For $|y_0|=C_0$ and $\bar\tau\geq \tau_0$, using Lemma \ref{linfitnityinside} and $\smallc_1\ll \smallc_0 \ll \smallc_{g}$, we have
\begin{equation*}
    \max\big\{|\widetilde{\Clss}(y_0, \bar\tau)e^{\varepsilon(\bar\tau-\tau_0)}|, \   |\widetilde{U}_i(y_0, \bar\tau)e^{\varepsilon(\bar\tau-\tau_0)}|\big\} \leq C\frac{\smallc_1}{\smallc_g} \leq \frac{8 \smallc_0}{\Lambda-1}\frac{1}{C_1} \big(\frac{1}{100} \big)^{\frac{1}{20}}.
\end{equation*}
For $|y_0|> C_0$ and $\bar\tau=\tau_0$, according to the initial condition \eqref{initialdatacondition1} and $\smallc_1 \ll \smallc_0$, we have
\begin{equation*}
    \max\big\{|\widetilde{\Clss}(y_0, \tau_0)|,\  |\widetilde{U}_i(y_0, \tau_0)|\big\} \leq \smallc_1 \leq  \frac{8\smallc_0}{\Lambda-1}\frac{1}{C_1} \big(\frac{1}{100} \big)^{\frac{1}{20}}.
\end{equation*}
Then the upper bound of $\widetilde{U}$ follows from the chosen constant parameters \eqref{parameterchosen}{\rm:} 
$$
\frac{32}{\Lambda-1}\frac{1}{C_1} \big( \frac{1}{100} \big)^{\frac{1}{20}}= \frac{1}{100}.
$$
The upper bound of $\widetilde{\Clss}$ can be proved by a similar way. 

This completes the proof.
\end{proof}

Now we control $\|\nabla^4\widetilde{\Clss}\|_{L^2(B^c(0,C_0))} $ and $\|\nabla^4\widetilde{U}\|_{L^2(B^c(0,C_0))}$. 

\begin{Lemma}\label{boosstrapestimate2}
    For $\tau \in [\tau_0, \tau_*]$,
    \begin{equation*}
\max\big\{\|\nabla^4\widetilde{\Clss}\|_{L^2(B^c(0,C_0))}, \ \|\nabla^4\widetilde{U}\|_{L^2(B^c(0,C_0))} \big\}\leq  \frac{\smallc_0}{2}e^{-\varepsilon(\tau-\tau_0)}.
\end{equation*}
\end{Lemma}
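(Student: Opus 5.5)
We prove Lemma~\ref{boosstrapestimate2} by a localized $L^2$ energy estimate for $W_\beta:=(\partial_\beta\widetilde{\Clss},\partial_\beta\widetilde U)$ with $|\beta|=4$ on the exterior region. To avoid boundary terms at $|y|=C_0$, we multiply by a smooth radial cutoff $\psi(y)$ with $\psi=1$ on $B^c(0,C_0)$ and $\psi=0$ on $B(0,C_0/2)$. In the transition region $B(0,C_0)\setminus B(0,C_0/2)$ we control $W_\beta$ by Lemma~\ref{linfitnityinside}, which gives $\|\nabla^4(\widetilde{\Clss},\widetilde U)\|_{L^\infty(B(0,C_0))}\le C\frac{\smallc_1}{\smallc_g}e^{-\varepsilon(\tau-\tau_0)}$.

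\textbf{Step 1: the differentiated system.} Applying $\partial_\beta$ to the perturbation system \eqref{eq: PerS}--\eqref{eq: PerU} gives
\[
(\partial_\tau+\Lambda-1+4)W_\beta+(y+\widehat X\overline U)\cdot\nabla W_\beta+\alpha\widehat X\overline{\Clss}\binom{\dive\partial_\beta\widetilde U}{\nabla\partial_\beta\widetilde{\Clss}}=\text{(l.o.t.)}+\partial_\beta(\mathcal N+\mathcal E)+\binom{0}{\partial_\beta\mathcal F_{\rm dis}}.
\]
Here the identity $\partial_\beta(y\cdot\nabla f)=4\partial_\beta f+y\cdot\nabla\partial_\beta f$ produces the damping $\Lambda+3$. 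We then multiply by $\psi W_\beta$ and integrate.

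\textbf{Step 2: linear terms.} The transport term integrates by parts to $\frac12\int\dive(\psi(y+\widehat X\overline U))|W_\beta|^2\le(\frac32+\text{small})\int\psi|W_\beta|^2+(\text{transition terms})$. The symmetric coupling $\alpha\widehat X\overline{\Clss}$ also integrates by parts, producing $|\nabla(\widehat X\overline{\Clss})|\le\frac1{20000}$ by \eqref{profilecondition4}. The net damping rate is therefore at least $\Lambda+\frac32-C\cdot 10^{-4}>1$, which is far larger than $\varepsilon$.

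The lower-order commutators have the schematic form $\nabla^{j}(\widehat X\overline{\Clss},\widehat X\overline U)\,\nabla^{5-j}(\widetilde{\Clss},\widetilde U)$ for $j\ge1$. We estimate them with H\"older:
\begin{itemize}
\item for $j=1$, use $L^\infty\times L^2$ with \eqref{profilecondition4};
\item for $j=2$, use $L^8\times L^{8/3}$ with \eqref{profilecondition2}, together with Gagliardo--Nirenberg interpolating $\nabla^3\widetilde\cdot$ between $L^\infty$ from \eqref{lowerestimate1} and $\nabla^4$ in $L^2$;
\item for $j\ge3$, use $L^2\times L^\infty$ with \eqref{profilecondition3} and \eqref{lowerestimate1}.
\end{itemize}
Each of these contributes at most $10^{-3}\smallc_0^2e^{-2\varepsilon(\tau-\tau_0)}$ to $\frac{d}{d\tau}$ of the energy.

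\textbf{Step 3: forcing terms.} The term $\partial_\beta\mathcal E$ is bounded by Lemma~\ref{Forcingterm1}. For $\partial_\beta\mathcal N$, the top-order transport piece $\widetilde U\cdot\nabla\partial_\beta$ is integrated by parts, and the top-order coupling piece $\alpha\widetilde{\Clss}\nabla\partial_\beta$ is symmetric and handled the same way. Both then cost only $\|\nabla\widetilde{(\cdot)}\|_{L^\infty}\le\smallc_0^{9/10}$. The intermediate products are handled by Gagliardo--Nirenberg using \eqref{lowerestimate1}--\eqref{lowerestimate2} and Lemma~\ref{per-U,S}, which gives a bound $C\smallc_0^{19/10}e^{-2\varepsilon(\tau-\tau_0)}$.

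\textbf{Step 4: the dissipative term (main obstacle).} $\partial_\beta\mathcal F_{\rm dis}$ involves fifth and sixth derivatives of $U$ multiplied by negative powers of $\Clss$, which grow like $\langle y\rangle^{(\frac{1-\delta}{\alpha})(\Lambda-1)}$ by Lemma~\ref{lemma:Sbounds}.
\begin{itemize}
\item The highest-order piece $\Clss^{\frac{\delta-1}\alpha}L(\partial_\beta U)$ is integrated by parts once. This yields a good-signed term, minus a term with $\nabla\Clss/\Clss$. The latter is bounded by $C(\smallc_0)\langle y/R_0\rangle^{-1+\eta-\Lambda}$ after splitting $\nabla\partial_\beta(\widehat X\overline U)$.
\item The remaining products $\Clss^{\frac{\delta-1}\alpha}\prod\frac{\partial\Clss}{\Clss}\,\nabla^k U$ with $k\le5$ are bounded pointwise by Lemmas~\ref{lemma:Sprimebounds}--\ref{higher-order}. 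The weights $\phi^{-j/2}$ beat the growth of $\Clss^{\frac{\delta-1}\alpha}$ because $-1+\delta_{\rm dis}<0$, as shown in \eqref{eq:-1+delta_dis1}--\eqref{eq:-1+delta_dis2}.
\end{itemize}
These bounds make the integrand square-integrable, with $L^2$ norm at most $C(\smallc_0)$. Hence this contribution is at most $C(\smallc_0)e^{-\delta_{\rm dis}\tau}\|W\|_{L^2}$, and taking $\tau_0$ large with $\varepsilon<\delta_{\rm dis}/4$ makes it $\le\smallc_0^3e^{-2\varepsilon(\tau-\tau_0)}$.

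\textbf{Conclusion.} Collecting the bounds gives
\[
\frac{d}{d\tau}\mathcal Y+2\mathcal Y\le 10^{-2}\smallc_0^2e^{-2\varepsilon(\tau-\tau_0)},\qquad \mathcal Y:=\int\psi|\nabla^4(\widetilde{\Clss},\widetilde U)|^2.
\]
The initial value satisfies $\mathcal Y(\tau_0)\le C\smallc_1^{2}$, obtained by interpolating \eqref{initialdatacondition1} with $E_K(\tau_0)$. Gr\"onwall then yields $\mathcal Y^{1/2}\le\frac{\smallc_0}{2}e^{-\varepsilon(\tau-\tau_0)}$, which proves the lemma.
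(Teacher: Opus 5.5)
Your proposal is correct and follows the paper's scheme: an $L^2$ energy estimate for $\nabla^4(\widetilde{\Clss},\widetilde U)$ on the exterior region, with linear commutators split by derivative count and controlled through \eqref{profilecondition4}--\eqref{profilecondition3}, then $\partial_\beta\mathcal E$ bounded by Lemma~\ref{Forcingterm1}, $\partial_\beta\mathcal F_{\rm dis}$ bounded by the pointwise decay of Lemmas~\ref{lemma:Sbounds}--\ref{higher-order} together with the factor $e^{-\delta_{\rm dis}\tau}$, and Gr\"onwall applied with initial data obtained by interpolation. Your deviations are harmless technical variants: a smooth cutoff instead of the boundary terms on $\partial B(0,C_0)$ (both handled by Lemma~\ref{linfitnityinside}); integration by parts of the top-order nonlinear and viscous terms, where the paper instead interpolates $\|\nabla^5\widetilde U\|_{L^2}$ against $\widetilde E_K$ and bounds $\nabla^6 U$ pointwise; and a slightly optimistic initial bound, since interpolation against $E_K$ gives $\mathcal Y(\tau_0)\lesssim\smallc_1^{9/5}$ rather than $C\smallc_1^2$, which is still $\ll\smallc_0^2$.
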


\begin{proof}
We divide the proof into four steps.

\smallskip
\textbf{1.}   
First, applying the fourth-order derivative $\partial_{\beta}$ with multi-index $\beta=(\beta_1,\beta_2,\beta_3,\beta_4)$ satisfying $|\beta|=4$ to \eqref{eq: PerS}--\eqref{eq: PerU}, we obtain
\begin{equation*}
\begin{aligned}
\partial_{\tau}\partial_{\beta} \widetilde{\Clss}&=B_{\subclss,0}(\widetilde{U},\widetilde{\Clss})+B_{\subclss,1}(\widetilde{U},\widetilde{\Clss})+B_{\subclss,2}(\widetilde{U},\widetilde{\Clss})+B_{\subclss,3}(\widetilde{U},\widetilde{\Clss})+\partial_{\beta}\mathcal N_\subclss+\partial_{\beta}\mathcal E_\subclss,\\
\partial_{\tau}\partial_{\beta} \widetilde{U}&=B_{u,0}(\widetilde{U},\widetilde{\Clss})+B_{u,1}(\widetilde{U},\widetilde{\Clss})+B_{u,2}(\widetilde{U},\widetilde{\Clss})+B_{u,3}(\widetilde{U},\widetilde{\Clss})+\partial_{\beta}\mathcal N_u+\partial_{\beta}\mathcal E_u+\partial_{\beta}\mathcal F_{\rm dis}.
\end{aligned}
\end{equation*}
Here $B_{\subclss,0}$ and $B_{u,0}$ collect the linear terms with the fifth-order derivatives 
fall on $\widetilde{\Clss}$ and $\widetilde{U}$. 
The terms $B_{\subclss,1}$ and $B_{u,1}$ consist of the linear terms 
with the fourth-order derivatives acting on $\widetilde{\Clss}$ and $\widetilde{U}$. 
Similarly, $B_{\subclss,2}$ and $B_{u,2}$ contain the linear terms with the third-order derivatives, 
while $B_{\subclss,3}$ and $B_{u,3}$ comprise the linear terms with less than or equal 
to the second-order derivatives acting on $\widetilde{\Clss}$ and $\widetilde{U}$.

Multiplying the above equations by $\partial_{\beta} \widetilde{\Clss}$ and 
$\partial_{\beta} \widetilde{U}$ respectively, summing over all 
$|\beta|=4$, and then integrating over $B^c(0,C_0)$, we obtain
    \begin{align}
       I&=\frac{\mathrm d}{\mathrm d\tau} \big( \|\nabla^4\widetilde{\Clss}\|_{L^2(B^c(0,C_0))}^{2}+ \|\nabla^4\widetilde{U}\|_{L^2(B^c(0,C_0))}^{2} \big) \nonumber \\ 
       &= \frac{\mathrm d}{\mathrm d\tau}\sum_{|\beta|=4}\int_{B^c(0,C_0)}\big(|\partial_{\beta}\widetilde{\Clss}|^2+|\partial_{\beta}\widetilde{U}|^2\big)\,\text{d}y \nonumber \\
       &=\sum_{|\beta|=4}\int_{B^c(0,C_0)}\frac{\mathrm d}{\mathrm d\tau}\big(|\partial_{\beta}\widetilde{\Clss}|^2+|\partial_{\beta}\widetilde{U}|^2\big)\,\text{d}y \nonumber \\
       &=2\sum_{|\beta|=4}\sum_{j=0}^{3} \bigg( \int_{B^c(0,C_0)}\partial_{\beta}\widetilde{U}\cdot B_{u,j}\,\text{d}y
       +\int_{B^c(0,C_0)} \partial_{\beta}\widetilde{\Clss}\,B_{\subclss,j}\,\text{d}y \bigg) \label{hignenergeest01}\\
       &\quad+2 \sum_{|\beta|=4}\bigg( \int_{B^c(0,C_0)}\partial_{\beta}\widetilde{U}\cdot \partial_{\beta}\mathcal{N}_u\,\text{d}y+\int_{B^c(0,C_0)}\partial_{\beta}\widetilde{\Clss}\, \partial_{\beta}\mathcal{N}_\subclss\,\text{d}y \bigg) \nonumber  \\
       &\quad+2 \sum_{|\beta|=4}\bigg( \int_{B^c(0,C_0)}\partial_{\beta}\widetilde{U}\cdot \partial_{\beta}\mathcal{E}_u\,\text{d}y
       +\int_{B^c(0,C_0)}\partial_{\beta}\widetilde{\Clss} 
       \,\partial_{\beta}\mathcal{E}_\subclss\,\text{d}y \bigg) \nonumber \\
       &\quad+2  \sum_{|\beta|=4}\int_{B^c(0,C_0)}\partial_{\beta}\widetilde{U}\cdot \partial_{\beta}\mathcal{F}_{\rm dis}\,\text{d}y \nonumber \\
       &:= \sum_{|\beta|=4}\Big(\sum_{j=0}^3 I_{j, \beta}+I_{N,\beta}+I_{E,\beta}+I_{F, \beta}\Big). 
       \nonumber
    \end{align}
    
\textbf{2.}
For $I_{0,\beta}$, it follows from  \eqref{radialrep} and integration by parts that 
    \begin{align*}
        I_{0,\beta}&=-2\int_{B^c(0,C_0)} \Big(\partial_{\beta}\widetilde{U}\cdot\big((y+\widehat{X}\overline{U})\cdot \nabla\big)\partial_{\beta}\widetilde{U} +(\partial_{\beta}\widetilde{U}\cdot  \nabla) \partial_{\beta}\widetilde{\Clss}(\alpha \widehat{X}\overline{\Clss})\Big)\,\text{d}y\\
        &\quad-2\int_{B^c(0,C_0)}\Big(\partial_{\beta}\widetilde{\Clss}(y+\widehat{X}\overline{U})\cdot \nabla \partial_{\beta}\widetilde{\Clss}+\partial_{\beta}\widetilde{\Clss}\,\dive (\partial_{\beta}\widetilde{U})\alpha \widehat{X}\overline{\Clss}\Big)\,\text{d}y\\
        &\leq \int_{B^c(0,C_0)}  \Big( 3+|\dive (\widehat{X}\overline{U})| \Big) \big(|\partial_{\beta}\widetilde{\Clss}|^2+|\partial_{\beta}\widetilde{U}|^2 \big)\,\text{d}y\\
        &\quad+2\int_{B^c(0,C_0)}| \nabla(\alpha\widehat{X}\overline{\Clss})||\partial_{\beta}\widetilde{\Clss}||\partial_{\beta}\widetilde{U}|\,\text{d}y\\
        &\quad+\int_{\partial B(0,C_0)} \big(C_0+|\widehat{X}\overline{\mathcal U}|\big) \big(|\partial_{\beta}\widetilde{\Clss}|^2+|\partial_{\beta}\widetilde{U}|^2 \big) \,\text{d}S
        +2\int_{\partial B(0,C_0)}|\alpha\widehat{X}\overline{\Clss}||\partial_{\beta}\widetilde{\Clss}||\partial_{\beta}\widetilde{U}|\,\text{d}S.
    \end{align*}
By \eqref{Para-ineq}, \eqref{profile decay}, Lemma \ref{linfitnityinside}, and the smoothness of $(\widetilde{\Clss}, \widetilde{U})$, we have
    \begin{align*}
        &\int_{\partial B(0,C_0)} \big(C_0+|\widehat{X}\overline{\mathcal U}|\big) \big(|\partial_{\beta}\widetilde{\Clss}|^2+|\partial_{\beta}\widetilde{U}|^2\big)\,\text{d}S
        +2\int_{\partial B(0,C_0)}|\alpha\widehat{X}\overline{\Clss}||\partial_{\beta}\widetilde{\Clss}||\partial_{\beta}\widetilde{U}|\,\text{d}S \\
        &\leq  C(C_0+|\widehat{X}\overline{\mathcal U}|+\alpha |\widehat{X}\overline{\Clss}|) \big( \frac{\smallc_1}{\smallc_g} \big)^{2}e^{-\smallc_g(\tau-\tau_0)}\\
        &\leq  \big( \frac{\smallc_1}{\smallc_g} \big)^{\frac{17}{10}}e^{-2\varepsilon(\tau-\tau_0)}.
    \end{align*}
Consequently,
\begin{equation}\label{termB0}
    \begin{aligned}
        I_{0,\beta}&\leq \int_{B^c(0,C_0)}  \big( 3+3|\nabla(\widehat{X}\overline{U})|+ |\nabla(\alpha\widehat{X}\overline{\Clss})| \big) \big(|\partial_{\beta}\widetilde{\Clss}|^2+|\partial_{\beta}\widetilde{U}|^2\big)\,\text{d}y 
        + \big( \frac{\smallc_1}{\smallc_g} \big)^{\frac{17}{10}}e^{-2\varepsilon(\tau-\tau_0)}. 
    \end{aligned}
\end{equation}

Next, $B_{\subclss,1}$ and $B_{u,1}$ can be rewritten as
\begin{align*}
B_{\subclss,1}&=-(\Lambda-1)\partial_{\beta}\widetilde{\Clss}-(\partial_{\beta}\widetilde{U}\cdot \nabla )(\widehat{X}\overline{\Clss})-\alpha\partial_{\beta}\widetilde{\Clss}\,\dive (\widehat{X}\overline{U})\\
&\quad-\sum_{j=1}^{4}\sum_{l=1}^{3}\partial_{y_{\beta_j}}(\widehat{X}\overline{U}_l+y_l)\partial_{y_l}\partial_{\beta^{(j)}} \widetilde{\Clss}-\alpha\sum_{j=1}^{4}\partial_{y_{\beta_j}}(\widehat{X}\overline{\Clss})\partial_{\beta^{(j)}} \dive\,\widetilde{U}\\
&=-(\Lambda+3)\partial_{\beta}\widetilde{\Clss}-(\partial_{\beta}\widetilde{U}\cdot \nabla )(\widehat{X}\overline{\Clss})-\alpha\partial_{\beta}\widetilde{\Clss}\,\dive (\widehat{X}\overline{U})\\
&\quad-\sum_{j=1}^{4}\sum_{l=1}^{3}\partial_{y_{\beta_j}}(\widehat{X}\overline{U}_l)\partial_{y_l}\partial_{\beta^{(j)}} \widetilde{\Clss}-\alpha\sum_{j=1}^{4}\partial_{y_{\beta_j}}(\widehat{X}\overline{\Clss})\partial_{\beta^{(j)}} \dive\,\widetilde{U},
\end{align*}
and
\begin{align*}
B_{u,1}&=-(\Lambda-1)\partial_{\beta}\widetilde{U}-(\partial_{\beta}\widetilde{U}\cdot\nabla)(\widehat{X}\overline{U})-\alpha\partial_{\beta}\widetilde{\Clss}\nabla(\widehat{X}\overline{\Clss})\\
&
\quad-\sum_{j=1}^{4}\sum_{l=1}^{3}\partial_{y_{\beta_j}}(\widehat{X}\overline{U}_l+y_l)\partial_{y_l}\partial_{\beta^{(j)}} \widetilde{U}-\alpha \sum_{j=1}^{4}\partial_{y_{\beta_j}}(\widehat{X}\overline{\Clss})\nabla(\partial_{\beta^{(j)}} \widetilde{\Clss})\\
&=-(\Lambda+3)\partial_{\beta}\widetilde{U}-(\partial_{\beta}\widetilde{U}\cdot\nabla)(\widehat{X}\overline{U})-\alpha\partial_{\beta}\widetilde{\Clss}\nabla(\widehat{X}\overline{\Clss})\\
&\quad-\sum_{j=1}^{4}\sum_{l=1}^{3}\partial_{y_{\beta_j}}(\widehat{X}\overline{U}_l)\partial_{y_l}\partial_{\beta^{(j)}} \widetilde{U}-\alpha \sum_{j=1}^{4}\partial_{y_{\beta_j}}(\widehat{X}\overline{\Clss})\nabla(\partial_{\beta^{(j)}} \widetilde{\Clss}),
\end{align*}
where the differential operator $\partial_{\beta^{(j)}}$ is defined in \S \ref{operator}. Since 
$$\alpha =\frac{\gamma-1}{2},\quad 1 <\gamma <1+\frac{2}{\sqrt{3}},
$$
we see that $0<\alpha <\frac{1}{\sqrt{3}}$.
Then it follows that
\begin{align}
        I_{1,\beta}&=2\int_{B^c(0,C_0)}\partial_{\beta}\widetilde{\Clss} B_{\subclss,1}\,\text{d}y+ 2\int_{B^c(0,C_0)}\partial_{\beta}\widetilde{U}\cdot B_{u,1}\,\text{d}y\nonumber \\
        &\leq -2(\Lambda+3)\int_{B^c(0,C_0)}\big(|\partial_{\beta}\widetilde{\Clss}|^2+|\partial_{\beta}\widetilde{U}|^2\big)\,\text{d}y \label{termB1}\\
&\quad+20\big(\|\nabla(\widehat{X}\overline{\Clss})\|_{L^{\infty}(B^c(0,C_0))}+ \|\nabla(\widehat{X}\overline{U})\|_{L^{\infty}(B^c(0,C_0))} \big)\nonumber \\
        &\qquad\,\,\, \times \big(\|\nabla^4\widetilde{\Clss}\|_{L^2(B^c(0,C_0))}^2+\|\nabla^4\widetilde{U}\|_{L^2(B^c(0,C_0))}^2 \big).\nonumber
\end{align}
For $B_{u,2}$ and $B_{\subclss,2}$, according to \eqref{profilecondition3}, the  H\"older inequality. 
and Lemma \ref{lemma:GN_generalnoweightwholespace} with 
$$
p=\infty,\quad q=2,\quad \theta=\frac{3}{4},\quad \bar r=\frac{8}{3},\quad i=3,\quad l=4,
$$
we derive
    \begin{align*}
        &\|B_{\subclss,2}\|_{L^2(B^c(0,C_0))}+\|B_{u,2}\|_{L^2(B^c(0,C_0))}\\
        & \leq 80 \big( \|\nabla^{2}(\widehat{X}\overline{U})\|_{L^{8}({B^c(0,C_0)})}+\|\nabla^2(\widehat{X}\overline{\Clss})\|_{L^{8}({B^c(0,C_0)})} \big)\\
        &\quad\,\,\, \times\big( \|\nabla^{3}\widetilde{U}\|_{L^{\frac{8}{3}}(B^c(0,C_0))}+\|\nabla^3{\widetilde{\Clss}}\|_{L^{\frac{8}{3}}(B^c(0,C_0))} \big) \\
        & \leq  80 \big( \|\nabla^{2}(\widehat{X}\overline{U})\|_{L^{8}({B^c(0,C_0)})}+\|\nabla^2(\widehat{X}\overline{\Clss})\|_{L^{8}({B^c(0,C_0)})}\big)   \smallc_0 100 ^{-\frac{1}{20}} e^{-\varepsilon(\tau-\tau_0)}.
    \end{align*}

  Thus, we have
    \begin{align}
I_{2,\beta}&=2\int_{B^c(0,C_0)}\partial_{\beta}\widetilde{U}\cdot B_{u,2}\,\text{d}y+2\int_{B^c(0,C_0)}\partial_{\beta}\widetilde{\Clss}\, B_{\subclss,2}\,\text{d}y \nonumber \\
        &\leq 160 \big( \|\nabla^2(\widehat{X}\overline{\Clss})\|_{L^{8}({B^c(0,C_0)})}+ \|\nabla^{2}(\widehat{X}\overline{U})\|_{L^{8}({B^c(0,C_0)})}\big) \\
        &\quad \, \times \Big(\int_{B^c(0,C_0)}\big(|\partial_{\beta}\widetilde{\Clss}|^2+|\partial_{\beta}\widetilde{U}|^2\big) \text{d}y\Big)^{\frac{1}{2}}  \smallc_0 100 ^{-\frac{1}{20}} e^{-\varepsilon(\tau-\tau_0)}  . \nonumber
    \end{align}
For $B_{u,3}$ and $B_{\subclss,3}$, we employ  \eqref{profilecondition3}, \eqref{lowerestimate1}--\eqref{lowerestimate2}, and Lemma \ref{lemma:GN_generalnoweightwholespace} with 
$$p=\infty,\quad q=2,\quad \theta=\frac{4}{5},\quad \bar{r}=\infty,\quad i=2, \quad l=4,$$
and 
$$p=\infty,\quad q=2,\quad \theta=\frac{2}{5},\quad \bar{r}=\infty,\quad i=1, \quad l=4,$$
to obtain
    \begin{align*}
        &\|B_{\subclss,3}\|_{L^2(B^c(0,C_0))}+\|B_{u,3}\|_{L^2(B^c(0,C_0))} \\
        & \leq   80 \big( \|\widetilde{U}\|_{W^{2,\infty}(B^c(0,C_0))}+\|\widetilde{\Clss}\|_{W^{2,\infty}(B^c(0,C_0))} \big)        \\
        &\quad\,\, \times \sum_{j=3}^{5} \big( \|\nabla^j(\widehat{X}\overline{\Clss})\|_{L^2(B^c(0,C_0))}+\|\nabla^j(\widehat{X}\overline{U})\|_{L^2(B^c(0,C_0))} \big) \\
        & \leq  80\sum_{j=3}^{5} \big( \|\nabla^j(\widehat{X}\overline{\Clss})\|_{L^2(B^c(0,C_0))}+\|\nabla^j(\widehat{X}\overline{U})\|_{L^2(B^c(0,C_0))} \big) \smallc_0 \, 100 ^{-\frac{1}{20}}e^{-\varepsilon(\tau-\tau_0)},
    \end{align*}
which implies
\begin{align}
I_{3,\beta}&=2\int_{B^c(0,C_0)}\partial_{\beta}\widetilde{U}\cdot B_{u,3}\,\text{d}y+2\int_{B^c(0,C_0)}\partial_{\beta}\widetilde{\Clss}\, B_{\subclss,3}\,\text{d}y  \nonumber\\
&\leq 160\sum_{j=3}^{5} \big( \|\nabla^j\widehat{X}\overline{\Clss}\|_{L^2(B^c(0,C_0))}+\|\nabla^j(\widehat{X}\overline{U})\|_{L^2(B^c(0,C_0))} \big) \label{termB3}\\
& \quad\,\, \times \Big(\int_{B^c(0,C_0)}\big(|\partial_{\beta}\widetilde{\Clss}|^2+|\partial_{\beta}\widetilde{U}|^2\big)\text{d}y\Big)^{\frac{1}{2}}\smallc_0 100 ^{-\frac{1}{20}} e^{-\varepsilon(\tau-\tau_0)} .\nonumber
\end{align}

\textbf{3. } For the nonlinear terms $(\mathcal{N}_\subclss, \mathcal{N}_u)$,  it follows from \eqref{Para-ineq}, \eqref{lowerestimate1}--\eqref{lowerestimate2}, and Lemmas \ref{per-U,S} and  \ref{lemma:GN_generalnoweightwholespace} that 
    \begin{align*}
        \|\nabla^2 \widetilde{U} \|_{L^8(B^c(0,C_0))} &\leq C\big( \|\widetilde{U} \|_{L^\infty(B^c(0,C_0))}^{\frac{7}{20}}\|\nabla^4 \widetilde{U} \|_{L^2(B^c(0,C_0))}^{\frac{13}{20}} + \|\widetilde{U} \|_{L^\infty(B^c(0,C_0))}\big) \\
        &\leq C\smallc_0 \Big(\frac{1}{100}+\big(\frac{1}{100}
        \big)^{\frac{7}{20}}\Big)e^{\varepsilon(\tau-\tau_0)},\\
        \|\nabla^3 \widetilde{U} \|_{L^{\frac{8}{3}}(B^c(0,C_0))} &\leq C\big(\|\widetilde{U} \|_{L^\infty(B^c(0,C_0))}^{\frac{1}{4}}\|\nabla^4 \widetilde{U} \|_{L^2(B^c(0,C_0))}^{\frac{3}{4}} + \|\widetilde{U} \|_{L^\infty(B^c(0,C_0))}\big) \\
        &\leq C\smallc_0 \Big(\frac{1}{100}+\big(\frac{1}{100} \big)^{\frac{1}{4}}\Big)e^{\varepsilon(\tau-\tau_0)}, \\
        \|\nabla^5 \widetilde{U} \|_{L^2(B^c(0,C_0))} &\leq C(K)\big( \|\nabla^4 \widetilde{U} \|_{L^2(B^c(0,C_0))}^{\frac{K-5}{K-4}} \|\nabla^K \widetilde{U} \|_{L^2(B^c(0,C_0))}^{\frac{1}{K-4}}+ \|\nabla^4\widetilde{U} \|_{L^2(B^c(0,C_0))}\big) \\
        &\leq C(K)(E^{\frac{1}{10}}\smallc_0^{\frac{9}{10}}e^{-\frac{9}{10}\varepsilon(\tau-\tau_0)}+\smallc_0e^{-\varepsilon(\tau-\tau_0)}) \leq C(K)\smallc_0^{\frac{4}{5}}e^{-\frac{9}{10}\varepsilon(\tau-\tau_0)}.
    \end{align*}
The corresponding estimates for $\widetilde{\Clss}$ follow analogously. Therefore, we obtain from $\smallc_0 \ll \frac{1}{K}$ that
\begin{align*}
&\|\partial_{\beta}\mathcal{N}_\subclss\|_{L^{2}(B^c(0,C_0))}+\|\partial_{\beta}\mathcal{N}_u\|_{L^{2}(B^c(0,C_0))}\\
&\leq C \big( \|\nabla ^{5}\widetilde{U}\|_{{L^{2}(B^c(0,C_0))}}+\|\nabla ^{5}\widetilde{\Clss}\|_{{L^{2}(B^c(0,C_0))}} \big) \big( \|\widetilde{U}\|_{{L^{\infty}(B^c(0,C_0))}}+\|\widetilde{\Clss}\|_{{L^{\infty}(B^c(0,C_0))}} \big) \\
 &\quad + C \big( \|\nabla ^{4}\widetilde{U}\|_{{L^{2}(B^c(0,C_0))}}+\|\nabla ^{4}\widetilde{\Clss}\|_{{L^{2}(B^c(0,C_0))}} \big) \big( \|\nabla\widetilde{U}\|_{{L^{\infty}(B^c(0,C_0))}}+\|\nabla \widetilde{\Clss}\|_{{L^{\infty}(B^c(0,C_0))}} \big)\\
& \quad+ C \big( \|\nabla ^{3}\widetilde{U}\|_{{L^{\frac{8}{3}}(B^c(0,C_0))}}+\|\nabla ^{3}\widetilde{\Clss}\|_{{L^{\frac{8}{3}}(B^c(0,C_0))}} \big) \big( \|\nabla ^{2}\widetilde{U}\|_{{L^{8}(B^c(0,C_0))}}+\|\nabla ^{2}\widetilde{\Clss}\|_{{L^{8}(B^c(0,C_0))}} \big) \\
& \leq C(K) \smallc_0^{\frac{9}{5}}e^{-\frac{19}{10}\varepsilon(\tau-\tau_0)} \leq \smallc_0^{\frac{3}{2}}e^{-\frac{3}{2}\varepsilon(\tau-\tau_0)}.
\end{align*}
Consequently,
\begin{equation}
\begin{aligned}\label{termN} 
I_{N,\beta}&=2\int_{B^c(0,C_0)}\partial_{\beta}\widetilde{U}\cdot \partial_{\beta}\mathcal N_u \,\text{d}y+2\int_{B^c(0,C_0)}\partial_{\beta}\widetilde{\Clss}\, \partial_{\beta}\mathcal N_\subclss \, \text{d}y  \\
&\leq 2{\smallc_0}^{\frac{3}{2}}e^{-\frac{3}{2}\varepsilon(\tau-\tau_0)}\Big(\int_{B^c(0,C_0)}\big(|\partial_{\beta}\widetilde{U}|^2+|\partial_{\beta}\widetilde{\Clss}|^2\big)\,
\text{d}y \Big)^{\frac{1}{2}}.
\end{aligned}
\end{equation}
For terms $\mathcal{E}_u$ and $\mathcal{E}_\subclss$, Lemma
\ref{Forcingterm1} gives
\begin{equation}
    \begin{aligned}\label{termE}
I_{E,\beta}&=2\int_{B^c(0,C_0)}\partial_{\beta}\widetilde{U}\cdot \partial_{\beta}\mathcal{E}_u \,\text{d}y+2\int_{B^c(0,C_0)}\partial_{\beta}\widetilde{\Clss}\, \partial_{\beta}\mathcal{E}_\subclss 
\,\text{d}y  \\
        & \leq 4\smallc_1e^{-(\tau-\tau_0)}\Big(\int_{B^c(0,C_0)}\big(|\partial_{\beta}\widetilde{U}|^2+|\partial_{\beta}\widetilde{\Clss}|^2\big)\,\text{d}y\Big)^{\frac{1}{2}}.
    \end{aligned}
\end{equation}
Before estimating the diffusion term $\mathcal{F}_{\rm dis}$, 
we first apply \eqref{decayestimatederj} to derive the following estimate{\rm:}
\begin{equation*}
    \begin{aligned}
        \left|\nabla^j(L(U))\right| &\leq C(\smallc_0, j) \langle y \rangle^{-\frac{K-(j+2)-3/2+K(1-\eta)(j+2-1)}{K- 5/2}}=C(\smallc_0, j)\langle y \rangle^{-\frac{K-j-7/2+K(1-\eta)(j+1)}{K- 5/2}}.
    \end{aligned}
\end{equation*}
Moreover, for any integer $\ell \in [1,j]$ and integers $k_i\ge 1$ ($1\leq i \leq \ell$) satisfying $k_1+\cdots +k_\ell = j$, it follows from Lemmas \ref{lemma:Sbounds}--\ref{higher-order} and $\smallc_0^{\frac{3}{2}}\ll \smallc_1 \ll \smallc_0 $ that
\begin{equation*}
    \begin{aligned}
        \big|\nabla^j(\Clss^{\frac{\delta-1}{\alpha}})\big| &\leq C(j)\sum_{k_1+\cdots +k_\ell = j}\frac{|\nabla^{k_1}\Clss||\nabla^{k_2}\Clss|\cdots |\nabla^{k_\ell}\Clss|}{\Clss^\ell}\Clss^{\frac{\delta-1}{\alpha}}\\
        &\leq C(\smallc_0, j) \sum_{ k_1+\cdots +k_\ell = j}\smallc_1^{-\ell+\frac{\delta-1}{\alpha}}\phi^{-\frac{k_1+ k_2+\cdots +k_\ell}{2}}\big\langle \frac{y}{R_0} \big\rangle^{-\frac{\delta-1}{\alpha}(\Lambda-1)} \\
        &\leq C(\smallc_0, j) \phi^{-\frac{j}{2}}\langle y \rangle^{\frac{1-\delta}{\alpha}(\Lambda-1)}.
    \end{aligned}
\end{equation*}
Combining the above two estimates with \eqref{range of Lambda}--\eqref{def:Lambda*} and \eqref{S-para}  yields
\begin{equation}\label{est:Fdis1}
    \begin{aligned}
        \big|\partial^{\beta}(\Clss^{\frac{\delta-1}{\alpha}}L(U))\big| &\leq C\sum_{0\leq j\leq 4} \big|\nabla^{4-j}(L(U))\big| \big|\nabla^{j}(\Clss^{\frac{\delta-1}{\alpha}})\big| \\
        &\leq C(\smallc_0) \sum_{0\leq j\leq 4}\langle y \rangle^{\frac{1-\delta}{\alpha}(\Lambda-1)-j(1-\eta)-\frac{K+j-15/2+K(1-\eta)(5-j)}{K- 5/2}}\\
        &\leq C(\smallc_0) \langle y \rangle^{-5(1-\eta)+2-\Lambda+\delta_{\rm{dis}}} \leq C(\smallc_0)\langle y \rangle^{-3+5\eta},
    \end{aligned}
\end{equation}
and
\begin{equation}\label{est:Fdis2}
    \begin{aligned}
        \big|\partial^{\beta}(\Clss^{\frac{\delta-1}{\alpha}-1}\nabla \Clss \md(U))\big| &\leq C \sum_{0\leq j\leq 4} \big|\nabla^{4-j}(\md(U))\big| \big|\nabla^{j}(\Clss^{\frac{\delta-1}{\alpha}-1}\nabla \Clss)\big| \\
        &\leq C(\smallc_0) \sum_{0\leq j\leq 4}\big|\nabla^{4-j}(\md(U))\big| \phi^{-\frac{j+1}{2}}\langle y \rangle^{\frac{1-\delta}{\alpha}(\Lambda-1)}\\
        &\leq C(\smallc_0) \sum_{0\leq j\leq 4}\langle y \rangle^{\frac{1-\delta}{\alpha}(\Lambda-1)-(j+1)(1-\eta)-\frac{K+j-13/2+K(1-\eta)(4-j)}{K- 5/2}}\\
        &\leq C(\smallc_0) \langle y \rangle^{-5(1-\eta)+2-\Lambda+\delta_{\rm{dis}}} \leq C(\smallc_0)\langle y \rangle^{-3+5\eta}.
    \end{aligned}
\end{equation}
Using \eqref{Para-ineq} and \eqref{est:Fdis1}--\eqref{est:Fdis2} and denoting $r=|y|$, we obtain
\begin{equation*}
    \begin{aligned}
    \|\partial^{\beta}\mathcal{F}_{\rm dis}\|_{L^{2}(B^c(0,C_0))} &\leq C(\smallc_0) e^{-\delta_{\rm{dis}}\tau}\Big(\int_{ C_0}^\infty \langle r\rangle^{-6+10\eta} r^2 \text{d}r\Big)^{\frac{1}{2}} \leq C(\smallc_0) e^{-\delta_{\rm{dis}}\tau}\\
    &\leq C(\smallc_0)\smallc_0^{-\frac{3}{2}}e^{-\delta_{\rm dis}\tau_0}\smallc_0^{\frac{3}{2}}e^{-\delta_{\rm dis}(\tau-\tau_0)}  \leq C(\smallc_0)e^{-\delta_{\rm dis}\tau_0}\smallc_1e^{-\smallc_g(\tau-\tau_0)} \\
    &\leq \smallc_1e^{-\smallc_g(\tau-\tau_0)}.
    \end{aligned}
\end{equation*}
Therefore, we have
\begin{equation}
    \begin{aligned}\label{termF}
        I_{F, \beta} &= 2\int_{B^c(0,C_0)}\partial_{\beta}\widetilde{U}\cdot \partial_{\beta}\mathcal{F}_{\rm dis}\, \text{d}y  \\
        &\leq 2\smallc_1e^{-\smallc_g(\tau-\tau_0)} \Big(\int_{B^c(0,C_0)}\big(|\partial_{\beta}\widetilde{\Clss}|^2+|\partial_{\beta}\widetilde{U}|^2\big)
        \,\text{d}y\Big)^{\frac{1}{2}}. 
    \end{aligned}
\end{equation}    
\textbf{4.} 
Gathering all the above bounds together and using $0<\alpha <\frac{1}{\sqrt{3}}$, we obtain
\begin{align*}
        I \leq &\, \Big( -2\Lambda-3+ 3|\nabla(\widehat{X}\overline{U})|+ |\nabla(\alpha\widehat{X}\overline{\Clss})| \Big)\Big(\sum_{|\beta|=4}\int_{B^c(0,C_0)}\big(|\partial_{\beta}\widetilde{U}|^2+|\partial_{\beta}\widetilde{\Clss}|^2\big)\,\text{d}y\Big) \\
        & + 81 \big( \frac{\smallc_1}{\smallc_g} \big)^{\frac{17}{10}}e^{-2\varepsilon(\tau-\tau_0)}\\
        &+ 1620\big(\|\nabla(\widehat{X}\overline{U})\|_{L^{\infty}(B^c(0,C_0))}+ \|\nabla(\widehat{X}\overline{\Clss})\|_{L^{\infty}(B^c(0,C_0))}\big)\\
        &\quad\,\, \times\big(\|\nabla^4\widetilde{U}\|_{L^2(B^c(0,C_0))}^2+ \|\nabla^4\widetilde{\Clss}\|_{L^2(B^c(0,C_0))}^2 \big)\\
        &+ \sum_{|\beta|=4} \Big(\int_{B^c(0,C_0)}\big(|\partial_{\beta}\widetilde{U}|^2+|\partial_{\beta}\widetilde{\Clss}|^2\big)\,
        \text{d}y\Big)^{\frac{1}{2}}\\
        &\quad\,\,\times\Big( 160 \big( \|\nabla^{2}(\widehat{X}\overline{U})\|_{L^{8}({B^c(0,C_0)})}+\|\nabla^2(\widehat{X}\overline{\Clss})\|_{L^{8}({B^c(0,C_0)})}\big)\Big. \smallc_0 e^{-\varepsilon(\tau-\tau_0)} 100 ^{-\frac{1}{20}}\\
        &\qquad\quad\, + 160\sum_{j=3}^{5} \big( \|\nabla^j(\widehat{X}\overline{U})\|_{L^2(B^c(0,C_0))}+\|\nabla^j(\widehat{X}\overline{\Clss})\|_{L^2(B^c(0,C_0))} \big) \smallc_0 e^{-\varepsilon(\tau-\tau_0)}100 ^{-\frac{1}{20}}\\
        &\qquad\quad\, + \Big. 2{\smallc_0}^{\frac{3}{2}}e^{-\frac{3}{2}\varepsilon(\tau-\tau_0)} + 4\smallc_1e^{-(\tau-\tau_0)} + 2\smallc_1e^{-\smallc_g(\tau-\tau_0)}\Big)\\
        \leq &\, \Big( -3-2\Lambda+ 2000 \|(\nabla(\widehat{X}\overline{U}),\nabla(\widehat{X}\overline{\Clss}))\|_{L^{\infty}(B^c(0,C_0))} \Big)\\
         &\,\,\times \big(\|\nabla^4\widetilde{U}\|_{L^2(B^c(0,C_0))}^2+ \|\nabla^4\widetilde{\Clss}\|_{L^2(B^c(0,C_0))}^2 \big)+ 81 \big( \frac{\smallc_1}{\smallc_g} \big)^{\frac{17}{10}}e^{-2\varepsilon(\tau-\tau_0)} \\
         &+ 160\big(\|\nabla^4\widetilde{U}\|_{L^2(B^c(0,C_0))}^2+ \|\nabla^4\widetilde{\Clss}\|_{L^2(B^c(0,C_0))}^2 \big)^{\frac{1}{2}}\\
         &\quad\,\,\times\Big(  \big( \|\nabla^{2}(\widehat{X}\overline{U})\|_{L^{8}({B^c(0,C_0)})}+\|\nabla^2(\widehat{X}\overline{\Clss})\|_{L^{8}({B^c(0,C_0)})}\big)\Big. \smallc_0 e^{-\varepsilon(\tau-\tau_0)} 100 ^{-\frac{1}{20}}\\
         &\qquad\,\,\,\,\, + \sum_{j=3}^{5} \big( \|\nabla^j(\widehat{X}\overline{U})\|_{L^{2}(B^c(0,C_0))}+\|\nabla^j(\widehat{X}\overline{\Clss})\|_{L^{2}(B^c(0,C_0))} \big) \smallc_0 e^{-\varepsilon(\tau-\tau_0)}100 ^{-\frac{1}{20}}\\
        &\qquad\,\,\,\,\, + \Big. 2{\smallc_0}^{\frac{3}{2}}e^{-\frac{3}{2}\varepsilon(\tau-\tau_0)} + 4\smallc_1e^{-(\tau-\tau_0)} + 2\smallc_1e^{-\smallc_g(\tau-\tau_0)}\Big).
\end{align*}
It follows from \eqref{profilecondition4}--\eqref{profilecondition3} that
\begin{equation*}
    \begin{aligned}
        &\frac{\mathrm d}{\mathrm d\tau}\big(\|\nabla^4\widetilde{U}\|_{L^2(B^c(0,C_0))}^2+ \|\nabla^4\widetilde{\Clss}\|_{L^2(B^c(0,C_0))}^2 \big)\\
        & \leq  -\big(\|\nabla^4\widetilde{U}\|_{L^2(B^c(0,C_0))}^2+ \|\nabla^4\widetilde{\Clss}\|_{L^2(B^c(0,C_0))}^2  \big)+  81 \big( \frac{\smallc_1}{\smallc_g} \big)^{\frac{17}{10}}e^{-2\varepsilon(\tau-\tau_0)}.\\
        & \quad +\frac{1}{20}100^{-\frac{1}{20}}\smallc_0 e^{-\varepsilon(\tau-\tau_0)}\big(\|\nabla^4\widetilde{U}\|_{L^2(B^c(0,C_0))}^2+ \|\nabla^4\widetilde{\Clss}\|_{L^2(B^c(0,C_0))}^2 \big)^{\frac{1}{2}}.
    \end{aligned}
\end{equation*}
For convenience, denote $A(\tau) =\big(\|\nabla^4\widetilde{U}\|_{L^2(B^c(0,C_0))}^2+ \|\nabla^4\widetilde{\Clss}\|_{L^2(B^c(0,C_0))}^2 \big)e^{2\varepsilon(\tau-\tau_0)}$. Then 
\begin{align*}
    \frac{\mathrm d}{\mathrm d\tau}A(\tau) \leq& -\frac{1}{2}A(\tau)+ \frac{\smallc_0}{20} 100^{-\frac{1}{20}} \sqrt{A(\tau)} +81 \big( \frac{\smallc_1}{\smallc_g} \big)^{\frac{17}{10}} \\
    \leq & -\frac{3}{8}A(\tau)+ \Big(\frac{\smallc_0}{10}100^{-\frac{1}{20}} \Big)^{2} + 81\big( \frac{\smallc_1}{\smallc_g} \big)^{\frac{17}{10}}.
\end{align*}
Then applying the Gr\"onwall inequality yields
\begin{align*}
    A(\tau) &\leq A(\tau_0)e^{-\frac{3}{8}(\tau-\tau_0)} + \frac{8}{3}\bigg( \Big(\frac{\smallc_0}{10}100^{-\frac{1}{20}} \Big)^{2} + 81\big( \frac{\smallc_1}{\smallc_g} \big)^{\frac{17}{10}} \bigg)
    \big(1-e^{-\frac{3}{8}(\tau-\tau_0)}\big)\\
    &\leq  A(\tau_0)e^{-\frac{3}{8}(\tau-\tau_0)} + 3\bigg( \Big(\frac{\smallc_0}{10}100^{-\frac{1}{20}} \Big)^{2} + 81\big( \frac{\smallc_1}{\smallc_g} \big)^{\frac{17}{10}} \bigg)\big(1-e^{-\frac{3}{8}(\tau-\tau_0)}\big)\\
    &\leq e^{-\frac{3}{8}(\tau-\tau_0)}\bigg(A(\tau_0)-\Big( \frac{3\smallc_0^2}{100}100^{-\frac{1}{10}}  + 243\big( \frac{\smallc_1}{\smallc_g} \big)^{\frac{17}{10}} \Big)\bigg)+ \Big( \frac{3\smallc_0^2}{100} 100^{-\frac{1}{10}}  + 243\big( \frac{\smallc_1}{\smallc_g} \big)^{\frac{17}{10}} \Big).
\end{align*}
Using \eqref{Para-ineq} and the Gagliardo-Nirenberg inequality \eqref{eq:GNresultnoweightwholespace} between $\|(\widetilde{U}_0,\widetilde{\Clss}_0)\|_{L^{\infty}}$ and $\widetilde{E}_{K}$, we have
\begin{equation*}
\begin{split}
A(\tau_0)&=\|\nabla^4\widetilde{U}_0\|_{L^2(B^c(0,C_0))}^2+ \|\nabla^4\widetilde{\Clss}_0\|_{L^2(B^c(0,C_0))}^2 \leq C(K)(\smallc_1^{\frac{2K-8}{K-3/2}}E^{\frac{5}{K-3/2}}+\smallc_1^2) \\
&\leq \smallc_1^{\frac{9}{5}}\leq  \frac{3\smallc_0^2}{100}100^{-\frac{1}{10}}  + 243\big( \frac{\smallc_1}{\smallc_g} \big)^{\frac{17}{10}}  \leq \frac{1}{4}\smallc_0^2.
\end{split}
\end{equation*}
Therefore, we conclude
\begin{equation}
    \Big(\|\nabla^4\widetilde{U}\|_{L^2(B^c(0,C_0))}^2+ \|\nabla^4\widetilde{\Clss}\|_{L^2(B^c(0,C_0))}^2  \Big)e^{2\varepsilon(\tau-\tau_0)}\leq \frac{1}{4}\smallc_0^2.
\end{equation}
This completes the proof of Lemma \ref{boosstrapestimate2}.
\end{proof}

\subsubsection{Higher-order weighted energy estimates}\label{subsection6.2}
Next, we establish the bootstrap estimate for the higher-order weighted energy $E_K$. 
For the higher--order weighted energy estimates, we first derive the following commutator estimates for the $K$-th order spatial derivatives
of the nonlinear convection term.

\begin{Lemma}\label{lemma:6.6}
Let $\beta = (\beta_1, \beta_2, \cdots, \beta_K)$ be a multi-index with 
$\beta_i\in\{1,2,3\}$ for $i=1,\cdots,K$, 
and  let $\beta^{(j)}$, $1\le j \le K$, 
be the $\beta$--related multi-index as defined in \eqref{indexbetai}. 
Then, for $\tau \in [\tau_0, \tau_*]$, 
       \begin{align}
    &\Big\|\sum_{i=1}^3\Big(\partial_\beta (U \cdot \nabla U_i) - U\cdot \nabla \partial_\beta U_i -\sum_{j=1}^K \sum_{k=1}^3 L_{k\beta_j}\partial_{y_k}\partial_{\beta^{(j)}}U_i\Big)\phi^{\frac{K}{2}}\Big\|_{L^2}
    \nonumber\\    &\,\,
    \leq E^{\frac{1}{2}-\frac{1}{4K}} + C\| \partial_\beta U \phi^{\frac{K}{2}}\|_{L^2},\label{Est:I21'}\\[1mm]
     &\Big\|\Big(\partial_\beta (U \cdot \nabla \Clss) - U\cdot \nabla \partial_\beta \Clss -\sum_{j=1}^K \sum_{k=1}^3 L_{k\beta_j}\partial_{y_k}\partial_{\beta^{(j)}}\Clss\Big)\phi^{\frac{K}{2}}\Big\|_{L^2}\nonumber\\ 
    & \,\,\leq  E^{\frac{1}{2}-\frac{1}{4K}} + C\| \partial_\beta \Clss \phi^{\frac{K}{2}}\|_{L^2},\label{Est:I22'}\\[1mm]
    &\Big\|\sum_{i=1}^3\Big(\partial_\beta (\Clss \partial_{y_i} \Clss ) - \Clss \partial_\beta \partial_{y_i} \Clss -\sum_{j=1}^K \big(\partial_r(\widehat{
    X}\overline{\Clss})\frac{y_{\beta_j}}{|y|}\big)\partial_{y_i}\partial_{\beta^{(j)}}\Clss\Big)\phi^{\frac{K}{2}}\Big\|_{L^2}\nonumber\\ 
    &\,\,\leq  E^{\frac{1}{2}-\frac{1}{4K}} + C\| \partial_\beta \Clss \phi^{\frac{K}{2}}\|_{L^2},\label{Est:I32'}\\[1mm]
    &\Big\|\Big(\partial_\beta (\Clss \,\dive \,U) - \Clss \partial_\beta \,\dive\, U -\sum_{j=1}^K \big(\partial_r(\widehat{
    X}\overline{\Clss})\frac{y_{\beta_j}}{|y|}\big)\partial_{\beta^{(j)}}(\dive U)\Big)\phi^{\frac{K}{2}}\Big\|_{L^2}\nonumber  \\
    &\,\,\leq E^{\frac{1}{2}-\frac{1}{4K}} + C\| \partial_\beta U \phi^{\frac{K}{2}}\|_{L^2},\label{Est:I31'}
     \end{align}
where $L_{k\beta_j}$ is given by
$$
L_{k\beta_j}:=
\big(\partial_r(\widehat{
    X}\overline{\mathcal U})- \frac{\widehat{
    X}\overline{\mathcal U}}{|y|}\big)\frac{y_k y_{\beta_j}}{|y|^2}+\delta_{\beta_j, k}\frac{\widehat{
    X}\overline{\mathcal U}}{|y|}.
$$
\end{Lemma}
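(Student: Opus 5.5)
The plan is to expand each commutator by the Leibniz rule and split the terms according to how many derivatives land on the coefficient. Take \eqref{Est:I21'} as the model. We have
\[
\partial_\beta (U\cdot\nabla U_i)-U\cdot\nabla\partial_\beta U_i=\sum_{j=1}^K \partial_{y_{\beta_j}}U_k\,\partial_{y_k}\partial_{\beta^{(j)}}U_i+\sum_{2\le|\gamma|\le K}c_\gamma\,\partial_\gamma U_k\,\partial_{y_k}\partial_{\beta\setminus\gamma}U_i .
\]
The first sum is the ``one derivative on the coefficient'' part. For it I would write $U=\widehat X\overline U+\widetilde U$. For the spherically symmetric profile $\widehat X\overline U=\widehat X\overline{\mathcal U}\,y/|y|$, a direct computation gives $\partial_{y_{\beta_j}}(\widehat X\overline U_k)=L_{k\beta_j}$, so this piece is exactly the term subtracted in the statement. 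What remains is $\sum_j\partial_{y_{\beta_j}}\widetilde U_k\,\partial_{y_k}\partial_{\beta^{(j)}}U_i$. Using $|\nabla\widetilde U|\le\smallc_0^{9/10}\phi^{-1/2}$, which comes from \eqref{ES:weighted 1derPerS} and its $U$-analogue in the whole space, this is bounded in weighted $L^2$ by $K\smallc_0^{9/10}\|\phi^{(K-1)/2}\nabla^K U\|_{L^2}\le K\smallc_0^{9/10}E^{1/2}$, and that is absorbed into $E^{\frac12-\frac1{4K}}$ because $\smallc_0\ll 1/E\ll 1/K$. In the same way, the terms with $|\gamma|=K$ (all derivatives on the coefficient $U_k$) are $\nabla U\cdot\partial_\beta U$-type, namely $\partial_\beta U_k\,\partial_{y_k}U_i$. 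Since $\|\nabla U\|_{L^\infty}\le C$ by Lemma~\ref{lemma:Sprimebounds}, these give $C\|\partial_\beta U\phi^{K/2}\|_{L^2}$.

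The intermediate terms, with $2\le|\gamma|\le K-1$, are products $\nabla^{\ell}U\,\nabla^{\ell^*}U$ with $\ell+\ell^*=K+1$ and $\ell,\ell^*\ge2$. I would bound them in two regimes, mirroring \eqref{interpol2}. When both $\ell,\ell^*\le K-2$, I would use the pointwise decay \eqref{eq:lessK-2}: the product is at most $C(\smallc_0)\phi^{-(K+1)/2}\langle y\rangle^{-2(\Lambda-1)}$. After multiplying by $\phi^{K/2}$, the result is $C(\smallc_0)\phi^{-1/2}\langle y\rangle^{-2(\Lambda-1)}$, and this lies in $L^2(\mathbb R^3)$ as long as $(1-\eta)+2(\Lambda-1)>3/2$. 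That condition may fail for $\Lambda$ close to $1$. In that case I would instead use weighted Gagliardo--Nirenberg interpolation between $\|U\|_{L^\infty}\le C\smallc_0$ (or $\|\langle y\rangle^\Lambda\nabla U\|_{L^\infty}\le C(\smallc_0)$) and $E_K^{1/2}$. With Hölder exponents $\frac{2(K+1)}{\ell}$ and $\frac{2(K+1)}{\ell^*}$, the weights combine exactly as in \eqref{interpol2}. This yields a bound $C(K)\smallc_0^{1-\theta}E^{\theta/2}$ with total $\theta=\frac{K-2}{K-3/2}<1-\frac1{2K}$, and since $\smallc_0\ll1/E$ this is $\le E^{\frac12-\frac1{4K}}$. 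The endpoint case $\ell=K-1$, $\ell^*=2$ would use \eqref{eq:K-1} for $\nabla^{K-1}U$ in $L^{2+\frac2{K-2}}$ together with the $L^{2(K-1)}$-type bound on $\nabla^2U$ obtained from \eqref{eq:lessK-2}. The polynomial weights there have room to spare because $K\eta\gg1$.

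Estimates \eqref{Est:I22'}, \eqref{Est:I32'} and \eqref{Est:I31'} follow by the same splitting. For the $\Clss$-coefficient cases, the one-derivative coefficient is $\partial_{y_{\beta_j}}(\widehat X\overline\Clss)=\partial_r(\widehat X\overline\Clss)y_{\beta_j}/|y|$, which is the subtracted term. The $\widetilde\Clss$ remainder is small by \eqref{ES:weighted 1derPerS}, and the top term uses $\|\nabla\Clss\|_{L^\infty}\le C$.

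I expect the main obstacle to be the exponent bookkeeping in the intermediate terms. One must check that the product of the weights $\phi^{K\theta/2}$ from the two interpolated factors reproduces $\phi^{K/2}$, up to a harmless $\langle y\rangle^{-\epsilon}$ and with an integrable leftover. One must also check that the power of $E$ stays strictly below $\frac12-\frac1{4K}$ once the smallness of $\smallc_0$ is used.
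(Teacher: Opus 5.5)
Your handling of the one-derivative terms (the profile part gives exactly $L_{k\beta_j}$, the $\widetilde U$ part is $O(\smallc_0^{9/10}E^{1/2})$) and of the top term $\partial_\beta U_k\,\partial_{y_k}U_i$ matches the paper. \textbf{The gap is in the intermediate products} $\nabla^{\ell}U\,\nabla^{\ell^*}U$ with $\ell+\ell^*=K+1$, $2\le\ell,\ell^*\le K-1$.

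\emph{The pointwise regime.} Your first regime uses \eqref{eq:lessK-2}, and your endpoint case uses \eqref{eq:K-1}. Their constants $C(\smallc_0)$ contain powers of $R_0\sim\smallc_0^{-1/(\Lambda-1)}$ (see \eqref{choiceR0}); for instance, \eqref{decayestimatederj} uses $\|\langle y\rangle^{\Lambda}\nabla U\|_{L^\infty}\le CR_0^{\Lambda}$. So these constants blow up as $\smallc_0\to0$. Since $\smallc_0$ is fixed after $E$ with $\smallc_0\ll 1/E$ (see \eqref{Para-ineq}), a contribution of size $C(\smallc_0)$ with no accompanying smallness can never be bounded by $E^{\frac12-\frac1{4K}}$. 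In the paper, such losses are affordable only when they carry the factor $e^{-\delta_{\rm dis}\tau}$, which is not the case here. It would also break the closing of Lemma~\ref{boosstrapestimate3}.

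\emph{The Gagliardo--Nirenberg fallback.} This has two errors.
\begin{enumerate}
\item The bound $\|U\|_{L^\infty}\le C\smallc_0$ is false. Only the perturbation is small by \eqref{lowerestimate1}, whereas $\widehat X\overline U=O(1)$ in $B(0,R_0)$.
\item The exponent count is wrong. With $\bar r=\frac{2(K+1)}{\ell}$ and $\theta_\ell=\frac{\ell-3/\bar r}{K-3/2}$, one gets $\theta_\ell+\theta_{\ell^*}=\frac{K-1/2}{K-3/2}>1$, not $\frac{K-2}{K-3/2}$.
\end{enumerate}
Consequently, interpolating the full $U$ with the correct $O(1)$ bound on $\|U\|_{L^\infty}$ gives $C(K)E^{\frac{K-1/2}{2K-3}}$. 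This is worse than $E^{1/2}$, so the estimate cannot close this way.

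\emph{The missing idea.} You must also split each factor of the intermediate products as $U=\widetilde U+\widehat X\overline U$.
\begin{itemize}
\item Interpolate only $\widetilde U$, using \eqref{eq:GNresult} with $\varphi=\phi^{1/2}$, $\psi=1$, $p=\infty$, $q=2$, $l=K$, $\bar r=\frac{2(K+1)}{\ell}$. The inputs are $\|\widetilde U\|_{L^\infty}\le\smallc_0$ and $\widetilde E_K\le 2E$ from Lemma~\ref{per-U,S}.
\item Bound the profile factors directly: $\|\phi^{K\theta_\ell/2}\nabla^{\ell}(\widehat X\overline U)\|_{L^{\bar r}}\le C(K)$, using \eqref{profile decay}, \eqref{estimatehatX}--\eqref{estimatehatX2} and $K\eta\gg1$. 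This constant is independent of $\smallc_0$.
\end{itemize}
The three types of product then behave as follows.
\begin{itemize}
\item $\widetilde U\,\widetilde U$: the bound is $\smallc_0^{\frac{K-5/2}{K-3/2}}E^{\frac{K-1/2}{2K-3}}$, up to a harmless factor $R_0^{\frac{\Lambda-1}{2(K+1)}}$ coming from the $\langle y\rangle^{-\bar\varepsilon}$ weight. This is $\le C(K)$ because $\smallc_0\ll1/E$. The double smallness from two perturbation factors is exactly what beats the super-linear power of $E$.
\item Mixed $\widetilde U\,(\widehat X\overline U)$: there is only one interpolated factor, and $1-\theta_\ell\ge\frac{K-2}{(K+1)(K-3/2)}$. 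This gives a power of $E$ strictly below $\frac12-\frac1{4K}$.
\item Profile--profile: the bound is $C(K)$.
\end{itemize}
Summing these and using $E\gg C(K)$ yields $E^{\frac12-\frac1{4K}}$. The same splitting works for \eqref{Est:I22'}, \eqref{Est:I32'} and \eqref{Est:I31'}.
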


\begin{proof}
Since all those estimates are analogous, we just show \eqref{Est:I21'}.
   Using the Leibniz rule to $\partial_\beta (U \cdot \nabla U_i)$ yields that, for any integers $2 \le \ell, \ell^* \le K-1$ satisfying $\ell +\ell^* =K+1$, 
\begin{align*}
    &\big|\partial_\beta (U \cdot \nabla U_i) - U\cdot \nabla \partial_\beta U_i -\partial_\beta U\cdot \nabla U_i -\sum_{j=1}^K \partial_{y_{\beta_j}}U \cdot \nabla \partial_{\beta^{(j)}}U_i\big| \\
    &\leq C(K) \sum_{\substack{2 \le \ell,\ \ell^* \le K-1,\\ \ \ell + \ell^* = K+1}} | \nabla^\ell U |  | \nabla^{\ell^*} U |.
\end{align*}
To control  the terms on the right-hand side of the above, we apply  Lemma \ref{lemma:GN_general} with
$$
\varphi=\phi^{\frac{1}{2}},\quad\psi = 1, \quad p=\infty,\quad q=2,\quad l=K,\quad \bar{r} = \frac{2(K+1)}{\ell}, \quad\theta = \frac{\ell - 3/\bar{r}}{K - 3/2},
$$
together with \eqref{range of Lambda}--\eqref{def:Lambda*}, \eqref{lowerestimate1}, and Lemma~\ref{perturbationenergy}, to obtain
\begin{equation}\label{preestI21}
    \|\langle y \rangle^{\frac{1-\Lambda}{4(K+1)}}\phi^{\frac{K\theta}{2}}\nabla^{\ell} \widetilde{U}\|_{L^{\bar{r}}}
    \leq C(K)\big( \|\widetilde{U}\|_{L^{\infty}}^{1-\theta} \|\phi^{\frac{K}{2}}\nabla^K \widetilde{U}\|_{L^{2}}^{\theta} 
    + \|\widetilde{U}\|_{L^{\infty}}\big) 
    \leq C(K)\smallc_0^{1-\theta}E^{\frac{\theta}{2}}.
\end{equation}
It follows from \eqref{profile decay} and \eqref{estimatehatX}--\eqref{estimatehatX2} that
\begin{equation*}
\begin{aligned}
    |\nabla^{\ell} (\widehat{X}\overline{U})|
    &\leq |\widehat{X}\nabla^\ell (\overline{U})| + \sum_{1\le j \le \ell}|\nabla^j(\widehat{X})\nabla^{\ell -j}(\overline{U}) | \\
    &\leq  C\langle y\rangle^{-(\Lambda-1)-\ell}\mathbf{1}_{\{ |y| \leq e^{\tau}\}} + C\sum_{1\le j \le \ell} e^{-j\tau}|y|^{-(\Lambda-1)-(\ell -j)}\mathbf{1}_{\{\frac{e^\tau}{2} \leq |y| \leq e^{\tau}\}},
\end{aligned}
\end{equation*}
where
\begin{equation*}
\mathbf{1}_{\{ |y| \le e^\tau \}}
=
\begin{cases}
1 & \mbox{ for $|y| \le e^\tau$},\\[6pt]
0 & \mbox{ for $|y| > e^\tau$},
\end{cases}\qquad
\mathbf{1}_{\{\frac{e^\tau}{2} \le |y| \le e^\tau \}}
=
\begin{cases}
1 & \mbox{ for $\frac{e^\tau}{2} \le |y| \le e^\tau$},\\[6pt]
0 & \mbox{ for $|y| < \frac{e^\tau}{2}$ or $|y| > e^\tau$}.
\end{cases}
\end{equation*}
Consequently, it follows from $K\eta \gg 1$ that
\begin{equation}
\begin{aligned}\label{preestI22}
    &\|\phi^{\frac{K\theta}{2}}\nabla^{\ell} (\widehat{X}\overline{U})\|^{\bar{r}}_{L^{\bar{r}}} \\
    & \leq C\int_0^{e^\tau} \phi^{\frac{K\theta}{2}\bar{r}}\langle r \rangle^{(-(\Lambda-1)-\ell)\bar r}r^2\,\text{d}r \\
    &\quad + C\sum_{1\le j \le \ell} e^{-j\tau \bar{r}}\int_{e^\tau/2}^{e^\tau} \phi^{\frac{K\theta }{2}\bar{r}} r^{(-(\Lambda-1)-(\ell-j))\bar{r}}r^2\,\text{d}r  \\
    &\leq C\int_{0}^{e^\tau} \langle r \rangle^{-(\Lambda-1)\bar{r}-2K\eta+\frac{2K-2K\eta}{K-3/2}}\,\text{d}r 
    +C(K)e^{(1-(\Lambda-1)\bar{r}-2K\eta+\frac{2K-2K\eta}{K-3/2})\tau} \\
    &\leq C\int_{0}^{e^\tau}\langle r \rangle^{-(\Lambda-1)\bar{r}-1}\text{d}r + C(K)e^{-(\Lambda-1)\bar{r}\tau}\leq C(K). 
\end{aligned}
\end{equation}
We obtain from \eqref{Para-ineq}, \eqref{choiceR0},  \eqref{preestI21}, and the H\"older inequality that, for any integers $2 \le \ell, \ell^* \le K-1$ satisfying $\ell +\ell^* =K+1$ 
and for $r^*=\frac{2(K+1)}{\ell^*}$,
    \begin{align}  \label{EstI21} 
    &\big\||\nabla^{\ell}\widetilde{U}||\nabla^{\ell^*}\widetilde{U}|\phi^{\frac{K}{2}}\big\|_{L^2} \nonumber \\
    &= \big\||\nabla^{\ell}\widetilde{U}||\nabla^{\ell^*}\widetilde{U}|
    \phi^{\frac{K}{2}\frac{\ell-3/{\bar{r}}}{K-3/2}}\phi^{\frac{K}{2}\frac{\ell^*-3/{r^*}}{K-3/2}}\phi^{-\frac{1}{2}\frac{K}{K-3/2}}\big\|_{L^2}  \\
    &\leq  C(K)
    R_0^\frac{\Lambda-1}{2(K+1)}\big\|\langle y \rangle^\frac{1-\Lambda}{4(K+1)}\phi^{\frac{K}{2}\frac{\ell-3/{\bar{r}}}{K-3/2}}\nabla^{\ell}\widetilde{U}\big\|_{L^{\frac{2(K+1)}{\ell}}} \big\|\langle y \rangle^\frac{1-\Lambda}{4(K+1)}\phi^{\frac{K}{2}\frac{\ell^*-3/{r^*}}{K-3/2}}\nabla^{\ell^*}\widetilde{U}\big\|_{L^{\frac{2(K+1)}{\ell^*}}} \nonumber \\
    &\leq C(K)\smallc_0^{-\frac{1}{2(K+1)}}\smallc_0^{\frac{K-5/2}{K-3/2}}E^{\frac{K-1/2}{2K-3}} \leq C(K)\smallc_0^{\frac{K-9/2}{2K-3}-\frac{1}{2(K+1)}} \leq C(K). \nonumber
    \end{align}
Similarly, combining \eqref{Para-ineq}, \eqref{choiceR0}, and \eqref{preestI21}--\eqref{preestI22} leads to 
    \begin{align}\label{EstI22}   
    &\big\||\nabla^{\ell}\widetilde{U}||\nabla^{\ell^*}(\widehat{X}\overline{U})|\phi^{\frac{K}{2}}\big\|_{L^2} \nonumber \\
    &\leq C(K)R_0^{\frac{\Lambda-1}{4(K+1)}}\big\|\langle y \rangle^\frac{1-\Lambda}{4(K+1)}
    \phi^{\frac{K}{2}\frac{\ell-3/{\bar{r}}}{K-3/2}}\nabla^{\ell}\widetilde{U}\big\|_{L^{\frac{2(K+1)}{\ell}}} \big\|\phi^{\frac{K}{2}\frac{\ell^*-3/{r^*}}{K-3/2}}\nabla^{\ell^*}(\widehat{X}\overline{U})\big\|_{L^{\frac{2(K+1)}{\ell^*}}} \\
    & \leq C(K) \smallc_0^{-\frac{1}{4(K+1)}}\smallc_0^{\frac{K-2}{(K-3/2)(K+1)}}E^{\frac{1}{2}\big(1-\frac{K-2}{(K-3/2)(K+1)}\big)}\nonumber\\
    & \leq C(K)\smallc_0^{\frac{K-3}{(2K-3)(K+1)}-\frac{1}{4(K+1)}}E^{\frac{1}{2}-\frac{1}{K+1}} \leq C(K)E^{\frac{1}{2}-\frac{1}{K+1}}, \nonumber
    \end{align}
and
\begin{equation}
    \begin{aligned}\label{EstI23}
       &\big\||\nabla^{\ell}(\widehat{X}\overline{U}))||\nabla^{\ell^*}(\widehat{X}\overline{U})|\phi^{\frac{K}{2}}\big\|_{L^2}  \\
       &\leq C(K)\big\|\phi^{\frac{K}{2}\frac{\ell-3/{\bar{r}}}{K-3/2}}\nabla^{\ell}(\widehat{X}\overline{U})\big\|_{L^{\frac{2(K+1)}{\ell}}} \big\|\phi^{\frac{K}{2}\frac{\ell^*-3/{r^*}}{K-3/2}}\nabla^{\ell^*}(\widehat{X}\overline{U})\big\|_{L^{\frac{2(K+1)}{\ell^*}}}\leq  C(K). 
    \end{aligned}
\end{equation}    
We obtain from \eqref{EstI21}--\eqref{EstI23} and $E \gg 1 $ that
\begin{equation}
    \big\| | \nabla^\ell U |  | \nabla^{\ell^*} U |\phi^{\frac{K}{2}}\big\|_{L^2} \leq C(K) E^{\frac{1}{2}-\frac{1}{K+1}}.
\end{equation}
Since $E$ is sufficiently large and depends on $K$, we have
\begin{equation}\label{eq:4.76}
    \Big\|\sum_{i=1}^3\big(\partial_\beta (U \cdot \nabla U_i) - U\cdot \nabla \partial_\beta U_i -\partial_\beta U\cdot \nabla U_i -\sum_{j=1}^K  \partial_{y_{\beta_j}}U \cdot \nabla \partial_{\beta^{(j)}}U_i\big)\phi^{\frac{K}{2}}\Big\|_{L^2} \leq E^{\frac{1}{2}-\frac{1}{2K}}.
\end{equation}
On the other hand, applying \eqref{Para-ineq} and the Gagliardo-Nirenberg inequality \eqref{eq:GNresult} between \eqref{lowerestimate1} and \eqref{perturbationenergy} gives 
\begin{equation}
    \|\nabla \widetilde{U}\|_{L^{\infty}}
    \leq C(K)\big( \smallc_0^{1 - \frac{1}{K-3/2}} E^{\frac{1}{2K-3}} + \smallc_0 \big)
    \leq \smallc_0^{\frac{9}{10}},
\end{equation}
hence
\begin{equation}
    \|\nabla U\|_{L^{\infty}}\leq \|\nabla \widetilde{U}\|_{L^{\infty}}+ \|\nabla (\widehat{X}\overline{U})\|_{L^{\infty}} \leq C .
\end{equation}
This implies
\begin{equation}\label{eq:4.79}
    \|\partial_\beta U\cdot \nabla U_i\, \phi^{\frac{K}{2}}\|_{L^{2}} \leq \| \nabla U \|_{L^\infty} \| \partial_\beta U \phi^{\frac{K}{2}}\|_{L^2} \leq C\| \partial_\beta U \phi^{\frac{K}{2}}\|_{L^2},
\end{equation}
and
\begin{equation}\label{eq:4.80}
    \Big\|\sum_{j=1}^{K}\partial_{y_{\beta_j}} \widetilde{U}\cdot \nabla \partial_{\beta^{(j)}} U_i\, 
    \phi^{\frac{K}{2}}\Big\|_{L^{2}} \leq C(K)\smallc_0^{\frac{9}{10}}E^{\frac{1}{2}} \leq \smallc_0^{\frac{2}{5}}.
\end{equation}
Finally, using that $\overline{U}$ is spherically symmetric (\textit{i.e.}, $\overline{U}=\overline{\mathcal{U}}\frac{y}{|y|}$), 
for each $k \in \{1, 2, 3\}$, we decompose
\begin{equation}\label{eq:4.81}
\begin{aligned}
    \partial_{y_{\beta_j}}U_k &= \partial_{y_{\beta_j}} \widetilde{U}_k + \partial_{y_{\beta_j}} (\widehat{
    X}\overline{U}_k)= \partial_{y_{\beta_j}} \widetilde{U}_k + \partial_{y_{\beta_j}} \big(\widehat{
    X}\overline{\mathcal U}\, \frac{y_k}{|y|}\big)\\
    &= \partial_{y_{\beta_j}} \widetilde{U}_k +\underbrace{ \big(\partial_r(\widehat{
    X}\overline{\mathcal U})- \frac{\widehat{
    X}\overline{\mathcal U}}{|y|}\big)\frac{y_k y_{\beta_j}}{|y|^2}+\delta_{\beta_j, k}\frac{\widehat{
    X}\overline{\mathcal U}}{|y|}}_{ L_{k\beta_j}}.
\end{aligned}
\end{equation}
Combining \eqref{eq:4.76} and \eqref{eq:4.79}--\eqref{eq:4.81} and using $\smallc_0 \ll \frac{1}{E}$, we obtain 
\begin{equation}
    \begin{aligned}
    &\Big\|\sum_{i=1}^3\Big(\partial_\beta (U \cdot \nabla U_i) - U\cdot \nabla \partial_\beta U_i -\sum_{j=1}^K \sum_{k=1}^3  L_{k\beta_j}\partial_{y_k}\partial_{\beta^{(j)}}U_i\Big)\phi^{\frac{K}{2}}\Big\|_{L^2}\\
    &\leq E^{\frac{1}{2}-\frac{1}{4K}} + C\| \partial_\beta U \phi^{\frac{K}{2}}\|_{L^2}.
    \end{aligned}
\end{equation}  
This completes the proof of Lemma \ref{lemma:6.6}.
\end{proof}

With the above commutator estimates, we can close the $K$-th order weighted energy estimates under the \textit{a priori} bootstrap assumptions.

\begin{Lemma}\label{boosstrapestimate3}
    For $\tau \in [\tau_0, \tau_*]$, \begin{equation*}
    E_{K} = \int
    \left( | \nabla^K \Clss |^2 + | \nabla^K U |^2 \right) \phi^K {\rm d}y \leq \frac{E}{2}.
\end{equation*}
\end{Lemma}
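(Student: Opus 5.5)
I will run the weighted $K$-th order energy identity of Step 3 in the proof of Theorem~\ref{Local(U,S)}, now with the weight $\phi^K$ alone. Since $E_K<\infty$ by Theorem~\ref{Local(U,S)}, the cutoff $f_R$ can be removed by the same Fatou argument. Concretely, apply $\partial_\beta$ ($|\beta|=K$) to \eqref{selfsimilar eq} to get \eqref{eq:K-thderivative'}. Multiply by $\phi^K\partial_\beta\Clss$ and $\phi^K\partial_\beta U$, sum over $|\beta|=K$, and integrate. This gives
\[
\Big(\tfrac12\tfrac{\mathrm d}{\mathrm d\tau}+\Lambda-1+K\Big)E_K=-J_1-J_2-J_3+C_{\rm dis}e^{-\delta_{\rm dis}\tau}J_4 .
\]

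\textbf{Transport and commutator terms.} Integration by parts gives
\[
-J_1=\tfrac12\sum_\beta\int\Big(3+K\tfrac{y\cdot\nabla\phi}{\phi}\Big)\big(|\partial_\beta\Clss|^2+|\partial_\beta U|^2\big)\phi^K .
\]
For $J_2,J_3$ I subtract the top-order parts $U\cdot\nabla\partial_\beta$ and $\alpha\Clss\,\partial_\beta\nabla$. These are handled by integration by parts with the symmetric structure, and they yield terms
\[
\tfrac12\big(\dive U+K\phi^{-1}\nabla\phi\cdot U\big)|\partial_\beta W|^2
\quad\text{and}\quad
\alpha\,\phi^{-1}K|\nabla\phi|\,\Clss\,|\partial_\beta\Clss||\partial_\beta U| .
\]
The next-order commutators are given by Lemma~\ref{lemma:6.6}, up to errors of size $E^{\frac12-\frac1{4K}}+C\|\partial_\beta W\phi^{K/2}\|_{L^2}$. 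The leading commutators carry $K$ copies of the profile gradient through $L_{k\beta_j}$ and $\partial_r(\widehat X\overline\Clss)y_{\beta_j}/|y|$. Decomposing each derivative index into radial and angular directions gives a quadratic form whose coefficients are at most $K$ times
\[
\max\big\{-\partial_r(\widehat X\overline{\mathcal U}),\,-\widehat X\overline{\mathcal U}/r\big\}+\alpha|\partial_r(\widehat X\overline\Clss)| .
\]
Adding the $\phi$-terms from $J_1$ and from the top-order transport ($y+\widehat X\overline U$ in $\nabla\phi$), this quantity is controlled by the repulsivity conditions stated in \S\ref{subsub:2.3.3}: it is $\le K(1-\tfrac12\min\{\tilde\eta,\eta\})$ pointwise. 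The perturbation part $\widetilde U,\widetilde\Clss$ contributes only $O(K\smallc_0^{9/10})$, by the bounds in Lemma~\ref{lemma:6.6}, \eqref{lowerestimate1}, and $|\nabla\phi|/\phi\lesssim\langle y\rangle^{-1}$. Verifying this pointwise inequality uniformly in $\tau$ needs the cutoff $\widehat X$, whose derivative terms are $O(e^{-\tau})$. It also needs the growth of $\phi$ at rate $2(1-\eta)$, which gives $\tfrac{K}{2}y\cdot\nabla\phi/\phi\le K(1-\eta)$ for $|y|\ge4R_0$; the profile there is $O(\smallc_0)$ by \eqref{Rochoice estimate}. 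On $|y|\le 4R_0$ the profile repulsivity \eqref{radial repulsivity}--\eqref{angular repulsivity} is used.

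\textbf{Dissipative term.} I split $J_4=J_{41}+\dots+J_{45}$ as in \eqref{EqJ4}. $J_{41}$ yields the good term $-\int\Clss^{\frac{\delta-1}{\alpha}}|\nabla^{K+1}U|^2\phi^K$ plus a cross term. That cross term, and $J_{44}$ after integration by parts, are absorbed by Young's inequality. This uses the weights $\Clss^{-1}\nabla\Clss$ and $\nabla\phi/\phi$, both bounded by Lemmas~\ref{lemma:Sbounds}--\ref{lemma:Sprimebounds}. The lower-order commutators $J_{42},J_{43},J_{45}$ have the form of products $\Clss^{\frac{\delta-1}{\alpha}}\partial U\prod\partial\Clss/\Clss$. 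I bound them with Lemma~\ref{higher-order} and Lemma~\ref{lemma:Sbounds}: $\Clss^{\frac{\delta-1}{\alpha}}\lesssim\smallc_1^{\frac{\delta-1}{\alpha}}\langle y\rangle^{\frac{1-\delta}{\alpha}(\Lambda-1)}$, and each derivative gains $\phi^{-1/2}$. The spatial growth is $\langle y\rangle^{\delta_{\rm dis}-\Lambda+1+O(\eta)}$, exactly as in \eqref{est:Fdis1}. I take the growth loss to be compensated by the weighted $L^{2+\frac2{K-2}}$ bound \eqref{eq:K-1} and by $\delta_{\rm dis}<1$. The large constants $C(\smallc_0,\smallc_1)$ are then killed by $e^{-\delta_{\rm dis}\tau_0}$. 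The result is $e^{-\delta_{\rm dis}\tau}J_4\le e^{-\delta_{\rm dis}(\tau-\tau_0)}(1+E_K)$.

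\textbf{Closing.} Collecting terms gives
\[
\tfrac12\tfrac{\mathrm d}{\mathrm d\tau}E_K\le-\Big(\Lambda-1+\tfrac{K\min\{\tilde\eta,\eta\}}{2}-C\Big)E_K+C(K)E^{1-\frac1{4K}}+e^{-\delta_{\rm dis}(\tau-\tau_0)} .
\]
Since $K\eta\gg1$, the damping rate is at least $K\eta/4$. Gr\"onwall with $E_K(\tau_0)\le E/2$ then gives $E_K\le\max\{E/2,\;C(K)E^{1-\frac1{4K}}/(K\eta)\}\le E/2$, because $E\ge C3^{2K^2}$. The main obstacle is the sharp leading-order bound: one must verify that the $K$ copies of the profile gradient in the commutator, together with the $\phi$-weight transport, stay below $K(1-\tfrac12\min\{\tilde\eta,\eta\})$. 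The $\Lambda-1+K$ damping leaves only an $O(K\eta)$ margin, so every non-leading error must be $o(K)$ times $E_K$ or sublinear in $E$.
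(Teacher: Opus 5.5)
Your overall architecture matches the paper's. You use the same $\phi^K$-weighted $K$-th order identity and Lemma~\ref{lemma:6.6} with the radial/angular split and repulsivity for the convective part. You also split the viscous term into a coercive part plus commutators, and close by Gr\"onwall on the $K\min\{\tilde\eta,\eta\}$ margin. The gap is in the lower-order viscous commutators ($J_{42},J_{43},J_{45}$ in your notation; $\mathcal D_{1,\beta,2}$, $\mathcal D_{4,\beta}$, $\mathcal D_{6,\beta}$ in the paper). This is exactly where the growing coefficient $\Clss^{\frac{\delta-1}{\alpha}}\lesssim\langle y\rangle^{\delta_{\rm dis}-\Lambda+2}$ has to be beaten. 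Suppose each derivative only gains $\phi^{-1/2}$, as in \eqref{eq:lessK-2}. Then a generic product $\Clss^{\frac{\delta-1}{\alpha}}\partial_{\boldsymbol{\bar\beta}^0}L(U)\prod_{i\ge1}\frac{\partial_{\boldsymbol{\bar\beta}^i}\Clss}{\Clss}\,\phi^{\frac K2}$, with total derivative count $K+2$, is bounded only by $C(\smallc_0)\phi^{-1}\langle y\rangle^{(\frac{1-\delta}{\alpha}-1)(\Lambda-1)}=C(\smallc_0)\langle y\rangle^{\delta_{\rm dis}-2\Lambda+1+2\eta}$. This lies in $L^2(\mathbb R^3)$ only if $\delta_{\rm dis}<2\Lambda-\frac52-2\eta$. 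That forces $\delta_{\rm dis}<\sqrt3-\frac32$, and it fails outright whenever $\Lambda\le\frac54$. The exponent $\delta_{\rm dis}-\Lambda+1$ you write down is never below $-\frac32$ at all. Neither remedy you name fixes this. Estimate \eqref{eq:K-1} controls only $\nabla^{K-1}$, so it matters only when one factor carries $K-1$ derivatives. And $\delta_{\rm dis}<1$ is an upper bound on an exponent, not a source of decay.

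The missing ingredient is the sharper pointwise bound \eqref{decayestimatederj}. It comes from interpolating the first-order decay $\|\langle y\rangle^{\Lambda}\nabla U\|_{L^\infty}$ (and likewise for $\Clss$) from Lemma~\ref{lemma:Sprimebounds} against $E_K$. It gives a $j$-th derivative the extra factor $\langle y\rangle^{[(\Lambda-\frac52)(j-2)+\eta(\frac52 j-K)]/(K-\frac52)}$ beyond \eqref{eq:lessK-2}. In effect, it transports the extra decay hidden in the $L^2$-integrability of the top-order energy down to intermediate orders. Sum this over the $\ell+1$ factors with $\sum_i j_i=K+2$, using $K\eta\gg1$ to handle products with $\ell\ge2$. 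This produces an additional $\langle y\rangle^{\Lambda-\frac52+\eta}$, so the exponent becomes $\delta_{\rm dis}-\Lambda-\frac32+3\eta$. That is square integrable since $\delta_{\rm dis}<1<\Lambda$ and $\eta$ is small.

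The endpoint configurations must be handled separately by H\"older:
\begin{itemize}
\item a $\Clss$-factor with $K$ or $K-1$ derivatives;
\item $\partial_{\boldsymbol{\bar\beta}^0}L(U)$ carrying $K$ or $K-1$ derivatives of $U$.
\end{itemize}
In each case that factor goes in $L^2$ via $E_K^{1/2}$, or in $L^{2+\frac2{K-2}}$ via \eqref{eq:K-1}. The remaining factors go in $L^\infty$ or $L^{2(K-1)}$, using Lemmas~\ref{lemma:Sbounds}--\ref{higher-order}. Without this, your bound $e^{-\delta_{\rm dis}\tau}J_4\le e^{-\delta_{\rm dis}(\tau-\tau_0)}(1+E_K)$ is not justified.

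A smaller point concerns the region $R_0\le|y|\le 4R_0$, where the weight is already growing. There you should use the smallness of $\nabla\overline\Clss,\nabla\overline U$ from \eqref{Rochoice estimate} together with $\frac{r\partial_r\phi}{2\phi}\le 1-\eta$. The profile repulsivity is the wrong tool there, because its bound cannot be added to the $\phi$-growth term and stay below $1$. So split at $R_0$, as the paper does.
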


\begin{proof}
For the sake of simplicity in the proof,  we introduce a family of multi-indices $\boldsymbol{\bar\beta}^i$  $(i=0,1,\cdots\!,\ell)$ for some   integer  $\ell\in [1,K]$ and  $\beta = (\beta_1, \beta_2, \cdots, \beta_K)$ with $|\beta|=K$, which  satisfy
\begin{equation*}
\begin{split}
&|\boldsymbol{\bar\beta}^0| + | \boldsymbol{\bar\beta}^1 |  + \cdots + | \boldsymbol{\bar\beta}^\ell |= |\beta|=K.
\end{split}
\end{equation*}
Here $|\boldsymbol{\bar\beta}^i|$, for $i=0,1,\cdots\!,\ell$, and $|\beta|$ denote the lengths of the multi-indices $\boldsymbol{\bar\beta}^i$ and $\beta$, respectively,  as defined in \S \ref{operator}. 

$E_K$ can be rewritten as follows
\begin{equation}
    E_{K} = \sum_{|\beta|=K}  \int
    \left( | \partial_\beta \Clss |^2 + | \partial_\beta U |^2 \right) \phi^K \text{d}y,
\end{equation}
where $\partial_\beta U$ is the vector $(\partial_\beta U_1, \partial_\beta U_2, \partial_\beta U_3)^\top$.

Applying $\partial_\beta$ to \eqref{selfsimilar eq}  and using the fact that 
$$\partial_\beta (y\cdot \nabla f) = K \partial_\beta f + y \cdot \nabla \partial_\beta f,$$
we have 
\begin{equation}\label{eq:K-thderivative}
\begin{aligned}
     &(\partial_\tau +\Lambda-1+K)\partial_{\beta}\Clss +y\cdot \nabla \partial_{\beta}\Clss + \partial_{\beta}(U\cdot \nabla \Clss)+\alpha\partial_{\beta}(\Clss \,\dive U)=0,\\[1mm]
     &(\partial_\tau +\Lambda-1+K)\partial_{\beta}U +y\cdot \nabla \partial_{\beta}U + \partial_{\beta}(U\cdot \nabla U)+\alpha\partial_{\beta}(\Clss\nabla \Clss)= \partial_{\beta}\mathcal{F}_{\rm dis}.
     \end{aligned}
\end{equation}
Multiplying each equation of \eqref{eq:K-thderivative} by $\phi^K \partial_\beta \Clss$ and $\phi^K \partial_\beta U $ respectively, summing over all $|\beta|=K$, and integrating over $\mathbb{R}^3$, we obtain
    \begin{align}\label{eq:EK}
        &\big(\frac{1}{2}\frac{\mathrm d}{\mathrm d\tau} + \Lambda-1+K\big)E_K  \nonumber\\
       &= -\sum_{|\beta|=K}\int \phi^K y \cdot \big(\partial_{\beta} \Clss\nabla \partial_{\beta} \Clss + \sum_{i=1}^3 \partial_{\beta} U_i \nabla \partial_{\beta}U_i \big) \text{d}y \nonumber\\
        &\quad -\sum_{|\beta|=K} \int \phi^K \big(\partial_{\beta} \Clss \, \partial_{\beta}(U\cdot \nabla \Clss)+ \sum_{i=1}^3 \partial_{\beta} U_i\, \partial_{\beta}(U \cdot \nabla U_i) \big) \text{d}y\\
        &\quad - \sum_{|\beta|=K} \int \alpha\phi^K \big(\partial_{\beta} \Clss \,\partial_{\beta}(\Clss \,\dive\, U) + \sum_{i=1}^3 \partial_{\beta} U_i\, \partial_{\beta}(\Clss \partial_{y_i} \Clss) \big) \text{d}y \nonumber\\
       &\quad +\sum_{|\beta|=K}C_{\rm{dis}}e^{-\delta_{\rm{dis}}\tau}\int \phi^K\partial_{\beta}U\, \partial_{\beta}\big( \Clss^{\frac{\delta-1}{\alpha}}L(U) +\frac{\delta}{\alpha}\Clss^{\frac{\delta-1}{\alpha}-1}\nabla \Clss \cdot \md(U) \big)\text{d}y \nonumber\\
       &=-I_1-I_2-I_3+C_{\rm{dis}}e^{-\delta_{\rm{dis}}\tau}I_4. \nonumber
    \end{align}

We divide the rest of the proof into four steps.

\smallskip
\textbf{1}. \emph{Estimate of $I_1$}. 
By integrating by parts, it follows from \eqref{higherestimate3} that
\begin{equation}\label{eq:EstI1}
    \begin{aligned}
        -I_1  &=-\frac{1}{2}\sum_{|\beta|=K}\int \phi^K y \cdot \big( \nabla |\partial_\beta \Clss|^2 + \sum_{i=1}^{3} \nabla|\partial_\beta U_i|^2\big)\,\text{d}y\\
        & =\frac{1}{2}\sum_{|\beta|=K}\int \frac{\dive(\phi^K y )}{\phi^K}\big( |\partial_\beta \Clss|^2 + |\partial_\beta U|^2\big)\phi^K\,\text{d}y\\
        & \leq \frac{3}{2}E + \frac{K}{2} \sum_{|\beta|=K} \int \frac{y \cdot \nabla\phi }{\phi}\big(  |\partial_\beta \Clss|^2 +|\partial_\beta U|^2\big)\phi^K\,\text{d}y .
    \end{aligned}
\end{equation}

\textbf{2}. \emph{Estimate of $I_2$ and $I_3$}.
To reindex the summation, we denote by $\tilde \beta $ a multi-index with $|\tilde\beta|=K-1$. Moreover, in preparation for applying the repulsivity properties  \eqref{radial repulsivity}--\eqref{angular repulsivity}
of profile $(\overline{\Clss}, \overline{U})$, we define $\nabla_{\theta}$ as the angular gradient in spherical coordinates by
\begin{equation}\label{dri:angular}
    \frac{1}{r}\nabla_{\theta}
:= \nabla - \frac{y}{r}\,\partial_r,
\end{equation}
where $r=|y|$ and $\partial_r := \frac{y}{r}\cdot\nabla$ denotes 
the radial derivative.
Using \eqref{higherestimate3} and \eqref{Est:I21'} yields
\begin{align}
       &-\sum_{|\beta|=K} \int \phi^K \sum_{i=1}^3 \partial_\beta 
       (U \cdot \nabla U_i) \partial_\beta U_i\,\text{d}y \nonumber \\
       &=-\sum_{|\beta|=K} \int \phi^K \sum_{i=1}^3 \Big(\partial_\beta (U \cdot \nabla U_i) - U\cdot \nabla \partial_\beta U_i -\sum_{j=1}^K \sum_{k=1}^3 L_{k\beta_j}\partial_{y_k}\partial_{\beta^{(j)}}U_i\Big) \partial_\beta U_i\,\text{d}y \nonumber\\
       &\quad -\sum_{|\beta|=K} \int \phi^K \sum_{i=1}^3 \Big( U\cdot \nabla \partial_\beta U_i +\sum_{j=1}^K \sum_{k=1}^3 L_{k\beta_j}\partial_{y_k}\partial_{\beta^{(j)}}U_i\Big) \partial_\beta U_i\, \text{d}y \nonumber  \\
       & \leq  \sum_{|\beta|=K } \big( E^{\frac{1}{2}-\frac{1}{4K}} + C\| \partial_\beta U \phi^{\frac{K}{2}}\|_{L^2} \big) \| \partial_\beta U \phi^{\frac{K}{2}} \|_{L^2} -\sum_{|\beta|=K} \int \phi^K \sum_{i=1}^3  U\cdot \nabla \partial_\beta U_i\, \partial_\beta U_i \,\text{d}y \nonumber \\
       &\quad -  \sum_{j=1}^K \sum_{|\beta^{(j)}|=K-1}\sum_{\beta_j=1}^3\sum_{k=1}^3 \int \phi^K \sum_{i=1}^3 \bigg( L_{k\beta_j} \partial_{y_k} \partial_{\beta^{(j)}} U_i \,\partial_{y_{\beta_j}} \partial_{\beta^{(j)}} U_i \bigg) \,\text{d}y  \nonumber \\
       & \leq  \frac{3^K}{2}E^{1-\frac{1}{2K}}+C\sum_{|\beta|=K}\| \partial_\beta U \phi^{\frac{K}{2}}\|_{L^2}^2 +\sum_{|\beta|=K} \int \frac{\dive(\phi^KU) }{2\phi^K } |\partial_\beta U|^2 \phi^K\, \text{d}y  \nonumber \\
       &\quad - \sum_{j=1}^K \sum_{|\beta^{(j)}|=K-1}\sum_{i=1}^3\int \phi^K \big(\partial_r(\widehat{X}\overline{\mathcal U})-\frac{\widehat{X}\overline{\mathcal U}}{|y|}\big) \sum_{\beta_j, k=1}^3 \frac{y_k y_{\beta_j}}{|y|^2} \partial_{y_k} \partial_{\beta^{(j)}} U_i\, \partial_{y_{\beta_j}} \partial_{\beta^{(j)}} U_i \,\text{d}y  \nonumber \\
       &\quad - \sum_{j=1}^K \sum_{|\beta^{(j)}|=K-1}\sum_{i=1}^3\int \phi^K  \frac{\widehat{X}\overline{\mathcal U}}{|y|} \sum_{ k=1}^3  |\partial_{y_k} \partial_{\beta^{(j)}} U_i
       |^2\,\text{d}y  \label{Est:I21''} \\
       & \leq   CE+ \sum_{|\beta|=K} \int \frac{K\nabla \phi \cdot U+ \phi\,\dive\,U }{2\phi } |\partial_\beta U|^2 \phi^K\, \text{d}y \nonumber \\
       &\quad - K \sum_{|\tilde{\beta}|=K-1} \int \phi^K \big(\partial_r (\widehat{X}\overline{\mathcal U}) - \frac{\widehat{X}\overline{\mathcal U}}{|y|} \big)\sum_{i=1}^3|\partial_r \partial_{\tilde{\beta}}U_i|^2 \,\text{d}y  \nonumber \\
       &\quad -K \sum_{|\tilde{\beta}|=K-1} \sum_{i,k=1}^3 \int \phi^K \frac{\widehat{X}\overline{\mathcal U}}{|y|} |\partial_{y_k} \partial_{\tilde{\beta}}U_i|^2\, \text{d}y \nonumber \\
       & \leq  CE + \sum_{|\beta|=K} \int \big(\frac{K\nabla \phi \cdot \widetilde{U}}{2\phi }+\frac{1}{2}\dive \,U \big) |\partial_\beta U|^2 \phi^K\, \text{d}y \nonumber \\
       &\quad + K \sum_{|\tilde{\beta}|=K-1}\int \Big( \frac{\nabla \phi \cdot \widehat{X} \overline{U}}{2\phi }|\nabla \partial_{\tilde{\beta}}U|^2 -\partial_r (\widehat{X}\overline{\mathcal U})|\partial_r \partial_{\tilde{\beta}}U|^2 - \frac{\widehat{X}\overline{\mathcal U}}{|y|^3} |\nabla_{\theta} \partial_{\tilde{\beta}}U|^2 \Big)\,\phi^K\,\text{d}y \nonumber\\
       &\leq  CE + K \sum_{|\tilde{\beta}|=K-1}\int \Big( \frac{\nabla \phi \cdot \widehat{X} \overline{U}}{2\phi }|\nabla \partial_{\tilde{\beta}}U|^2 -\partial_r (\widehat{X}\overline{\mathcal U})|\partial_r \partial_{\tilde{\beta}}U|^2 - \frac{\widehat{X}\overline{\mathcal U}}{|y|^3} |\nabla_{\theta} \partial_{\tilde{\beta}}U|^2 \Big)\phi^K\, \text{d}y, \nonumber
\end{align}
where the second inequality (\textit{i.e.}, the 6th line of \eqref{Est:I21''}) follows from the Cauchy-Schwarz inequality, integration by parts, and rearranging the indices; in the third inequality (\textit{i.e.}, the 9th line of \eqref{Est:I21''}), we have used the identity: 
$$
\sum_{k,\beta_j=1}^3\frac{y_k y_{\beta_j}}{|y|^2}\partial_{y_k} f \partial_{y_{\beta_j}} f = |\partial_r f|^2;
$$
and, in  the last inequality (\textit{i.e.}, the  line 14  of \eqref{Est:I21''}), we have employed the bounds: 
$$
\|\dive\, U\|_{L^{\infty}} \leq C,\qquad  \big\|\frac{\nabla \phi}{2\phi}\widetilde{U}\big\|_{L^{\infty}}\leq C\smallc_0 
$$ 
with $\smallc_0 \ll K^{-1}$.

By an analogous argument, we obtain from \eqref{higherestimate3} and \eqref{Est:I22'} that   
    \begin{align}
    &-\sum_{|\beta|=K} \int \phi^K \partial_\beta \Clss \partial_\beta (U \cdot \nabla \Clss)\,\text{d}y\nonumber\\
    &=-\sum_{|\beta|=K} \int \phi^K \partial_\beta \Clss \Big(\partial_\beta (U \cdot \nabla \Clss) - U\cdot \nabla \partial_\beta \Clss -\sum_{j=1}^K \sum_{k=1}^3 L_{k\beta_j}\partial_{y_k}\partial_{\beta^{(j)}}\Clss\Big)\,\text{d}y\label{Est:I22''}\\
    &\ \quad-\sum_{|\beta|=K} \int \phi^K \partial_\beta \Clss \Big( U\cdot \nabla \partial_\beta \Clss +\sum_{j=1}^K \sum_{k=1}^3 L_{k\beta_j}\partial_{y_k}\partial_{\beta^{(j)}}\Clss\Big)\,\text{d}y\nonumber\\
    & \leq  CE + K \sum_{|\tilde{\beta}|=K-1}\int \Big( \frac{\nabla \phi \cdot \widehat{X} \overline{U}}{2\phi }|\nabla \partial_{\tilde{\beta}}\Clss|^2 -\partial_r (\widehat{X}\overline{\mathcal U})|\partial_r \partial_{\tilde{\beta}}\Clss|^2 - \frac{\widehat{X}\overline{\mathcal U}}{|y|^3} |\nabla_{\theta} \partial_{\tilde{\beta}}\Clss|^2 \Big)\phi^K \,\text{d}y.\nonumber
    \end{align}
Adding \eqref{Est:I21''}--\eqref{Est:I22''} yields 
\begin{equation}    
    \begin{aligned}
        -I_2 &\leq CE + K \int \Big( \frac{\nabla \phi \cdot \widehat{X} \overline{U}}{2\phi } -\partial_r (\widehat{X}\overline{\mathcal U})  \Big)\big(|\partial_r \nabla^{K-1} U|^2+|\partial_r \nabla^{K-1} \Clss|^2\big)\phi^K\,\text{d}y  \\
        &\quad + K  \int \Big( \frac{\nabla \phi \cdot \widehat{X} \overline{U}}{2\phi |y|^2}- \frac{\widehat{X}\overline{\mathcal U}}{|y|^3}   \Big)\big(|\nabla_{\theta} \nabla^{K-1} U|^2+|\nabla_{\theta} \nabla^{K-1} \Clss|^2\big)\phi^K\,\text{d}y. \label{Est:I2}
\end{aligned}
\end{equation}
Then we start estimating $I_3$. It follows from \eqref{higherestimate3} and \eqref{Est:I32'} that
\begin{align}
       &-\sum_{|\beta|=K} \int \phi^K \sum_{i=1}^3 \partial_\beta U_i \partial_\beta (\Clss \partial_{y_i} \Clss) \,\text{d}y\nonumber \\
       &=-\sum_{|\beta|=K} \int \phi^K \sum_{i=1}^3 \partial_\beta U_i \Big(\partial_\beta (\Clss \partial_{y_i} \Clss ) - \Clss \partial_\beta \partial_{y_i} \Clss -\sum_{j=1}^K  \big(\partial_r(\widehat{
    X}\overline{\Clss})\frac{y_{\beta_j}}{|y|}\big)\partial_{y_i}\partial_{\beta^{(j)}}\Clss\Big)\,\text{d}y\nonumber\\
       &\quad -\sum_{|\beta|=K} \int \phi^K \sum_{i=1}^3 \partial_\beta U_i \Big( \Clss \partial_\beta \partial_{y_i} \Clss +\sum_{j=1}^K  \big(\partial_r(\widehat{
    X}\overline{\Clss})\frac{y_{\beta_j}}{|y|}\big)\partial_{y_i}\partial_{\beta^{(j)}}\Clss\Big)\,\text{d}y\nonumber\\
       & \leq  \sum_{|\beta|=K } \big( E^{\frac{1}{2}-\frac{1}{4K}} + C\| \partial_\beta \Clss \phi^{\frac{K}{2}}\|_{L^2} \big) \| \partial_\beta U \phi^{\frac{K}{2}} \|_{L^2} -\sum_{|\beta|= K} \int \phi^K  \Clss \nabla \partial_\beta \Clss \cdot \partial_\beta U\, \text{d}y \nonumber \\
       &\quad - \sum_{j=1}^K \sum_{|\beta^{(j)}|=K-1}\sum_{\beta_j=1}^3\sum_{i=1}^3  \int \phi^K   \partial_r(\widehat{X}\overline{\Clss})\frac{y_{\beta_j}}{|y|}\partial_{y_i} \partial_{\beta^{(j)}} \Clss \,\partial_{y_{\beta_j}}\partial_{\beta^{(j)}}U_i\,\text{d}y  \label{Est:I31''} \\
       & \leq  \frac{3^K}{2}E^{1-\frac{1}{2K}}+C\sum_{|\beta|=K}\big(\| \partial_\beta \Clss \phi^{\frac{K}{2}}\|_{L^2}^2+ \| \partial_\beta U \phi^{\frac{K}{2}}\|_{L^2}^2 \big)-\sum_{|\beta|=K} \int \phi^K  \Clss \nabla \partial_\beta \Clss \cdot \partial_\beta U\, \text{d}y \nonumber \\
       &\quad - \sum_{j=1}^K \sum_{|\beta^{(j)}|=K-1}\sum_{i=1}^3 \int \phi^K  \partial_r(\widehat{X}\overline{\Clss})\sum_{\beta_j=1}^3\frac{y_{\beta_j}}{|y|}\partial_{y_{\beta_j}}\partial_{\beta^{(j)}}U_i \, \partial_{y_i} \partial_{\beta^{(j)}} \Clss
       \, \text{d}y  \nonumber \\
       &\leq  CE - \sum_{|\beta|=K} \int \phi^K  \Clss \nabla \partial_\beta \Clss \cdot \partial_\beta U \,\text{d}y \nonumber \\
       & \quad+ \frac{K}{2} \sum_{|\tilde{\beta}|=K-1}\sum_{i=1}^3 \int \phi^K |\partial_r(\widehat{X}\overline{\Clss})|\big(|\nabla \partial_{\tilde \beta} U_i |^2+|\partial_{y_i} \partial_{\tilde \beta} \Clss |^2\big) \,\text{d}y \nonumber \\
       &=  CE - \sum_{|\beta|=K} \int \phi^K  \Clss \nabla \partial_\beta \Clss \cdot \partial_\beta U \text{d}y+\frac{K}{2} \sum_{|\beta|=K}\int \phi^K |\partial_r(\widehat{X}\overline{\Clss})|\big(|\partial_{\beta} \Clss |^2+|\partial_{ \beta} U |^2\big) \,\text{d}y. \nonumber 
\end{align}
By an analogous argument, it follows from \eqref{higherestimate3} and \eqref{Est:I31'} that 
    \begin{align}
        &-\sum_{|\beta|=K} \int \phi^K \sum_{i=1}^3 \partial_\beta (\Clss\partial_{y_i}U_i) \partial_\beta \Clss \,\text{d}y\nonumber\\
        &=-\sum_{|\beta|=K} \int \phi^K \Big(\partial_\beta (\Clss \,\dive \,U) - \Clss \partial_\beta \,\dive\, U -\sum_{j=1}^K \big(\partial_r(\widehat{
    X}\overline{\Clss})\frac{y_{\beta_j}}{|y|}\big)\partial_{\beta^{(j)}}(\dive U)\Big)\partial_\beta \Clss \,\text{d}y\label{Est:I32''}\\
    &\quad -\sum_{|\beta|=K} \int \phi^K \Big(\Clss \partial_\beta \,\dive\, U +\sum_{j=1}^K \big(\partial_r(\widehat{
    X}\overline{\Clss})\frac{y_{\beta_j}}{|y|}\big)\partial_{\beta^{(j)}}(\dive U)\Big)\partial_\beta \Clss \,\text{d}y  \nonumber \\
    & \leq  CE - \sum_{|\beta|=K}\sum_{i=1}^3 \int \phi^K  \Clss  \partial_\beta \Clss \cdot \partial_{y_i} \partial_\beta U_i \,\text{d}y
    +\frac{K}{2} \sum_{|\beta|=K}\int \phi^K |\partial_r(\widehat{X}\overline{\Clss})|\big(|\partial_{\beta} \Clss |^2+ |\partial_{\beta} U |^2\big) \,\text{d}y. \nonumber
    \end{align}
Combining \eqref{Est:I31''}--\eqref{Est:I32''} and integrating by parts, along with the Cauchy-Schwarz inequality, give
    \begin{align}\label{Est:I3}
        -I_3 &\leq  CE +\alpha \sum_{|\beta|=K} \int \frac{\nabla (\phi^K \Clss)}{\phi^K}  \cdot \big(\phi^K\partial_\beta \Clss \partial_\beta U\big)\, \text{d}y \nonumber \\
        &\quad + \alpha K \sum_{|\beta|=K}\int \phi^K |\partial_r(\widehat{X}\overline{\Clss})|\big(|\partial_{\beta} \Clss |^2+|\partial_{\beta} U |^2\big) \,\text{d}y \nonumber \\
        &\leq   CE + \alpha \sum_{|\beta|=K} \int \Big(\frac{K\nabla \phi \widetilde{\Clss}}{\phi}+\frac{K\nabla \phi \widehat{X}\overline{\Clss}}{\phi}+\nabla \Clss \Big) \cdot \big(\phi^K\partial_\beta \Clss \partial_\beta U\big) \,\text{d}y  \\
        &\quad + \alpha K \sum_{|\beta|=K}\int \phi^K |\partial_r(\widehat{X}\overline{\Clss})|\big(|\partial_{\beta} \Clss |^2+|\partial_{ \beta} U |^2\big) \,\text{d}y  \nonumber \\
        &\leq  CE+ K\alpha \sum_{|\beta|=K}\int \phi^K \Big(\frac{|\nabla \phi| \widehat{X}\overline{\Clss}}{2\phi}+|\partial_r(\widehat{X}\overline{\Clss})|\Big)\big(|\partial_{\beta} \Clss |^2+|\partial_{ \beta} U |^2\big)\, \text{d}y, \nonumber
    \end{align}
where we have used the following facts in the last inequality (\textit{i.e.}, the line 5th of \eqref{Est:I3}):
$$
\|\nabla \Clss\|_{L^{\infty}} \leq C, \qquad 
\|K\frac{\nabla \phi}{2\phi}\widetilde{\Clss}\|_{L^{\infty}}\leq CK\smallc_0\leq 1.$$ 

\textbf{3}. \emph{Estimate of $I_4$}.
By the expression of $I_4$, after commuting $\partial_\beta$ with $\Clss^{\frac{\delta-1}{\alpha}}$and $\Clss^{\frac{\delta-1}{\alpha}-1}$ and then  integrating by parts, we obtain
    \begin{align}\label{Exp:diss}
        I_{4, \beta} = & \underbrace{ \int \phi^K  \big[ \partial_\beta, \Clss^{\frac{\delta-1}{\alpha}} \big] L(U)\cdot \partial_\beta U \,\text{d}y }_{\mathcal  D_{1, \beta}}-\underbrace{ \int \big(\nabla ( \phi^K \Clss^{\frac{\delta-1}{\alpha}} ) \cdot \partial_\beta \md(U)\big) \cdot \partial_\beta U\,\text{d}y }_{\mathcal D_{2, \beta}} \nonumber\\
     &- \underbrace{ \int \phi^K \Clss^{\frac{\delta-1}{\alpha}}\big(a_1|\nabla \partial_\beta U|^2 + (a_1+a_2)|\dive \partial_{\beta} U|^2\big)\,\text{d}y}_{\mathcal D_{3,\beta}}\nonumber\\
      & + \underbrace{ \frac{\delta}{\alpha}\int \phi^K  \big[ \partial_\beta, \Clss^{\frac{\delta-1}{\alpha}-1} \big] \big(\nabla \Clss \cdot \md(U)\big)\cdot \partial_\beta U\, \text{d}y }_{\mathcal  D_{4, \beta}}\\
      &+\underbrace{\frac{\delta}{\alpha} \int \phi^K \Clss^{\frac{\delta-1}{\alpha}-1} \big(\partial_\beta(\nabla \Clss)\cdot  \md(U)\big) \cdot \partial_\beta U\, \dy }_{\mathcal D_{5,\beta}}\nonumber \\
     &+ \underbrace{\frac{\delta}{\alpha}\int \phi^K \Clss^{\frac{\delta-1}{\alpha}-1} \big(\partial_\beta(\nabla \Clss \cdot \md(U))-\partial_\beta(\nabla \Clss)\cdot  \md(U)\big) \cdot \partial_{\beta} U\, \dy}_{\mathcal D_{6,\beta}}. \nonumber
    \end{align}
Since $\mathcal D_{3, \beta } > 0$, this is a dissipative term and already carries the favorable sign. We now estimate the remaining terms, starting with $\mathcal D_{2, \beta }$. Applying \eqref{higherestimate3} and the H\"older inequality yields
\begin{equation}
\begin{aligned}
    | \mathcal D_{2, \beta} | &=\bigg| \int \Big(\big( K \phi^{K-1} \nabla \phi \,\Clss^{\frac{\delta-1}{\alpha}} + \frac{\delta-1}{\alpha} \Clss^{\frac{\delta-1}{\alpha}-1} \phi^K \nabla \Clss \big) \cdot  \partial_\beta \md(U)\Big) \cdot \partial_\beta U \,\text{d}y\bigg|  \\
    &\leq C \,\mathcal D_{3, \beta}^{\frac{1}{2}}  K \Big( \int \phi^{K-2} |\nabla \phi |^2 \Clss^{\frac{\delta-1}{\alpha}}| \partial_\beta U |^2 \,\text{d}y\Big)^{\frac{1}{2}} \\
    &\quad + C\, \mathcal D_{3, \beta}^{\frac{1}{2}}\frac{1-\delta}{\alpha} \Big( \int \phi^K \Clss^{\frac{\delta-1}{\alpha}}\frac{|\nabla \Clss|^2}{\Clss^{2}} | \partial_\beta U |^2\,\text{d}y \Big)^{\frac{1}{2}}  \label{eq:D2beta} \\
    &\leq C \,\mathcal D_{3, \beta}^{\frac{1}{2}} E^{\frac{1}{2}} \bigg( K \Big\| \frac{| \nabla \phi |^2}{\phi^2  }\Clss^{\frac{\delta-1}{\alpha}}\Big\|^{\frac 12}_{L^\infty} + \frac{1-\delta}{\alpha} \Big\|\Clss^{\frac{\delta-1}{\alpha}} \frac{| \nabla \Clss |^2}{\Clss^{2}} \Big\|^{\frac 12}_{L^\infty} \bigg). 
\end{aligned}
\end{equation}
It follows from Lemmas \ref{lemma:Sbounds}--\ref{lemma:Sprimebounds} that
\begin{align*} \label{Est:D2beta'}
    \frac{| \nabla \phi |^2}{\phi^2 }\Clss^{\frac{\delta-1}{\alpha}} &\leq C(\smallc_0) \big\langle \frac{y}{R_0} \big\rangle^{-2+\frac{(1-\delta)}{\alpha}(\Lambda-1)} = C(\smallc_0) \big\langle \frac{y}{R_0} \big\rangle^{-\Lambda +\delta_{\rm{dis}}}  \leq C(\smallc_0) , \\
    \frac{| \nabla \Clss |^2}{\Clss^{2}}\Clss^{\frac{\delta-1}{\alpha}} &\leq  C(\smallc_0)  \big\langle \frac{y}{R_0} \big\rangle^{-2+\frac{(1-\delta)}{\alpha}(\Lambda-1)}\leq C(\smallc_0) ,
\end{align*}
where  we have used the fact that 
$$
-\Lambda+\delta_{\rm{dis}}<0.
$$
Since $\smallc_0$ is sufficiently small and depends only on $E$ and $K$, we obtain
\begin{equation} \label{Est:D2beta}
    |\mathcal D_{2, \beta}| \leq C(\smallc_0)  \mathcal D_{3, \beta}^{\frac{1}{2}}. 
\end{equation}
For $\mathcal D_{5, \beta}$, it follows from integration  by parts that 
\begin{equation}    
    \begin{aligned}\label{eq:D5beta}
       \mathcal D_{5, \beta}&= -\frac{\delta}{\alpha}\int \partial_{\beta}\Clss \big(\nabla(\phi^K \Clss^{\frac{\delta-1}{\alpha}-1})\cdot  \md(U)\big)\cdot \partial_{\beta}U\,\text{d}y \\
       &\quad -\frac{\delta}{\alpha}\int \phi^K \Clss^{\frac{\delta-1}{\alpha}-1} \partial_{\beta}\Clss\, \dive(  \partial_{\beta}U\cdot\md(U))\,\text{d}y   \\
       &=  \mathcal D_{5, \beta, 1}+ \mathcal D_{5, \beta, 2}.
    \end{aligned}
\end{equation}    
We first estimate $\mathcal D_{5, \beta, 1}$. It follows from \eqref{higherestimate3} and Lemmas~\ref{lemma:Sbounds}--\ref{lemma:Sprimebounds} that
    \begin{align}
       |\mathcal D_{5, \beta, 1}|&=\frac{\delta}{\alpha}\bigg| \int \partial_{\beta}\Clss\Big(\big( K \phi^{K-1} \nabla \phi\, \Clss^{\frac{\delta-1}{\alpha}-1} + (\frac{\delta-1}{\alpha}-1) \phi^K \Clss^{\frac{\delta-1}{\alpha}-2}\nabla \Clss \big)\cdot  \md(U)\Big)\cdot \partial_{\beta}U \,\text{d}y \bigg| \nonumber \\
       &\leq  E^{\frac{1}{2}}\bigg( K \Big( \int \phi^{K-2} |\nabla \phi |^2 \Clss^{\frac{2(\delta-1)}{\alpha}}\frac{|\md(U)|^2}{\Clss^2}| \partial_\beta \Clss |^2 \,\text{d}y\Big)^{\frac{1}{2}} \bigg. \nonumber \\
       &\quad \bigg.+ \big(\frac{1-\delta}{\alpha}+1\big) \Big( \int \phi^K \Clss^{\frac{2(\delta-1)}{\alpha}}\frac{|\nabla \Clss|^2}{\Clss^{2}}\frac{|\md(U)|^2}{\Clss^2} | \partial_\beta \Clss |^2\,\text{d}y \Big)^{\frac{1}{2}} \bigg) \label{Est:D5beta1}  \\
       &\leq E\bigg(K \Big\| \frac{| \nabla \phi |^2}{\phi^2  }\frac{|\md(U)|^2}{\Clss^2}\Clss^{\frac{2(\delta-1)}{\alpha}}\Big\|^{\frac{1}{2}}_{L^\infty} + \big(\frac{1-\delta}{\alpha}+1 \big)\Big\| \frac{| \nabla \Clss |^2}{\Clss^{2}}\frac{|\md(U)|^2}{\Clss^2} \Clss^{\frac{2(\delta-1)}{\alpha}}\Big\|^{\frac{1}{2}}_{L^\infty}\bigg)  \nonumber \\
       &\leq C(\smallc_0) ,\nonumber
    \end{align}
where we used the facts that 
$$
\max\big\{|\frac{\nabla \Clss}{\Clss}|,\  |\frac{\nabla U}{U}|,\ |\frac{\nabla \phi}{\phi}|\big\}
\leq C(\smallc_0)  \langle \frac{y}{R_0} \rangle^{-1},\qquad -\Lambda+\delta_{\rm{dis}}<0,
$$
and  $\smallc_0$ is sufficiently small and depends on $E$ and $K$. 

Next, for $\mathcal D_{5, \beta, 2}$, we  obtain from \eqref{higherestimate3}, Lemmas~\ref{lemma:Sbounds}--\ref{higher-order}, and  $\smallc_0 \ll \frac{1}{E} \ll \frac{1}{K}$ that
    \begin{align}
       |\mathcal D_{5, \beta, 2}|&=\frac{\delta}{\alpha}\Big|\int \phi^K \Clss^{\frac{\delta-1}{\alpha}-1} \partial_{\beta}\Clss\, \dive\,\big(  \partial_{\beta}U \cdot \md(U)\big)\,\text{d}y \Big| \nonumber \\
       &\leq C  E^{\frac{1}{2}} \bigg( \int \phi^{K}  \Clss^{\frac{2(\delta-1)}{\alpha}}\frac{|\nabla^2U|^2}{\Clss^2}| \partial_\beta \Clss |^2\, \dy \bigg)^{\frac{1}{2}}\nonumber\\
       &\quad \bigg.+ C\,\mathcal{D}_{3, \beta}^{\frac{1}{2}}  \bigg( \int \phi^K \Clss^{\frac{\delta-1}{\alpha}}\frac{|\nabla U|^2}{\Clss^{2}}| \partial_\beta \Clss |^2\,\dy \bigg)^{\frac{1}{2}}  \label{Est:D5beta2} \\
       &\leq CE \Big\| \frac{| \nabla^2 U |^2}{\Clss^2  }\Clss^{\frac{2(\delta-1)}{\alpha}}\Big\|^{\frac{1}{2}}_{L^\infty} + C\,\mathcal{D}_{3, \beta}^{\frac{1}{2}} E^{\frac{1}{2}}\Big\| \frac{|\nabla U|^2}{\Clss^2} \Clss^{\frac{\delta-1}{\alpha}}\Big\|^{\frac{1}{2}}_{L^\infty} \nonumber \\
       &\leq C(\smallc_0) \big( 1 + \mathcal{D}_{3, \beta}^{\frac{1}{2}}\big). \nonumber
    \end{align}
Substituting \eqref{Est:D5beta1}--\eqref{Est:D5beta2} into \eqref{eq:D5beta} leads to
\begin{equation}\label{Est:D5beta}
    \mathcal D_{5, \beta} \leq C(\smallc_0)\big( 1 + \mathcal{D}_{3, \beta}^{\frac{1}{2}}\big).
\end{equation}
We now turn to the estimate of $\mathcal D_{1, \beta}$ in \eqref{Exp:diss}. Note that
\begin{equation*}
\begin{split}
   & \Big| \big[ \partial_\beta, \Clss^{\frac{\delta-1}{\alpha}} \big] f - \sum_{j=1}^K\partial_{y_{\beta_j}}\Clss^{\frac{\delta-1}{\alpha}} \partial_{\beta^{(j)}}f \Big| \\
   &\leq  C(K)3^K\max_{\substack{ \sum_{j=0}^\ell|\boldsymbol{\bar\beta}^j|
    = K \\ 0\le|  \boldsymbol{\bar\beta}^0 | \leq K-2}} \Big|\Clss^{\frac{\delta-1}{\alpha}} \partial_{\boldsymbol{\bar\beta}^0}f \frac{\partial_{\boldsymbol{\bar\beta}^1} \Clss}{\Clss} \frac{\partial_{\boldsymbol{\bar\beta}^2} \Clss}{\Clss} \cdots \frac{\partial_{\boldsymbol{\bar\beta}^\ell} \Clss}{\Clss}\Big|,
    \end{split}
\end{equation*}
where $|\boldsymbol{\bar{\beta}}^j|\ge 1$ for all $ 1\le j \le \ell$.

Based on the above inequality and \eqref{higherestimate3}, we can decompose $\mathcal D_{1, \beta}$ as
\begin{equation}   
    \begin{aligned}\label{eq:D1beta}
    \mathcal D_{1, \beta} &\leq C\sum_{j=1}^K \int |\phi^K \partial_\beta U\cdot  \partial_{\beta^{(j)}}L(U) \Clss^{\frac{\delta-1}{\alpha}-1} \partial_{y_{\beta_j}} \Clss|\,\text{d}y  \\
    &\quad \quad + C(K)3^K E^{\frac{1}{2}}\max_{\substack{\sum_{j=0}^\ell |\boldsymbol{\bar\beta}^j|
     = K \\ 0 \le| \boldsymbol{\bar\beta}^0 | \leq K-2}} \Big\|\Clss^{\frac{\delta-1}{\alpha}} \partial_{\boldsymbol{\bar\beta}^0} L(U) \frac{\partial_{\boldsymbol{\bar\beta}^1} \Clss}{\Clss} \frac{\partial_{\boldsymbol{\bar\beta}^2} \Clss}{\Clss} \cdots \frac{\partial_{\boldsymbol{\bar\beta}^\ell} \Clss}{\Clss}  \phi^{\frac{K}{2}} \Big\|_{L^2}  \\
    & = \mathcal D_{1, \beta, 1}+\mathcal D_{1, \beta, 2}.
    \end{aligned}
\end{equation}
For later use, we define
\begin{equation*}
\mathcal D_3 =   \sum_{|\beta|=K} \mathcal D_{3, \beta} = \int  \Clss^{\frac{\delta-1}{\alpha}}\phi^K \big(a_1|\nabla^{K+1} U|^2 + (a_1+a_2)|\nabla^{K}\dive  U|^2\big)\,\text{d}y . 
\end{equation*}
By \eqref{higherestimate3}, Lemmas~\ref{lemma:Sbounds}--\ref{lemma:Sprimebounds}, and the Cauchy-Schwarz inequality, we first estimate $\mathcal D_{1, \beta, 1}$:
\begin{equation}
\begin{aligned}
\mathcal D_{1, \beta, 1} &\leq C(K) \int \phi^K | \partial_\beta U | \, |\nabla^{K+1} U | \frac{| \nabla \Clss |}{\Clss} \Clss^{\frac{\delta-1}{\alpha}} \,\text{d}y \\
&\leq C(K) \mathcal D_3^{\frac{1}{2}} \bigg( \int \phi^K |\partial_\beta U|^2 \frac{| \nabla \Clss |^2}{\Clss^{2}}\Clss^{\frac{\delta-1}{\alpha}}\, \text{d}y \bigg)^{\frac{1}{2}}  \\
&\leq C(K)
\mathcal D_3^{\frac{1}{2}} E^{\frac{1}{2}} \Big\| \frac{| \nabla \Clss |^2}{\Clss^{2}} \Clss^{\frac{\delta-1}{\alpha}}\Big\|^{\frac{1}{2}}_{L^\infty} \leq C(\smallc_0) \mathcal D_3^{\frac{1}{2}}. \label{Est:D1beta1}
\end{aligned}
\end{equation}
We now proceed to estimate $\mathcal D_{1,\beta,2}$ by splitting the analysis into four cases.

\smallskip
\textit{Generic case. $0\le | \boldsymbol{\bar\beta}^0 | \leq K-4$ and $1\le | \boldsymbol{\bar\beta}^i | \leq K-2$ for all $i \geq 1$.} In this case, each term involves less than $(K-2)$-th order derivatives. By Lemmas \ref{lemma:Sbounds}--\ref{higher-order}, we derive 
\begin{align*}
    &\Big\| \Clss^{\frac{\delta-1}{\alpha}}\partial_{\boldsymbol{\bar\beta}^0} L(U) \frac{\partial_{\boldsymbol{\bar\beta}^1} \Clss}{\Clss} \frac{\partial_{\boldsymbol{\bar\beta}^2} \Clss}{\Clss} \cdots \frac{\partial_{\boldsymbol{\bar\beta}^\ell} \Clss}{\Clss}  \phi^{\frac{K}{2}} \Big\|_{L^2} \\
    &\leq C(\smallc_0) \big\| \phi^{-1}\langle y\rangle^{(\frac{1-\delta}{\alpha}-1)(\Lambda-1)}\langle y\rangle^{\frac{(K+2)\Lambda-(\ell+1)\Lambda-(\ell+1)K+\frac{5}{2}(\ell+1)+(1-\eta)(K(\ell+1)-\frac{5}{2}K-5)}{K-5/2}} \big\|_{L^2}\\
    &= C(\smallc_0) \big\| \phi^{-1}\langle y\rangle^{(\frac{1-\delta}{\alpha}-1)(\Lambda-1)}\langle y\rangle^{\frac{\Lambda(K+1-\ell)-\frac{5}{2}(1-\eta)(K+2)-K\eta(\ell+1)+\frac{5}{2}(\ell+1)}{K-5/2}} \big\|_{L^2}.
\end{align*}
If $\ell\geq 2$, since $K$ is sufficiently large with respect to $\eta$, we have 
$$
(\frac{3}{2} - \ell)K\eta < -100 \ell,
$$
which implies
\begin{equation}
    \Lambda(K+1-\ell)-\frac{5}{2}(1-\eta)(K+2)-K\eta(\ell+1)+\frac{5}{2}(\ell+1) \leq (\Lambda-\frac{5}{2})K.
\end{equation}
If $\ell = 1$, a similar computation yields
\begin{equation}
\Lambda(K+1-l)-\frac{5}{2}(1-\eta)(K+2)-K\eta(l+1)+\frac{5}{2}(l+1) \leq (\Lambda-\frac{5}{2}+\eta)K.
\end{equation}
Using $\eta$ is sufficiently small, $-1+\delta_{\rm{dis}}<0$, and $\Lambda>1$, we deduce
\begin{align*}
    &\big\| \Clss^{\frac{\delta-1}{\alpha}} \partial_{\boldsymbol{\bar\beta}^0} L(U) \frac{\partial_{\boldsymbol{\bar\beta}^1} \Clss}{\Clss} \frac{\partial_{\boldsymbol{\bar\beta}^2} \Clss}{\Clss} \cdots \frac{\partial_{\boldsymbol{\bar\beta}^\ell} \Clss}{\Clss}  \phi^{\frac{K}{2}} \big\|_{L^2} \\
    &\leq C(\smallc_0)   \big\| \langle y\rangle^{-2(1-\eta)+(\frac{1-\delta}{\alpha}-1)(\Lambda-1)+\Lambda-\frac{5}{2}+\eta}\big\|_{L^2}\\
    &= C(\smallc_0)  \big\| \langle y\rangle^{-\Lambda-\frac{3}{2}+3\eta+\delta_{\rm{dis}}}\big\|_{L^2} \\
    &= C(\smallc_0) \Big(\int_0^\infty \langle r\rangle^{-2\Lambda-3+6\eta+2\delta_{\rm{dis}}}r^2 \text{d}r\Big)^{\frac 12}\leq C(\smallc_0) .
\end{align*}

\textit{Case where some $| \boldsymbol{\bar\beta}^i | \in \{ K-1, K \}$ for $i \geq 1$.} Without loss of generality, we assume $i=1$. If $| \boldsymbol{\bar\beta}^1 | = K$, then, by \eqref{higherestimate3} and Lemmas \ref{lemma:Sbounds}--\ref{higher-order}, we have
\begin{equation*}
    \Big\| \phi^{\frac{K}{2}}\Clss^{\frac{\delta-1}{\alpha}} \partial_{\boldsymbol{\bar\beta}^1} \Clss \frac{L(U)}{\Clss}  \Big\|_{L^2} \leq C E^{\frac 12} \Big\|\Clss^{\frac{\delta-1}{\alpha}} \frac{| \nabla^2 U |}{\Clss} \Big\|_{L^\infty} \leq C(\smallc_0).
\end{equation*}
If $| \boldsymbol{\bar\beta}^1 | = K-1$, we divide the corresponding estimates into two subcases. According to  Lemmas \ref{lemma:Sbounds}--\ref{higher-order}, we see that,  for fixed $j\in \{1,2,3\}$,
\begin{align*}
    \Big\| \phi^{\frac{K}{2}} \partial_{\boldsymbol{\bar\beta}^1} \Clss \frac{\partial_{y_j} \Clss\, L(U)}{\Clss^{2}} \Clss^{\frac{\delta-1}{\alpha}} \Big\|_{L^2} 
    &\leq C(\smallc_0,\bar{\varepsilon}) \|\langle y\rangle^{K(1-\eta)\frac{K-2}{K-1}-\bar{\varepsilon}}\partial_{\boldsymbol{\bar\beta}^{1}}\Clss \|_{L^{2+\frac{2}{K-2}}} \\
    &\quad \times  \Big\|  \frac{ | \nabla \Clss |\langle y\rangle}{\Clss} \Big\|_{L^\infty} \Big\| \frac{\langle y\rangle^{K(1-\eta)\frac{1}{K-1}+\bar{\varepsilon}-1} |\nabla^2 U|}{\Clss} \Clss^{\frac{\delta-1}{\alpha}}\Big\|_{L^{2(K-1)}}  \\
    & \leq C(\smallc_0,\bar{\varepsilon}) \big\| \langle y\rangle^{-2(1-\eta)+\frac{1-\delta}{\alpha}(\Lambda-1)-1+\frac{K}{K-1}(1-\eta)+\bar{\varepsilon}}\big\|_{L^{2(K-1)}} \\
     & \leq C(\smallc_0,\bar{\varepsilon}) \big\| \langle y\rangle^{-\Lambda+2\eta+\bar{\varepsilon}+\frac{K}{K-1}(1-\eta)}\big\|_{L^{2(K-1)}} \leq C(\smallc_0,\bar{\varepsilon}),\\
    \Big\| \phi^{\frac{K}{2}} \partial_{\boldsymbol{\bar\beta}^1} \Clss \frac{\partial_{y_j} L(U)}{\Clss}\Clss^{\frac{\delta-1}{\alpha}}  \Big\|_{L^2} &\leq C(\smallc_0,\bar{\varepsilon}) \|\langle y\rangle^{K(1-\eta)\frac{K-2}{K-1}-\bar{\varepsilon}}\partial_{\boldsymbol{\bar\beta}^{1}}\Clss \|_{L^{2+\frac{2}{K-2}}} \\
    & \quad \times \Big\| \frac{ \langle y\rangle^{K(1-\eta)\frac{1}{K-1}+\bar{\varepsilon}} | \nabla^3 U | }{\Clss}\Clss^{\frac{\delta-1}{\alpha}} \Big\|_{L^{2(K-1)}}\\
    & \leq C(\smallc_0,\bar{\varepsilon})  \big\| \langle y\rangle^{-3(1-\eta)+\frac{1-\delta}{\alpha}(\Lambda-1)+\frac{K}{K-1}(1-\eta)+\bar{\varepsilon}}\big\|_{L^{2(K-1)}} \\
     & \leq C(\smallc_0,\bar{\varepsilon}) \big\| \langle y\rangle^{-\Lambda+3\eta+\bar{\varepsilon}+\frac{K}{K-1}(1-\eta)}\big\|_{L^{2(K-1)}} \leq C(\smallc_0,\bar{\varepsilon}).
\end{align*}

\textit{Case where $| \boldsymbol{\bar\beta}^0  | = K-2$.}
We obtain from \eqref{higherestimate3} and Lemmas \ref{lemma:Sbounds}--\ref{higher-order} that
 \begin{align*}
&\Big\|\phi^{\frac{K}{2}} \partial_{\boldsymbol{\bar\beta}^0} L(U)   \nabla^2 ( \Clss^{\frac{\delta-1}{\alpha}} ) \Big\|_{L^2}\\ 
&\leq \| \phi^{\frac{K}{2}} \nabla^K U \|_{L^2} \Big( \Big\| \frac{\nabla^2 \Clss}{\Clss}\Clss^{\frac{\delta-1}{\alpha}}\Big\|_{L^\infty} + \Big\| \frac{|\nabla \Clss|^2}{\Clss^{2}}\Clss^{\frac{\delta-1}{\alpha}}\Big\|_{L^\infty} \Big) \\
&\leq E^{\frac{1}{2}} \big( \big\| \langle y\rangle^{-2(1-\eta)+\frac{\delta-1}{\alpha}(\Lambda-1)}\big\|_{L^\infty} + \big\| \langle y\rangle^{-2+\frac{\delta-1}{\alpha}(\Lambda-1)}\big\|_{L^\infty} \big) \leq C(\smallc_0).
\end{align*}

\textit{Case where $| \boldsymbol{\bar\beta}^0  | = K-3$.}
It follows from Lemmas \ref{lemma:Sbounds}--\ref{higher-order} that
\begin{align*}
&\big\|\phi^{\frac{K}{2}} \partial_{\boldsymbol{\bar\beta}^0} L(U)  \nabla^3 ( \Clss^{\frac{\delta-1}{\alpha}} )  \big\|_{L^2}  \\
&\leq C(\smallc_0,\bar{\varepsilon})\big\| \langle y\rangle^{K(1-\eta)\frac{K-2}{K-1}-\bar\varepsilon}\nabla^{K-1}U\big\|_{L^{1+\frac{2}{K-2}}}\, \big\| \langle y\rangle^{(1-\eta)\frac{K}{K-1}+\bar\varepsilon+\frac{1-\delta}{\alpha}(\Lambda-1)-3(1-\eta)}\big\|_{L^{2(K-1)}}\\
&\leq C(\smallc_0,\bar{\varepsilon}) \big\| \langle y\rangle^{-\Lambda+3\eta+\bar\varepsilon+(1-\eta)\frac{K}{K-1}}\big\|_{L^{2(K-1)}}
\leq C(\smallc_0,\bar{\varepsilon}).
\end{align*}
Collecting the above estimates and choosing $\bar{\varepsilon}$ sufficiently small (depending on $\eta$, $\Lambda$, and $K$), we obtain
\begin{align}\label{Est:D1beta2}
\mathcal D_{1, \beta, 2} &\leq C(\smallc_0, \bar{\varepsilon})\leq C(\smallc_0). 
\end{align}
Repeating the procedure used in the estimate of $\mathcal D_{1, \beta, 2}$, we deduce
\begin{align}\label{Est:D4beta}
\mathcal D_{4, \beta}\leq C(\smallc_0). 
\end{align}
For $D_{6, \beta}$, it follows from \eqref{higherestimate3} and Lemmas~\ref{lemma:Sbounds}--\ref{lemma:Sprimebounds} that
\begin{equation}
\begin{aligned}\label{Eq:D6beta}
\mathcal D_{6, \beta} &= \frac{\delta}{\alpha}\int \phi^K \Clss^{\frac{\delta-1}{\alpha}-1} \big(\partial_\beta(\nabla \Clss \cdot \md(U))-\partial_\beta(\nabla \Clss)\cdot  \md(U)\big) \cdot \partial_{\beta} U\,\dy \\
& \leq CE^{\frac{1}{2}}\mathcal{D}_3^{\frac{1}{2}}\Big\|\frac{|\nabla \Clss|^2}{\Clss^2}\Clss^{\frac{\delta-1}{\alpha}}\Big\|^{\frac{1}{2}}_{L^\infty} +C(K)E^{\frac{1}{2}}\max_{\substack{ |\boldsymbol{\bar\beta}^0| + | \boldsymbol{\bar\beta}^1 |= K \\ 1\le|  \boldsymbol{\bar\beta}^0 | \leq K-1}} \Big\|\phi^{\frac{K}{2}}\Clss^{\frac{\delta-1}{\alpha}} \partial_{\boldsymbol{\bar\beta}^0}\md(U)\frac{\partial_{\boldsymbol{\bar\beta}^1}\nabla\Clss}{\Clss}\Big\|_{L^2} \\
&\leq C(\smallc_0) \mathcal{D}_3^{\frac{1}{2}}+ \mathcal D_{6, \beta, 2} \leq C(\smallc_0) \big(1+\mathcal{D}_3^{\frac{1}{2}}\big).
\end{aligned}
\end{equation}
The estimate of $\mathcal D_{6, \beta, 2}$ follows from the same argument as $\mathcal D_{1, \beta, 2}$ and is therefore omitted.

Consequently, it follows from \eqref{Est:D2beta}--\eqref{Eq:D6beta} that
\begin{equation*}
\begin{aligned}
    C_{\rm{dis}}e^{-\delta_{\rm{dis}}\tau}I_4 + C_{\rm{dis}}e^{-\delta_{\rm{dis}}\tau}\mathcal D_{3}
    &= C_{\rm{dis}}e^{-\delta_{\rm{dis}}\tau}\sum_{|\beta|=K}
    \big(\mathcal D_{1, \beta}-\mathcal D_{2, \beta}+\mathcal D_{4, \beta}
    +\mathcal D_{5, \beta}+\mathcal D_{6, \beta}\big)\\
    & \leq C(\smallc_0)e^{-\delta_{\rm{dis}}\tau}\big(\mathcal D_{3}^{\frac 12}+1\big),
    \end{aligned}
\end{equation*}
where $C(\smallc_0)$ is sufficiently large and depends on $\smallc_0$. Applying the Cauchy-Schwarz inequality gives
\begin{equation*}
  C_{\rm{dis}}e^{-\delta_{\rm{dis}}\tau}I_4 + C_{\rm{dis}} e^{-\delta_{\rm{dis}}\tau} \mathcal D_3  \leq C(\smallc_0) e^{-\delta_{\rm{dis}}\tau} + e^{-\delta_{\rm{dis}}\tau} \Big( \mathcal D_3\frac{C_{\rm{dis}}}{2} + \frac{2 C(\smallc_0)^2}{C_{\rm{dis}}} \Big).
\end{equation*}
Consequently,
\begin{equation*}
 C_{\rm{dis}}e^{-\delta_{\rm{dis}}\tau}I_4 + \frac{1}{2}C_{\rm{dis}}e^{-\delta_{\rm{dis}}\tau} \mathcal D_3 \leq   e^{-\delta_{\rm{dis}}\tau} \Big( C(\smallc_0) +  \frac{2 C(\smallc_0)^2}{C_{\rm{dis}}} \Big) 
 \leq C(\smallc_0) e^{-\delta_{\rm{dis}}\tau}.
\end{equation*}
Since $\tau_0$ is sufficiently large, depending on $\smallc_0$, we conclude 
\begin{equation} \label{eq:I4}
  C_{\rm{dis}}e^{-\delta_{\rm{dis}}\tau}I_4 +\frac{1}{2}C_{\rm{dis}}e^{-\delta_{\rm{dis}}\tau}\mathcal D_3 \leq e^{-\delta_{\rm{dis}} (\tau-\tau_0)}.
\end{equation}

\textbf{4}. \emph{Energy estimate}.
It follows from  the estimates for  $I_1, I_2,I_3,  C_{\rm{dis}}e^{-\delta_{\rm{dis}}\tau}I_4$ obtained above,   \eqref{eq:EK}, and \eqref{dri:angular} that 
    \begin{align}\label{energyest1}
        &\big(\frac{1}{2}\frac{\mathrm d}{\mathrm d\tau} + \Lambda-1+K\big)E_K  + \frac{1}{2}C_{\rm{dis}}e^{-\delta_{\rm{dis}}\tau}\int  \Clss^{\frac{\delta-1}{\alpha}}\phi^K \big(a_1|\nabla^{K+1} U|^2 + (a_1+a_2)|\nabla^{K}\dive  U|^2\big)\,\text{d}y \nonumber \\
        &=-I_1-I_2-I_3+C_{\rm{dis}}e^{-\delta_{\rm{dis}}\tau}I_4 \nonumber\\
        &\quad + \frac{1}{2}C_{\rm{dis}}e^{-\delta_{\rm{dis}}\tau}\int  \Clss^{\frac{\delta-1}{\alpha}}\phi^K \big(a_1|\nabla^{K+1} U|^2 + (a_1+a_2)|\nabla^{K}\dive  U|^2\big)\,\text{d}y  \nonumber \\
        &\leq CE +\frac{K}{2} \int \frac{y \cdot \nabla\phi }{\phi}\big(  |\nabla^K \Clss|^2 + |\nabla^K U|^2\big)\phi^K\,\text{d}y + e^{-\delta_{\rm{dis}} (\tau-\tau_0)} \nonumber  \\
        &\quad + K \int \Big( \frac{\nabla \phi \cdot \widehat{X} \overline{U}}{2\phi } -\partial_r (\widehat{X}\overline{\mathcal U})  \Big)\big(|\partial_r \nabla^{K-1} \Clss|^2+|\partial_r \nabla^{K-1} U|^2\big)\phi^K\,\text{d}y \nonumber \\
        &\quad + K  \int \Big( \frac{\nabla \phi \cdot \widehat{X} \overline{U}}{2\phi |y|^2}- \frac{\widehat{X}\overline{\mathcal U}}{|y|^3}   \Big)\big(|\nabla_{\theta} \nabla^{K-1} U|^2+|\nabla_{\theta} \nabla^{K-1} \Clss|^2\big)\phi^K \,\text{d}y \nonumber \\    
        &\quad  +K\alpha \int \phi^K \Big(\frac{|\nabla \phi| \widehat{X}\overline{\Clss}}{\phi}+|\partial_r(\widehat{X}\overline{\Clss})|\Big)\big(|\nabla^K U |^2+|\nabla^K \Clss |^2\big)\, \text{d}y\\
        &\leq  C E + e^{-\delta_{\rm{dis}} (\tau-\tau_0)} \nonumber \\
        &\quad + K\int \big( -\partial_r (\widehat{X}\overline{\mathcal U})  + \alpha|\partial_r(\widehat{X}\overline{\Clss})| \big) 
        \big(|\partial_r \nabla^{K-1} U|^2+|\partial_r \nabla^{K-1} \Clss|^2\big)\phi^K \,\text{d}y \nonumber \\
        & \quad + K\int \Big( -\frac{\widehat{X}\overline{\mathcal U}}{|y|^3}  + \frac{\alpha|\partial_r(\widehat{X}\overline{\Clss})|}{|y|^2} \Big) \big(|\nabla_\theta \nabla^{K-1} U|^2+|\nabla_\theta \nabla^{K-1} \Clss|^2\big)\phi^K \,\text{d}y \nonumber \\
        & \quad + K\int \frac{ \nabla\phi\cdot (y + \widehat{X}\overline{U})+ |\nabla\phi|\widehat{X}\overline{\Clss}}{2\phi}\big(|\nabla^K U |^2+|\nabla^K \Clss |^2\big)\phi^K \,\text{d}y.\nonumber 
    \end{align}
    
In  $B(0, R_0)$, the weight function satisfies $\phi = 1$. Moreover, we know that 
$|\partial_r \widehat X| \leq C e^{-\tau_0}$ for $r=|y|$. Therefore, by \eqref{Para-ineq}--\eqref{Para-ineq1} and the standard repulsivity properties \eqref{radial repulsivity}--\eqref{angular repulsivity}, we obtain
\begin{align*} 
  -\partial_r (\widehat{X}\mathcal{\overline U}) + \alpha |\partial_r (\widehat{X}\overline \Clss) |+ \frac{\nabla \phi \cdot (y+\widehat{X}\overline U) + | \nabla \phi | \widehat{X}\overline \Clss}{2\phi} \leq 1 - \frac{ \tilde \eta}{2}, \\
    -\frac{\widehat{X}\mathcal{\overline U}}{r} + \alpha |\partial_r (\widehat{X}\overline \Clss) |+ \frac{\nabla \phi \cdot (y+\widehat{X}\overline U) + | \nabla \phi | \widehat{X}\overline \Clss}{2\phi} \leq 1 - \frac{ \tilde \eta  }{2},
\end{align*}
where $r=|y|$.
In the exterior region $B^c(0,R_0)$, it follows from \eqref{weightdefi}, \eqref{Para-ineq}, \eqref{Rochoice estimate}, and \eqref{estimatehatX2} that
\begin{equation*}
-\partial_r (\widehat{X}\mathcal{\overline U}) + \alpha |\partial_r (\widehat{X}\overline \Clss )|+ \frac{\nabla \phi \cdot (y+\overline U) + | \nabla \phi | \overline \Clss}{2\phi}  \leq \frac{\eta}{4} + \frac{r\partial_r \phi}{2\phi}  \leq \frac{\eta}{4} + 1-\eta < 1-\frac{\eta}{2},
\end{equation*}
and
\begin{equation*}
-\frac{\widehat{X}\mathcal{\overline U}}{r} + \alpha |\partial_r (\widehat{X}\overline \Clss )|+ \frac{\nabla \phi \cdot (y+\overline U) + | \nabla \phi | \overline \Clss}{2\phi}  \leq \frac{\eta}{4} + \frac{r\partial_r \phi}{2\phi}  \leq \frac{\eta}{4} + 1-\eta < 1-\frac{\eta}{2},
\end{equation*}
where $r=|y|$.
Combining the above estimates, we derive 
\begin{equation*}
\begin{aligned}
       & \big(\frac{1}{2}\frac{\mathrm d}{\mathrm d\tau} + \Lambda-1+K\big)E_K +\frac{1}{2}C_{\rm{dis}}e^{-\delta_{\rm{dis}}\tau}\int  \Clss^{\frac{\delta-1}{\alpha}}\phi^K \big(a_1|\nabla^{K+1} U|^2 + (a_1+a_2)|\nabla^{K}\dive  U|^2\big)\,\text{d}y  \\
        &\leq  CE +K\Big(1- \frac{\min \{ \tilde \eta , \eta \} }{2}
        \Big)E_K+ e^{-\delta_{\rm{dis}} (\tau-\tau_0)}. 
\end{aligned}        
\end{equation*}
Equivalently,
\begin{equation}\label{energyest2}
\begin{aligned}
        &\frac{\mathrm d}{\mathrm d\tau} E_K + C_{\rm{dis}}e^{-\delta_{\rm{dis}}\tau}\int  \Clss^{\frac{\delta-1}{\alpha}}\phi^K \big(a_1|\nabla^{K+1} U|^2 + (a_1+a_2)|\nabla^{K}\dive  U|^2\big)\,\text{d}y \\
        &\leq - K\min \{ \tilde \eta , \eta \}E_K +CE.
\end{aligned}        
\end{equation}
Since $K$ is sufficiently large and depends on $(\eta, \tilde \eta)$, applying the Gr\"onwall inequality yields
\begin{equation}\label{energyest3}
\begin{aligned}
         &E_K(\tau)+ C\int_{\tau_0}^{\tau}e^{-\delta_{\rm{dis}}\tilde{\tau}-K\min \{ \tilde \eta , \eta \}(\tau-\tilde \tau)}\|\phi^{\frac{K}{2}}\Clss^{\frac{\delta-1}{2\alpha}}\nabla^{K+1}U(\tilde{\tau}, \cdot)\|_{L^2}^2\,\text{d}\tilde{\tau}\\
         &\leq  e^{-K\min \{ \tilde \eta , \eta \}(\tau-\tau_0)}E_K(\tau_0)+\frac{CE }{K\min \{ \tilde \eta , \eta \}}\big(1-e^{-K\min \{ \tilde \eta , \eta \}(\tau-\tau_0)}\big)\\
         & \leq e^{-K\min \{ \tilde \eta , \eta \}(\tau-\tau_0)}E_K(\tau_0)
         +\frac{E }{2}\big(1-e^{-K\min \{ \tilde \eta , \eta \}(\tau-\tau_0)}\big). 
\end{aligned} 
\end{equation}
If $E_K(\tau_0) \leq E/2$, then $E_K(\tau) \leq E/2$ for all $\tau \geq \tau_0$.

This completes the proof of Lemma \ref{boosstrapestimate3}.
\end{proof}

\subsubsection{Control of the unstable modes of $(\overset{\approx}{\Clss},\overset{\approx}{U})$ by the selection of  initial data}\label{subsection6.3}
Since the truncated linear operator $\mathcal L$ generates a semigroup on space $X$ of functions supported in $B(0, 3C_0)$, we consider the Cauchy problem \eqref{eq:truncated} with initial data
$( \overset{\approx}{\Clss}_{0},\overset{\approx}{U}_{0})$.
Recalling the invariant decomposition
$$X=V_{\mathrm{sta}}\oplus V_{\mathrm{uns}},$$
the initial data for \eqref{eq:truncated} admit a decomposition into stable and unstable components. The stable component has already been chosen in \S\ref{subsection4.2} so that its evolution is controlled by the estimates established above.

We now turn to the selection of appropriate unstable components in order to control the solution to \eqref{eq:truncated}.
Recall that $\{(\varphi_{i,\subclss}, \varphi_{i,u})\}_{i=1}^{N}$ forms a normalized basis of $V_{\rm{uns}}$. For any $(\overset{\approx}{\Clss}(\cdot, \tau),\overset{\approx}{U}(\cdot,\tau))$, we denote 
$$
k(\tau)=P_{\rm{uns}}(\overset{\approx}{\Clss}(\cdot,\tau),\overset{\approx}{U}(\cdot,\tau))=\sum_{i=1}^{N}k_i(\tau)(\varphi_{i,\subclss},\varphi_{i,u}).
$$
By the initial data condition \eqref{initialdatatruncated}, together with ($\chi_2 \widetilde U_0^*, \chi_2 \widetilde \Clss_0^* ) \in V_{\rm{sta}}$, we have 
\begin{equation*}
k_i(\tau_0)=\hat{k}_i.
\end{equation*} 
From Lemma \ref{prop:maxdissmooth}, there exists a metric $\Upsilon$ such that 
$$
\langle \mathcal{L}k(\tau),k(\tau) \rangle_{\Upsilon}\geq -\frac{3\smallc_g}{5}\langle k(\tau),k(\tau) \rangle_{\Upsilon}.
$$
Since the unstable space $V_{\rm{uns}}$ only has finite dimension and $X\subset H^m (B(0, 3C_0))$, 
all the norms on $V_{\rm{uns}}$ are equivalent. 
In particular, there exists a constant $C(m)> 0$ such that
\begin{equation}\label{normequivalence}
\|k(\tau)\|_{X}\leq C(m) \|k(\tau)\|_{\Upsilon}\leq C(m)\|k(\tau)\|_{X},
\end{equation}
where we have denoted the norm $\| k(\tau) \|_X:=\| \sum_{i=1}^N k_i(\tau) \varphi_i \|_X$, and 
the dependence on $m$ arises from the definition of space $X$.

We now restrict the initial coefficients $\hat{k}_i$ such that 
\begin{equation*}
\| \hat{k} \|_\Upsilon \leq \smallc_1^{\frac{11}{10}}.
\end{equation*}
By \eqref{normequivalence},  this yields 
$$\| \hat{k} \|_X\leq C(m)\smallc_1^{\frac{11}{10}}.$$ 
Since $\varphi_i$ are compactly supported, $\|\varphi_i\|_X=1$, and $\smallc_1$ is sufficiently small (depending on $m$ and $E$), the initial conditions \eqref{initialdatacondition1}--\eqref{initialdatacondition2} are automatically satisfied for any such choice of $\hat{k}_i$ as long as
\begin{align} \begin{split} \label{eq:conditions_for_tilde}
& \max\big\{\|\widetilde{\Clss}_0^* \|_{L^{\infty}},\ \|\widetilde{U}_0^* \|_{L^{\infty}}\big\}\leq  \smallc_1, \qquad \| \nabla^K\widetilde{\Clss}_0^*\phi^{K/2} \|_{L^2}+\| \nabla^K\widetilde{U}_0^* \phi^{K/2}\|_{L^2} \leq\frac{E}{4}, 
\\
&\widetilde{\Clss}_0^*+\widehat{X}\overline{\Clss}\geq \frac{\smallc_1}{2}, \qquad|\nabla(\widetilde{\Clss}_0^*+\widehat{X} \overline{\Clss})|+|\nabla(\widetilde{U}_0^*+\widehat{X} \overline{U})|\leq C\langle y \rangle^{-\Lambda}.
\end{split} \end{align}

Let
$$
\mathcal R(\tau)=\big\{k(\tau) :\, \|k(\tau)\|_{X}\leq \smallc_1^{\frac{21}{20}} e^{-\frac{4}{3}\varepsilon(\tau-\tau_0)}\big\},
$$
$$
\widetilde{\mathcal R}(\tau)=\big\{k(\tau) :\, \|k(\tau)\|_{\Upsilon}\leq \smallc_1^{\frac{11}{10}}e^{-\frac{4}{3}\varepsilon(\tau-\tau_0)}\big\}.
$$
By \eqref{normequivalence} and the choice of parameter \eqref{Para-ineq}, we obtain
$$
\widetilde{ R}(\tau)\Subset \mathcal R(\tau).
$$

Throughout the argument, we choose the initial coefficients $\{ \hat{k}_i \} = k(\tau_0)\in \widetilde{\mathcal R}(\tau_0)$. As long as $k(\tau)$ remains in $\mathcal R(\tau)$, the unstable modes are controlled. 
Then we have the estimate for the forcing term{\rm:} 
\begin{equation}\label{forcingest}
\|\chi_2 \mathcal F\|_{X}=\|(\chi_2\mathcal{F}_{\subclss}, \chi_2\mathcal{F}_{u})\|_{X}\leq C \smallc_1^{\frac{6}{5}}e^{-\frac{3}{2}\varepsilon(\tau-\tau_0)}.
\end{equation}

Next, we show the following outgoing property{\rm:}
\begin{Lemma}\label{outgoingpro}
Suppose that $k(\tau)\in \widetilde{\mathcal R}(\tau)$ at $\tau \in[\tau_0,\tau_*]$, and 
 $k(\tau_*)\in \partial{\widetilde{\mathcal R}(\tau_*)}$ at time $\tau_*$, that is, 
$$
\|k(\tau_*)\|_{\Upsilon}=\smallc_1^{\frac{11}{10}}e^{-\frac{4}{3}\varepsilon(\tau_*-\tau_0)}.
$$
Then $k(\tau)$ does not belong to $\widetilde{\mathcal R}(\tau)$ 
for $\tau$ close enough to $\tau_*$ from  above. 
\end{Lemma}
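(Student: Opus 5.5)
The plan is to compute the derivative of $\|k(\tau)\|_{\Upsilon}^2$ at $\tau=\tau_*$ and compare it with the derivative of the squared boundary radius $\smallc_1^{\frac{11}{5}}e^{-\frac{8}{3}\varepsilon(\tau-\tau_0)}$. By \eqref{ODEofki}, $k$ satisfies $\partial_\tau k=\mathcal L k+P_{\rm uns}(\chi_2\mathcal F)$. Since $V_{\rm uns}$ is finite dimensional and invariant under $\mathcal L$, the map $\tau\mapsto k(\tau)$ is $C^1$, so we have
\begin{equation*}
\frac12\frac{\mathrm d}{\mathrm d\tau}\|k\|_{\Upsilon}^2=\langle \mathcal L k,k\rangle_{\Upsilon}+\langle P_{\rm uns}(\chi_2\mathcal F),k\rangle_{\Upsilon}.
\end{equation*}

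For the first term, Lemma~\ref{prop:maxdissmooth} gives the lower bound $\langle\mathcal Lk,k\rangle_\Upsilon\ge-\frac{3\smallc_g}{5}\|k\|_\Upsilon^2$. For the forcing term, $P_{\rm uns}$ is bounded on $X$, and the norms on $V_{\rm uns}$ are equivalent by \eqref{normequivalence}. Because $k(\tau)\in\widetilde{\mathcal R}(\tau)\subset\mathcal R(\tau)$ on $[\tau_0,\tau_*]$, the bootstrap hypotheses apply there, so \eqref{forcingest} (from Lemmas~\ref{Forcingterm1}--\ref{eq:est-NF}) yields
\begin{equation*}
|\langle P_{\rm uns}(\chi_2\mathcal F),k\rangle_\Upsilon|\le C(m)\smallc_1^{\frac65}e^{-\frac32\varepsilon(\tau-\tau_0)}\|k\|_\Upsilon .
\end{equation*}

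At $\tau=\tau_*$ substitute $\|k\|_\Upsilon=\smallc_1^{\frac{11}{10}}e^{-\frac43\varepsilon(\tau_*-\tau_0)}$. Using $\smallc_g=\frac{25}{12}\varepsilon$, so that $\frac{3\smallc_g}{5}=\frac54\varepsilon$, we get
\begin{equation*}
\frac12\frac{\mathrm d}{\mathrm d\tau}\|k\|_\Upsilon^2\Big|_{\tau_*}\ge -\frac54\varepsilon\|k\|_\Upsilon^2-C(m)\smallc_1^{\frac{1}{10}}e^{-\frac16\varepsilon(\tau_*-\tau_0)}\|k\|_\Upsilon^2 .
\end{equation*}
Here the ratio of the forcing bound to $\|k\|_\Upsilon$ is $C(m)\smallc_1^{\frac65-\frac{11}{10}}e^{(-\frac32+\frac43)\varepsilon(\tau_*-\tau_0)}$. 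Since $\smallc_1\ll \varepsilon/C(m)$ by \eqref{Para-ineq}, the second term is at most $\frac1{24}\varepsilon\|k\|_\Upsilon^2$. Hence
\begin{equation*}
\frac12\frac{\mathrm d}{\mathrm d\tau}\|k\|^2_\Upsilon\Big|_{\tau_*}>-\frac43\varepsilon\|k\|_\Upsilon^2=\frac12\frac{\mathrm d}{\mathrm d\tau}\Big(\smallc_1^{\frac{11}{5}}e^{-\frac83\varepsilon(\tau-\tau_0)}\Big)\Big|_{\tau_*}.
\end{equation*}

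The strict inequality between these derivatives, combined with equality of the two functions at $\tau_*$, shows that $\|k(\tau)\|_\Upsilon^2-\smallc_1^{\frac{11}{5}}e^{-\frac83\varepsilon(\tau-\tau_0)}>0$ for $\tau>\tau_*$ close to $\tau_*$. This is exactly the claim that $k(\tau)\notin\widetilde{\mathcal R}(\tau)$ there. The argument requires the solution to exist slightly beyond $\tau_*$, which the local theory (Theorem~\ref{Local(U,S)} and Remark~\ref{local-truncation}) provides, together with continuity of the forcing, so the derivative inequality persists on a right-neighbourhood.

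The main obstacle is verifying the sign of the spectral bound $\langle\mathcal Lk,k\rangle_\Upsilon\ge -\frac{3\smallc_g}{5}\|k\|_\Upsilon^2$ with a constant strictly smaller than $\frac43\varepsilon$. This bound is where the specific choice $\smallc_g=\frac{25}{12}\varepsilon$ enters. A secondary point is checking that the forcing estimate \eqref{forcingest} still holds at $\tau_*$ under the bootstrap hypotheses, including the $\mathcal F_{\rm dis}$ part, whose decay $e^{-\delta_{\rm dis}\tau/2}$ beats $e^{-\frac32\varepsilon(\tau-\tau_0)}$ because $\varepsilon\ll\delta_{\rm dis}$ and $\tau_0$ is large.
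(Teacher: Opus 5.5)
Your proposal is correct and takes essentially the same route as the paper. Both arguments differentiate $\|k\|_\Upsilon^2$ at $\tau_*$, use the Jordan-form bound $\Re\langle\mathcal L k,k\rangle_\Upsilon\ge-\frac{3\smallc_g}{5}\|k\|_\Upsilon^2$ from Lemma~\ref{prop:maxdissmooth} together with the forcing bound \eqref{forcingest}, and arrive at the same constant $\frac{31}{50}\smallc_g=\frac{31}{24}\varepsilon<\frac43\varepsilon$. The paper states the conclusion as positivity of $\frac{\mathrm d}{\mathrm d\tau}\big(\|k\|_\Upsilon^2e^{\frac83\varepsilon(\tau-\tau_0)}\big)$ at $\tau_*$, which is equivalent to your direct comparison with the squared radius.
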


\begin{proof}
Since $(\overset{\approx}{\Clss},\overset{\approx}{U})$ satisfies the Cauchy problem \eqref{eq:truncated}
and $V_{\rm{uns}}$ is invariant with respect to the linear operator $\mathcal L$, the unstable component $k(\tau)$ satisfies
\begin{align*}
\begin{cases}
\displaystyle
\partial_\tau k(\tau) = \mathcal L k(\tau) + P_{\rm{uns}}(\chi_2 \mathcal F), \\
\displaystyle
k(\tau_0) = (\,\sum_{i=1}^{N}\hat{k}_i
\varphi_{i,\subclss},\,\sum_{i=1}^{N}\hat{k}_i
\varphi_{i,u}).
\end{cases}
\end{align*}
The forcing estimate \eqref{forcingest} leads to
\begin{align*}
&\|P_{\rm{uns}}(\chi_2 \mathcal F)\|_{\Upsilon} \leq C(m)\|P_{\rm{uns}}(\chi_2 \mathcal F)\|_{X} \leq C(m) \|\chi_2 \mathcal F\|_{X}\leq C(m)\smallc_{1}^{\frac{6}{5}}e^{-\frac{3}{2}\varepsilon(\tau-\tau_0)}.
\end{align*}
Consequently, we obtain from Lemma \ref{prop:maxdissmooth} that
\begin{align*}
\big\langle \frac{\mathrm d}{\mathrm d\tau}k(\tau), \,k(\tau) \big\rangle_{\Upsilon}\bigg|_{\tau=\tau_*}\geq -\frac{3\smallc_g}{5}\langle k(\tau_*),k(\tau_*) \rangle_{\Upsilon}-C(m)  \smallc_{1}^{\frac{6}{5}}e^{-\frac{3}{2}\varepsilon(\tau_*-\tau_0)}\|k(\tau_*)\|_{\Upsilon}.
\end{align*}
Since $\|k(\tau_*)\|_{\Upsilon}=\smallc_1^{\frac{11}{10}}e^{-\frac{4}{3}\varepsilon(\tau_*-\tau_0)}$,
it follows that
\begin{align*}
\big\langle \frac{\mathrm d}{\mathrm d\tau}k(\tau), \,k(\tau) \big\rangle_{\Upsilon}\bigg|_{\tau=\tau_*}&\geq -\frac{3\smallc_g}{5}\langle k(\tau_*),k(\tau_*) \rangle_{\Upsilon}-C(m)\smallc_{1}^{\frac{6}{5}-\frac{11}{10}}e^{-\varepsilon(\frac{3}{2}-\frac{4}{3})(\tau_*-\tau_0)} \langle k(\tau_*),k(\tau_*) \rangle_{\Upsilon}.\\
&\geq -\frac{31}{50}\smallc_g \langle k(\tau_*),k(\tau_*) \rangle_{\Upsilon}.
\end{align*}
 When $\tau=\tau_*$,
$$\frac{31}{50}\smallc_g< \frac{4}{3}\varepsilon=\frac{16}{25}\smallc_g$$ 
yields
$$
\frac{\mathrm d}{\mathrm d\tau}\big(\langle k(\tau), \,k(\tau)\rangle_{\Upsilon}e^{\frac{8}{3}\varepsilon(\tau-\tau_0)}\big)\geq -\frac{31}{25}\smallc_g\langle k(\tau),k(\tau)\rangle_{\Upsilon} e^{\frac{8}{3}\varepsilon(\tau-\tau_0)}+\frac{8}{3} \varepsilon\langle k(\tau),k(\tau) \rangle_{\Upsilon}e^{\frac{8}{3}\varepsilon(\tau-\tau_0)}>0.
$$
Therefore, $k(\tau)$ exits $\widetilde{\mathcal R}(\tau)$ at $\tau=\tau_*$.

This completes the proof.

\end{proof}

We claim that coefficients $\{\hat{k}_i\}$ can be selected so that $k(\tau) \in \mathcal{R}(\tau)$ for all $\tau \in [\tau_0, \infty)$. The argument is identical to that of \cite[Proposition 8.15]{buckmaster}, and thus we use that result directly.
\begin{Proposition} \label{prop:ci}
There exist specific initial conditions $\{\hat{k}_i\} \in \widetilde{\mathcal R} (\tau_0)$ such that $ k(\tau_0)=\{\hat{k}_i\}$ and $k(\tau)\in \mathcal R(\tau)$ for all $\tau>\tau_0$.
\end{Proposition}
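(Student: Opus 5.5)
We prove Proposition~\ref{prop:ci} by a Brouwer-type topological (shooting) argument on the finite-dimensional unstable space $V_{\rm uns}\cong\mathbb{R}^N$, following \cite[Proposition 8.15]{buckmaster}.

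The setup is as follows. Fix the stable component $(\chi_2\widetilde{\Clss}_0^*,\chi_2\widetilde U_0^*)\in V_{\rm sta}$ satisfying \eqref{eq:conditions_for_tilde}. For each $\hat k$ in the closed ball
$$
\overline{B}:=\big\{\hat k:\ \|\hat k\|_\Upsilon\le \smallc_1^{\frac{11}{10}}\big\}=\widetilde{\mathcal R}(\tau_0),
$$
we run the local theory of Theorem~\ref{Local(U,S)} for the full problem, together with Remark~\ref{local-truncation} for the truncated problem \eqref{eq:truncated}. We then continue the solution as long as the bootstrap hypotheses \eqref{unsestimate1}--\eqref{higherestimate3} hold. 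The argument is by contradiction: suppose every $\hat k\in\overline B$ has a finite exit time
$$
\tau^\sharp(\hat k):=\sup\{\tau:\ k(s)\in\widetilde{\mathcal R}(s)\ \text{for all } s\in[\tau_0,\tau]\}<\infty .
$$
For $\tau\le\tau^\sharp$ we have $k\in\widetilde{\mathcal R}\Subset\mathcal R$, so \eqref{unsestimate1} holds with room to spare. Proposition~\ref{prop:bootstrap} then improves the remaining bounds \eqref{lowerestimate1}--\eqref{higherestimate3} by a factor $\tfrac12$. A standard continuity argument, using the local existence together with the improved bounds, shows that the solution extends past any time at which $k$ still lies in $\widetilde{\mathcal R}$. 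Hence the only way the bootstrap can fail is that $k$ reaches $\partial\widetilde{\mathcal R}(\tau^\sharp)$.

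Next we define the exit map
$$
\Phi(\hat k):=e^{\frac43\varepsilon(\tau^\sharp(\hat k)-\tau_0)}\,k(\tau^\sharp(\hat k))\in\partial\overline B .
$$
On $\partial\overline B$ we have $\tau^\sharp=\tau_0$, because Lemma~\ref{outgoingpro} forces immediate exit. Hence $\Phi$ restricts to the identity on the sphere $\partial\overline B$.

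The key step is continuity of $\tau^\sharp$, and hence of $\Phi$. Lemma~\ref{outgoingpro} says that $\frac{\mathrm d}{\mathrm d\tau}\big(\|k\|_\Upsilon^2e^{\frac83\varepsilon(\tau-\tau_0)}\big)>0$ at any boundary point. So the crossing is transversal, and $\tau^\sharp$ depends continuously on the trajectory. The trajectory depends continuously on $\hat k$ on compact time intervals, by continuous dependence of smooth solutions of \eqref{selfsimilar eq} and \eqref{eq:truncated} on the initial data, uniformly under the bootstrap bounds. Granting this, $\Phi:\overline B\to\partial\overline B$ is a continuous retraction of a closed ball onto its boundary sphere, which contradicts Brouwer's no-retraction theorem. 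Hence some $\hat k\in\widetilde{\mathcal R}(\tau_0)$ satisfies $k(\tau)\in\widetilde{\mathcal R}(\tau)\subset\mathcal R(\tau)$ for all $\tau>\tau_0$.

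The main obstacle is justifying this continuous dependence. The viscous term involves $\Clss^{\frac{\delta-1}{\alpha}}$, but under the bootstrap $\Clss$ has the positive lower bound of Lemma~\ref{lemma:Sbounds}. On a fixed compact time interval the system is therefore a nondegenerate hyperbolic--parabolic system in $H^K$, so difference estimates à la Theorem~\ref{zth1-po} give Lipschitz dependence in a lower norm. That suffices, since $V_{\rm uns}$ is finite-dimensional and $P_{\rm uns}$ is continuous from $L^2(B(0,3C_0))$-type norms, all norms on $V_{\rm uns}$ being equivalent by \eqref{normequivalence}. Everything else is identical to \cite{buckmaster}.
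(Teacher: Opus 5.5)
Your argument is the one the paper uses. The paper simply defers to \cite[Proposition 8.15]{buckmaster}: the rescaled exit-time map is made continuous by the transversality in Lemma~\ref{outgoingpro} and equals the identity on $\partial\widetilde{\mathcal R}(\tau_0)$, so it would be a retraction of a ball onto its boundary sphere, which Brouwer forbids.

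One justification in your continuity step needs repair. Since $V_{\rm sta}$ is the $X$-orthogonal complement of the adjoint generalized eigenspaces, $P_{\rm uns}=\sum_i\langle\cdot,w_i\rangle_X\,\varphi_i$ is only known to be bounded in the $H^m$-based norm of $X$, and it is typically not $L^2$-continuous. Norm equivalence on the finite-dimensional range $V_{\rm uns}$ does not change this. Instead, interpolate your lower-norm Lipschitz estimate against the uniform high-order bounds available on compact time intervals (recall $K\gg m$). This gives continuity of $(\overset{\approx}{\Clss},\overset{\approx}{U})(\tau)$ in $X$, uniformly for $\tau$ in compact sets, which is all that continuity of $\tau^\sharp$ and $\Phi$ requires.
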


Moreover, restriction $(\chi_2 \widetilde{\Clss}_0^*, \chi_2 \widetilde{U}_0^*) \in V_{\mathrm{sta}}$  is equivalent to 
$$
P_{\mathrm{uns}} (\widetilde{\Clss}_0^*, \widetilde{U}_0^*) = 0,
$$
which is a finite-dimensional closed restriction. Since projection
$P_{\mathrm{uns}}$ maps onto the finite-dimensional subspace 
$V_{\mathrm{uns}}$,  the manifold of initial data that give rise to finite-time implosion has finite codimension.

The proof of Proposition~\ref{prop:bootstrap} is now complete. By a standard continuity argument, the bootstrap assumptions are therefore preserved on the whole time interval of the existence. This enables us to establish the global-in-time existence of solution $(\Clss, U)$ of the Cauchy problem \eqref{selfsimilar eq}--\eqref{SSfar}.

\subsection{Global-In-Time Regularity}\label{global62}
In this subsection, we establish the global-in-time regularity of the solution. 
In \S \ref{global62}, the initial data function $(\widetilde{\Clss}_0, \widetilde{U}_0)$ under consideration
is constructed in  \eqref{eq:tilde_is_stable}--\eqref{initialdatacondition2},
$1< \gamma <1+\frac{2}{\sqrt{3}}$, $\Lambda \in (1, \Lambda^*(\gamma))$ is introduced in {\rm Lemma~\ref{thm:existence_profiles}}, $\delta\in (0,\frac{1}{2})$ satisfies $\delta_{\rm dis}>0$,
and assumptions \eqref{Para-ineq}--\eqref{choiceR0} always hold.
Based on the conclusions obtained in Proposition \ref{prop:bootstrap},
we are now ready to establish the desired global-in-time regularities.

\begin{Lemma}\label{lem:globalregularities}
     The solution of the Cauchy problem \eqref{selfsimilar eq}{\rm--}\eqref{SSfar} satisfies \eqref{unsestimate1}{\rm--}\eqref{higherestimate3} and, for $\tau \in [ \tau_0, \tau_*]$,
    \begin{equation}\label{eq:HKestimate1}
        \| \Clss - \Clss^*\|^2_{H^K} + \|  U \|^2_{H^K}+\int_{\tau_0}^{\tau}\|  U(\tilde{\tau}, \cdot) \|^2_{H^{K+1}}\,{\rm d}\tilde{\tau}\leq C(\tau),
    \end{equation}
    \begin{equation}\label{eq:HKestimate2}
        \|\Clss_{\tau} \|_{H^{K-1}}+ \|  U_\tau \|_{H^{K-2}}+\int_{\tau_0}^{\tau}\|  U_\tau(\tilde{\tau}, \cdot) \|_{H^{K-1}}\,{\rm d}\tilde{\tau}\leq C(\tau).
    \end{equation}
\end{Lemma}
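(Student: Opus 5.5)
The plan has two parts. First we establish \eqref{unsestimate1}--\eqref{higherestimate3} on the whole existence interval by continuity. Then we convert these into unweighted $H^K$ bounds by a standard, possibly $\tau$-dependent energy argument. The constants may grow with $\tau$, since the lemma only claims $C(\tau)$.

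\textbf{Bootstrap.} At $\tau=\tau_0$ the initial conditions \eqref{initialdatacondition1}--\eqref{initialdatacondition2} and the choice of $\{\hat k_i\}$ in Proposition~\ref{prop:ci} give \eqref{unsestimate1}--\eqref{higherestimate3} with strict inequality. By Theorem~\ref{Local(U,S)} the quantities involved are continuous in $\tau$. The $L^\infty$ and $\nabla^4$ bounds are continuous by Sobolev embedding. For $E_K$, we combine the $E_{K,R}$ argument with Fatou, or alternatively rerun the energy identity of Lemma~\ref{boosstrapestimate3}, which shows that $E_K$ is absolutely continuous. Let $\tau^\sharp$ be the supremum of times up to which the assumptions hold. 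Proposition~\ref{prop:bootstrap} improves every bound by a definite factor on $[\tau_0,\tau^\sharp]$, so by continuity $\tau^\sharp$ equals the maximal existence time.

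\textbf{$H^K$ bounds.} On $[\tau_0,\tau]$ we have two ingredients from Section~\ref{Section5}. Lemma~\ref{lemma:Sbounds} gives the lower bound $Q\ge \frac{\smallc_1}{C}\langle y/R_0\rangle^{1-\Lambda}$. Lemmas~\ref{lemma:Sprimebounds}--\ref{higher-order} give uniform $W^{2,\infty}$ bounds on $(Q,U)$. Next we repeat Step~3 of the proof of Theorem~\ref{Local(U,S)}, using the energy $\|Q-Q^*\|_{H^K}^2+\|U\|_{H^K}^2$ with unweighted $L^2$ ($\phi\equiv1$). The convection terms $y\cdot\nabla$ produce only $C(K)$ times the energy. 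Commutators are handled by Gagliardo--Nirenberg between $L^\infty$ and $\dot H^K$, and top-order $Q$ terms are handled by integration by parts as in \eqref{Est:J22} and \eqref{Est:J44}. For the viscous term, the coefficient $Q^{(\delta-1)/\alpha}\ge c>0$ is bounded below. It is also not bounded above at infinity, so the dissipation must be used relative to this weight. The term $J_{41}$ gives the coercive quantity $e^{-\delta_{\rm dis}\tau}\int Q^{(\delta-1)/\alpha}|\nabla^{K+1}U|^2$. The cross terms carry $\nabla Q/Q$ or lower-order products, and we bound them by the weighted dissipation times $\|Q^{(\delta-1)/(2\alpha)}\nabla Q/Q\|_{L^\infty}$ and $\|Q^{(\delta-1)/(2\alpha)}\nabla^2U\|_{L^\infty}$. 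These quantities are finite because $\delta_{\rm dis}<\Lambda$, using the decay in Lemmas~\ref{lemma:Sprimebounds}--\ref{higher-order} exactly as in \eqref{eq:D2beta}. The lowest order ($j=0$) is handled from the equations directly. We obtain
\[
\frac{\mathrm d}{\mathrm d\tau}\mathcal H+c\,e^{-\delta_{\rm dis}\tau}\|\nabla^{K+1}U\|_{L^2}^2\le C(\mathcal H+1),
\]
and Gr\"onwall yields \eqref{eq:HKestimate1} with $C(\tau)$ of exponential type.

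\textbf{Time derivatives and main obstacle.} The bound \eqref{eq:HKestimate2} follows by reading $Q_\tau$ and $U_\tau$ off \eqref{selfsimilar eq}. For $Q_\tau$ we lose one derivative, and the term $y\cdot\nabla Q$ is controlled in $H^{K-1}$ because $\nabla Q$ decays like $\langle y\rangle^{-\Lambda}$ (Lemma~\ref{lemma:Sprimebounds}) and $\nabla^j Q$ has the weighted decay $\phi^{-j/2}$; combined with $E_K\le E$ this gives $\|y\nabla^{j}Q\|_{L^2}<\infty$. For $U_\tau$ we lose two derivatives, and the viscous term in $L^2_\tau H^{K-1}$ is controlled by the dissipation integral. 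The main obstacle is this last point. The weight $y$ in $y\cdot\nabla(Q,U)$ and the unbounded factor $Q^{(\delta-1)/\alpha}$ must both be absorbed by spatial decay rather than by Sobolev norms. I would handle both with the weighted energy $E_K$ and the pointwise decay lemmas, interpolating $\|\langle y\rangle\nabla^j(Q,U)\|_{L^2}$ between $E_K$ and the $H^K$ bound.
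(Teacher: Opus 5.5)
\textbf{On \eqref{eq:HKestimate1}.} Your bootstrap paragraph is fine, and your unweighted $H^K$ energy inequality should close. It is, however, a different and much heavier route than the paper's, which never differentiates the equations in this lemma:
\begin{itemize}
\item Since $\phi\ge 1$ by \eqref{weightdefi}, the bootstrap already gives $\|\nabla^K(Q,U)\|_{L^2}^2\le E_K\le E$.
\item The dissipation kept in \eqref{energyest3} gives $\int_{\tau_0}^{\tau}\|\nabla^{K+1}U\|_{L^2}^2\,{\rm d}\tilde\tau\le C(\tau,E)$. This uses $\phi\ge1$, $Q^{(\delta-1)/\alpha}\ge c$, and the fact that the exponential factors are bounded below on $[\tau_0,\tau]$.
\item Hence only a zeroth-order estimate for $(Q-Q^*,U)$ is needed, followed by interpolation between $L^2$ and $\dot H^K$.
\end{itemize}
In that $L^2$ estimate the only delicate term is the viscous cross term. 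It is controlled by $Q^{(\delta-1)/\alpha}|\nabla Q|^2Q^{-2}\lesssim\langle y\rangle^{\delta_{\rm dis}-\Lambda}$, the same fact $\delta_{\rm dis}<\Lambda$ you invoke. Your version redoes the whole commutator analysis of Step~3 of Theorem~\ref{Local(U,S)}, with the decaying lower bound on $Q$ replacing $\inf Q$. That works by the same decay bookkeeping as in \S\ref{subsection6.2}, but it only buys independence from a weighted estimate you have anyway. Your vague ``order zero from the equations directly'' can be made precise by
$$\int\big(Q(Q-Q^*)\,\dive U+Q\nabla Q\cdot U\big)\,{\rm d}y=\tfrac12\int (Q-Q^*)^2\,\dive U\,{\rm d}y,$$
which is simpler than the paper's Gagliardo--Nirenberg treatment of $J_3$.

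\textbf{On \eqref{eq:HKestimate2}.} The step you identify as the main obstacle does not go through as proposed.
\begin{itemize}
\item Since $\Lambda<\Lambda^*(\gamma)<\frac32$, the pointwise bound $|y\cdot\nabla Q|\lesssim\langle y\rangle^{1-\Lambda}$ from Lemma~\ref{lemma:Sprimebounds} is not square integrable.
\item Interpolating $\|Q-Q^*\|_{L^2}$ against $E_K$ only gives $\langle y\rangle^{j(1-\eta)-\bar\varepsilon}\nabla^jQ\in L^2$. This controls $y\nabla^jQ$ for $j\ge2$, so $\nabla(y\cdot\nabla Q)\in H^{K-2}$ is fine. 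It does not control $\|y\cdot\nabla Q\|_{L^2}$, nor $\|y\cdot\nabla U\|_{L^2}$.
\item No bookkeeping can repair this. The hypotheses of Theorem~\ref{globalexist} admit data with $\langle y\rangle\nabla Q_0\notin L^2$: add a small multiple of $\langle y\rangle^{-3/2-b}\sin\langle y\rangle^{b}$ with $0<b<\eta$, cut off away from $B(0,3C_0)$. For such data $\partial_\tau(Q-Q^*)\notin L^2$ already at $\tau=\tau_0$.
\item In addition, $Q_\tau\to-(\Lambda-1)Q^*\neq0$ as $|y|\to\infty$, so $Q_\tau$ itself is never in $L^2$. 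The quantity to bound is $\partial_\tau(Q-Q^*)$, as in \eqref{eq:L2}.
\end{itemize}
The paper's proof only says ``by the equation'' and overlooks both points, so the statement itself needs repair. One option is to add the hypothesis $\langle y\rangle\nabla(Q_0,U_0)\in L^2$ and propagate it by a first-order weighted $L^2$ estimate; there the $y\cdot\nabla$ term costs only a bounded multiple of the energy. The other is to claim the bounds only for $\nabla\partial_\tau Q$ and $\nabla\partial_\tau U$.

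Your treatment of the viscous part of $U_\tau$ is sound. Since $Q^{(\delta-1)/\alpha}\phi^{-K}\lesssim\langle y\rangle^{\delta_{\rm dis}-\Lambda+2-2K(1-\eta)}$ is bounded, the $\phi^K$-weighted dissipation in \eqref{energyest3} controls $\int_{\tau_0}^{\tau}\|Q^{(\delta-1)/\alpha}\nabla^{K+1}U\|_{L^2}^2\,{\rm d}\tilde\tau$.
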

\begin{proof}
    
 Combining \eqref{selfsimilar eq} with $Q^* = \Lambda e^{-(\Lambda-1)\tau}\bar{\lss}$, we derive
\begin{equation}\label{eq:L2}
\begin{aligned}
    (\partial_\tau + \Lambda-1)(\Clss-\Clss^*)=&-(y+U)\cdot \nabla (\Clss-\Clss^*) -\alpha\Clss \, \dive\, U,\\
    (\partial_\tau + \Lambda-1)U=&-(y+U)\cdot\nabla U -\alpha\Clss \nabla \Clss+\mathcal{F}_{\rm{dis}}.
    \end{aligned}
\end{equation}
Multiplying each equation of \eqref{eq:L2} by  $\Clss-\Clss^*$  and $U$ respectively, summing over all, and integrating over $\mathbb{R}^3$, we obtain
\begin{align}\label{eq:L2energy}
   &\frac{1}{2}\frac{\rm d}{\rm d\tau} \int \big(|\Clss-\Clss^*|^2 +|U|^2\big)\,\dy \nonumber \\
   & = -(\Lambda-1)\int \big(|\Clss-\Clss^*|^2 +|U|^2\big)\,\dy \,- \frac{1}{2}\int (y+U)\cdot \nabla\big( |\Clss-\Clss^*|^2+|U|^2\big)\, \dy \\
   &\quad- \alpha\int  \big(\Clss(\Clss-\Clss^*)\,\dive \, U+\Clss \nabla \Clss \cdot U \big)\,\dy +C_{\rm{dis}}e^{-\delta_{\rm{dis}}}\int \Clss^{-\frac{1}{\delta}}\dive \,\big(\Clss^{\frac{\delta}{\alpha}}\md(U)\big)\cdot U \,\dy \nonumber \\
   &=J_1+J_2+J_3+J_4. \nonumber
\end{align}
For simplicity, we denote
\begin{equation*}
    \bar{E}(\tau) = \frac{1}{2}\int \big(|\Clss-\Clss^*|^2 +|U|^2\big)(\tau, y)\,\dy,
\end{equation*}
where $\tau \ge \tau_0$.
Since $\Lambda>1$, $J_1$ carries the negative sign. We now estimate the remaining terms. 
Integrating by parts and using Lemma~\ref{lemma:Sprimebounds}, we have
\begin{equation}
\begin{aligned}\label{est:J2}
   J_2 & =  -\frac{1}{2}\int (y+U)\cdot \nabla\big( |\Clss-\Clss^*|^2+|U|^2\big)\, \dy \\
   & = \frac{1}{2}\int \dive(y+U)\big( |\Clss-\Clss^*|^2+|U|^2\big)\, \dy \leq  C \bar{E}(\tau).
\end{aligned}
\end{equation}
For $J_3$, it follows from the Young inequality, Lemmas \ref{lemma:Sbounds} and  \ref{linfitnityinside},
the definition of $\widehat{X}$, and the properties of profile $(\overline{\Clss}, \overline{U})$ that
\begin{align}
    J_3 & = - \alpha\int  \big(\Clss(\Clss-\Clss^*)\,\dive \, U+\Clss \nabla \Clss \cdot U \big)\,\dy \nonumber\\
    & \leq (\Lambda-1)\bar E(\tau) + C\int \big(|\nabla \Clss|^2 + |\nabla U|^2\big)\, \dy \nonumber \\
    & \leq (\Lambda-1)\bar E(\tau) + C\int \big(|\nabla (\widehat{X}\overline{\Clss})|^2 + |\nabla (\widehat{X}\overline{U})|^2\big)\, \dy + C\|(\nabla \widetilde{\Clss}, \nabla \widetilde{U}) \|_{L^2}^2\nonumber \\
    & \leq (\Lambda-1)\bar E(\tau) + C(\tau) + C(C_0)\|(\nabla \widetilde{\Clss}, \nabla \widetilde{U}) \|_{L^\infty(B(0,C_0))}^2 + C\|(\nabla \widetilde{\Clss}, \nabla \widetilde{U}) \|_{L^2(B^c(0,C_0))}^2\nonumber \\
    & \leq (\Lambda-1)\bar E(\tau) + C(\tau)+C\|(\nabla \widetilde{\Clss}, \nabla \widetilde{U}) \|_{L^2(B^c(0,C_0))}^2.\nonumber 
\end{align}
In the Gagliardo-Nirenberg inequality \eqref{eq:GNresultnoweightwholespace}, choosing
$$
\bar r=2, \, \,  l=4, \,  \, p=2, \,  \, q=2, \,  \,  \theta = \frac{1}{4},
$$
we have
\begin{align*}
    \|\nabla \widetilde{\Clss}\|_{L^2(B^c(0,C_0))} 
    & \leq C\big(\| \nabla^4 \widetilde{\Clss} \|^{\frac{1}{4}}_{L^2(B^c(0,C_0))}\|  \widetilde{\Clss}-\Clss^* \|^{\frac{3}{4}}_{L^2(B^c(0,C_0))}
    +\|\widetilde{\Clss}-\Clss^* \|_{L^2(B^c(0,C_0))}\big) \\
    & \leq C\big(\|\widetilde{\Clss}-\Clss^* \|_{L^2(B^c(0,C_0))}+ \| \nabla^4 \widetilde{\Clss} \|_{L^2(B^c(0,C_0))}\big)\\
    & \leq C\big(\|\Clss-\Clss^* \|_{L^2(B^c(0,C_0))} +\|\widehat{X}\overline{ \Clss} \|_{L^2(B^c(0,C_0))} 
    + \| \nabla^4 \widetilde{\Clss} \|_{L^2(B^c(0,C_0))}\big)\\
    &\leq C\|\Clss-\Clss^* \|_{L^2}+ C(\tau).
\end{align*}
Similarly, we can also  obtain
\begin{align*}
    \|\nabla \widetilde{U}\|_{L^2(B^c(0,C_0))}  \leq C\|U \|_{L^2}+ C(\tau).
\end{align*}
Thus, we conclude
\begin{equation}\label{est:J3}
    J_3 \leq C\bar E(\tau)+C(\tau).
\end{equation}
For $J_4$, we have
\begin{align}\label{est:J4}
    J_4& = C_{\rm{dis}}e^{-\delta_{\rm{dis}}\tau}\int \Clss^{-\frac{1}{\delta}}\dive \,\big(\Clss^{\frac{\delta}{\alpha}}\md(U)\big)\cdot U \,\dy \nonumber \\
    & = -C_{\rm{dis}}e^{-\delta_{\rm{dis}}\tau}\Big(\int \Clss^{\frac{\delta-1}{\alpha}}\big(a_1|\nabla U|^2+ (a_1+a_2)|\dive U|^2\big)\,\dy \,+\int \nabla(\Clss^{-\frac{1}{\alpha}})\big(\Clss^{\frac{\delta}{\alpha}}\md (U)\big)\cdot U \,\dy\Big) \nonumber\\
    & = -C_{\rm{dis}}e^{-\delta_{\rm{dis}}\tau}\Big(\int \Clss^{\frac{\delta-1}{\alpha}}\big(a_1|\nabla U|^2+ (a_1+a_2)|\dive U|^2\big)\,\dy \,-\int \frac{1}{\alpha}\big(\frac{\nabla \Clss}{\Clss}\cdot \Clss^{\frac{\delta-1}{\alpha}}\md (U)\big)\cdot U \,\dy\Big) \nonumber\\
    & \leq -\frac{1}{2}C_{\rm{dis}}e^{-\delta_{\rm{dis}}\tau}\int \Clss^{\frac{\delta-1}{\alpha}}\big(a_1|\nabla U|^2+ (a_1+a_2)|\dive U|^2\big)\,\dy  \\
    & \quad +Ce^{-\delta_{\rm{dis}}\tau}\int \frac{|\nabla \Clss|^2}{\Clss^2}\Clss^{\frac{\delta-1}{\alpha}}|U|^2 \,\dy \nonumber \\
    &\leq -\frac{1}{2}C_{\rm{dis}}e^{-\delta_{\rm{dis}}\tau}\int \Clss^{\frac{\delta-1}{\alpha}}\big(a_1|\nabla U|^2+ (a_1+a_2)|\dive U|^2\big)\,\dy \nonumber\\
    &\quad + Ce^{-\delta_{\rm{dis}}\tau}\int \langle y \rangle^{-2+\delta_{\rm{dis}}-(\Lambda-2)} |U|^2 \,\dy \nonumber \\
    &\leq -\frac{1}{2}C_{\rm{dis}}e^{-\delta_{\rm{dis}}\tau}\int \Clss^{\frac{\delta-1}{\alpha}}\big(a_1|\nabla U|^2+ (a_1+a_2)|\dive U|^2\big)\,\dy+ C\bar E(\tau),\nonumber
\end{align}
where the last inequality follows from Lemmas~\ref{lemma:Sbounds}--\ref{lemma:Sprimebounds}, the definition of $\delta_{\rm dis}$, and $\Lambda>1$.

Combining \eqref{eq:L2energy}--\eqref{est:J4} yields
\begin{equation}
    \frac{\rm d}{\rm d\tau}\bar E(\tau)+ \frac{1}{2}C_{\rm{dis}}e^{-\delta_{\rm{dis}}\tau}\int \Clss^{\frac{\delta-1}{\alpha}}\big(a_1|\nabla U|^2+ (a_1+a_2)|\dive U|^2\big)\,\dy \leq C\bar E(\tau)+C(\tau),
\end{equation}
which, along with the Gr\"onwall inequality, implies
\begin{equation}
    \bar E(\tau)=  \frac{1}{2}\int \big(|\Clss-\Clss^*|^2 +|U|^2\big)\,\dy \leq C(\tau).
\end{equation}

Moreover, using \eqref{weightdefi}, \eqref{eq:Sbound1}, and \eqref{energyest3}, we obtain
\begin{equation}
    \int \big(|\nabla^K \Clss|^2+ |\nabla^K U|^2\big)\,\dy \leq E_K(\tau)\leq E,
\end{equation}
and 
\begin{equation}
\begin{aligned}
   & \int_{\tau_0}^{\tau}\|\nabla^{K+1} U(\tilde{\tau}, \cdot)\|^2_{L^2} \,\text{d}\tilde{\tau} \\
   &\leq C(\tau)\int_{\tau_0}^{\tau}e^{-\delta_{\rm{dis}}\tilde{\tau}-K\min \{ \tilde \eta , \eta \}(\tau-\tilde \tau)}\|\phi^{\frac{K}{2}}\Clss^{\frac{\delta-1}{2\alpha}}\nabla^{K+1}U(\tilde{\tau}, \cdot)\|_{L^2}^2\,\text{d}\tilde{\tau} \\
    &\leq C(\tau, E).
\end{aligned}
\end{equation}
Together with the $L^2$--control of $(\Clss-\Clss^*, U)$ and $\tau\ge \tau_0 \gg E$, the uniform bound on
$E_K(\tau)$ allows us to apply the standard interpolation inequalities to obtain
\begin{equation}
\| \Clss - \Clss^*\|^2_{H^K} + \|  U \|^2_{H^K}+\int_{\tau_0}^{\tau}\|  U(\tilde{\tau}, \cdot) \|^2_{H^{K+1}}\,\text{d}\tilde{\tau}\leq C(\tau).
\end{equation}
By the equation \eqref{selfsimilar eq}, we also derive 
\eqref{eq:HKestimate2}.

This completes the proof of Lemma~\ref{lem:globalregularities}.
\end{proof}

\section{Global Existence of Smooth  Solutions of  the Reformulated Problem}\label{Section7}

In this section, we give the proof of  Theorem~\ref{globalexist} based on  the local well-posedness obtained in Theorem~\ref{Local(U,S)} and  the \textit{a priori} estimates obtained in Proposition~\ref{prop:bootstrap}.

According to Theorem~\ref{Local(U,S)}, there exists a unique smooth   solution $(\Clss, U)$ of  the  Cauchy problem \eqref{selfsimilar eq}--\eqref{SSfar} in  $[\tau_0, \tau_*]\times \mathbb{R}^3$. To extend this solution to be a global-in-time one, we define the set
\begin{equation}
\mathscr{S} = \{ \tau > \tau_0 : \text{the solution exists on } [\tau_0, \tau] \text{ and satisfies }  \eqref{lowerestimate1} \text{ and } \eqref{higherestimate3}  \}.
\end{equation}
Let $\bar{\tau}_{*} = \sup \mathscr{S}$. We aim to show that $\bar{\tau}_{*} = \infty$ by contradiction.

Let $\hat\tau$ be any fixed time satisfying $\hat\tau \in (\tau_0, \bar{\tau}_*)$. Collecting the uniform \textit{a priori} bounds obtained in Proposition~\ref{prop:bootstrap} and Lemma~\ref{lem:globalregularities} yields that, for any $\tau \in [\tau_0, \hat\tau]$,
\begin{equation}\label{uniformbounds}
    \begin{aligned}
         \| (\Clss-\Clss^*)(\tau, \cdot) \|^2_{H^K}+\|U (\tau, \cdot)\|^2_{H^K} + \int_{\tau_0}^{\tau}\|  U(\upsilon, \cdot)\|^2_{H^{K+1}} \text{d}\upsilon &\leq C(\hat\tau),\\
        \| \partial_\tau \Clss (\tau, \cdot) \|^2_{H^{K-1}} + \|\partial_{\tau} U(\tau, \cdot)\|^2_{H^{K-2}} +\int_{\tau_0}^{\tau}\| \partial_{\tau} U(\upsilon, \cdot)\|^2_{H^{K-1}} \text{d}\upsilon &\leq C(\hat\tau),\\
        \|P_{\rm{uns}}( \overset{\approx}{\Clss}, \overset{\approx}{U})(\tau, \cdot)\|_{X} &\leq \frac{\smallc_1}{2},\\
        \max\big\{\|\widetilde{\Clss}(\tau, \cdot)\|_{L^\infty}, \  \|\widetilde{U}(\tau, \cdot)\|_{L^\infty}\big\}
        &\leq \frac{\smallc_0 }{200}, \\ 
        \max\big\{\|\nabla^4\widetilde{\Clss}(\tau, \cdot)\|_{L^2(B^c(0, C_0))}, \ \|\nabla^4\widetilde{U}(\tau, \cdot)\|_{L^2(B^c(0, C_0))}\big\} & \leq \frac{\smallc_0}{2} , \\
       E_{K}(\tau)=\int\big(|\nabla^{K}\Clss(\tau, y)|^2+|\nabla^{K}U(\tau,y)|^2\big)\phi^{K}(y)\dy &\leq \frac{E}{2}.
    \end{aligned}
\end{equation}

Clearly, $\bar \tau_{*} \ge \tau_*$. If $\bar \tau_{*}<\infty$, according to the uniform \textit{a priori} estimates \eqref{uniformbounds} and the standard weak convergence arguments, for any sequence $\{\tau_k\}_{k=1}^{\infty}$ satisfying 
$$
0<\tau_k <\bar \tau_*,\qquad\,\, \tau_k \to \bar\tau_*\,\,\,\,\, \text{as $k\to \infty$},
$$
there exists a subsequence (still denoted by) $\{\tau_k\}_{k=1}^{\infty}$ and limits $(\Clss, U)(\bar\tau_*, y)$ satisfying
\begin{equation*}
    \Clss(\bar\tau_*, y)-\Clss^* \in H^K(\mathbb{R}^3), \quad U(\bar\tau_*, y)\in H^K(\mathbb{R}^3),
\end{equation*}
and, by Lemma \ref{lem:weighted-weak-limit}, as $k\to \infty$,
\begin{equation}
    \begin{aligned}
        \Clss(\tau_k, y)-\Clss^* \to \Clss(\bar\tau_*, y)-\Clss^* &\quad\text{weakly   \  in}\  H^K(\mathbb{R}^3),\\
        U(\tau_k, y) \to U(\bar\tau_*, y) &\quad\text{weakly \    in}\   H^K(\mathbb{R}^3),\\
        P_{\mathrm{uns}}(\overset{\approx}{\Clss}, \overset{\approx}{U})(\tau_k, y)\to P_{\mathrm{uns}}(\overset{\approx}{\Clss}, \overset{\approx}{U})(\bar\tau_*, y)&\quad\text{weakly \    in}\   H_0^m(B(0, 3C_0)), \\
        (\widetilde{\Clss}, \widetilde{U})(\tau_k, y) \to  (\widetilde{\Clss}, \widetilde{U})(\bar\tau_*, y) &\quad\text{weakly$^*$  \   in}    \  L^\infty(\mathbb{R}^3), \\
        (\nabla^4\widetilde{\Clss}, \nabla^4\widetilde{U})(\tau_k, y) \to  (\nabla^4\widetilde{\Clss}, \nabla^4\widetilde{U})(\bar\tau_*, y)&\quad\text{weakly \    in}\   L^2(B^c(0, C_0)),\\
        (\nabla^K \Clss\phi^{\frac{K}{2}}, \nabla^K U \phi^{\frac{K}{2}})(\tau_k, y) \to  (\nabla^K \Clss\phi^{\frac{K}{2}}, \nabla^K U\phi^{\frac{K}{2}})(\bar\tau_*, y)&\quad\text{weakly   \  in}\   L^2(\mathbb{R}^3).
    \end{aligned}
\end{equation}
It follows from \eqref{uniformbounds} and the lower semi-continuity of norms that
\begin{equation}\label{uniformbounds2}
\begin{aligned}
     \|(\Clss-\Clss^*)(\bar\tau_*, \cdot)\|_{H^K}^2 +\|U(\bar\tau_*, \cdot)\|_{H^K}^2 &\leq C(\bar\tau_*),\\
     \|P_{\rm{uns}}( \overset{\approx}{\Clss}, \overset{\approx}{U})(\bar\tau_*, \cdot)\|_{X} &\leq \frac{\smallc_1}{2},\\
        \max\big\{\|\widetilde{\Clss}(\bar\tau_*, \cdot)\|_{L^\infty}, \  \|\widetilde{U}(\bar\tau_*, \cdot)\|_{L^\infty}\big\} &\leq \frac{\smallc_0 }{200}, \\ 
        \max\big\{\|\nabla^4\widetilde{\Clss}(\bar\tau_*, \cdot)\|_{L^2(B^c(0, C_0))}, \  \|\nabla^4\widetilde{U}(\bar\tau_*, \cdot)\|_{L^2(B^c(0, C_0))}\big\}&\leq \frac{\smallc_0}{2} , \\
       E_{K}(\bar\tau_*)=\int\big(|\nabla^{K}\Clss(  \bar\tau_*,y)|^2+|\nabla^{K}U(  \bar\tau_*,y)|^2\big)\phi^{K}(y)\,\dy &\leq \frac{E}{2}.
\end{aligned}
\end{equation}

We take the limit $(\Clss, U)(\bar\tau_*, y)$  as new initial data at $\tau=\bar\tau_*$. 
Theorem~\ref{Local(U,S)} ensures the existence of a positive time $\iota_*>0$ 
such that the smooth solution exists on $[\bar\tau_*, \bar\tau_*+\iota_*]$.
$(\Clss, U)(\bar\tau_*, y)$ satisfies \eqref{initialdatacondition1}--\eqref{initialdatacondition2}, and the bounds in \eqref{uniformbounds} yield  
$$(\Clss-\Clss^*, U) \in C([\bar\tau_*, \bar\tau_*+\iota_*]; H^K(\mathbb{R}^3)).$$ 
It follows from \eqref{EKRbound}--\eqref{EKbound} and \eqref{uniformbounds2}$_5$ that, for all $\tau \in [\bar\tau_*, \bar\tau_*+\iota_*]$,
\begin{equation}\label{EKbounds}
\begin{aligned}
E_K(\tau) &\leq e^{C(K, \smallc_1, \iota_*)(\tau-\bar\tau_*)}E_K(\bar\tau_*)+e^{C(K, \smallc_1, \iota_*)(\tau-\bar\tau_*)}-1\\
& \leq e^{C(K, \smallc_1, \iota_*)(\tau-\bar\tau_*)}\frac{E}{2}
+e^{C(K, \smallc_1, \iota_*)(\tau-\bar\tau_*)}-1.
\end{aligned}
\end{equation}
By the continuity of factor $e^{C(K, \smallc_1, \iota_*)(\tau-\bar\tau_*)}$, there exists $\iota_1 \in (0, \iota_*]$ 
such that estimate  \eqref{higherestimate3} remains valid on the extended interval $ [\bar\tau_*, \bar\tau_*+\iota_1]$.

On the other hand, since 
$$(\Clss-\Clss^*,\,U) \in C([\bar\tau_*, \bar\tau_*+\iota_*]; H^K(\mathbb{R}^3))$$
and $K \ge 4$, the Sobolev embedding and the bounds of $(\widehat{X}\overline{\Clss},\,\widehat{X}\overline{U})$ 
imply 
$$
(\widetilde{\Clss},\,\widetilde{U})\in C([\bar\tau_*, \bar\tau_*+\iota_*]; L^\infty(\mathbb{R}^3)).
$$
Together with \eqref{uniformbounds2}$_3$, this yields that there exists $\iota_2 \in (0, \iota_*]$ such that 
estimate  \eqref{lowerestimate1} remains valid on the extended interval $ [\bar\tau_*, \bar\tau_*+\iota_2]$. 
Consequently, there exists $\iota = \min\{\iota_1, \iota_2\}$ such that estimates $\eqref{lowerestimate1}$ and $\eqref{higherestimate3}$ 
are both valid on the extended interval $[\bar\tau_*, \bar\tau_*+\iota]$.

This contradicts the definition of $\bar\tau_*$ as the supremum of $\mathscr{S}$. Therefore, we conclude that $\bar\tau_*=\infty$, yielding the global-in-time existence of the solution. This completes the proof of Theorem~\ref{globalexist}.

\section{Development   of Implosions of Solutions}\label{Section8}

Based on the global-in-time stability result obtained in Theorem~\ref{globalexist} for the reformulated system \eqref{selfsimilar eq} in self-similar coordinates,
we are now ready to consider the corresponding development of implosions of solutions to  the degenerate \textbf{CNS} \eqref{eq:1.1} in Eulerian coordinates.

\smallskip
We divide the proof into two steps.

\smallskip
\textbf{1. Condition $(P_1)$}. Under condition $(P_1)$ in {\rm Theorem~\ref{Thm1.1}}, for any fixed  $\gamma\in (1, 1+\frac{2}{\sqrt 3})$, there exists $\Lambda\in (1, \Lambda^*(\gamma))$ introduced in {\rm Lemma~\ref{thm:existence_profiles}} such that the steady Euler system \eqref{Profile} admits a smooth, spherically symmetric solution $(\overline{\Clss},\,\overline{U})$. Furthermore, $\delta\in (0, \frac{1}{2})$ satisfies $\delta_{\rm dis}>0$. 
Then all the assumptions for parameters $(\gamma, \Lambda, \delta)$ required in  Theorem~\ref{globalexist} are satisfied.

For any vector function  $(\widetilde{\Clss}^*_0, \widetilde{U}^*_0)$ satisfying conditions \eqref{eq:tilde_is_stable}--\eqref{initialdatacondition2}, we can take $\{\hat{k}_i\}$ as in Proposition \ref{prop:ci}. Then the initial data function for the Cauchy problem \eqref{selfsimilar eq}--\eqref{SSfar} is given by
$$
\Clss_0 = \widehat{X}\overline{\Clss}+ \widetilde{\Clss}_0^* +\sum_{i=1}^N \hat{k}_i\varphi_{i, \subclss},\qquad U_0 = \widehat{X}\overline{U}+ \widetilde{U}_0^* +\sum_{i=1}^N \hat{k}_i\varphi_{i, u}.
$$
According to the self-similar scaling \eqref{scaling-coordinat} and Theorem \ref{globalexist}, 
we see that, for all $\tau \ge \tau_0$,
\begin{equation*}
    \max\big\{ \|\widetilde{\Clss}(\tau, \cdot)\|_{L^\infty},\   \|\widetilde{U}(\tau, \cdot)\|_{L^\infty}\big\} \leq \frac{\smallc_0}{100}e^{-\varepsilon(\tau-\tau_0)}  \leq \frac{\smallc_0}{100}\left(\frac{T-t}{T}\right)^{\frac{\varepsilon}{\Lambda}}.
\end{equation*}
It follows from \eqref{scaling-coordinat}--\eqref{scaling} and $\widehat{X}(y, \tau)=\mathscr{X}(e^{-\tau}y)$ that
\begin{equation*}
    \begin{aligned}
        \lss(t, x)&= \frac{1}{\Lambda(T-t)^{1-\frac{1}{\Lambda}}}\Big(\mathscr{X}(x)\overline{\Clss}(\frac{x}{(T-t)^{\frac{1}{\Lambda}}})+ \widetilde{\Clss}(-\frac{\log (T-t)}{\Lambda}, \frac{x}{(T-t)^{\frac{1}{\Lambda}}})\Big),\\
         u(t, x)&= \frac{1}{\Lambda(T-t)^{1-\frac{1}{\Lambda}}}\Big(\mathscr{X}(x)\overline{U}(\frac{x}{(T-t)^{\frac{1}{\Lambda}}})+ \widetilde{U}(-\frac{\log (T-t)}{\Lambda}, \frac{x}{(T-t)^{\frac{1}{\Lambda}}})\Big).
         \end{aligned}
\end{equation*}
Moreover, recalling that $\lss = \frac{1}{\alpha}\rho^\alpha$, we have
\begin{align*}
    \lim_{t\to T^-}(\alpha^{-1}\Lambda(T-t)^{1-\frac{1}{\Lambda}})^{\frac{1}{\alpha}}\rho(t, x)&= \mathscr{X}^{\frac{1}{\alpha}}(x)\lim_{t\to T^-}\overline{\Clss}^{\frac{1}{\alpha}}(\frac{x}{(T-t)^{\frac{1}{\Lambda}}}),\\
     \lim_{t\to T^-}\Lambda(T-t)^{1-\frac{1}{\Lambda}}u(t, x)&= \mathscr{X}(x)\lim_{t\to T^-}\overline{U}(\frac{x}{(T-t)^{\frac{1}{\Lambda}}}).
     \end{align*}

In view of the properties of the self-similar profile $(\overline{\Clss}, \overline{U})$, 
these asymptotics show that, as $t \to T^{-}$,  density $\rho(t, x)$ blows up at the origin, 
while velocity $u(t, x)$ becomes unbounded in any arbitrarily small neighborhood of the origin. 
Consequently, the solution to \textbf{CNS} develops an implosion-type singularity at time $T$.

\smallskip
\smallskip
\textbf{2. Condition $(P_2)$}. Now we consider the analysis under condition $(P_2)$. 
For any fixed  $\gamma \in (1+\frac{2}{\sqrt{3}})\setminus \mathcal{J}$, 
where the set $\mathcal J$ (possibly empty) introduced in {\rm Lemma~\ref{thm:existence_profiles}},  
there exists a discrete sequence of scaling parameters $\{\Lambda_n\}_{n=1}^{\infty}$ satisfying 
\begin{equation*}
    1<\Lambda_n <\Lambda^*(\gamma), \qquad \lim_{n\to \infty}\Lambda_n=\Lambda^*(\gamma),
\end{equation*}
such that the steady Euler system \eqref{Profile} with $\Lambda_n$ ($n=1,2, \cdots$) admits a smooth, spherically symmetric solution $(\overline{\Clss},\,\overline{U})$. For the range of $\delta>0$, in order to make sure that $\delta_{\rm dis}>0$, it is required that 
\begin{equation*}
    0<\delta<
\frac{\Lambda_n(\gamma+1)-2\gamma}{2(\Lambda_n-1)} <\frac{\Lambda^*(\gamma)(\gamma+1)-2\gamma}{2(\Lambda^*(\gamma)-1)}:=\delta^*(\gamma)<\frac{1}{2}.
\end{equation*}

Since $\lim_{n\to \infty}\Lambda_n=\Lambda^*(\gamma)$, when $\gamma \in (1+\frac{2}{\sqrt{3}})\setminus \mathcal{J}$ is fixed,  for any  $\delta\in (0, \delta^*(\gamma))$, there exists a suitable $\Lambda_n \in \{\Lambda_n\}_{n=1}^{\infty}$ such that, taking $\Lambda=\Lambda_n$, we have
$$
\delta_{\rm dis} (\gamma, \delta, \Lambda)=\frac{2(1-\delta)(\Lambda-1)}{\gamma-1}+\Lambda-2>0.$$
Consequently, the assumptions for parameters $(\gamma, \Lambda, \delta)$ required in  Theorem~\ref{globalexist} are satisfied. By an analogous argument in Step~1, we can establish the singularity formation of solutions to \textbf{CNS}.

\section{Remarks on the Periodic Problem}\label{remarkperiodic}
This section is devoted to the proof of the development of implosions for solutions 
of the periodic problem stated in Theorem \ref{Thm1.2peordic}. 
The argument is largely analogous to that of Theorem \ref{Thm1.1}. 
Accordingly, we provide only a brief outline of the proof and highlight 
the minor differences between the Cauchy and the periodic settings.

\subsection{Reformulation of the Periodic Problem}

First,  problem \eqref{periodicproblem} can be rewritten as 
\begin{equation}\label{eq:1.2periodic-cc}
\begin{cases}
\partial_t\lss=-u\cdot \nabla_x \lss -\alpha \lss \, \dive_x u  \ \ \ \ \  \  \  \  \  \  \  \ \ \qquad \qquad  \  \   \qquad \qquad  \ \qquad \qquad  \ \  \  \quad \ \ \ \text{in $\mathbb{R}^+\times \mathbb{T}^3_{10}$},\\[6pt]
 \partial_tu = -u\cdot \nabla_x u -\alpha \lss \nabla_x \lss+ \alpha^{\frac{\delta-1}{\alpha}}\lss^{\frac{\delta-1}{\alpha}}L_xu+\frac{\delta}{\alpha}\alpha^{\frac{\delta-1}{\alpha}}\lss^{\frac{\delta-1-\alpha}{\alpha}}\nabla_x \lss \cdot \md_x(u)  \quad \ \ \ \text{in $\mathbb{R}^+\times \mathbb{T}^3_{10}$},\\[6pt]
(c,u)(0,x)=(c_0,u_0)(x):=(\alpha^{-1}\rho^\alpha_0,u_0)(x)
   \ \ \    \   \qquad \qquad  \  \   \qquad \qquad  \ \qquad \quad  \text{in $\mathbb T^3_{10}$},
\end{cases}
\end{equation}
where $(L_x u, \md_x(u))$ are given in \eqref{Lame operator}, $\alpha =\frac{\gamma-1}{2}$, 
and  $\lss = \frac{1}{\alpha}\rho^{\alpha}$.

Second, based on the self-similar scaling \eqref{scaling-coordinat}--\eqref{scaling},
in the self-similar variables 
$$
(\tau, y)\in \Omega(\tau_0):=\{(\tau, y):\ \tau\in  [\tau_0, \infty), \ y\in  e^\tau\mathbb T^3_{10}\},
$$  \eqref{eq:1.2periodic-cc} can be rewritten as 
\begin{equation}\label{selfsimilar eq-periodic}
    \begin{cases}
    \displaystyle
        \partial_\tau \Clss =-(\Lambda-1)\Clss-(y+U)\cdot\nabla \Clss- \alpha \Clss \,\dive U \qquad  \ \ \ \ \ \  \quad \ \ \ \text{in $\Omega(\tau_0)$},
        \\[6pt]
       \displaystyle
        \partial_\tau U =-(\Lambda-1)U-(y+U)\cdot\nabla U- \alpha \Clss \nabla \Clss\\[6pt]
        \qquad\quad\,+ C_{\rm{dis}}e^{-\delta_{\rm{dis}}\tau}\big(\Clss^{\frac{\delta-1}{\alpha}}L(U)+\frac{\delta}{\alpha}\Clss^{\frac{\delta-1-\alpha}{\alpha}}\nabla \Clss \cdot \md(U)\big)
        \qquad \text{in $\Omega(\tau_0)$},       
        \\[6pt]
        \displaystyle
         (\Clss, U)(\tau_0, y)= (\Clss_0, U_0)(y)\\[6pt]
         \ \ \ \ \ \  :=(\Lambda\alpha^{-1} e^{-(\Lambda-1)\tau_0}\rho_0^{\alpha}, \ \Lambda e^{-(\Lambda-1)\tau_0}u_0)(e^{-\tau_0}y)  \quad \ \  \ \  \ \ \ \ \ \ \ \text{in} \  e^{\tau_0}\mathbb{T}_{10}^3,          \end{cases}
\end{equation}
where $(L(U),\md(U))$ and $(C_{\rm{dis}},\delta_{\rm{dis}})$ are given in \eqref{Lame operator-y}--\eqref{S-para}.

The original problem \eqref{eq:1.2periodic-cc} is defined in the fixed periodic domain
$$\mathbb T^3_{10}=\{(x_1, x_2, x_3)^\top: -5 \leq x_i <5, \ i=1,2,3\}.$$
Under the self-similar scaling of variables $y=e^\tau x$, the spatial domain corresponding to the rescaled variable $y$ becomes time-dependent, \textit{i.e.},
$$
e^\tau\mathbb T^3_{10}=\big\{(y_1, y_2, y_3)^\top:\, -5e^\tau \leq y_i < 5e^\tau, \ i=1,2,3\big\}. 
$$

Let $(\overline{\Clss}, \overline{U})$ be the $C^\infty$  self-similar profile  solving \eqref{Profile} obtained in {\rm Lemma \ref{lem:existofselfsimilar}} (see  {\rm Appendix \ref{appendix B}}) when  the  scaling parameter $\Lambda$ introduced in \eqref{scaling1}{\rm--}\eqref{scaling-coordinat1} satisfies  \eqref{range of Lambda}{\rm--}\eqref{def:Lambda*}. Since $(\overline{\Clss}, \overline{U})$ 
is spherically symmetric,  then there exists a scalar function $\overline{\mathcal{U}}$ such that 
\[
\overline{\Clss}(y)=\overline\Clss(r),\quad 
\overline{U}(y)=\overline{\mathcal U}(r)\frac{y}{r} \qquad\,\,\mbox{ for $r=|y|$}.
\]
The cutoff self-similar profile $(\widehat{X}\overline{\Clss}, \widehat{X}\overline{U})$ with $\widehat{X}$ introduced in \eqref{cutoff function} is compactly supported in $B(0, e^\tau)\subset e^\tau\mathbb T^3_{10}$. Since its support is strictly contained in the interior of the periodic domain, we may regard it as a periodic function on $e^\tau\mathbb T^3_{10}$ via periodic extension. Consequently, it satisfies the periodic boundary condition automatically.

Motivated by the proof  for Theorem \ref{Thm1.1},  
\begin{itemize}
\item
the study on  the development of implosions for the original periodic problem of \textbf{CNS} can also  be reduced to a stability analysis around the cutoff self-similar profile $(\widehat{X}\overline \Clss, \widehat{X}\overline U)$.
\end{itemize}
Based on this viewpoint, the key issue is to analyze the mathematical structure satisfied by the corresponding perturbation
\begin{equation}\label{truncated profile-periodic}
     \widetilde{\Clss}= \Clss- \widehat{X}\overline \Clss, \quad \quad  \widetilde{U}= U- \widehat{X}\overline U.
\end{equation}
According to  \eqref{Profile} and the equations in \eqref{selfsimilar eq-periodic}, 
 $(\widetilde{\Clss}, \widetilde{U})$ satisfies the following
equations{\rm:}
\begin{align}
    \partial_\tau \widetilde{\Clss} 
    =&\underbrace{-(\Lambda-1)\widetilde{\Clss}- (y + \widehat{X}\overline{U})\cdot \nabla \widetilde{\Clss}- \alpha(\widehat{X}\overline{\Clss})\,\dive \,\widetilde{U}
     - \widetilde{U}\cdot \nabla (\widehat{X}\overline{\Clss}) - \alpha\widetilde{\Clss}\,\dive (\widehat{X}\overline{U})}_{\mathcal{L}_\subclss^e(\widetilde{\Clss}, \widetilde{U})}\nonumber\\
    & \underbrace{- \widetilde{U}\cdot \nabla \widetilde{\Clss} - \alpha \widetilde{\Clss}\,\dive \,\widetilde{U}}_{\mathcal{N}_\subclss(\widetilde{\Clss}, \widetilde{U})}\label{eq: PerS-periodic}\\
    &\underbrace{-(\widehat{X}^2- \widehat{X})\overline{U}\cdot \nabla \overline{\Clss}- (1+\alpha)\widehat{X}\overline{U}\cdot \nabla\widehat{X}\overline{\Clss}-(\widehat{X}^2- \widehat{X})\overline{\Clss}\,\dive \,\overline{U}}_{\mathcal{E}_\subclss},  \nonumber\\
    \partial_\tau \widetilde{U} =&\underbrace{-(\Lambda-1)\widetilde{U}- (y + \widehat{X}\overline{U})\cdot \nabla \widetilde{U}- \alpha(\widehat{X}\overline{\Clss})\nabla \widetilde{\Clss}- \widetilde{U}\cdot \nabla (\widehat{X}\overline{U}) - \alpha\widetilde{\Clss}\nabla (\widehat{X}\overline{\Clss})}_{\mathcal{L}_u^e(\widetilde{\Clss}, \widetilde{U})}\nonumber\\
    &\underbrace{-\widetilde{U}\cdot \nabla \widetilde{U} - \alpha \widetilde{\Clss}\nabla \widetilde{\Clss}}_{\mathcal{N}_u(\widetilde{\Clss}, \widetilde{U})}\label{eq: PerU-periodic}\\
    &\underbrace{-(\widehat{X}^2- \widehat{X})\overline{U}\cdot \nabla \overline{U}-\widehat{X}\overline{U}\cdot \nabla\widehat{X}\overline{U}-\alpha(\widehat{X}^2- \widehat{X})\overline{\Clss} \nabla \overline{\Clss} -\alpha \widehat{X}\overline{\Clss} \nabla\widehat{X}\overline{\Clss} }_{\mathcal{E}_u} \nonumber \\
    &\underbrace{+C_{\rm{dis}}e^{-\delta_{\rm{dis}}\tau}\big(\Clss^{\frac{\delta-1}{\alpha}}L(U)+\frac{\delta}{\alpha}\Clss^{\frac{\delta-1-\alpha}{\alpha}}\nabla \Clss \cdot \md(U)
        \big)}_{\mathcal{F}_{\rm{dis}}}. \nonumber
\end{align}

We are now ready to state the desired global stability for problem \eqref{selfsimilar eq-periodic}.

\begin{Theorem}\label{globalexist-periodic}
We assume that 
$1< \gamma <1+\frac{2}{\sqrt{3}}$, $\Lambda \in (1, \Lambda^*(\gamma))$ is the scaling parameter introduced in {\rm Lemma~\ref{thm:existence_profiles}} such that there exists a smooth, spherically symmetric self-similar profile $(\overline{\Clss}, \overline{U})$ solving \eqref{Profile}, $\delta\in (0,\frac{1}{2})$ satisfies $\delta_{\rm dis}>0$,
$(m, J)$ are  two sufficiently large constants determined in {\rm Lemma~\ref{prop:maxdissmooth}}, and  $C_0>1$ is  a sufficiently large constant depending on profile $(\overline{\Clss}, \overline{U})$ and parameter $\Lambda$. 
Then there exist positive constants $\smallc_1, E$, and $K$, which depend only on $(m, J)$, and satisfy 
  $$
  K\geq 4, \qquad \smallc_1 \ll \frac{1}{E} \ll \frac{1}{K}\ll 1,
  $$ such that, 
  if the initial data function $(\Clss_0,U_0)$ satisfies
  $$
 \inf_{y\in e^{\tau_0}\mathbb{T}_{10}^3}\Clss_0>0,\quad   \Clss_0-\Clss^* \in   H^K(e^{\tau_0}\mathbb{T}_{10}^3), \quad U_0\in  H^K(e^{\tau_0}\mathbb{T}_{10}^3),     
 $$
and 
\begin{align}    
&\max\big\{ \|\Clss_0-\widehat{X}\overline{\Clss}\|_{L^{\infty}(e^{\tau_0}\mathbb{T}_{10}^3)}, 
\ \|U_0-\widehat{X}\overline{U}\|_{L^{\infty}(e^{\tau_0}\mathbb{T}_{10}^3)}\big\} 
\leq \smallc_1, \label{initialdata1-periodic}\\
&\Clss_0 \geq \frac{\smallc_1}{2}, \quad    
\langle y \rangle^{\Lambda}\big(|\nabla \Clss_0|+|\nabla U_0|\big)< \infty,\label{initialdata1*-periodic}\\
&\max\big\{\|P_{\rm sta}(\chi_2(\Clss_0-\widehat{X}\overline{\Clss}))\|_{X},\  \|P_{\rm sta}(\chi_2(U_0-\widehat{X}\overline{U}))\|_{X}\big\} \leq \smallc_1,\label{initialdata2-periodic}\\
&E_K(\tau_0)=\int_{e^{\tau_0}\mathbb{T}_{10}^3}\big(|\nabla^K \Clss(\tau_0,y)|^2+|\nabla^K U(\tau_0,y)|^2\big)\phi^K(y) \,\dy \leq \frac{E}{2},\label{initialdata3-periodic}
\end{align}  
where the function space $X$ is defined in \eqref{X space} based on the choice of $(m,C_0)$, 
and the  weight function $\phi \in C^1(\mathbb{R}^3)$ is defined in \eqref{weightdefi},
    then  the periodic problem \eqref{selfsimilar eq-periodic} admits a unique global-in-time smooth solution $(\Clss, U)(\tau,y)$ in $\Omega(\tau_0)$,   
which satisfies
    \begin{equation}\label{peri0dic-A}
        \begin{split}
            &\inf_{y\in e^{\tau}\mathbb{T}_{10}^3} \Clss(\tau,y)>0, \ \ 
\Clss-\Clss^*\in C([\tau_0,\tau];H^K(e^{\tau}\mathbb{T}_{10}^3)),\\
&\partial_\tau \Clss\in C([\tau_0,\tau];H^{K-1}(e^{\tau}\mathbb{T}_{10}^3)),\ \ U\in C([\tau_0,\tau];H^K(e^{\tau}\mathbb{T}_{10}^3))\cap L^2([\tau_0,\tau];H^{K+1}(e^{\tau}\mathbb{T}_{10}^3)),\\[2pt]
& \partial_\tau U\in C([\tau_0,\tau];H^{K-2}(e^{\tau}\mathbb{T}_{10}^3))\cap L^2([\tau_0,\tau];H^{K-1}(e^{\tau}\mathbb{T}_{10}^3)),
\end{split}
    \end{equation}
and
    \begin{equation}\label{periodic-B}
        \max\big\{\|\Clss-\widehat{X}\overline{\Clss}\|_{L^\infty(e^{\tau}\mathbb{T}_{10}^3)},\   \|U-\widehat{X}\overline{U}\|_{L^\infty(e^{\tau}\mathbb{T}_{10}^3)}\big\}\leq \frac{\smallc_0}{100}e^{-\varepsilon(\tau-\tau_0)}, \qquad   E_K(\tau) \leq E,
    \end{equation}
    for all $\tau>\tau_0$, where $\smallc_0>0$ is a sufficiently small positive constant depending only on $E$, and $\varepsilon>0$ is a small positive constant depending only on $\delta_{\rm dis}$, so that they satisfy $$\smallc_0^{3/2}\ll \smallc_1 \ll \smallc_0 \ll \frac{1}{E}\ll \frac{1}{K}\ll \frac{1}{m}\ll \varepsilon \ll 1.$$
    Moreover, there exists a finite-codimension set of initial data satisfying the above conditions {\rm(}see {\rm{Appendix \ref{appendix D} }}for more details{\rm)}.
\end{Theorem}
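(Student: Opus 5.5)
The plan is to transplant the proof of Theorem~\ref{globalexist} to the expanding periodic domain $e^{\tau}\mathbb{T}^3_{10}$, checking only the places where the whole space $\mathbb{R}^3$ was used. First we establish a periodic analogue of Theorem~\ref{Local(U,S)}. Local well-posedness of \eqref{eq:1.2periodic-cc} on the fixed torus $\mathbb{T}^3_{10}$ with $\inf c_0>0$ follows exactly as in Theorem~\ref{zth1-po}: the density stays positive by transport, and the velocity equation is uniformly parabolic. Pulling back by $y=e^{\tau}x$ gives a solution of \eqref{selfsimilar eq-periodic} on $\Omega(\tau_0)$. The weighted bound \eqref{weightenergy} is then automatic: the domain is bounded for each $\tau$, so $\phi^K\le C(\tau)$ and no truncation $f_R$ is needed. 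Integration by parts over $e^{\tau}\mathbb{T}^3_{10}$ has no boundary terms in $x$, but in $y$ the domain moves. We therefore perform all energy identities either in $x$-variables, or in $y$ noting that $\frac{\mathrm d}{\mathrm d\tau}\int_{e^\tau\mathbb{T}^3_{10}} f\,\mathrm dy=\int(\partial_\tau f+\dive(yf))\,\mathrm dy$ by periodicity. This produces exactly the same transport identity $-J_1$ as in \eqref{eq:EstJ1}, because the flux across opposite faces of the cube cancels.

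Next we restate the bootstrap, Proposition~\ref{prop:bootstrap}, with all norms taken over $e^{\tau}\mathbb{T}^3_{10}$. Since $C_0$ and $R_0$ are fixed and $\tau_0\gg1$, the balls $B(0,3C_0)$ and $B(0,R_0)$ sit well inside the fundamental cell. Hence the truncated problem \eqref{eq:truncated}, the space $X$, the decomposition $V_{\rm sta}\oplus V_{\rm uns}$, Lemma~\ref{truncatedestimate}, Lemma~\ref{linfitnityinside} and Lemma~\ref{outgoingpro} carry over verbatim, and Proposition~\ref{prop:ci} selects $\{\hat k_i\}$ as before. Lemma~\ref{Forcingterm1} also carries over. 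The cutoff $\widehat X$ is supported in $B(0,e^\tau)\subset e^\tau\mathbb{T}^3_{10}$, its periodic extension is smooth, and the bounds on $\mathcal E$ are unchanged. The Gagliardo--Nirenberg inequalities of Appendix~A are applied on the cube $e^\tau\mathbb{T}^3_{10}$, or equivalently to periodic functions in a neighborhood of a point. We use them only locally, with constants independent of $\tau$ because the cube side $10e^{\tau}\ge 10e^{\tau_0}$ is large.

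The main obstacle is the exterior analysis of Lemmas~\ref{lemma:Sbounds}, \ref{lemma:Sprimebounds} and \ref{boosstrapestimate1}, which follows the trajectories $y=y_0e^{\tau-\bar\tau}$. On the torus these rays leave the fundamental cell. The point is that in $x$-variables they are just fixed points $x=e^{-\bar\tau}y_0$, so they never leave $\mathbb{T}^3_{10}$. The trajectory argument is therefore valid for every $y$ in the cell once $|y|$ is measured as the distance to the origin inside the fundamental domain $[-5e^\tau,5e^\tau)^3$.

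The only new issue is the weight $\phi$ and the decay rates $\langle y/R_0\rangle^{-\Lambda}$. These are not periodic and are not smooth across the faces of the cell. We will replace $\phi$ by a periodized version equal to \eqref{weightdefi} on $|y|\le 4e^{\tau}$ and bounded above and below on the rest of the cell, with $|y\cdot\nabla\phi|/\phi\le 2(1-\eta)$ preserved. Where $\phi$ is flattened, the transport term $\frac{y\cdot\nabla\phi}{2\phi}$ only decreases, so the repulsivity inequality in Lemma~\ref{boosstrapestimate3} still closes with factor $1-\frac{\min\{\tilde\eta,\eta\}}{2}$. I expect verifying this modified weight compatibly with the interpolation bounds of Lemma~\ref{higher-order} and Lemma~\ref{lemma:6.6} to be the delicate step. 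There the profile terms are supported in $|y|\le e^\tau$, and everywhere else only the perturbation is involved, with the same exponents.

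With the bootstrap closed, the continuation argument of Section~\ref{Section7} and the global regularity argument of Lemma~\ref{lem:globalregularities} go through. The $L^2$ energy $\bar E$ is now finite trivially because the domain is bounded. This yields \eqref{peri0dic-A}--\eqref{periodic-B}, and the finite-codimension statement follows as before.
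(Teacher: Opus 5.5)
Your route is essentially the paper's (Section~\ref{remarkperiodic}). You pull back local well-posedness from $\mathbb{T}^3_{10}$ and note that $B(0,3C_0)$ and $B(0,R_0)$ lie inside the cell for $\tau_0\gg1$, so the truncated problem, the splitting $V_{\rm sta}\oplus V_{\rm uns}$ and the choice of $\{\hat k_i\}$ are unchanged. You switch to the periodic Gagliardo--Nirenberg inequalities (Lemmas~\ref{lemma:GN_general_period}--\ref{lemma:GN_generalnoweightwholespace_period}, constants independent of the period) and account for the moving cell. Your identity $\frac{\mathrm d}{\mathrm d\tau}\int_{e^\tau\mathbb{T}^3_{10}}f\,\mathrm dy=\int(\partial_\tau f+\dive(yf))\,\mathrm dy$ is the paper's boundary-term cancellation in packaged form. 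It comes from the divergence theorem, not from periodicity: $y$ has outward normal component $5e^\tau$ on every face, so its fluxes add between opposite faces, and they are absorbed by the moving-domain term. Also, the rays $y_0e^{\tau-\bar\tau}$ never leave the cell, since the cell itself is $e^\tau\mathbb{T}^3_{10}$, so the trajectory arguments face no obstacle at all.

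Your one real departure, periodizing $\phi$, is unnecessary, and the justification you give for it is wrong. A weight equal to \eqref{weightdefi} on $|y|\le 4e^\tau$ depends on $\tau$, so $\partial_\tau(\phi^K)$ enters the energy identity. The quantity to compare with $1$ in the repulsivity step is then $\frac{(\partial_\tau+y\cdot\nabla)\phi}{2\phi}$, not $\frac{y\cdot\nabla\phi}{2\phi}$. Take $\phi(\min\{|y|,4e^\tau\})$: in the flattened zone the spatial gradient vanishes, but $\partial_\tau\phi=2(1-\eta)\phi$ there, so nothing decreases. The estimate closes only because this equals the original far-field rate, which you would have to verify for whatever periodization you pick. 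You would also be bounding a modified energy rather than the $E_K$ of \eqref{periodic-B}; near the corners of the cell the two differ by up to a factor $C^K$.

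The paper simply keeps $\phi$ from \eqref{weightdefi} restricted to the cell. Since $\phi$ is radial and the cell is symmetric under $y_i\mapsto -y_i$, $\phi$ takes equal values at periodically identified points of opposite faces. Hence every face flux of a periodic quantity weighted by $\phi^K$ cancels pairwise. The only non-periodic factor is $y$, and its face contribution $\frac52 e^\tau\int_{\partial(e^\tau\mathbb{T}^3_{10})}(|\partial_\beta\Clss|^2+|\partial_\beta U|^2)\phi^K\,\mathrm dS$ cancels exactly against the moving-domain term. Smoothness of $\phi$ across faces is never used, so the step you flag as ``delicate'' does not arise. Lemmas~\ref{higher-order}, \ref{lemma:6.6} and \ref{boosstrapestimate3} then carry over with the same exponents.
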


\subsection{Local-In-Time Well-Posedness of the Reformulated Problem}

The desired local-in-time well-posedness  in this section can be stated as follows:
\begin{Theorem}\label{Local(U,S)-periodic}
Assume that  parameters $(\Lambda,\alpha,\delta, a_1,a_2)$ satisfy  \eqref{physical}
and  $K\geq 4$ is an integer.   
Let the initial data $(\Clss_0,U_0)$ satisfy 
\begin{equation}\label{initial'-periodic}
\inf_{y\in e^{\tau_0}\mathbb{T}_{10}^3} \Clss_0(y)>0, \quad  \Clss_0-\Clss^*\in H^s(e^{\tau_0}\mathbb{T}_{10}^3),\quad  U_0\in H^s(e^{\tau_0}\mathbb{T}_{10}^3)
\end{equation}
for some integer $s\geq K$. Then there exists  $\tau_*>\tau_0$ such that  problem \eqref{selfsimilar eq-periodic} admits a unique smooth  solution $(\Clss, U)(\tau, y)$ in 
$$
\Omega(\tau_0,\tau_*)=\big\{(\tau, y):\ \tau\in  [\tau_0, \tau_*], \ y\in  e^\tau\mathbb T^3_{10}\big\},
$$
which satisfies  
\begin{equation}\label{regularity'-periodic}
\begin{split}
&\inf_{(\tau, y)\in \Omega(\tau_0,\tau_*)} \Clss(\tau,y)>0, \ \ 
\Clss-\Clss^*\in C([\tau_0,\tau_*];H^s(e^{\tau}\mathbb{T}_{10}^3)),\\
&\partial_\tau \Clss \in C([\tau_0,\tau_*];H^{s-1}(e^{\tau}\mathbb{T}_{10}^3)),\ \ U\in C([\tau_0,\tau_*];H^s(e^{\tau}\mathbb{T}_{10}^3))\cap L^2([\tau_0,\tau_*];H^{s+1}(e^{\tau}\mathbb{T}_{10}^3)),\\[2pt]
& \partial_\tau U\in C([\tau_0,\tau_*];H^{s-2}(e^{\tau}\mathbb{T}_{10}^3))\cap L^2([\tau_0,\tau_*];H^{s-1}(e^{\tau}\mathbb{T}_{10}^3)). \end{split}
\end{equation}
Moreover, if $E_{K}(\tau_0)< \infty$, then
\begin{equation}\label{weightenergy-periodic}
   \text{\rm ess}\!\!\!\sup_{\tau_0\leq \tau \leq \tau_*} E_K(\tau) < \infty.
\end{equation}
\end{Theorem}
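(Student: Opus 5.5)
I would follow the proof of Theorem~\ref{Local(U,S)} step by step, replacing $\mathbb{R}^3$ by the periodic domains. First, I would establish the periodic analogue of Theorem~\ref{zth1-po} for \eqref{eq:1.2periodic-cc} on the fixed torus $\mathbb{T}^3_{10}$. Since $\inf c_0>0$, the transport equation $\eqref{eq:1.2periodic-cc}_1$ keeps $c$ bounded below on a short time interval. On that interval $\eqref{eq:1.2periodic-cc}_2$ is a uniformly parabolic Lam\'e system with coefficient $c^{\frac{\delta-1}{\alpha}}$ and the lower-order source $c^{\frac{\delta-1-\alpha}{\alpha}}\nabla_x c\cdot\md_x(u)$. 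The Matsumura--Nishida iteration then applies verbatim, and is in fact simpler because there is no far field and all integrations by parts are boundary-free. This gives a unique solution $(c,u)$ on $[0,T_*]\times\mathbb{T}^3_{10}$ with $c-\bar c,u$ in the $H^s(\mathbb{T}^3_{10})$ analogue of \eqref{regularity}, where now $\bar c$ can be replaced by $0$ or by any constant.

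Second, I would transfer the initial data and the solution through the scaling \eqref{scaling-coordinat}--\eqref{scaling}, exactly as in Steps~1--2 of the proof of Theorem~\ref{Local(U,S)}. The map $y=e^{\tau}x$ sends $\mathbb{T}^3_{10}$ onto $e^{\tau}\mathbb{T}^3_{10}$. The chain-rule identity $\partial^y_\beta U=\Lambda e^{-(\Lambda-1)\tau}e^{-|\beta|\tau}\partial^x_\beta u$ and $\mathrm{d}y=e^{3\tau}\mathrm{d}x$ give two-sided norm equivalences $\|U(\tau)\|_{H^s(e^\tau\mathbb{T}^3_{10})}\sim C(s,\tau)\|u(t)\|_{H^s(\mathbb{T}^3_{10})}$, with constants bounded on $[\tau_0,\tau_*]$. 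This produces \eqref{regularity'-periodic} and the positive lower bound of $\Clss$. Uniqueness follows from uniqueness for \eqref{eq:1.2periodic-cc}, since the change of variables is a bijection.

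Time continuity on the moving domain needs care, because the spaces $H^s(e^\tau\mathbb{T}^3_{10})$ change with $\tau$. I would interpret $C([\tau_0,\tau_*];H^s(e^\tau\mathbb{T}^3_{10}))$ as continuity after pulling back to the fixed torus by $\mathcal{S}_\tau$. The pulled-back function is $\Lambda e^{-(\Lambda-1)\tau}u(t(\tau),\cdot)$, which is continuous by the continuity of $t\mapsto u(t)$ in $H^s(\mathbb{T}^3_{10})$ and of $\tau\mapsto t(\tau)$. The $L^2_\tau H^{s+1}$ bounds follow from $\mathrm{d}t=\Lambda(T-t)\,\mathrm{d}\tau$.

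Third, for \eqref{weightenergy-periodic} I would repeat Step~3 of that proof. On a bounded domain the cutoff $f_R$ is unnecessary, since $\phi^K$ is bounded on $e^\tau\mathbb{T}^3_{10}$ for bounded $\tau$. Hence $E_K(\tau)\le C(\tau_*)\|\nabla^K(\Clss,U)\|_{L^2}^2<\infty$ follows directly from the regularity already obtained. If a differential inequality is preferred, the weighted estimate can be carried out directly on the torus in $x$-variables and then rescaled.

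The main obstacle, as I see it, is the weight $\phi(y)$. It is radial in $y$, so it is not periodic on $e^\tau\mathbb{T}^3_{10}$, and the integral is over a growing cell. This matters only for the uniform-in-time energy estimates, not for local finiteness. Locally, the crude bound $\sup_{e^\tau\mathbb{T}^3_{10}}\phi^K\le C(K,\tau_*)$ together with the $H^K$ continuity suffices.
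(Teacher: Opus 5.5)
Your proposal is correct and follows essentially the same route as the paper. The paper invokes the argument of \S\ref{Section3}: local well-posedness for the original system on the fixed torus, transferred through the self-similar scaling, with \eqref{weightenergy-periodic} then following from the boundedness of $\phi$ on the bounded cell $e^\tau\mathbb{T}^3_{10}$ together with the $H^s$-regularity. Your remark that time continuity on the moving domain should be read via pull-back to $\mathbb{T}^3_{10}$ supplies a detail the paper leaves implicit.
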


The corresponding local-in-time well-posedness of smooth solutions can be obtained by a similar argument as in \S \ref{Section3}. Moreover, since  the weight function $\phi(y)$ is bounded in $ e^\tau \mathbb{T}^3_{10}$, then the boundedness of the weighted energy $E_K(\tau)$  follows from the $H^s$--regularity of solutions.

\subsection{Global-In-Time Stability of the Reformulated Problem}
We assume that all the related  parameters still satisfy \eqref{Para-ineq}--\eqref{Para-ineq1}. Since $\tau_0$ is sufficiently large such that
$$C(\smallc_0) \le e^{\delta_{\rm dis}\tau_0},$$
it follows that, for all $\tau \ge \tau_0$,
\begin{equation}\label{Ballsinperiodic}
B(0, C_0)\Subset e^\tau\mathbb T^3_{10},\qquad B(0, R_0) \Subset e^\tau\mathbb T^3_{10}.
\end{equation}

Consequently, we may apply the same region segmentation method as in the Cauchy problem to establish the global-in-time stability for the periodic problem. The proof is analogous except the following two points: 
\begin{itemize}
    \item  When the interpolation estimates are needed, we employ the Gagliardo-Nirenberg interpolation 
    inequalities on the periodic domain $e^\tau \mathbb T^3_{10}$, as stated in 
    Lemmas~\ref{lemma:GN_general_period}--\ref{lemma:GN_generalnoweightwholespace_period}. 
    
   \item Since the periodic domain $e^\tau\mathbb{T}^3_{10}$ depends on time, 
    differentiating the energy functional with respect to $\tau$ generates an additional boundary 
    term associated with the moving boundary. 
    Moreover, to control the higher-order derivatives in certain integrals, 
    one must perform integration by parts. When the integrand contains non-periodic components, 
    such as the spatial variable $y$,  additional new boundary terms may arise 
    on $\partial(e^\tau\mathbb T^3_{10})$, which do not appear in the Cauchy problem.
    A direct computation shows that the boundary terms generated by integration by parts 
    exactly cancel those arising from the time differentiation of the expanding domain. 
    Consequently, the same energy estimates as in the Cauchy problem remain valid in the periodic setting.
\end{itemize}

\subsubsection{$4$-th order derivative estimates in $B^c(0, C_0)\cap e^\tau\mathbb T^3_{10}$}
To control 
$$
\|\nabla^4 \widetilde{\Clss}\|_{L^2(B^c(0, C_0)\cap e^\tau\mathbb T^3_{10})}+ \|\nabla^4 \widetilde{U}\|_{L^2(B^c(0, C_0)\cap e^\tau\mathbb T^3_{10})},
$$
applying  $\partial_{\beta}$ with multi-index $\beta=(\beta_1,\beta_2,\beta_3,\beta_4)$ satisfying $|\beta|=4$ to \eqref{eq: PerS-periodic}--\eqref{eq: PerU-periodic}, we obtain
\begin{equation*}
\begin{aligned}
\partial_{\tau}\partial_{\beta} \widetilde{\Clss}&=B_{\subclss,0}(\widetilde{U},\widetilde{\Clss})+B_{\subclss,1}(\widetilde{U},\widetilde{\Clss})+B_{\subclss,2}(\widetilde{U},\widetilde{\Clss})+B_{\subclss,3}(\widetilde{U},\widetilde{\Clss})+\partial_{\beta}\mathcal N_\subclss+\partial_{\beta}\mathcal E_\subclss,\\
\partial_{\tau}\partial_{\beta} \widetilde{U}&=B_{u,0}(\widetilde{U},\widetilde{\Clss})+B_{u,1}(\widetilde{U},\widetilde{\Clss})+B_{u,2}(\widetilde{U},\widetilde{\Clss})+B_{u,3}(\widetilde{U},\widetilde{\Clss})+\partial_{\beta}\mathcal N_u+\partial_{\beta}\mathcal E_u+\partial_{\beta}\mathcal F_{\rm dis},
\end{aligned}
\end{equation*}
where, for $0\leq i \le 3$, $B_{\subclss,i}$ and $B_{u,i}$  denote the collections of the terms as 
defined in Step 1 of the proof of Lemma~\ref{boosstrapestimate2}.

Multiplying the above equations by $\partial_{\beta} \widetilde{\Clss}$ and 
$\partial_{\beta} \widetilde{U}$ respectively, summing over all 
$|\beta|=4$, and then integrating over $B^c(0,C_0)\cap e^\tau\mathbb T^3_{10}$, we obtain from \eqref{Ballsinperiodic} that
    \begin{align}
       I&=\frac{\mathrm d}{\mathrm d\tau} \Big( \|\nabla^4\widetilde{\Clss}\|_{L^2(B^c(0, C_0)\cap e^\tau\mathbb T^3_{10})}^{2}+ \|\nabla^4\widetilde{U}\|_{L^2(B^c(0, C_0)\cap e^\tau\mathbb T^3_{10})}^{2} \Big) \nonumber \\ 
       &= \frac{\mathrm d}{\mathrm d\tau}\sum_{|\beta|=4}\int_{B^c(0, C_0)\cap e^\tau\mathbb T^3_{10}}\big(|\partial_{\beta}\widetilde{\Clss}|^2+|\partial_{\beta}\widetilde{U}|^2\big)
       \,\text{d}y \nonumber \\
       &=\sum_{|\beta|=4}\int_{B^c(0, C_0)\cap e^\tau\mathbb T^3_{10}}\frac{\mathrm d}{\mathrm d\tau}\big(|\partial_{\beta}\widetilde{\Clss}|^2+|\partial_{\beta}\widetilde{U}|^2\big)\,
       \text{d}y \nonumber\\
       &\quad +\sum_{|\beta|=4}5e^\tau \int_{\partial(e^\tau \mathbb{T}^3_{10})}\big(|\partial_{\beta}\widetilde{\Clss}|^2+|\partial_\beta \widetilde{U}|^2\big) \,\mathrm{d}S\nonumber \\
       &=2\sum_{|\beta|=4}\sum_{j=0}^{3} \bigg( \int_{B^c(0, C_0)\cap e^\tau\mathbb T^3_{10}}\partial_{\beta}\widetilde{U}\cdot B_{u,j}\text{d}y+\int_{B^c(0, C_0)\cap e^\tau\mathbb T^3_{10}} \partial_{\beta}\widetilde{\Clss}\,B_{\subclss,j}\,\text{d}y \bigg) \label{hignenergeest01-periodic}\\
       &\quad+2 \sum_{|\beta|=4}\bigg( \int_{B^c(0, C_0)\cap e^\tau\mathbb T^3_{10}}\partial_{\beta}\widetilde{U}\cdot \partial_{\beta}\mathcal{N}_u\text{d}y+\int_{B^c(0, C_0)\cap e^\tau\mathbb T^3_{10}}\partial_{\beta}\widetilde{\Clss}\, \partial_{\beta}\mathcal{N}_\subclss\,\text{d}y \bigg) \nonumber  \\
       &\quad+2 \sum_{|\beta|=4}\bigg( \int_{B^c(0, C_0)\cap e^\tau\mathbb T^3_{10}}\partial_{\beta}\widetilde{U}\cdot \partial_{\beta}\mathcal{E}_u \,\text{d}y
       +\int_{B^c(0, C_0)\cap e^\tau\mathbb T^3_{10}}\partial_{\beta}\widetilde{\Clss} \,\partial_{\beta}\mathcal{E}_\subclss \,\text{d}y \bigg) \nonumber \\
       &\quad+2  \sum_{|\beta|=4}\int_{B^c(0, C_0)\cap e^\tau\mathbb T^3_{10}}\partial_{\beta}\widetilde{U}\cdot \partial_{\beta}\mathcal{F}_{\rm dis} \,\text{d}y \nonumber \\
       &\quad +\underbrace{\sum_{|\beta|=4}5e^\tau \int_{\partial(e^\tau \mathbb{T}^3_{10})
       }\big(|\partial_{\beta}\widetilde{\Clss}|^2+|\partial_\beta \widetilde{U}|^2\big)\,\mathrm{d}S}_{I^c_b: \ \text{boundary term of the periodic domain}}\nonumber\\
       &:= \sum_{|\beta|=4}\Big(\sum_{j=0}^3 I_{j, \beta}+I_{N,\beta}+I_{E,\beta}+I_{F, \beta}\Big)+I^c_b. \nonumber
    \end{align}
    
For $I_{0, \beta}$, it follows from   integration by parts, the definition of $\widehat{X}$, \eqref{Para-ineq}, \eqref{radialrep}, \eqref{profile decay}, and Lemma~\ref{linfitnityinside} that
\begin{align*}
    I_{0, \beta}&=-2\int_{B^c(0,C_0)\cap e^\tau \mathbb{T}^3_{10}} \Big(\partial_{\beta}\widetilde{U}\cdot\big((y+\widehat{X}\overline{U})\cdot \nabla\big)\partial_{\beta}\widetilde{U} +(\partial_{\beta}\widetilde{U}\cdot  \nabla) \partial_{\beta}\widetilde{\Clss}(\alpha \widehat{X}\overline{\Clss})\Big)\,\text{d}y\\
        &\quad-2\int_{B^c(0,C_0)\cap e^\tau \mathbb{T}^3_{10}}\Big(\partial_{\beta}\widetilde{\Clss}(y+\widehat{X}\overline{U})\cdot \nabla \partial_{\beta}\widetilde{\Clss}+\partial_{\beta}\widetilde{\Clss}\,\dive (\partial_{\beta}\widetilde{U})\,\alpha \widehat{X}\overline{\Clss}\Big)\,\text{d}y\\
    & \leq \int_{B^c(0,C_0)\cap e^\tau \mathbb{T}^3_{10}}  \big( 3+|\dive (\widehat{X}\overline{U})| \big) \big(|\partial_{\beta}\widetilde{\Clss}|^2+|\partial_{\beta}\widetilde{U}|^2 \big)\, \text{d}y\\
        &\quad+2\int_{B^c(0,C_0)\cap e^\tau \mathbb{T}^3_{10}}| \nabla(\alpha\widehat{X}\overline{\Clss})||\partial_{\beta}\widetilde{\Clss}||\partial_{\beta}\widetilde{U}|
        \,\text{d}y\\
        &\quad+\int_{\partial B(0,C_0)} \big(C_0+|\widehat{X}\overline{\mathcal U}|\big) \big(|\partial_{\beta}\widetilde{\Clss}|^2+|\partial_{\beta}\widetilde{U}|^2 \big) \,\text{d}S\\
        &\quad +2\int_{\partial B(0,C_0)}|\alpha\widehat{X}\overline{\Clss}||\partial_{\beta}\widetilde{\Clss}||\partial_{\beta}\widetilde{U}|\text{d}S -5\int_{\partial(e^\tau \mathbb{T}^3_{10})}\big(|\partial_{\beta}\widetilde \Clss|^2+|\partial_{\beta}\widetilde U|^2\big)e^\tau\, \text{d}S\\
        & \leq \int_{\partial B(0,C_0)}\big(3+3|\nabla(\widehat{X}\overline{U})|+|\nabla(\alpha\widehat{X}\overline{\Clss})|\big)\big(|\partial_\beta \widetilde{\Clss}|^2+|\partial_\beta \widetilde{U}|^2\big)\, \dy + (\frac{\smallc_1}{\smallc_g})^{\frac{17}{10}}e^{-2\varepsilon(\tau-\tau_0)}\\
        &\quad -\underbrace{5e^\tau \int_{\partial(e^\tau \mathbb{T}^3_{10})}\big(|\partial_{\beta}\widetilde \Clss|^2+|\partial_{\beta}\widetilde U|^2\big)\,\text{d}S.}_{I^\beta_{bc}: \ \text{boundary term of the periodic domain}}
\end{align*}
It is observed that, after summing over all $|\beta|=4$, 
$$
I^c_b-\sum_{|\beta|=4}I^\beta_{bc}=0,
$$
which yields that 
the corresponding boundary terms arising from integration by parts cancel out.
Moreover, the estimates for all remaining terms are identical to those in the Cauchy problem.

\subsubsection{$K$-th order weighted energy estimate} 
Let $\beta = (\beta_1, \beta_2, \cdots, \beta_K)$ with $|\beta|=K$. 
For the periodic problem \eqref{selfsimilar eq-periodic}, we denote the $K$-th order weighted energy by
\begin{equation}\label{eq:weightedenergy-periodic}
   E_{K}(\tau)=\sum_{|\beta|=K}\int_{e^\tau\mathbb{T}^3_{10}}\big(|\partial_\beta \Clss|^2+ |\partial_\beta U|^2\big)\phi^K \,\dy.
\end{equation}
Applying $\partial_\beta$ to \eqref{selfsimilar eq-periodic}  and using the fact that 
$$\partial_\beta (y\cdot \nabla f) = K \partial_\beta f + y \cdot \nabla \partial_\beta f,$$
we have
\begin{equation}\label{eq:K-thderivative-periodic}
\begin{aligned}
     &(\partial_\tau +\Lambda-1+K)\partial_{\beta}\Clss +y\cdot \nabla \partial_{\beta}\Clss + \partial_{\beta}(U\cdot \nabla \Clss)+\alpha\partial_{\beta}(\Clss \,\dive U)=0,\\[1mm]
     &(\partial_\tau +\Lambda-1+K)\partial_{\beta}U +y\cdot \nabla \partial_{\beta}U + \partial_{\beta}(U\cdot \nabla U)+\alpha\partial_{\beta}(\Clss\nabla \Clss)= \partial_{\beta}\mathcal{F}_{\rm dis}.
     \end{aligned}
\end{equation}
Multiplying each equation of \eqref{eq:K-thderivative-periodic} by $\phi^K \partial_\beta \Clss$ and $\phi^K \partial_\beta U $ respectively, summing over all $|\beta|=K$, and integrating over $e^\tau\mathbb{T}^3_{10}$, we obtain
    \begin{align}\label{eq:EK-periodic}
        &\big(\frac{1}{2}\frac{\mathrm d}{\mathrm d\tau} + \Lambda-1+K\big)E_K  \nonumber\\
       &= -\sum_{|\beta|=K}\int_{e^\tau\mathbb{T}^3_{10}} \phi^K y\big(\partial_{\beta} \Clss\nabla \partial_{\beta} \Clss + \sum_{i=1}^3 \partial_{\beta} U_i \nabla \partial_{\beta}U_i \big) \,\text{d}y \nonumber\\
        &\quad -\sum_{|\beta|=K} \int_{e^\tau\mathbb{T}^3_{10}} \phi^K \big(\partial_{\beta} \Clss \, \partial_{\beta}(U\cdot \nabla \Clss)+ \sum_{i=1}^3 \partial_{\beta} U_i\, \partial_{\beta}(U \cdot \nabla U_i) \big) \,\text{d}y \nonumber\\
        &\quad - \sum_{|\beta|=K} \int_{e^\tau\mathbb{T}^3_{10}} \alpha\phi^K \big(\partial_{\beta} \Clss \,\partial_{\beta}(\Clss \,\dive\, U) + \sum_{i=1}^3 \partial_{\beta} U_i\, \partial_{\beta}(\Clss \partial_{y_i} \Clss) \big)\, \text{d}y \\
       &\quad +\sum_{|\beta|=K}C_{\rm{dis}}e^{-\delta_{\rm{dis}}\tau}\int_{e^\tau\mathbb{T}^3_{10}} \phi^K\partial_{\beta}U\, \partial_{\beta}\big( \Clss^{\frac{\delta-1}{\alpha}}L(U) +\frac{\delta}{\alpha}\Clss^{\frac{\delta-1}{\alpha}-1}\nabla \Clss \cdot \md(U) \big)\,\text{d}y \nonumber\\
       &\quad + \underbrace{\frac{5}{2}e^\tau \sum_{|\beta|=K}\int_{\partial(e^\tau\mathbb{T}^3_{10})}\big(|\partial_\beta \Clss|^2+|\partial_\beta U|^2\big)\phi^K \,\text{d}S}_{I^\phi_b:\ \text{boundary term  of the periodic domain}} \nonumber \\
       &=-I_1-I_2-I_3+C_{\rm{dis}}e^{-\delta_{\rm{dis}}\tau}I_4 +I^\phi_b. \nonumber
    \end{align}

For $I_1$, via  integration by parts, it follows that
\begin{equation}\label{eq:EstI1-periodic}
    \begin{aligned}
        -I_1  &=-\frac{1}{2}\sum_{|\beta|=K}\int_{e^\tau\mathbb{T}^3_{10}} \phi^K y \big( \nabla |\partial_\beta \Clss|^2 + \sum_{i=1}^{3} \nabla|\partial_\beta U_i|^2\big)\,\text{d}y\\
        & =\frac{1}{2}\sum_{|\beta|=K}\int_{e^\tau\mathbb{T}^3_{10}} \frac{\dive(\phi^K y )}{\phi^K}\big( |\partial_\beta \Clss|^2 + |\partial_\beta U|^2\big)\phi^K\,\text{d}y\\
        & \quad \underbrace{- \frac{5}{2}e^\tau\sum_{|\beta|=K} \int_{\partial(e^\tau\mathbb{T}^3_{10})} \big( |\partial_\beta \Clss|^2 + |\partial_\beta U|^2\big)\phi^K\, \text{d}S}_{I^1_{b\phi}:\ \text{boundary term of the periodic domain}}.
    \end{aligned}
\end{equation}
It is observed that
$$
I^\phi_b-I^1_{b\phi}=0,
$$
which yields that 
the corresponding  boundary terms  arising from  integration by parts cancel out.
In all remaining integrations by parts, no boundary terms appear due to the periodic boundary conditions.

\subsection{Development of Implosions of Solutions of the Original Periodic Problem}
Based on the global-in-time stability for problem \eqref{selfsimilar eq-periodic} obtained in Theorem \ref{globalexist-periodic}, the corresponding development of implosions of solutions of the periodic problem \eqref{periodicproblem}
 follows exactly the same argument used  in the Cauchy problem, which can be found in \S \ref{Section8}.

\bigskip
\appendix
\section{Basic Lemmas}\label{appendix A}
This appendix lists some useful lemmas that have been used frequently in the previous sections. 
The first  lemma gives  the well-known Fatou's lemma ({\it cf}. \cite{realrudin})   
\begin{Lemma}\label{Fatou}
Given a measure space $(V,\mathcal{H},\varkappa)$ and a set $\beth \in \mathcal{H}$,
let  $\{f_n\}$ be a sequence of $(\mathcal{H} , \mathcal{B}_{\mathbb{R}_{\geq 0}})$--measurable 
non-negative functions $f_n(\upsilon): \beth\rightarrow [0,\infty]$. Define a function $f(\upsilon): \beth\rightarrow [0,\infty]$ by setting
$$
f(\upsilon)= \liminf_{n\rightarrow \infty} f_n(\upsilon)\qquad \mbox{for every $\upsilon\in \beth$}.
$$
Then $f$ is $(\mathcal{H},  \mathcal{B}_{\mathbb{R}_{\geq 0}})$--measurable and   
$$
\int_\beth f(\upsilon)\,\text{\rm d}\varkappa 
\leq \liminf_{n\rightarrow \infty} \int_\beth f_n(\upsilon)\,\text{\rm d}\varkappa.
$$
\end{Lemma}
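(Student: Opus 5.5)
This is Fatou's lemma, a standard result. I would prove it via the monotone convergence theorem rather than re-derive integration theory.

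\textbf{Step 1 (measurability).} Set $g_k(\upsilon):=\inf_{n\ge k} f_n(\upsilon)$. For each $a\in\mathbb{R}$ we have
\[
\{g_k<a\}=\bigcup_{n\ge k}\{f_n<a\},
\]
which is a countable union of measurable sets. Hence each $g_k$ is $(\mathcal{H},\mathcal{B}_{\mathbb{R}_{\ge0}})$--measurable with values in $[0,\infty]$. The sequence $\{g_k\}$ is nondecreasing in $k$, and by definition of the lower limit
\[
f=\liminf_{n\to\infty} f_n=\sup_k g_k=\lim_{k\to\infty} g_k \quad\text{pointwise}.
\]
For each $a$, $\{\sup_k g_k> a\}=\bigcup_k\{g_k>a\}$, so $f$ is measurable as well.

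\textbf{Step 2 (monotone convergence).} Since $0\le g_k\uparrow f$ on $\beth$, the monotone convergence theorem gives
\[
\int_\beth f\,\mathrm{d}\varkappa=\lim_{k\to\infty}\int_\beth g_k\,\mathrm{d}\varkappa .
\]

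\textbf{Step 3 (comparison).} For every $n\ge k$ we have $g_k\le f_n$ pointwise. Monotonicity of the integral for nonnegative functions then yields
\[
\int_\beth g_k\,\mathrm{d}\varkappa\le \inf_{n\ge k}\int_\beth f_n\,\mathrm{d}\varkappa .
\]
Letting $k\to\infty$, the left side tends to $\int_\beth f\,\mathrm{d}\varkappa$ by Step 2, and the right side tends to $\liminf_{n\to\infty}\int_\beth f_n\,\mathrm{d}\varkappa$. This gives the claimed inequality.

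There is no real obstacle here. The only point needing care is that the argument rests on the monotone convergence theorem, which I take as the standard foundation of Lebesgue integration. All values are allowed in $[0,\infty]$, so no integrability assumption is needed anywhere.
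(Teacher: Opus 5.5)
Your argument is correct. It is the standard proof of Fatou's lemma: set $g_k=\inf_{n\ge k}f_n$, which increases to $f$, apply monotone convergence, and compare $\int_\beth g_k\,\mathrm{d}\varkappa$ with $\inf_{n\ge k}\int_\beth f_n\,\mathrm{d}\varkappa$. The paper gives no proof of its own and instead refers to Rudin's text, which uses this same argument.
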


The second lemma shows the weak compactness in weighted Sobolev spaces for a suitably chosen weight function.
\begin{Lemma}\label{lem:weighted-weak-limit}
    Let $k\in \mathbb{N}$, let $\phi$ be a weighted function defined in \eqref{weightdefi}, 
    and let $\{f_n(y)\}_{n=1}^\infty$ be a sequence with $f_n(y):\mathbb{R}^3\to \mathbb{R}$. 
    Suppose that, for some constant $\bar C_1,\bar C_2>1$ and for all $n\ge 1$,
    \begin{equation}\label{HK-bound}
        \| f_n \|^2_{H^K} \leq \bar C_1,
    \end{equation}
    and for the multi-index $\beta=(\beta_1,\beta_2,\cdots,\beta_K)$ with 
$\beta_i\in\{1,2,3\}$ for $i=1,\cdots,K$,    \begin{equation}\label{weighted-energy-bound}
        \sum_{|\beta|=K}\int |\partial_\beta f_n|^2\phi^K \,\dy \le \bar C_2,
    \end{equation}
    where $\bar C_1, \bar C_2$, and $K$ are independent of $n$ and $\phi$.
   Then there exist a subsequence {\rm (}still denoted by $n${\rm )} 
   and functions $f \in H^K(\mathbb{R}^3)$ and $F_\beta \in L^2(\mathbb{R}^3)$ such that
   \begin{equation}\label{weak-conv}
   \begin{aligned}
       f_n \to f \quad &\textit{weakly in $H^K(\mathbb{R}^3)$},\\
       \partial_\beta f_n\phi^{\frac{K}{2}} \to F_{\beta} \quad &\textit{weakly in $L^2(\mathbb{R}^3)$}.
    \end{aligned}
   \end{equation}
Moreover,  for every multi-index $\beta=(\beta_1,\beta_2,\cdots,\beta_K)$ with 
$\beta_i\in\{1,2,3\}$ for $i=1,\cdots,K$, the weak limits in \eqref{weak-conv} are uniquely identified by
\begin{equation}
    F_\beta = \partial_{\beta}f \phi^{\frac{K}{2}}\quad \text{ in $L^2(\mathbb{R}^3)$}.
\end{equation}
\end{Lemma}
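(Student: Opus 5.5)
The plan is to combine the two standard facts that $H^K(\mathbb{R}^3)$ and $L^2(\mathbb{R}^3)$ are Hilbert spaces, so bounded sets are weakly sequentially compact, with a diagonal extraction and then an identification of the limit by testing against compactly supported smooth functions. By \eqref{HK-bound}, $\{f_n\}$ is bounded in $H^K(\mathbb{R}^3)$, so we first extract a subsequence with $f_n\rightharpoonup f$ weakly in $H^K$. Since there are only finitely many ($3^K$) ordered multi-indices $\beta$ with $|\beta|=K$, and by \eqref{weighted-energy-bound} each family $\{\partial_\beta f_n\phi^{K/2}\}_n$ is bounded in $L^2$ by $\bar C_2^{1/2}$, we pass successively to further subsequences (finitely many times, so no genuine diagonal argument is needed) to obtain $F_\beta\in L^2$ with $\partial_\beta f_n\phi^{K/2}\rightharpoonup F_\beta$ weakly in $L^2$ for every $\beta$. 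This gives \eqref{weak-conv}.

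For the identification $F_\beta=\partial_\beta f\,\phi^{K/2}$, fix $\psi\in C_c^\infty(\mathbb{R}^3)$. Since $\phi\in C^1$ is bounded on the compact support of $\psi$, the function $\phi^{K/2}\psi$ is in $L^2$ (indeed continuous and compactly supported). Weak convergence $f_n\rightharpoonup f$ in $H^K$ implies $\partial_\beta f_n\rightharpoonup\partial_\beta f$ weakly in $L^2$, because $g\mapsto\int\partial_\beta g\,h\,\dy$ is a bounded linear functional on $H^K$ for each $h\in L^2$. Hence
\[
\int F_\beta\psi\,\dy=\lim_{n\to\infty}\int\partial_\beta f_n\,\big(\phi^{K/2}\psi\big)\,\dy=\int\partial_\beta f\,\phi^{K/2}\psi\,\dy .
\]
Thus $F_\beta-\partial_\beta f\,\phi^{K/2}$ annihilates $C_c^\infty$, so the two agree a.e. Note $\partial_\beta f\,\phi^{K/2}\in L^2_{\rm loc}$ a priori, and the equality shows it is actually in $L^2$; by weak lower semicontinuity it also satisfies the bound $\sum_\beta\|\partial_\beta f\phi^{K/2}\|_{L^2}^2\le\bar C_2$. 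Uniqueness of weak limits then gives that the identification holds along the whole extracted subsequence.

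The only point needing care, and the one I expect to be the mild obstacle, is that $\phi^{K/2}$ grows at infinity, so one cannot directly pass weak $L^2$ limits through multiplication by $\phi^{K/2}$ globally. This is avoided by the localization above: we only test against $\phi^{K/2}\psi$ with $\psi$ compactly supported, where $\phi$ is bounded, and density of $C_c^\infty$ in $L^2$ (or the fundamental lemma of the calculus of variations for $L^1_{\rm loc}$ functions) closes the argument.
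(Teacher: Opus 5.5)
Your proof is correct, but it is organized differently from the paper's.

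The paper works ball by ball. For each fixed $R$ it extracts weak limits of $f_n$ in $H^K(B(0,R))$ and of $\partial_\beta f_n\,\phi^{K/2}$ in $L^2(B(0,R))$. It identifies $F_\beta^{(R)}=\partial_\beta f^{(R)}\phi^{K/2}$ by noting that multiplication by $\phi^{K/2}\in L^\infty(B(0,R))$ is a bounded, hence weakly continuous, operator on $L^2(B(0,R))$. It then runs a diagonal argument over $R\in\mathbb{N}$, checks that the local limits are compatible under restriction, and glues them into global $f$ and $F_\beta$.

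You instead extract once, globally, in the Hilbert spaces $H^K(\mathbb{R}^3)$ and $L^2(\mathbb{R}^3)$. Since there are only finitely many $\beta$, no diagonalization is needed at all. You localize only the test functions. Pairing against $\phi^{K/2}\psi$ with $\psi\in C_c^\infty(\mathbb{R}^3)$ is the dual form of the paper's bounded-multiplication observation, and both rest on the same fact: $\phi$ is locally bounded.

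Your route is shorter and gives the statement exactly as formulated. Weak convergence in the whole-space spaces, together with $f\in H^K(\mathbb{R}^3)$ and $F_\beta\in L^2(\mathbb{R}^3)$, comes directly from global weak compactness. In the paper's gluing step these need two extra observations that are not written out:
\begin{itemize}
\item the glued functions inherit the uniform bounds $\bar C_1$ and $\bar C_2$;
\item a bounded sequence that converges weakly on every ball converges weakly on $\mathbb{R}^3$, by density of compactly supported functions.
\end{itemize}
The paper's local scheme would be the natural choice if only local-in-space bounds were available. Under the global hypotheses assumed in this lemma, it buys nothing beyond your argument.
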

\begin{proof}
We divide the proof into two steps.

\smallskip
1. {\it Local weak limits and identification.}
Fix $R>0$ and a multi-index $\beta$ with $|\beta|=K$.
It follows from the uniform bounds in \eqref{HK-bound} that
$$
\|f_n\|_{H^K(B(0,R))}\le \|f_n\|_{H^K(\mathbb{R}^3)}\le \bar C ^{\frac{1}{2}}_1.
$$
Then there exist both a subsequence (still denoted by $n$) and a function $f^{(R)}\in H^K(B(0, R))$ such
that
$$
f_n \to f^{(R)} \qquad \text{weakly in $H^K(B(0, R))$}.
$$
In particular, for every multi-index $\beta$ with $|\beta|=K$, we also have
\begin{equation}\label{cvginBR1}
\partial_\beta f_n \to \partial_\beta f^{(R)} \qquad \text{weakly in $L^2(B(0, R))$}.
\end{equation}
Next, the weighted energy bound \eqref{weighted-energy-bound} implies 
$$
\|\partial_\beta f_n \phi^{\frac{K}{2}}\|_{L^2(B(0, R))}\leq \|\partial_\beta f_n \phi^{\frac{K}{2}}\|_{L^2(\mathbb R^3)} \leq \bar C_2^{\frac{1}{2}}.
$$
After extracting a further subsequence (still denoted by $n$), there exists $F_\beta^{(R)} \in L^2(B(0, R))$ such that
\begin{equation}\label{cvginBR2}
  \partial_\beta f_n\phi^{\frac{K}{2}} \to F_{\beta}^{(R)} \qquad \text{weakly in $L^2(B(0,R))$}.  
\end{equation}
By the definition of the weight function $\phi$ in \eqref{weightdefi}, we see that
$\phi^{\frac K2}\in L^\infty(B(0, R))$. 
Since the bounded linear operators preserve weak convergence, 
the weak convergence \eqref{cvginBR1} implies
$$
\partial_\beta f_n \phi^{\frac{K}{2}} \to \partial_\beta f^{(R)} \phi^{\frac{K}{2}}  
\qquad \text{weakly in $L^2(B(0, R))$}.
$$
By the uniqueness of weak limits in $L^2(B(0, R))$, we conclude  
\begin{equation}\label{local-identification}
F_\beta^{(R)}=\partial_\beta f^{(R)}\phi^{\frac K2} \qquad \text{ in $L^2(B(0, R))$}.
\end{equation}

\smallskip
2. {\it Diagonal subsequence and consistency of local limits.}
Perform the above extraction for $R=1,2,3,\cdots$ and use a diagonal argument to obtain a subsequence $\{f_n\}$ such that, for every $R\in \mathbb{N}$,
\begin{align*}
     f_n \to  f^{(R)} \qquad   &\text{weakly in $H^K(B(0, R))$},\\
    \partial_\beta f_n \phi^{\frac{K}{2}} \to F^{(R)}_\beta  \qquad &\text{weakly in $L^2(B(0, R))$}.
\end{align*}
For $1 \le R < R^*$, the restriction map from $H^K(B(0, R^*))$ to $H^K(B(0, R))$ is bounded and linear, hence preserves weak convergence. Therefore, it follows from the uniqueness of weak limits that
$$ 
f^{(R^*)}|_{B(0, R)}= f^{(R)} \qquad \text{{\it a.e.} in $B(0, R)$}.
$$
This compatibility allows us to define a global function $f\in H^K(\mathbb{R}^3)$ by setting $f = f^{(R)}$ in $B(0, R)$. Combining this with the local identification obtained in Step~1, we conclude that, for every $R$,
$$
F_\beta^{(R)}=\partial_\beta f\,\phi^{\frac K2}
\qquad \text{in $L^2(B(0,R))$}.
$$
Similarly, we also define a global function $F_\beta\in L^2(\mathbb{R}^3)$ by setting $F_\beta = F^{(R)}_\beta$ in $B(0, R)$. Since $R>0$ is arbitrary, we derive 
$$F_\beta = \partial_\beta f \phi^{\frac{K}{2}}\qquad \text{ in $L^2(\mathbb{R}^3)$}.
$$
\end{proof}

The next lemma provides a weighted Gagliardo-Nirenberg interpolation inequality, 
which plays an important role in our analysis.

\begin{Lemma}[\cite{shijia}] \label{lemma:GN_general} 
Let $f(y):\mathbb R^3 \to \mathbb R^3$, $I_{-1} = [0, 1]$,  and $I_j = [2^{j}, 2^{j+1}]$, 
and let $\varphi (y), \psi(y) > 0$ be two weight functions such that there exist values $\varphi_j, \psi_j$, 
and a constant $C^*$ satisfying $\varphi (y) \in [\varphi_j/C^*, C^* \varphi_j]$ 
and $\psi(y) \in [\psi_j/C^*, C^*\psi_j]$ for all $y$ with $|y| \in I_j$. 
Moreover, assume that $\varphi (y)$ and $\psi(y)$ are approximately $1$ at the origin, that is, 
$$\varphi_{-1} = \psi_{-1} = \varphi_0 = \psi_0 = 1.$$
For an integer $1 \leq i \leq l$, assume that parameters $(p, q, \bar{r}, \theta)$ satisfy
\begin{equation} \label{eq:GN_conditions}
    \frac{1}{\bar{r}} = \frac{i}{3} + \theta \Big( \frac{1}{q} - \frac{l}{3} \Big) + \frac{1-\theta}{p}, \qquad  \frac{i}{l} \leq \theta < 1.
\end{equation}

When $\bar{r} = \infty$, then
\begin{equation} \label{eq:GNresultinfty}
| \nabla^i f | \leq C_3\big( \| \psi^l f \|_{L^p}^{1-\theta} \| \varphi^l \nabla^l f \|_{L^q}^{\theta} \psi^{-l(1-\theta)} \varphi^{-l\theta}  + \| \psi^l f \|_{L^p} \, \langle y \rangle^{3\theta (\frac 1q-\frac 1p)-l\theta} \psi^{-l}\big).
\end{equation}
Moreover, if $p = \infty$, $q = 2$, and $\psi = 1$, then
\begin{equation} \label{eq:GNresultinfty_simplified}
    \left| \nabla^i f \right| \leq C_4\big( \| f \|_{L^\infty}^{1-\frac{i}{l-3/2}} \| \varphi^l \nabla^l f \|_{L^2}^{\frac{i}{l-3/2}}  \varphi^{-l\frac{i}{l-3/2}}  + \left\| f \right\|_{L^\infty} \,
    \langle y \rangle^{-i}\big).
\end{equation}
Here  $C_3$ and $C_4$ are both positive constants that depend on  $(p, q, i, l, \theta, C^*)$, 
but are independent of $(f, \psi, \varphi)$. 

When $\bar{r} \in [1,\infty)$, given any $\bar{\varepsilon} > 0$, under the extra assumption{\rm:}
\begin{equation}\label{eq:GN_extracond}
    \left( \frac{\varphi(y)}{\langle y \rangle \psi(y) }\right)^{l\theta} \langle y \rangle^{3\theta (\frac1q-\frac1p)} \leq C_5,
\end{equation}
the following weighted Gagliardo-Nirenberg inequality holds{\,\rm:}
\begin{equation} \label{eq:GNresult}
    \| \langle y \rangle^{-\bar{\varepsilon}} \psi^{l(1-\theta)} \varphi^{l\theta} \nabla^i f \|_{L^{\bar{r}}} \leq C_6\big(\| \psi^l f \|_{L^p}^{1-\theta} \| \varphi^l \nabla^l f \|_{L^q}^{\theta} + \| \psi^l f \|_{L^p}\big),
\end{equation}
where  $C_5$ is a positive constant depending on $(p, q, l, \theta, C^*)$, while $C_6$ depends on parameters 
$(p, q, i, l, \theta, \bar{r}, \bar{\varepsilon}, C^*)$,  but is independent of $(f,  \psi, \varphi)$. 
\end{Lemma}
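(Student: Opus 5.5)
The plan is a dyadic decomposition combined with the unweighted Gagliardo--Nirenberg inequality on annuli, scaled to unit size. For each $j\ge -1$ set $A_j=\{|y|\in I_j\}$ and the enlarged annulus $\widetilde A_j=\{|y|\in I_{j-1}\cup I_j\cup I_{j+1}\}$ (with the obvious modification for $j=-1,0$, where $\widetilde A_j$ is a ball). On $\widetilde A_j$ the weights are comparable to constants: $\varphi\simeq\varphi_j$ and $\psi\simeq\psi_j$, with constants depending only on $C^*$ (after also using the comparability between neighbouring $\varphi_j$, which I would deduce from the assumption on overlapping dyadic shells, or else simply define $\varphi_j$ on the enlarged annuli). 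Rescale by $y=2^j z$, so that $\widetilde A_j$ becomes a fixed Lipschitz domain $\Omega$ of unit size, independent of $j$. On $\Omega$ apply the classical bounded-domain Gagliardo--Nirenberg inequality
\[
\|\nabla^i g\|_{L^{\bar r}(\Omega)}\le C\big(\|g\|_{L^p(\Omega)}^{1-\theta}\|\nabla^l g\|_{L^q(\Omega)}^{\theta}+\|g\|_{L^p(\Omega)}\big),
\]
which is valid under \eqref{eq:GN_conditions} with $\frac il\le\theta<1$, and then undo the scaling. Because the exponent relation \eqref{eq:GN_conditions} is exactly the scaling-invariance condition, the first term comes back with no power of $2^j$. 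The lower-order term picks up a factor $2^{j(3/\bar r - i-3/p)}$, and using \eqref{eq:GN_conditions} this equals $2^{j(3\theta(1/q-1/p)-l\theta)}$.

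Next, insert the weights. On $A_j$, multiply the inequality through using $\psi_j^{l(1-\theta)}\varphi_j^{l\theta}$. Bounding $\psi_j^l\|f\|_{L^p(\widetilde A_j)}\lesssim\|\psi^lf\|_{L^p}$ and $\varphi_j^l\|\nabla^lf\|_{L^q(\widetilde A_j)}\lesssim\|\varphi^l\nabla^lf\|_{L^q}$, the lower-order term takes the form
\[
\psi_j^{l(1-\theta)}\varphi_j^{l\theta}\,2^{j(3\theta(1/q-1/p)-l\theta)}\,\psi_j^{-l}\,\|\psi^l f\|_{L^p}
=\Big(\tfrac{\varphi_j}{2^j\psi_j}\Big)^{l\theta}2^{3j\theta(1/q-1/p)}\|\psi^l f\|_{L^p}.
\]
For $\bar r=\infty$, this gives \eqref{eq:GNresultinfty} pointwise on $A_j$ after dividing by the weights, since $\langle y\rangle\simeq 2^j$ there. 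The simplified form \eqref{eq:GNresultinfty_simplified} then follows by substituting $p=\infty$, $q=2$, $\psi=1$. In that case \eqref{eq:GN_conditions} with $\bar r=\infty$ forces $\theta=i/(l-3/2)$, and the power of $\langle y\rangle$ reduces to $3\theta/2-l\theta=-i$. For $\bar r<\infty$, hypothesis \eqref{eq:GN_extracond} bounds the lower-order prefactor by $C_5$, so on each annulus
\[
\|\psi^{l(1-\theta)}\varphi^{l\theta}\nabla^i f\|_{L^{\bar r}(A_j)}\lesssim \|\psi^lf\|_{L^p}^{1-\theta}\|\varphi^l\nabla^l f\|_{L^q}^{\theta}+\|\psi^lf\|_{L^p}.
\]

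The main obstacle is summing these annular bounds for $\bar r<\infty$, since the right-hand side is global and does not decay in $j$. This is exactly what the factor $\langle y\rangle^{-\bar\varepsilon}$ is for: on $A_j$ it contributes $2^{-j\bar\varepsilon}$, so
\[
\sum_j 2^{-j\bar\varepsilon\bar r}\big(\cdots\big)^{\bar r}\le C(\bar\varepsilon,\bar r)\big(\cdots\big)^{\bar r}.
\]
This yields \eqref{eq:GNresult} with $C_6$ depending on $\bar\varepsilon$. I expect the remaining care to go into justifying the bounded-domain Gagliardo--Nirenberg inequality (with the extra $L^p$ term) uniformly on the rescaled annuli, including the endpoint cases $\theta=i/l$ and $\bar r=\infty$ (where one needs $l-i>3/q$ or $\theta<1$). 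Those are exactly the conditions under which the classical theory applies.
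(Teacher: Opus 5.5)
Your proposal is correct and follows the standard argument for this lemma. The paper does not prove the lemma itself but imports it from \cite{shijia}, whose proof has the same structure. You freeze the weights on enlarged dyadic shells; your observation that the closed intervals $I_j$ and $I_{j+1}$ share the endpoint $2^{j+1}$, which gives $\varphi_{j+1}/\varphi_j\in[(C^*)^{-2},(C^*)^2]$, is exactly what this step needs. You then apply the bounded-domain Gagliardo--Nirenberg inequality, with its lower-order term, on a fixed rescaled annulus: \eqref{eq:GN_conditions} removes the scaling factor from the main term and turns the lower-order factor into $2^{j(3\theta(1/q-1/p)-l\theta)}$. For $\bar r<\infty$, \eqref{eq:GN_extracond} bounds that factor uniformly on each shell, and the weight $\langle y\rangle^{-\bar\varepsilon}$ makes the shell contributions summable, as you describe.
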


Moreover, in the exterior region $B^c(0,C_0)$ for some constant $C_0\ge 1$, the following interpolation inequality holds:

\begin{Lemma}[\cite{shijia}]  \label{lemma:GN_generalnoweightwholespace} 
Let $f(y):\mathbb R^3 \to \mathbb R^3$ be in $H^{l}(\mathbb R^3 )$, and 
let $C_0\geq 1$ be a constant independent of $f$. Assume that parameters $(p, q, \bar{r}, \theta, l)$ satisfy
\begin{equation} \label{eq:GN_conditions02torus}
    \frac{1}{\bar{r}} = \frac{i}{3} + \theta \Big( \frac{1}{q} - \frac{l}{3} \Big) + \frac{1-\theta}{p},  \qquad \frac{i}{l} \leq \theta < 1,  \qquad \frac{l}{3}\geq \frac{1}{q}-\frac{1}{p}.
\end{equation}
Then the Gagliardo-Nirenberg inequalities hold
\begin{equation} \label{eq:GNresultnoweightwholespace}
    \|   \nabla^i f \|_{L^{\bar{r}}(B^c(0,C_0))} \leq C_7\big( \|  f \|_{L^p(B^c(0,C_0))}^{1-\theta} \|  \nabla^l f \|_{L^q(B^c(0,C_0))}^{\theta} + \| f \|_{L^p(B^c(0,C_0))}\big),\end{equation}
where $C_7$ is a positive constant that depends on $(p, q, i, l, \theta, \bar{r})$, but is independent of $f$ and $C_0$.
\end{Lemma}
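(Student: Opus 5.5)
The natural route is to localize with a cutoff and reduce to the standard (unweighted) Gagliardo--Nirenberg inequality on a fixed bounded domain, followed by a scaling argument. First extend $f$ from $B^c(0,C_0)$ to all of $\mathbb{R}^3$, or more conveniently cover $B^c(0,C_0)$ by unit-scale pieces. Concretely, I would use a Stein-type extension operator for the exterior of a ball. Since the statement requires $C_7$ to be independent of $C_0$, I would first rescale: set $g(z)=f(C_0 z)$, so that $B^c(0,C_0)$ becomes $B^c(0,1)$. Under this scaling, $\|\nabla^i g\|_{L^{\bar r}}$, $\|g\|_{L^p}$ and $\|\nabla^l g\|_{L^q}$ pick up the factors $C_0^{i-3/\bar r}$, $C_0^{-3/p}$ and $C_0^{l-3/q}$. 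The exponent relation in \eqref{eq:GN_conditions02torus} is precisely the one that makes the first (homogeneous) term on the right scale exactly like the left side.

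On $B^c(0,1)$, apply a bounded extension operator $\mathcal{E}$, which maps $L^p\to L^p$ and $W^{l,q}\to W^{l,q}$ simultaneously (Stein's universal extension works for all orders and all exponents). Then invoke the classical whole-space Gagliardo--Nirenberg inequality
\[
\|\nabla^i \mathcal{E}g\|_{L^{\bar r}}\le C\|\mathcal{E}g\|_{L^p}^{1-\theta}\|\nabla^l\mathcal{E}g\|_{L^q}^\theta .
\]
The difficulty is that $\|\nabla^l \mathcal{E}g\|_{L^q}$ is bounded by the full $W^{l,q}(B^c(0,1))$ norm, not by $\|\nabla^l g\|_{L^q}$ alone. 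To handle this, I would interpolate the intermediate derivatives on the exterior domain, using $\|\nabla^j g\|_{L^q}\lesssim \|g\|_{L^p}+\|\nabla^l g\|_{L^q}$; this is where the hypothesis $\frac l3\ge \frac1q-\frac1p$ enters, guaranteeing the relevant embedding. This yields
\[
\|\nabla^i g\|_{L^{\bar r}(B^c(0,1))}\le C\big(\|g\|_{L^p}^{1-\theta}\|\nabla^l g\|_{L^q}^\theta+\|g\|_{L^p}\big).
\]

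The main obstacle is then undoing the scaling while keeping the lower-order term $\|f\|_{L^p}$ with a constant uniform in $C_0\ge 1$. After rescaling back, the extra term carries a factor $C_0^{\,i-3/\bar r+3/p}=C_0^{\theta(l-3/q+3/p)}$, which is $\ge 1$ by the assumption $\frac l3\ge\frac1q-\frac1p$, so it is not automatically harmless. To fix this I would not scale by $C_0$ at all. Instead, I would decompose $B^c(0,C_0)$ into a locally finite family of unit cubes, each possibly truncated by the sphere $|y|=C_0$. For $C_0\ge1$ these truncated pieces are Lipschitz domains with uniformly controlled geometry. Applying the bounded-domain inequality on each piece with a uniform constant, and summing using $\ell^{a}\subset\ell^{b}$ embeddings together with Hölder in the sum, gives the result. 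The summation requires $\bar r\ge p$ and $\bar r\ge q$ in the relevant regime, which follows from the exponent relation.
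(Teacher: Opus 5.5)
Your first route, rescaling, is the right one. You abandoned it because of a sign error.

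\textbf{The scaling step closes.} For $g(z)=f(C_0z)$, divide the inequality on $B^c(0,1)$ by $C_0^{\,i-3/\bar r}$. The lower-order term is left with the coefficient $C_0^{-3/p-i+3/\bar r}$, and the exponent relation gives
\[
-\frac3p-i+\frac3{\bar r}=-\theta\Big(l-\frac3q+\frac3p\Big)\le 0 .
\]
So for $C_0\ge1$ this coefficient is at most $1$; you wrote its reciprocal. Making this coefficient at most $1$ is exactly the role of the hypothesis $\frac l3\ge\frac1q-\frac1p$; it is not needed at $C_0=1$. (The paper does not prove this lemma; it only quotes it.)

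\textbf{The cube decomposition does not work.} On each bounded piece the local inequality must contain a lower-order term, since polynomials of degree $<l$ have $\nabla^l=0$ while $i<l$. Summing these terms over the cubes requires $\bar r\ge p$. This does not follow from the exponent relation, since $\frac1{\bar r}-\frac1p=\frac i3-\theta\big(\frac l3-\frac1q+\frac1p\big)$ can be positive. A case the paper actually uses (for $B_{u,2}$ in the proof of Lemma~\ref{boosstrapestimate2}) is $p=\infty$, $q=2$, $i=3$, $l=4$, $\theta=\frac34$, $\bar r=\frac83$. There $\sum_k\|f\|_{L^\infty(Q_k)}^{8/3}$ over infinitely many cubes is not controlled by $\|f\|_{L^\infty}^{8/3}$. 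In addition, cubes truncated by the sphere can be arbitrarily thin slivers, so uniform constants would need a regrouping you do not describe.

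\textbf{The $C_0=1$ step also has a gap.} The estimate $\|\nabla^jg\|_{L^q(B^c(0,1))}\lesssim\|g\|_{L^p}+\|\nabla^lg\|_{L^q}$ is false on the unbounded exterior for $j=0$ when $p>q$. For a counterexample, take $p=\infty$, $q=2$, $g=\chi(\cdot/R)$ with $\chi\in C_c^\infty$, $\chi=1$ on $B(0,1)$, and let $R\to\infty$. The hypothesis $\frac l3\ge\frac1q-\frac1p$ does not help here.

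\textbf{The fix.} Use an extension that only sees a fixed bounded annulus $A$, such as $\{1<|z|<2\}$; for example, a cutoff times a higher-order reflection across the unit sphere. Then
\[
\|\mathcal Eg\|_{L^p(\mathbb R^3)}\lesssim\|g\|_{L^p(B^c(0,1))},\qquad \|\nabla^l\mathcal Eg\|_{L^q(\mathbb R^3)}\lesssim\|\nabla^lg\|_{L^q(B^c(0,1))}+\|g\|_{W^{l,q}(A)} .
\]
On the bounded annulus, for every $p$,
\[
\|g\|_{W^{l,q}(A)}\lesssim\|\nabla^lg\|_{L^q(A)}+\|g\|_{L^1(A)}\lesssim\|\nabla^lg\|_{L^q}+\|g\|_{L^p} .
\]
The whole-space Gagliardo--Nirenberg inequality together with $(a+b)^\theta\le a^\theta+b^\theta$ then gives the case $C_0=1$ for all admissible exponents. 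The rescaling above, with the correct sign, then gives a constant independent of $C_0$.
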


We next state the corresponding weighted Gagliardo-Nirenberg interpolation inequalities on the periodic domain $\mathbb T^3_{2\bar L}$ of period $2\bar L$.

\begin{Lemma}[\cite{shijia}] \label{lemma:GN_general_period} 
Let $f(y):\mathbb T^3_{2\bar L} \to \mathbb R^3$ with $\bar L\ge 1$, $I_{-1} = [0, 1]$,  $I_j = [2^{j}, 2^{j+1}]$, 
and let $\varphi (y), \psi(y) > 0$ be two weight functions such that there exist values $\varphi_j, \psi_j$, and a constant $C^*$ 
satisfying $\varphi (y) \in [\varphi_j/C^*, C^* \varphi_j]$ and $\psi(y) \in [\psi_j/C^*, C^*\psi_j]$ for all $y$ with $|y| \in I_j$. 
Moreover, assume that $\varphi (y)$ and $\psi(y)$ are approximately $1$ at the origin, that is, 
$$\varphi_{-1} = \psi_{-1} = \varphi_0 = \psi_0 = 1.$$
For an integer $1 \leq i \leq l$, assume that parameters $(p, q, \bar{r}, \theta)$ satisfy \eqref{eq:GN_conditions}.

When $\bar{r} = \infty$, then
\begin{equation} \label{eq:GNresultinfty-periodic}
    \left| \nabla^i f \right| 
    \leq C_3\Big( \| \psi^l f \|_{L^p(\mathbb T^3_{2\bar L})}^{1-\theta} \| \varphi^l \nabla^l f \|_{L^q(\mathbb T^3_{2\bar L})}^{\theta} 
    \psi^{-l(1-\theta)} \varphi^{-l\theta}  
    + \| \psi^l f\|_{L^p(\mathbb T^3_{2\bar L})} \,  \langle y \rangle^{3\theta (\frac 1q-\frac 1p)-l\theta} \psi^{-l}\Big).
\end{equation}
Moreover, if $p = \infty$, $q = 2$, and $\psi = 1$, then
\begin{equation} \label{eq:GNresultinfty_simplified-periodic}
    \left| \nabla^i f \right| \leq C_4\Big( \| f \|_{L^\infty(\mathbb T^3_{2\bar L})}^{1-\frac{i}{l-3/2}} \| \varphi^l \nabla^l f \|_{L^2(\mathbb T^3_{2\bar L})}^{\frac{i}{l-3/2}}  \varphi^{-l\frac{i}{l-3/2}}  + \left\| f \right\|_{L^\infty(\mathbb T^3_{2\bar L})} \cdot  \langle y \rangle^{-i}\Big).
\end{equation}
Here  $C_3$ and $C_4$ are both positive constants that may depend on  $(p, q, i, l, \theta, C^*)$, 
but are independent of $(f, \bar L, \psi, \varphi)$. 

When $\bar{r} \in [1,\infty)$, given any $\bar{\varepsilon} > 0$, and under the extra assumption \eqref{eq:GN_extracond},
the following weighted Gagliardo-Nirenberg inequality holds{\,\rm:}
\begin{equation} \label{eq:GNresult-periodic}
    \| \langle y \rangle^{-\bar{\varepsilon}} \psi^{l(1-\theta)} \varphi^{l\theta} \nabla^i f \|_{L^{\bar{r}}(\mathbb T^3_{2\bar L})} 
    \leq C_6\Big(\| \psi^l f \|_{L^p(\mathbb T^3_{2\bar L})}^{1-\theta} \| \varphi^l \nabla^l f \|_{L^q(\mathbb T^3_{2 \bar L})}^{\theta} + \| \psi^l f \|_{L^p(\mathbb T^3_{2\bar L})}\Big),
\end{equation}
where  $C_5$ is a positive constant depending on $(p, q, l, \theta, C^*)$, while $C_6$ depends on 
parameters $(p, q, i, l, \theta, C^*, \bar{r}, \bar{\varepsilon})$,  but is independent of  $(f, \bar L, \psi, \varphi)$. 
\end{Lemma}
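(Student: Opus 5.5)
The statement is the periodic version of the weighted Gagliardo--Nirenberg interpolation lemma (Lemma~\ref{lemma:GN_general_period}). I would prove it by reducing to the whole-space Lemma~\ref{lemma:GN_general}, through a dyadic localization that sees only the fundamental domain $[-\bar L,\bar L)^3$. The key requirement is that every constant be independent of $\bar L$. This comes from the fact that all the analysis happens on dyadic annuli $\{|y|\in I_j\}$ intersected with the cube, and on each such piece the geometry is uniformly controlled after rescaling.

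\emph{Step 1: localize.} Fix a smooth dyadic partition of unity $\{\zeta_j\}_{j\ge -1}$ with $\operatorname{supp}\zeta_j\subset\{|y|\in \tilde I_j\}$, where $\tilde I_j=[2^{j-1},2^{j+2}]$, and $|\nabla^k\zeta_j|\le C(k)2^{-jk}$. Only those $j$ with $2^{j-1}\lesssim \sqrt3\bar L$ meet the cube.
\begin{itemize}
\item For annuli lying entirely inside $[-\bar L,\bar L)^3$, I apply the whole-space inequality to $\zeta_j f$ on the annulus, rescaled to unit size. Since the weights are comparable to the constants $\varphi_j,\psi_j$ there, the local unweighted GN inequality gives
\[
|\nabla^i(\zeta_j f)|\lesssim \|f\|_{L^p}^{1-\theta}\|\nabla^l f\|_{L^q}^{\theta}+2^{-j\cdot(\text{scaling})}\|f\|_{L^p},
\]
with the norms taken on the enlarged annulus. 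The lower-order term arises from the Leibniz errors $\nabla^k\zeta_j\,\nabla^{l-k}f$, and these are absorbed by the usual interpolation.
\item For annuli that meet $\partial([-\bar L,\bar L)^3)$, I use periodicity. The function $f$ extends periodically to $\mathbb R^3$, so near the cube faces the set $\{|y|\in\tilde I_j\}\cap[-\bar L,\bar L)^3$ is a region on which $f$ has smooth periodic continuation. I cover it by boundedly many cubes of side $\sim 2^j$ in the periodic extension and apply the local GN inequality on each. The number of cubes is bounded independently of $\bar L$, because $2^j\sim\bar L$ there.
\end{itemize}

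\emph{Step 2: reassemble.} With $\varphi\approx\varphi_j$ and $\psi\approx\psi_j$ on each piece, I multiply the local estimate by $\psi_j^{l(1-\theta)}\varphi_j^{l\theta}$. For $\bar r=\infty$ the pointwise bound follows at once, which gives \eqref{eq:GNresultinfty-periodic}, and setting $p=\infty$, $q=2$, $\psi=1$ gives \eqref{eq:GNresultinfty_simplified-periodic}. For $\bar r<\infty$ I raise to the power $\bar r$ and sum over $j$. The factor $\langle y\rangle^{-\bar\varepsilon}$ supplies a convergent geometric series $\sum_j 2^{-j\bar\varepsilon\bar r}$. Assumption \eqref{eq:GN_extracond} controls the relative scaling of the lower-order term, exactly as in the whole-space proof of \cite{shijia}.

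\emph{Main obstacle.} The delicate point is the boundary annuli, where the Euclidean annulus and the torus geometry interact. I must check that the periodic images of $f$ used in the covering are evaluated at points $y'$ with $|y'|\sim 2^j$ modulo the period, so that the weight comparability $\varphi\in[\varphi_j/C^*,C^*\varphi_j]$ still holds. Otherwise the weighted norms on the right-hand side would not be the ones defined on $\mathbb T^3_{2\bar L}$. I would first try restricting to the cube itself, which is a Lipschitz domain whose intersections with dyadic annuli are uniformly bi-Lipschitz to unit models after scaling. I would then use a uniform Stein extension operator, which gives $\bar L$-independent GN constants without needing periodicity at all.
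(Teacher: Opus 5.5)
Your overall strategy is the standard one behind this lemma: rescale the unweighted bounded-domain Gagliardo--Nirenberg inequality on dyadic pieces where $\varphi,\psi$ are essentially constant. (The paper gives no proof and quotes the lemma from \cite{shijia}.) Your scaling bookkeeping also works. By \eqref{eq:GN_conditions} the main term is scale invariant, and the lower-order term picks up the factor $2^{j(3\theta(\frac1q-\frac1p)-l\theta)}$. After multiplying by $\psi_j^{l(1-\theta)}\varphi_j^{l\theta}\psi_j^{-l}$, this is exactly the quantity bounded in \eqref{eq:GN_extracond}.

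The gap is in the only step specific to the torus: uniformity in $\bar L$ for the pieces with $|y|\gtrsim \bar L$. You leave the weight check there open, and the resolution you say you would try first is false. The sets $\{|y|\in I_j\}\cap[-\bar L,\bar L]^3$ are not uniformly bi-Lipschitz to unit models after rescaling. As $2^j/\bar L$ approaches $1$, $\sqrt2$ or $\sqrt3$, the sphere $|y|=2^j$ meets the faces or edges at a vanishing angle, or cuts off vanishingly small corner pieces.

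For $2^j=\bar L$ exactly (allowed, e.g.\ $\bar L=1$, $j=0$) the sphere is inscribed. Near each face centre, $\{|y|\ge \bar L\}\cap[-\bar L,\bar L]^3$ is then the cusp between a sphere and its tangent plane, with volume $\sim\epsilon^4$ inside an $\epsilon$-ball. Take a bump of height one at scale $\epsilon$ centred at the tangency point. Then $\sup|\nabla^i f|\gtrsim\epsilon^{-i}$, while $\|f\|_{L^\infty}^{1-\theta}\|\nabla^l f\|_{L^2}^{\theta}+\|f\|_{L^\infty}\lesssim \epsilon^{-i(l-2)/(l-3/2)}+1=o(\epsilon^{-i})$. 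So the local inequality fails on that piece with any constant, and no uniform extension operator exists for this family.

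The fix is short. Points with $|y|<\bar L/4$ lie in a full annulus (or the unit ball) inside the cube and are handled exactly as in $\mathbb R^3$. For $|y|\ge\bar L/4$ you have two options.

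First, apply the bounded-domain inequality once on $[-\bar L,\bar L]^3\setminus\overline{B(0,\bar L/8)}$. This rescales to the fixed Lipschitz domain $[-1,1]^3\setminus\overline{B(0,1/8)}$, and it is the version that genuinely avoids periodicity.

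Second, complete your periodic covering. The key fact is that $|y|$, the norm of the representative in $[-\bar L,\bar L)^3$, equals the Euclidean distance from $[y]$ to $0$ on $\mathbb T^3_{2\bar L}$, so it is $1$-Lipschitz. Hence a ball of radius $\bar L/100$ in the periodic extension, centred at $y_0$ with $|y_0|\ge \bar L/8$, maps injectively into the torus and contains only points with $|y|\in[\bar L/10,\sqrt3\bar L]$. $O(1)$ such balls cover $\{|y|\ge\bar L/4\}$.

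Both options need one more observation that your proposal lacks: consecutive constants are comparable. Since $I_j\cap I_{j+1}=\{2^{j+1}\}$ and that sphere meets the torus whenever $2^{j+1}\le\sqrt3\bar L$, one gets $\varphi_{j+1}/\varphi_j,\ \psi_{j+1}/\psi_j\in[(C^*)^{-2},(C^*)^{2}]$. A range $|y|\in[c\bar L,\sqrt3\bar L]$ meets only $O(1)$ intervals $I_j$. So on it the weights are comparable to single constants up to $(C^*)^{O(1)}$, and every constant is independent of $\bar L$.
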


Moreover, in the exterior region $B^c(0,C_0)\cap \mathbb T^3_{2\bar L}$ for some constant $C_0\ge 1$, we have the following Gagliardo-Nirenberg interpolation inequality without weights:

\begin{Lemma}[\cite{shijia}]  \label{lemma:GN_generalnoweightwholespace_period} 
Let $f(y): \mathbb T^3_{2\bar L} \to \mathbb R^3$ be in $H^{l}(\mathbb T^3_{2\bar L})$ with $\bar{L}\ge 10C_0$, where $C_0\geq 1$ is 
a constant independent of $f$. 
Then, if parameters $(p, q, \bar{r}, \theta, l)$ satisfy \eqref{eq:GN_conditions02torus},
the Gagliardo-Nirenberg inequalities hold{\rm :}
\begin{equation} \label{eq:GNresultnoweight-periodic}
\begin{split}
    \|   \nabla^i f \|_{L^{\bar{r}}(B^c(0,C_0)\cap \mathbb T^3_{2\bar L})} & \leq C_7 \|  f \|_{L^p(B^c(0,C_0)\cap \mathbb T^3_{2\bar L})}^{1-\theta} \|  \nabla^l f \|_{L^q(B^c(0,C_0)\cap \mathbb T^3_{2\bar L})}^{\theta}\\
    &\quad+ C_7\| f \|_{L^p(B^c(0,C_0)\cap \mathbb T^3_{2\bar L})},
\end{split}
\end{equation}
where  $C_7$ is a positive constant that depends only on $(p, q, i, l, \theta, \bar{r})$, 
but is  independent of $(f, \bar L,  C_0)$.
\end{Lemma}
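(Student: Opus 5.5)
The statement to prove is Lemma~\ref{lemma:GN_generalnoweightwholespace_period}: a Gagliardo--Nirenberg inequality on $B^c(0,C_0)\cap\mathbb T^3_{2\bar L}$ with a constant independent of $\bar L$ and $C_0$. I would reduce it to the whole-space exterior estimate, Lemma~\ref{lemma:GN_generalnoweightwholespace}, together with a local scale-one argument.

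The first step is to localize at unit scale. Cover the region $\Omega:=B^c(0,C_0)\cap\mathbb T^3_{2\bar L}$, viewed in the fundamental cell $[-\bar L,\bar L)^3$, by cubes $Q_k$ of side comparable to $1$. Each $Q_k\cap\Omega$ should be a uniformly Lipschitz domain, being the intersection of a cube with the exterior of a ball of radius $C_0\ge1$. Periodicity then lets me translate every cube so that it stays away from the artificial cell boundary, using that $\bar L\ge 10C_0$ separates $\partial B(0,C_0)$ from the cell faces. On each such piece I would apply the standard bounded-domain Gagliardo--Nirenberg inequality
\[
\|\nabla^i f\|_{L^{\bar r}(Q_k\cap\Omega)}\le C\big(\|f\|_{L^p(Q_k\cap\Omega)}^{1-\theta}\|\nabla^l f\|_{L^q(Q_k\cap\Omega)}^{\theta}+\|f\|_{L^p(Q_k\cap\Omega)}\big).
\]
Its constant depends only on the Lipschitz character of the piece, which is uniform in $k$, $\bar L$ and $C_0$ because $C_0\ge1$ bounds the curvature of the sphere.

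The second step is to sum over $k$, and this is where I expect the main difficulty. Using $\ell^{\bar r}$ summation and H\"older's inequality in $k$ with exponents $\frac{(1-\theta)\bar r}{p}+\frac{\theta\bar r}{q}$ only works when this combination is at least $1$. That is not automatic, so I would first try to avoid it. The alternative is an extension argument. Extend $f|_\Omega$ to a periodic function on all of $\mathbb T^3_{2\bar L}$ across the sphere $\partial B(0,C_0)$ by a Stein-type extension operator adapted to the exterior of a ball with $C_0\ge1$. Such an operator is bounded simultaneously on $L^p$ and on $\dot W^{l,q}$, with norms uniform in $C_0$. Then apply the full-torus Gagliardo--Nirenberg inequality with the lower-order term, whose constant is uniform in $\bar L\ge1$: on the torus with period at least $2$, one can write $f=$ mean $+$ oscillation, and the unit-scale covering supplies the additive $\|f\|_{L^p}$ term. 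Finally restrict back to $\Omega$.

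If the extension route is awkward, I would fall back on an alternative. Any cube meeting the cell boundary is interior to $\Omega$, so periodic translation turns it into a whole-space cube. The pieces touching $\partial B(0,C_0)$ are handled exactly as in Lemma~\ref{lemma:GN_generalnoweightwholespace}. The condition $\frac l3\ge\frac1q-\frac1p$ in \eqref{eq:GN_conditions02torus} is precisely the hypothesis that makes the summation over cubes close there, so the same bookkeeping carries over verbatim.
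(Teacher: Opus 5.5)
The paper gives no proof of this lemma (it is quoted from \cite{shijia}), so your argument has to stand on its own, and its load-bearing step, the uniformity of $C_7$ in $\bar L$, is not established.

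You also misplace the obstruction in the cube summation. By \eqref{eq:GN_conditions02torus}, $\frac{1-\theta}{p}+\frac{\theta}{q}=\frac{1}{\bar r}+\frac{\theta l-i}{3}\ge\frac{1}{\bar r}$ because $\theta\ge i/l$, so the product terms always sum. What fails is the additive term: $\sum_k\|f\|_{L^p(Q_k)}^{\bar r}\le\|f\|_{L^p}^{\bar r}$ needs $\bar r\ge p$. Since $\frac1{\bar r}-\frac1p=\frac i3+\theta\big(\frac1q-\frac1p-\frac l3\big)$, this does not follow from the hypotheses, and in particular not from $\frac l3\ge\frac1q-\frac1p$, contrary to your fallback. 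The paper itself uses such a case in the proof of Lemma~\ref{boosstrapestimate2} and its periodic analogue: $p=\infty$, $q=2$, $l=4$, $i=3$, $\theta=\frac34$, $\bar r=\frac83$. For $f(y)=\sin(\pi y_1/\bar L)$ the right-hand side of the lemma is $O(1)$, whereas the summed local additive terms are of order $\bar L^{3/\bar r}$.

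Your extension route obtains the full-torus inequality by letting ``the unit-scale covering supply the additive $\|f\|_{L^p}$ term'', which is precisely this failing step. The fallback only defers to the bookkeeping of Lemma~\ref{lemma:GN_generalnoweightwholespace}, which the paper also only quotes.

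Your skeleton (extend across $\partial B(0,C_0)$, apply a full-torus inequality, restrict) is sound once two ingredients are supplied, and the main idea you are missing is dilation. Under $y=\lambda z$ the product term is invariant, while the additive term acquires the factor $\lambda^{-i+\frac{3}{\bar r}-\frac3p}=\lambda^{3\theta(\frac1q-\frac1p-\frac l3)}\le 1$ for $\lambda\ge1$. \emph{This} is what the hypothesis $\frac l3\ge\frac1q-\frac1p$ is for. Taking $\lambda=\bar L$ transfers the standard inequality on the fixed torus $\mathbb T^3_2$ to $\mathbb T^3_{2\bar L}$ with a constant independent of $\bar L\ge1$.

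The extension needs more care than you give it. A plain Stein operator only gives $\|\nabla^lEf\|_{L^q}\lesssim\|f\|_{W^{l,q}}$, whose lower-order pieces are not on the right-hand side. Instead, work at unit scale on $A=B(0,2)\setminus B(0,1)$: subtract the $L^2(A)$-projection $\Pi f$ onto polynomials of degree $<l$, extend $f-\Pi f$ into $B(0,1)$, and add $\Pi f$ back. Bramble--Hilbert gives $\|f-\Pi f\|_{W^{l,q}(A)}\lesssim\|\nabla^lf\|_{L^q(A)}$, so this operator is bounded on $L^p$ and on $\dot W^{l,q}$. Conjugating it by the dilation with $\lambda=C_0$ keeps both bounds uniform in $C_0$.

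Since $\bar L\ge10C_0$, the ball $B(0,2C_0)$ lies in the fundamental cell, so the extended function is periodic. Applying the torus inequality to it and restricting to $B^c(0,C_0)\cap\mathbb T^3_{2\bar L}$ gives the lemma.
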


\section{Self-Similar Solutions to the Steady Compressible Euler Equations}\label{appendix B}
This appendix is devoted to showing some properties of the smooth and spherically symmetric self-similar solution to the steady Euler equations, which have been verified in \cites{buckmaster, merle1, shijia, shao}. The spherically symmetric self-similar solution 
$(\overline{\Clss}, \overline{U})$ takes the form:
\[
\overline{\Clss}(y)=\overline\Clss(r),\quad 
\overline{U}(y)=\overline{\mathcal U}(r)\frac{y}{r} \qquad\,\,\mbox{for $r=|y|$}.
\]
Thus, the steady Euler equations \eqref{Profile} reduce to the following
system of ordinary differential equations
$(\overline \Clss,\overline{\mathcal U})$:
\begin{equation}\label{eq:sym_profiles}
\begin{split}
&(\Lambda-1) \overline{\Clss} + (r + \overline{\mathcal U}) \partial_r\overline{\Clss}
+ \alpha \overline{\Clss} \big(\partial_r\overline{\mathcal U}+\frac{2}{r}\overline{\mathcal U}\big) = 0,\\
&(\Lambda-1) \overline{\mathcal U} + (r + \overline{\mathcal U})\partial_r\overline{\mathcal U} + \alpha \overline{\Clss} \,\partial_r\overline{\Clss} = 0.
\end{split}
\end{equation}

Following \cite{buckmaster} and \cite{merle1}, one introduces the Emden
transformation:
\begin{equation}
        \overline \Clss(r) = r\,\widehat{\Clss}(\xi),\quad
        \overline{\mathcal U}(r) =r\,\widehat{U}(\xi)\qquad\,\, 
        \mbox{ for $\xi = \log r$}, 
\end{equation}
which maps \eqref{eq:sym_profiles} onto the following \emph{autonomous} system:
\begin{equation}\label{atuosys}  
    \begin{cases}
    \displaystyle
\Lambda \widehat{\Clss}+(1+\widehat{U})\frac{{\rm d}\widehat{\Clss}}{{\rm d} \xi} +\widehat{\Clss}\widehat{U}+\alpha \widehat{\Clss}\big(\frac{d\widehat{U}}{d \xi} +3\widehat{U}\big)=0, \\[12pt]
\displaystyle
        \Lambda \widehat{U}+(1+\widehat{U})\frac{{\rm d}\widehat{U}}{{\rm d} \xi} +\widehat{U}^2+\alpha \widehat{\Clss}\big(\frac{{\rm d}\widehat{\Clss}}{{\rm d} \xi} +\widehat{\Clss}\big)=0,
    \end{cases} \Longleftrightarrow \quad \begin{cases}
        \displaystyle\frac{{\rm d}\widehat{\Clss}}{{\rm d} \xi} = \frac{N_\Clss(\widehat{\Clss}, \widehat{U})}{D(\widehat{\Clss}, \widehat{U})},\\[12pt]
        \displaystyle
        \frac{{\rm d}\widehat{U}}{{\rm d} \xi} = \frac{N_U(\widehat{\Clss}, \widehat{U})}{D(\widehat{\Clss}, \widehat{U})},    \end{cases}
    \end{equation}
where 
\begin{align*}
   N_{\Clss}(\widehat{\Clss}, \widehat{U}) &=-(1+\widehat{U})\widehat{\Clss}(\Lambda+(1+3\alpha)\widehat{U})+\alpha \widehat{\Clss}(\Lambda \widehat{U}+\widehat{U}^2+\alpha\widehat{\Clss}^2),\\
   N_{U}(\widehat{\Clss}, \widehat{U}) &=-(1+\widehat{U})(\Lambda \widehat{U}+\widehat{U}^2+\alpha\widehat{\Clss}^2)+\alpha \widehat{\Clss}^2(\Lambda+(1+3\alpha)\widehat{U}),\\
   D(\widehat{\Clss}, \widehat{U})&= (1+\widehat{U})^2-\alpha^2 \widehat{\Clss}^2.
\end{align*}

The compactified phase portrait in the $(\widehat{\Clss},\widehat U)$--plane provides a
geometric description of self-similar profiles through the Emden transformation. In particular, a smooth orbit of the autonomous system
\eqref{atuosys} connecting $P_0$ and $P_\infty$ corresponds to a smooth spherically symmetric self-similar profile that is regular at the origin and decays at infinity, which is stated in the following two lemmas.

  \begin{figure}
      \centering
      \begin{tikzpicture}
      \node[anchor=south west, inner sep=0] (image) 
      at (0,0)
     {\includegraphics[height=0.4\textheight]{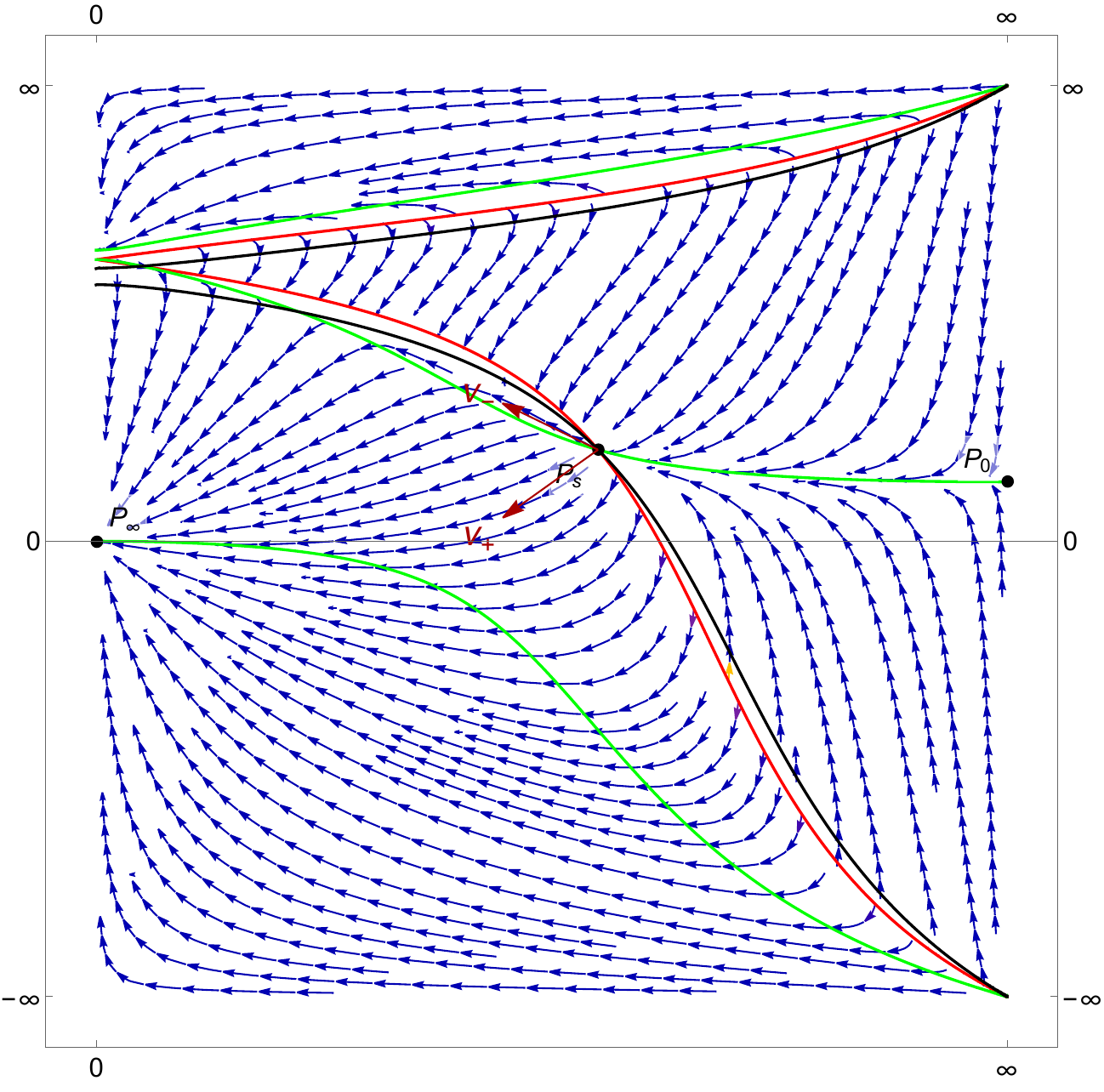}};
     \begin{scope}[x={(image.south east)},y={(image.north west)}]
      \node at (0.5,-0.03) {$\widehat{Q}$};
      \node at (-0.03,0.5) {$\widehat U$};
    \end{scope}
     \end{tikzpicture}
    \caption{Phase portrait trajectory from $P_\infty$ to $P_0$.}
    \label{fig2}
  \end{figure}

\begin{Lemma}[Existence of Smooth Solutions to \eqref{atuosys}]\label{thm:existence_profiles}
Let $\Lambda^*(\gamma)$ be defined as in \eqref{def:Lambda*}.
\begin{enumerate}
\item[\rm (i)]
There exists a {\rm(}possibly empty{\rm)} exceptional countable set $\mathcal{J}:=\{\gamma_k\}_{k\in \mathbb{N}}$ whose accumulation points can only be at $\{1, \frac{5}{3}, \infty\}$ and $\{\frac{7}{5}, \frac{5}{3}\}\cap \mathcal{J}=\varnothing$. Then, for each $\gamma \in (1, \infty)\setminus \mathcal{J}$, there exists a discrete sequence of scaling parameters 
$\{\Lambda_n\}_{n=1}^{\infty}$ satisfying
\begin{equation*}
    1 < \Lambda_n < \Lambda^*(\gamma), \qquad \lim_{n\to \infty}\Lambda_n = \Lambda^*(\gamma),
\end{equation*}
such that the autonomous system \eqref{atuosys} with $\Lambda=\Lambda_n$, $n=1,2, \cdots$, admits a smooth solution connecting $P_0$ to $P_\infty=(0,0)${\rm ;} 
see \cite[Theorem 1.3]{merle1}, \cite[Theorem 1.2]{buckmaster}, and \cite[Theorem 1.1]{shao}.
\item[\rm (ii)] Let $\gamma\in(1,\infty)$. 
There exists an interval $I_\gamma\subsetneq (1, \Lambda^*(\gamma))$ and a parameter $\Lambda \in I_\gamma$ such that the autonomous system \eqref{atuosys} admits a smooth solution connecting $P_0$ to $P_\infty=(0,0)${\rm ;} 
see \cite[Theorem 1.1]{buckmaster}.
\end{enumerate}
In {\rm Figure~\ref{fig2}}, $P_0$ is a point in the compactified phase portrait, with finite value of $\widehat U$ and $\widehat{\Clss}=\infty$, representing
the regular behavior of self-similar profiles at origin $r=0$,
whereas $P_\infty=(0,0)$ corresponds to the values of profiles at $r=\infty$. Point $P_s$ is a regular singular point, 
and there exist two smooth integral curves crossing $P_s${\rm :} one is tangent to the direction $\nu_-$ and the other one is tangent to $\nu_+$. The curve tangent to $\nu_+$ corresponds to a shock-type self-similar solution to the Euler equations, whereas the curve tangent to $\nu_-$ yields a smooth self-similar solution to the Euler equations.
\end{Lemma}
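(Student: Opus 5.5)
The plan is to treat Lemma~\ref{thm:existence_profiles} as a compilation of known results, reducing each item to the cited theorems rather than rebuilding the phase-plane analysis. The reduction itself is short. By the Emden transformation $\overline{\Clss}(r)=r\widehat{\Clss}(\xi)$, $\overline{\mathcal U}(r)=r\widehat U(\xi)$, $\xi=\log r$, the radial profile system \eqref{eq:sym_profiles} is equivalent to the autonomous system \eqref{atuosys}. A smooth profile regular at $r=0$ and decaying at $r=\infty$ corresponds to an orbit from $P_0$ to $P_\infty$. The profile is smooth precisely when this orbit crosses the sonic curve $D=0$ smoothly at the regular singular point $P_s$, which means along the direction $\nu_-$.

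\textbf{Item (i).} I would follow the scheme of \cite{merle1}.

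First, for $\Lambda$ close to $\Lambda^*(\gamma)$, I would locate $P_s$ on $D=0$ as the intersection with $N_\Clss=N_U=0$. I would then check that $\Lambda<\Lambda^*(\gamma)$, with $\Lambda^*$ given by \eqref{def:Lambda*}, is exactly the condition under which the solution from $P_\infty$ can reach $P_s$ in the correct sector. The two branches of \eqref{def:Lambda*} come from which root of the sonic quadratic is relevant, and the switch occurs at $\gamma=\tfrac53$.

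Second, I would linearize at $P_s$ and observe that the eigenvalue ratio varies with $\Lambda$. Smoothness of the $\nu_-$ branch then requires avoiding resonances, and analyticity of the relevant curve at $P_s$ is obtained by a Frobenius or power-series argument. Integrating that branch backward toward $P_0$ yields a shooting function in $\Lambda$, which is analytic in $(\gamma,\Lambda)$. Its oscillation as $\Lambda\uparrow\Lambda^*$ forces infinitely many zeros $\Lambda_n\to\Lambda^*$.

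Third, the exceptional set $\mathcal J$ consists of those $\gamma$ where the analytic shooting function degenerates identically or a nondegeneracy coefficient vanishes. Since this coefficient is analytic in $\gamma$, its zeros are countable and can accumulate only at the endpoints $1$, $\infty$, or at $\tfrac53$, where the triple-point degeneracy occurs.

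Finally, the statement $\{\tfrac75,\tfrac53\}\cap\mathcal J=\varnothing$ would be imported from \cite{shao} for $\gamma=\tfrac53$, via the renormalization of \eqref{atuosys} that resolves the triple point, and from \cite{buckmaster} for $\gamma=\tfrac75$, via computer-assisted verification of the nondegeneracy.

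\textbf{Item (ii).} I would cite \cite[Theorem 1.1]{buckmaster} directly. There, for every $\gamma>1$, a barrier-function argument produces an interval $I_\gamma$ of $\Lambda$ on which the $\nu_-$ branch from $P_s$ is trapped. An intermediate-value argument then yields one $\Lambda\in I_\gamma$ whose orbit lands at $P_0$.

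The main obstacle is the smooth passage through the sonic point $P_s$. It is what requires $\Lambda<\Lambda^*$, and it is the source of the exceptional set $\mathcal J$. I would not reprove it here, but would take it from \cite[Theorem 1.3]{merle1} and \cite[Theorem 1.2]{buckmaster}, checking only that the parameter conventions, $\alpha=\frac{\gamma-1}{2}$ and the scaling \eqref{scaling1}, match ours.
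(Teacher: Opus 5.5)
Your proposal matches what the paper does. The paper gives no argument of its own for Lemma~\ref{thm:existence_profiles}: it imports item (i) from \cite[Theorem 1.3]{merle1}, \cite[Theorem 1.2]{buckmaster} and \cite[Theorem 1.1]{shao}, with $\frac{5}{3},\frac{7}{5}\notin\mathcal J$ taken from \cite{shao} and \cite{buckmaster}, and item (ii) from \cite[Theorem 1.1]{buckmaster}, exactly as you propose. Your heuristic sketch of the mechanism is not needed and is loose in places. For example, for $1<\gamma<\frac{5}{3}$ the value $\Lambda^*(\gamma)$ is where the two roots of the sonic-point quadratic $2\alpha\widehat U^2+\big((1-\alpha)(\Lambda-1)+2\alpha\big)\widehat U+(\Lambda-1)=0$ coalesce, not where one of them is singled out. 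Since you ultimately defer to the cited theorems, this does not affect the argument.
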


While Lemma~\ref{thm:existence_profiles} establishes the existence of a smooth orbit in the phase plane, it remains to verify that this orbit corresponds to a smooth profile in the spatial variables. By projecting this orbit back into the physical coordinates via the inverse Emden transformation, we obtain a globally defined, spherically symmetric smooth profile $(\overline{\Clss}, \overline{U})$.

\begin{Lemma}[Existence of Smooth Self-Similar Euler Profiles]
\label{lem:existofselfsimilar}
Suppose that parameters $(\gamma, \Lambda)$ in the autonomous system \eqref{atuosys} are 
chosen according to either case~{\rm (i)} or case~{\rm (ii)} in {\rm Lemma~\ref{thm:existence_profiles}.}
In either case, the system admits a smooth solution $(\widehat{\Clss},\widehat U)$ connecting $P_0$ to
$P_\infty=(0,0)$ in the $(\widehat{\Clss},\widehat U)$--phase plane.
Define
\[
\overline{\Clss}(y)=|y|\,\widehat{\Clss}(\xi),\quad
\overline U(y)=\widehat U(\xi)\,y \qquad\,\,\mbox{for $\xi=\log |y|$}.
\]
Then $(\overline{\Clss},\overline U)$ is a smooth, spherically symmetric solution
to the steady Euler system \eqref{Profile}. Moreover, the asymptotic behavior of
$(\widehat{\Clss},\widehat U)$ at $P_0$ implies 
\[
 \overline{\Clss}(0)=\Clss_*>0,\qquad \overline U(0)=0,
\]
and that profile $(\overline{\Clss},\overline U)$ is smooth in a
neighborhood of the origin. On the other hand, the convergence of the orbit to
$P_\infty$ yields
\[
(\overline{\Clss},\overline U)(y)\to(0,0)\qquad \text{as } |y|\to\infty,
\]
corresponding to the decay of the profile at spatial infinity.
\end{Lemma}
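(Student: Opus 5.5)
The plan is to take the orbit from Lemma~\ref{thm:existence_profiles} and invert the Emden transformation: set $r=e^\xi$, $\overline{\Clss}(r)=r\widehat{\Clss}(\log r)$ and $\overline{\mathcal U}(r)=r\widehat U(\log r)$. Since $\frac{d}{dr}=\frac1r\frac{d}{d\xi}$, substituting into \eqref{eq:sym_profiles} and dividing by $r$ reproduces exactly the first form of \eqref{atuosys}. Hence $(\overline{\Clss},\overline{\mathcal U})$ solves \eqref{eq:sym_profiles} on $(0,\infty)$, as long as the orbit does not touch the sonic set $D=0$. The only point where it meets this set is $P_s$, and the orbit passes through $P_s$ along the smooth branch tangent to $\nu_-$. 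Because $N_\Clss$, $N_U$ and $D$ vanish simultaneously there, the solution of the first (non-divided) form is still smooth, so the ODE system holds classically for every $r>0$. The standard radial computation (divergence of $\overline{\mathcal U}\,y/r$ equals $\overline{\mathcal U}'+\frac2r\overline{\mathcal U}$, and the gradient of a radial function is $\partial_r\,y/r$) then shows that $(\overline{\Clss},\overline{U})$ with $\overline U=\widehat U(\log|y|)\,y$ satisfies \eqref{Profile} on $\mathbb{R}^3\setminus\{0\}$.

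Near the origin, i.e.\ as $\xi\to-\infty$, I would linearize \eqref{atuosys} at $P_0$ in compactified coordinates, for instance $\zeta=1/\widehat{\Clss}$ together with $\widehat U$, so that $P_0=(0,\widehat U_0)$. From the eigenstructure, taken from \cite{merle1,buckmaster}, one should obtain expansions $\widehat{\Clss}=\Clss_* e^{-\xi}(1+\sum a_k e^{2k\xi})$ and $\widehat U=\sum b_k e^{2k\xi}$ (the limit value $\widehat U_0$ must be $-\frac{\Lambda-1}{3\alpha}$ or similar, as forced by the leading balance). These expansions give $\overline{\Clss}(r)=\Clss_*+O(r^2)$ and $\overline{\mathcal U}(r)/r=\widehat U$, both depending only on $r^2$.

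The main obstacle is exactly this step: upgrading the asymptotics to smoothness at $y=0$. A radial $f(r)$ is $C^\infty$ on $\mathbb{R}^3$ if and only if $f(r)=g(r^2)$ with $g$ smooth, and a radial vector field $h(r)\,y$ is smooth under the same condition on $h$. So I need $\overline{\Clss}$ and $\widehat U(\log r)$ to be smooth even functions of $r$. My first attempt is to rewrite \eqref{eq:sym_profiles} as a regular-singular ODE in $s=r^2$ for the unknowns $(\overline{\Clss},\overline{\mathcal U}/r)$. Its indicial roots at $s=0$ are nonresonant with the chosen branch, so a Frobenius/analytic-ODE argument produces a unique smooth solution with $\overline{\Clss}(0)=\Clss_*$. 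This solution must coincide with the orbit through $P_0$, because that orbit is the unique (one-dimensional) unstable manifold entering $P_0$. This gives $\overline{\Clss}(0)=\Clss_*>0$, $\overline U(0)=0$, and smoothness in a neighborhood of $0$.

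Finally, as $\xi\to+\infty$ the orbit converges to $P_\infty=(0,0)$. Linearizing at the origin of \eqref{atuosys} gives $\widehat{\Clss},\widehat U=O(e^{-\Lambda\xi})$, since the linear part is $\Lambda\,\mathrm{Id}$ and the trajectory enters along the attracting node. Therefore $\overline{\Clss},\overline{\mathcal U}=O(r^{1-\Lambda})\to0$ because $\Lambda>1$, and differentiating the ODE yields the derivative decay used as \eqref{profile decay}.
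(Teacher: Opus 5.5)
Your proposal is correct and follows the same route as the paper, which simply inverts the Emden transformation and takes smoothness through $P_s$, regularity at $P_0$ and decay at $P_\infty$ from \cites{merle1, buckmaster, shao}. Your Briot--Bouquet argument in $s=r^2$ and your rate argument at $P_\infty$ supply details the paper leaves to those references: the relevant indicial exponent is $-\tfrac32$, matching the saddle at $P_0$ with eigenvalues $1$ and $-3$ in the variables $(1/\widehat{\Clss},\widehat U)$. One slip: the linearization of \eqref{atuosys} at $P_\infty$ is $-\Lambda\,\mathrm{Id}$, not $\Lambda\,\mathrm{Id}$, and that sign is what makes $P_\infty$ the attracting node giving $\widehat{\Clss},\widehat U=O(e^{-\Lambda\xi})$, hence $\overline{\Clss}=O(r^{1-\Lambda})$.
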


We now summarize the main
properties of the spherically symmetric self-similar solution $(\overline \Clss,\overline{U})$.

\begin{Lemma}[\cites{shijia, shao}]\label{prop:profiles-R}
Let $(\overline \Clss, \overline U)=(\overline{\Clss}(r), \overline{\mathcal{U}}(r)\frac{y}{r})$ be the smooth, spherically symmetric self-similar profile obtained in {\rm Lemma~\ref{lem:existofselfsimilar}}. Then the following properties hold{\,\rm :}
\begin{align}
&\overline \Clss >0, \label{barS lower bd}\\
& \overline \Clss \geq \widehat{C}^{-1}\langle r \rangle^{-\Lambda+1}, \qquad |\nabla^j  \overline \Clss|+|\nabla^j \overline U| \leq \widehat{C}(j) \langle r \rangle^{-(\Lambda-1)-j} \,\,\mbox{for any $j \geq 0$},  \label{profile decay}\\
& 1+ \partial_r\overline{\mathcal U}- \alpha | \partial_r\overline \Clss| > \tilde{\eta}, \label{radial repulsivity}\\
&1+\frac{\overline{\mathcal U}}{r}-\alpha |\partial_r\overline \Clss| > \tilde{\eta}, \label{angular repulsivity}
\end{align}
and, for $r>1$,
\begin{equation}
    r + \overline{\mathcal U}- \alpha\overline{\Clss
    }> (r-1) \tilde{\eta}.
\end{equation}
Here $r = |y|$ is the radial variable, $\alpha=\frac{\gamma-1}{2}$, $j$ is a natural number, and 
$\tilde{\eta}>0$ is a sufficiently small constant that depends on $(\Lambda, \gamma)$. 
Moreover, $\widehat{C}>1$ denotes a constant depending on $\Lambda$ and $(\overline{\Clss}, \overline{U})$, and $\widehat{C}(j)>1$ denotes a constant depending on $j$ and $\widehat{C}$.
\end{Lemma}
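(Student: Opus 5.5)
The plan is to work entirely in the Emden variables of Lemma~\ref{thm:existence_profiles}. Along the orbit $\xi\mapsto(\widehat{\Clss},\widehat U)(\xi)$ joining $P_0$ to $P_\infty$, every quantity in the statement is an explicit function of $(\widehat{\Clss},\widehat U,\widehat{\Clss}',\widehat U')$, with $'=\frac{\mathrm d}{\mathrm d\xi}$. Indeed, with $\xi=\log r$ one has
\[
\frac{\overline{\mathcal U}}{r}=\widehat U,\qquad \partial_r\overline{\mathcal U}=\widehat U+\widehat U',\qquad \partial_r\overline{\Clss}=\widehat{\Clss}+\widehat{\Clss}',\qquad r+\overline{\mathcal U}-\alpha\overline{\Clss}=r\,(1+\widehat U-\alpha\widehat{\Clss}).
\]
The two repulsivity conditions therefore become the pointwise inequalities $1+\widehat U+\widehat U'-\alpha|\widehat{\Clss}+\widehat{\Clss}'|>\tilde\eta$ and $1+\widehat U-\alpha|\widehat{\Clss}+\widehat{\Clss}'|>\tilde\eta$ along the orbit. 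I would treat three regions separately: a neighbourhood of $P_0$ (small $r$), a neighbourhood of $P_\infty$ (large $r$), and the compact middle piece, which contains the sonic point $P_s$.

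\emph{Positivity and decay.} Since $N_{\Clss}$ carries the factor $\widehat{\Clss}$, the line $\{\widehat{\Clss}=0\}$ is invariant. The orbit starts at $P_0$, where $\widehat{\Clss}=+\infty$, so it can never reach that line at finite $\xi$; this gives $\overline{\Clss}>0$. Near $P_\infty=(0,0)$ we have $D=1+O(|\cdot|)$, and the linearisation is
\[
\widehat{\Clss}'=-\Lambda\widehat{\Clss}+O(|\cdot|^2),\qquad \widehat U'=-\Lambda\widehat U+O(|\cdot|^2),
\]
a stable node with double eigenvalue $-\Lambda$. The $\widehat{\Clss}$-equation is $\widehat{\Clss}'=\widehat{\Clss}\,(-\Lambda+O(|\cdot|))$. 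Hence $e^{\Lambda\xi}\widehat{\Clss}$ converges to a constant $c_\infty$, and $c_\infty>0$ because $\int^\infty O(e^{-\Lambda\xi})\,\mathrm d\xi<\infty$. The same argument shows $e^{\Lambda\xi}\widehat U$ converges.

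More precisely, $(\widehat{\Clss},\widehat U)$ admits a convergent expansion in powers of $e^{-\Lambda\xi}=r^{-\Lambda}$: the quadratic terms are nonresonant at higher orders, and only the $2\times2$ Jordan-free block at order $1$ appears. It follows that $\overline{\Clss}$ and $\overline{\mathcal U}$ equal $r^{1-\Lambda}$ times a smooth function of $r^{-\Lambda}$ for large $r$. Each $\partial_r$ then produces an extra factor $r^{-1}$, which yields \eqref{profile decay} for $|y|\ge R$ together with the lower bound $\overline{\Clss}\ge c\langle r\rangle^{1-\Lambda}$. On $|y|\le R$, the bounds follow from smoothness (Lemma~\ref{lem:existofselfsimilar}) and from $\overline{\Clss}(0)=\Clss_*>0$ combined with positivity and compactness.

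\emph{Repulsivity at the ends.} As $r\to\infty$, all of $\widehat U,\widehat U',\widehat{\Clss},\widehat{\Clss}'$ tend to $0$, so both expressions tend to $1$. The last inequality $1+\widehat U-\alpha\widehat{\Clss}\ge\tilde\eta(1-1/r)$ also holds there. As $r\to0$, smoothness at the origin gives $\partial_r\overline{\mathcal U}(0)=\overline{\mathcal U}/r|_{0}=\overline U'(0)$, $\partial_r\overline{\Clss}(0)=0$, and the ODE at $r=0$ gives $(\Lambda-1)+(1+3\alpha)\,\partial_r\overline{\mathcal U}(0)=0$. Thus $1+\partial_r\overline{\mathcal U}(0)=1-\frac{\Lambda-1}{1+3\alpha}>0$ since $\Lambda<\Lambda^*<2$. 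By continuity, both repulsivity inequalities hold near $r=0$ and near $r=\infty$, with some margin.

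\emph{The compact middle region, the main obstacle.} On a compact $\xi$-interval, strictness of continuous inequalities upgrades to a uniform $\tilde\eta$. So it suffices to show that the inequalities hold strictly at every point of the orbit, and this is where the real content lies. I would factor
\[
D=(1+\widehat U-\alpha\widehat{\Clss})(1+\widehat U+\alpha\widehat{\Clss}),
\]
and note that the smooth branch crosses $D=0$ only at $P_s$, along $\nu_-$. To the right of $P_s$ (that is, for $r>r_s$) we have $D>0$, so $1+\widehat U-\alpha\widehat{\Clss}>0$ there. I would then use the explicit eigendirection $\nu_-$ at $P_s$ to check the repulsivity inequalities across $P_s$, where $\widehat{\Clss}',\widehat U'$ are given by the L'Hôpital limits. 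Away from $P_s$, I would try to build an invariant barrier region in the phase plane, bounded by explicit curves (following the renormalisation in \cite{shao}), on which $N_U/D$ and $N_{\Clss}/D$ have controlled signs. This barrier step is exactly what \cite{shijia,buckmaster} verify, by computer-assisted bounds or a barrier argument, for the relevant $(\gamma,\Lambda)$. I expect that verification to be the hardest step, and I would rely on the cited results for it rather than reprove it.
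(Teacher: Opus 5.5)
The paper gives no argument for this lemma. It imports it from \cites{shijia,shao}, which rest on the phase-plane analysis and repulsivity verification of \cites{merle1,buckmaster}. So deferring the middle region to those works is no weaker than what the paper does. Your Emden-variable treatment of positivity, of the node at $P_\infty$, and of the decay and lower bounds is sound. (At $P_\infty$ the linear part is $-\Lambda I$ with no resonances, so $\overline{\Clss},\overline{\mathcal U}$ are $r^{1-\Lambda}$ times analytic functions of $r^{-\Lambda}$.) Two steps, however, are wrong or incomplete as written.

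\textbf{The computation at the origin is incorrect.} Evaluate \eqref{Profile}$_1$ at $y=0$. The transport term drops because $y+\overline U$ vanishes there, and $\dive\overline U(0)=3\,\partial_r\overline{\mathcal U}(0)$. So the relation is $(\Lambda-1)+3\alpha\,\partial_r\overline{\mathcal U}(0)=0$, not $(\Lambda-1)+(1+3\alpha)\,\partial_r\overline{\mathcal U}(0)=0$. This is consistent with \eqref{atuosys}: as $\widehat{\Clss}\to\infty$ one has $N_U\sim\alpha\widehat{\Clss}^2(\Lambda-1+3\alpha\widehat U)$ and $D\sim-\alpha^2\widehat{\Clss}^2$, so $\widehat U\to-\frac{\Lambda-1}{3\alpha}$ at $P_0$. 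Hence $1+\partial_r\overline{\mathcal U}(0)=\frac{1+3\alpha-\Lambda}{3\alpha}$, and $\Lambda<2$ does not give positivity when $\gamma$ is close to $1$. The conclusion survives for a different reason: $\Lambda<\Lambda^*(\gamma)<1+3\alpha=\frac{3\gamma-1}{2}$. For $\gamma\ge\frac53$ this is immediate from \eqref{def:Lambda*}. For $\gamma<\frac53$ it follows from $\Lambda^*(\gamma)-1=\frac{2(\gamma-1)}{(\sqrt{\gamma-1}+\sqrt2)^2}<\gamma-1<3\alpha$.

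\textbf{Your compactness argument does not yield the last inequality.} Along the smooth branch, $D<0$ for $r<r_s$ and $D>0$ for $r>r_s$. So $r+\overline{\mathcal U}-\alpha\overline{\Clss}=r(1+\widehat U-\alpha\widehat{\Clss})$ is negative on $(0,r_s)$ and vanishes at $r_s$. The inequality for all $r>1$ therefore forces the normalization $r_s=1$. You must impose this explicitly, using the translation invariance $\xi\mapsto\xi+\xi_0$ of \eqref{atuosys} (equivalently, the scaling $(\overline{\Clss},\overline U)\mapsto\lambda(\overline{\Clss},\overline U)(\cdot/\lambda)$ of \eqref{Profile}).

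Even with $r_s=1$, both sides vanish at $r=1$, so ``strict at every point plus compactness'' gives no uniform constant as $r\to1^+$. The missing observation is
\[
\frac{\mathrm d}{\mathrm dr}\big(r+\overline{\mathcal U}-\alpha\overline{\Clss}\big)\Big|_{r=1}\ge 1+\partial_r\overline{\mathcal U}(1)-\alpha|\partial_r\overline{\Clss}(1)|>\tilde\eta,
\]
which is radial repulsivity at the sonic point. Consequently $(r+\overline{\mathcal U}-\alpha\overline{\Clss})/(r-1)$ extends continuously and positively to $[1,\infty)$ and tends to $1$ at infinity. It is therefore bounded below, after shrinking $\tilde\eta$ if needed.

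Finally, the repulsivity bounds in the middle region are imported. Your argument is therefore only as general as the cited verification, so state the range of $(\gamma,\Lambda)$ it covers rather than claiming the lemma for every profile produced by Lemma~\ref{lem:existofselfsimilar}.
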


\section{Properties of the Truncated Linear Operator}
\label{appendix C}
In this appendix, we collect several properties of the truncated linear operator
$\mathcal L = (\mathcal L_\subclss, \mathcal L_{u})$ defined in \eqref{eq:cutoffL}.
In particular, we describe the decomposition of the phase space $X$ into the stable and unstable subspaces, 
and summarize the dissipative behavior and regularity features of $\mathcal L$ that have been used 
in our analysis. We first recall a decomposition result for $\mathcal L$.

\begin{Lemma}[\cite{shijia}, Lemma 2.9] \label{lemma:abstract_result} 
Let $\smallc_g \in (0,1)$, and let $(m,J)$ satisfy
\[
m \ge \widetilde{C}\tilde{\eta}^{-1},\qquad J \ge 2m,
\]
for some sufficiently large constant $\widetilde{C}>1$ independent of $(m, J, \tilde{\eta})$, 
where  $\tilde{\eta}$ arises from the properties of $(\overline \Clss, \overline U)$ in {\rm Lemma \ref{prop:profiles-R}}.
Then  
\begin{enumerate}
\item[\rm (i)]\label{item:spectrum} 
If $\sigma (\mathcal L)$ is the spectrum of $\mathcal L$, then the set 
$$ 
\Sigma  = \sigma (\mathcal L) \cap \{ \lambda \in \mathbb{C} : \, \Re (\lambda) > -\frac{\smallc_g}{ 2}  \}
$$
is finite and formed only by the eigenvalues of $\mathcal L$,
where $\Re(\lambda)$ denotes the real part of $\lambda$. 
Moreover, each $\lambda \in \Sigma$ has finite algebraic multiplicity. 
That is, if $\mu_{\lambda}$ is the smallest natural number such that 
$$
{\rm ker} (\mathcal L-\lambda  )^{\mu_{\lambda}} = {\rm ker} (\mathcal L- \lambda  )^{\mu_\lambda + 1},
$$
then the vector space
\begin{equation} \label{eq:spaceV}
V_{\rm{uns}} = \oplus_{\lambda \in \Sigma} {\rm ker} (\mathcal L - \lambda )^{\mu_{\lambda}}\,
\end{equation}
is finite-dimensional. 

\smallskip
\item[\rm (ii)] \label{item:invariance} 
Denote $\mathcal L^\ast$ as the adjoint of $\mathcal L$ and 
$$
\Sigma^\ast = \sigma (\mathcal L^\ast) \cap \big\{ \lambda \in \mathbb{C} :\, \Re (\lambda) >- \frac{\smallc_g}{ 2}\big\},
$$
and define
\begin{equation} \label{eq:spaceVast}
V_{\rm{sta}} = \big(\oplus_{\lambda \in \Sigma^\ast} {\rm ker} (\mathcal L^\ast - \lambda )^{\mu_{\lambda}^\ast}\big)^{\perp},
\end{equation}
where $\mu_\lambda^{\ast}$ is the smallest natural number such that
$$
{\rm ker} (\mathcal L^\ast-\lambda  )^{\mu_{\lambda}^\ast} = {\rm ker} (\mathcal L^\ast- \lambda  )^{\mu_\lambda^\ast + 1}.
$$
Then both $V_{\rm{uns}}$ and $V_{\rm{sta}}$ are invariant under $\mathcal L$, 
$\Sigma^\ast = \overline{\Sigma}$ {\rm{(}}$\lambda \in \Sigma$ if and only if its complex conjugate
$\overline{\lambda} \in \Sigma^\ast${\rm{)}},  $\mu_\lambda = \mu_{\overline{\lambda}}^\ast$, and  the decomposition holds{\rm :}
$$
X = V_{\rm{uns}} \oplus V_{\rm{sta}}.
$$

\item[\rm (iii)] \label{item:stability_outwards} 
The linear transformation $\mathcal L|_{V_{\rm{uns}}}: V_{\rm{uns}} \rightarrow V_{\rm{uns}}$ 
obtained by restricting $\mathcal L$ to the finite-dimensional space $V_{\rm{uns}}$ has all its eigenvalues 
with real part larger than $-\frac{\smallc_g}{2}$. 
Since $V_{\rm{uns}}$ is finite-dimensional, denote
$$N:=\dim V_{\rm{uns}}.$$
Then there exists a basis of $V_{\rm{uns}}$ such that $\mathcal L|_{V_{\rm{uns}}}$ is in the Jordan normal form 
with $\ell$ Jordan blocks, whose sizes satisfy $$\sum_{i=1}^{\ell} \dim J_i =N.$$
More precisely,
\begin{equation*}
\mathcal L|_{V_{\rm{uns}}}  = 
\begin{bmatrix}
J_1 &  & &  \\
 & J_2 & &  \\
 & & \ddots & \\
 &  & & J_\ell
 \end{bmatrix}  
 \qquad\,\, \mbox{ with} \quad J_i = 
 \begin{bmatrix}
\lambda_i & \frac{\smallc_g}{10} & & \\
 & \lambda_i & \ddots & \\
 & & \ddots & \frac{\smallc_g}{10} \\
 & & & \lambda_i
 \end{bmatrix},
\end{equation*}
where $\lambda_i$, $i=1,2,\cdots, \ell$, are the eigenvalues of $\mathcal L|_{V_{\rm{uns}}}$. In that basis, 
\begin{equation} \label{eq:chisinau}
\Re \big(\bar{w}^\top \cdot \mathcal L|_{V_{\rm{uns}}} \cdot w \big) 
\geq -\frac{3 \smallc_g}{5} \| w \|^2 \qquad \mbox{ for any $w \in \mathbb{C}^N$},
\end{equation}
where we have used $\Re$ to denote the real part and $\bar{w}$ is the complex conjugate of $w$.
Moreover, letting $\aleph(t)$ be the semigroup generated by $\mathcal L$, then 
\begin{equation} \label{eq:prim}
\| \aleph(t) v \|_X \leq e^{-t \frac{\smallc_g}{ 2}} \| v \|_X \qquad\,\mbox{ for any  $v \in V_{\rm{sta}}$}.
\end{equation}
\end{enumerate}
\end{Lemma}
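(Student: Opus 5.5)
This is Lemma 2.9 of \cite{shijia}, so the plan is to follow the standard "maximal dissipativity plus compact perturbation" route for the truncated operator $\mathcal L$ on $X=H_0^m(B(0,3C_0))$.

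\textbf{Step 1: a coercive energy estimate.} We split $\mathcal L=\mathcal L_0+\mathcal K$. Here $\mathcal L_0$ collects the transport part $-\chi_2(y+\widehat X\overline U)\cdot\nabla$, the top-order coupling $-\alpha\chi_2\widehat X\overline Q\,(\dive,\nabla)$, and the damping $-J(1-\chi_1)$. The remainder $\mathcal K$ contains only terms of order at most $m-1$ in the $H^m$ energy. We apply $\partial_\beta$ with $|\beta|=m$, pair with $\partial_\beta(\widetilde Q,\widetilde U)$, and integrate by parts; the Dirichlet condition on $\partial B(0,3C_0)$ kills boundary terms.

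\textbf{Step 2: sign of the top-order terms.} Commuting $\partial_\beta$ through $y\cdot\nabla$ produces $-m|\partial_\beta W|^2$. The terms where one derivative falls on $\widehat X\overline U$ or $\widehat X\overline Q$ produce contributions of the form
\[
m\big(\partial_r\overline{\mathcal U}-\alpha|\partial_r\overline Q|\big)\quad\text{and}\quad m\big(\tfrac{\overline{\mathcal U}}{r}-\alpha|\partial_r\overline Q|\big).
\]
We separate radial and angular components exactly as in Lemma~\ref{boosstrapestimate3}. The repulsivity properties \eqref{radial repulsivity}--\eqref{angular repulsivity} then give
\[
\mathrm{Re}\langle\mathcal L W,W\rangle_X\le\big(-m\tilde\eta+C\big)\|W\|_X^2+C(m)\|W\|_{H^{m-1}}^2
\]
on $B(0,C_0)$. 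On $\{|y|>C_0\}$, where repulsivity may fail, the term $-J(1-\chi_1)$ with $J\ge2m$ dominates. Since $m\ge\widetilde C\tilde\eta^{-1}$, we conclude that $\mathcal L+\smallc_g$ is dissipative modulo the lower-order term $C(m)\|W\|_{H^{m-1}}^2$.

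\textbf{Step 3: compact perturbation and the spectral decomposition.} To make this precise, we use the equivalent inner product $\langle W,W\rangle_X+\bar C\|W\|_{L^2}^2$ and a Lions--Magenes type argument. By Rellich, the embedding $H_0^m\hookrightarrow H^{m-1}$ is compact, so $\mathcal L=\mathcal L_{\rm diss}+\mathcal K$ with $\mathcal L_{\rm diss}-\frac{\smallc_g}{2}$ maximally dissipative and $\mathcal K$ compact. Maximality requires solvability of $(\lambda-\mathcal L_{\rm diss})W=F$ for large $\lambda$; we get it from the elliptic-free transport structure via characteristics together with a viscosity regularization. Standard Fredholm/Kato perturbation theory then shows that the spectrum in $\{\mathrm{Re}\,\lambda>-\frac{\smallc_g}{2}\}$ is a finite set of eigenvalues of finite algebraic multiplicity. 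This gives (i).

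\textbf{Step 4: invariant subspaces and the bounds (ii)--(iii).} Riesz projections give the invariant splitting $X=V_{\rm uns}\oplus V_{\rm sta}$. The identification of $V_{\rm sta}$ with the annihilator of the adjoint generalized eigenspaces, together with $\Sigma^*=\overline\Sigma$, is routine duality. On $V_{\rm uns}$ we take the Jordan basis and rescale the basis vectors to put $\frac{\smallc_g}{10}$ off the diagonal; then $\mathrm{Re}\,\bar w^\top\mathcal Lw\ge(-\frac{\smallc_g}{2}-\frac{\smallc_g}{10})|w|^2$, which is \eqref{eq:chisinau}. For \eqref{eq:prim}, we use the Gearhart--Prüss theorem, or directly the Lumer--Phillips bound applied to the restriction to $V_{\rm sta}$ in an adapted norm.

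\textbf{Main obstacle.} The hardest point is obtaining the exact contraction rate $e^{-\smallc_g t/2}$ in \eqref{eq:prim}, rather than $Ce^{-\smallc_g t/2}$, in the original $X$ norm. The spectral bound only yields decay up to a constant, so I would first try to absorb that constant by redefining the norm on $V_{\rm sta}$, using a Lyapunov functional built from the resolvent.
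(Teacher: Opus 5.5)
You are right that this is the compact-perturbation scheme, and right to flag the constant in \eqref{eq:prim} as the hard point. It is in fact the genuine gap: the estimate as stated cannot be obtained this way. The paper itself does not prove the lemma; it imports it from \cite{shijia}. Parts (i), (ii) and \eqref{eq:chisinau} go through along your lines once a few slips are fixed.

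\textbf{Step 2.} The profile terms contribute $-m(\partial_r\overline{\mathcal U}-\alpha|\partial_r\overline{Q}|)$, not $+m(\cdots)$, and similarly for the angular part. Together with the $-m$ from $y\cdot\nabla$ this gives $-m(1+\partial_r\overline{\mathcal U}-\alpha|\partial_r\overline{Q}|)\le-m\tilde\eta$.

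\textbf{Step 3.} You need $\mathcal L_{\rm diss}+\smallc_g$ dissipative, not $\mathcal L_{\rm diss}-\frac{\smallc_g}{2}$. The margin must be strictly larger than $\frac{\smallc_g}{2}$, otherwise eigenvalues may accumulate on $\mathrm{Re}\,\lambda=-\frac{\smallc_g}{2}$. The cleanest way to get it without changing the inner product is to define $\mathcal K$ by $\langle\mathcal KW,V\rangle_X=C(m)\langle W,V\rangle_{H^{m-1}}$. This operator is compact by Rellich, and you set $\mathcal L_{\rm diss}=\mathcal L-\mathcal K$.

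\textbf{Finiteness of $\Sigma$.} Analytic Fredholm theory only gives discreteness in $\{\mathrm{Re}\,\lambda>-\smallc_g\}$. To get finiteness you need $\|(\mathcal L_{\rm diss}-\lambda)^{-1}\mathcal K\|<1$ for $|\lambda|$ large with $\mathrm{Re}\,\lambda\ge-\frac{\smallc_g}{2}$. This follows by approximating $\mathcal K$ by finite-rank operators and applying Riemann--Lebesgue to $(\lambda-\mathcal L_{\rm diss})^{-1}b=\int_0^\infty e^{-\lambda t}e^{t\mathcal L_{\rm diss}}b\,{\rm d}t$.

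\textbf{The gap: \eqref{eq:prim}.} The constant-free bound in the original $X$-norm does not follow from ``dissipative plus compact.'' Take $X=\mathbb C^2$, $\mathcal L_{\rm diss}=-\smallc_g I$ and $\mathcal K=\bigl(\begin{smallmatrix}\smallc_g/4&M\\0&\smallc_g/4\end{smallmatrix}\bigr)$. Then $\sigma(\mathcal L)=\{-\frac34\smallc_g\}$, so $\Sigma=\emptyset$ and $V_{\rm sta}=X$. Yet $\|e^{t\mathcal L}\|\ge e^{-3\smallc_g t/4}(1+\frac{Mt}{2})>e^{-\smallc_g t/2}$ for small $t>0$ as soon as $M>\frac{\smallc_g}{2}$.

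Your proposed remedies do not escape this. A resolvent-built Lyapunov renorming, Lumer--Phillips in an adapted norm, or Gearhart--Pr\"uss all give, back in the $X$-norm, only $\|\aleph(t)v\|_X\le Ce^{-\smallc_g t/2}\|v\|_X$ with some $C\ge1$.

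Even this weaker bound needs care. The essential growth bound is at most $-\smallc_g$, because $\aleph(t)-e^{t\mathcal L_{\rm diss}}=\int_0^t\aleph(t-s)\mathcal K e^{s\mathcal L_{\rm diss}}\,{\rm d}s$ is compact. So the growth bound on $V_{\rm sta}$ equals the spectral bound of $\mathcal L|_{V_{\rm sta}}$. If an eigenvalue with a nontrivial Jordan block lies exactly on $\mathrm{Re}\,\lambda=-\frac{\smallc_g}{2}$, you get only $(1+t)^k e^{-\smallc_g t/2}$. You can avoid this by moving $\smallc_g$ slightly, since only finitely many eigenvalues lie in $\{\mathrm{Re}\,\lambda\ge-\frac34\smallc_g\}$.

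So what your argument proves, and what should be stated, is \eqref{eq:prim} with a constant $C=C(m,J,\smallc_g)$. That is enough for the paper: in the Duhamel estimate of Lemma~\ref{linfitnityinside} it only enlarges the constant in $C\frac{\smallc_1}{\smallc_g}$, which is absorbed later by $\smallc_1\ll\smallc_0$.
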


The next lemma shows that, for sufficiently large parameters $m$ and $J$, the unstable subspace constructed above is in fact spanned by smooth functions.

\begin{Lemma}[\cite{shijia}]\label{prop:smooth} Let $\smallc_g \in (0, 1)$. 
Then there exist $\tilde{m}$ depending on $\tilde{\eta}$, and $\bar{J}$ depending on $(C_0,\smallc_g,\tilde{\eta})$ 
and profile $(\overline{\Clss},\overline{U})$ 
such that, for all $m \ge \tilde{m}$ and all $\bar{J}
 \ge J_0$,  the unstable space $V_{\rm{uns}} \subset X$ defined in {\rm Lemma \ref{lemma:abstract_result}} is formed by smooth functions.
 \end{Lemma}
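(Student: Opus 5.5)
The claim is that, for $m\ge\tilde m$ and $J\ge\bar J$, the finite-dimensional space $V_{\rm uns}$ of Lemma~\ref{lemma:abstract_result} consists of $C^\infty$ functions. My plan is a bootstrap in Sobolev regularity. Since $V_{\rm uns}=\oplus_{\lambda\in\Sigma}\ker(\mathcal L-\lambda)^{\mu_\lambda}$, it suffices to show that any $v\in X$ with $(\mathcal L-\lambda)^\mu v=0$ and $\Re\lambda>-\frac{\smallc_g}{2}$ lies in $H^{m'}$ for every $m'\ge m$. By induction on the Jordan chain, I reduce to the equation
$$(\mathcal L-\lambda)v=w,$$
where $w$ is already known to be smooth (for instance $w=0$ for a true eigenvector).

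\textbf{Localization.} I split the domain into two regions.
\begin{itemize}
\item In $\{|y|>\frac32C_0\}$ we have $\chi_1=0$, so $\mathcal L=\chi_2\mathcal L^e-J$. There $\chi_2$ vanishes at $|y|=\frac52C_0$, and on $\{\chi_2=0\}$ the equation gives $(-J-\lambda)v=w$. Since $J\gg1$, this forces $v=-w/(J+\lambda)$, which is smooth.
\item In the interior, $\mathcal L$ is a first-order symmetric hyperbolic operator with transport field $-\chi_2(y+\widehat X\overline U)$. This field points outward with a quantitative lower bound, thanks to the repulsivity estimates $r+\overline{\mathcal U}-\alpha\overline\Clss>(r-1)\tilde\eta$ of Lemma~\ref{prop:profiles-R}.
\end{itemize}

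\textbf{Regularity gain.} Rather than solving the ODE along characteristics directly, I use a resolvent/energy argument. Differentiating the equation $k$ times, for $k>m$, gives
$$(\partial^\beta\mathcal L)v=\mathcal L(\partial^\beta v)+\text{commutators}.$$
The top-order commutator is $-k\,\nabla(\chi_2(y+\widehat X\overline U))\cdot\nabla^k v$ together with the analogous $\alpha\nabla(\chi_2\widehat X\overline\Clss)$ terms. Using the radial and angular repulsivity properties \eqref{radial repulsivity}--\eqref{angular repulsivity}, these produce a damping of size $-k\tilde\eta$ in the $\dot H^k$ energy. This is the same mechanism as in the $E_K$ estimate of Lemma~\ref{boosstrapestimate3}.

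The resulting estimate would be
$$(\Re\lambda+k\tilde\eta-C)\|\nabla^k v\|_{L^2}^2\le C(k)\big(\|v\|_{H^{k-1}}+\|w\|_{H^k}\big)\|\nabla^kv\|_{L^2}.$$
Since $\Re\lambda>-\frac{\smallc_g}{2}$ and $k\ge m\ge\widetilde C\tilde\eta^{-1}$, the coefficient on the left is positive. The $J(1-\chi_1)$ term only adds damping $-J$ on its support, and its commutators $J\nabla^j\chi_1$ are lower order. Inducting on $k$ then gives $v\in H^k$ for all $k$, and hence $v\in C^\infty$ by Sobolev embedding.

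\textbf{Main obstacle.} This a priori estimate is only formal for $v\in H^m$, because $\nabla^k v$ may not be in $L^2$. To make it rigorous I would regularize, either with difference quotients or with a mollifier $\rho_\epsilon*v$ whose commutator with the transport field is controlled by Friedrichs' lemma. I would then need the estimate to be uniform in $\epsilon$ and pass to the limit. The delicate point is the boundary $|y|=3C_0$ of the support of $X$. There the coefficient $\chi_2$ degenerates, while the zeroth-order term $-J$ remains, so the mollified functions stay compactly supported and no boundary terms arise. I would also check that the constants in the commutator bounds depend on $(C_0,\smallc_g,\tilde\eta)$ and the profile only, and not on the mollification parameter; this is what determines $\bar J$.
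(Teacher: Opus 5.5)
The paper gives no proof of this lemma; it is quoted from \cite{shijia}. Judged on its own, your argument has a genuine gap in the regularity-gain step. The estimate
\[
(\Re\lambda+k\tilde\eta-C)\|\nabla^k v\|_{L^2}^2\le C(k)\big(\|v\|_{H^{k-1}}+\|w\|_{H^k}\big)\|\nabla^k v\|_{L^2}
\]
does not hold on the support of $\mathcal L$ once $k$ is large compared with $J$.

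The top-order commutator involves
\[
\nabla\big(\chi_2(y+\widehat X\overline U)\big)=\chi_2\nabla(y+\widehat X\overline U)+(y+\widehat X\overline U)\otimes\nabla\chi_2 .
\]
The repulsivity properties \eqref{radial repulsivity}--\eqref{angular repulsivity} control only the first piece, i.e.\ only where $\chi_2\equiv1$. The second piece has radial part $\chi_2'(r)\,(r+\widehat X\overline{\mathcal U})\le 0$. It therefore contributes
\[
+k\int|\chi_2'|\,(r+\widehat X\overline{\mathcal U})\,|\partial_r\nabla^{k-1}v|^2\,{\rm d}y
\]
to $\Re\langle\mathcal L v,v\rangle_{\dot H^k}$, which is an amplification, not a damping. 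Since $\chi_2$ drops from $1$ to $0$ on $[2C_0,\frac52C_0]$, one has $\sup r|\chi_2'|\ge 4$. So on that annulus the net top-order coefficient is negative and of size comparable to $k$, while the only damping available there is the fixed constant $J$.

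This is the mechanism behind the coupling $J\ge 2m$ in Lemma~\ref{lemma:abstract_result}. It also contradicts your claim that the commutator constants, and hence $\bar J$, are independent of $k$. With $J$ fixed, your induction stops at $H^k$ with $k\sim J$, not at $C^\infty$. Separately, you never treat regularity across the sphere $|y|=\frac52C_0$, where $\chi_2$ degenerates (not $|y|=3C_0$). Knowing $v=-w/(J+\lambda)$ on $\{\chi_2=0\}$ says nothing about how the derivatives of $v$ match there.

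Here is a repair consistent with your approach.
\begin{itemize}
\item Run the $H^k$ bootstrap for all $k$ only on $\{\chi_2\equiv1\}$, using a radially non-increasing localizing weight $\omega$ that is constant on $B(0,1)$. This is legitimate because $r+\overline{\mathcal U}-\alpha\overline\Clss>(r-1)\tilde\eta>0$ for $r>1$. Hence every characteristic family exits $B(0,R)$ for $R>1$, and the weight produces terms of good sign.
\item On the annulus $\{2C_0\le|y|<\frac52C_0\}$, where $\chi_1=0$, use weights such as $\chi_2^{Nk}$ with $N\ge3$. Their gradient term $-\frac{Nk}{2}\chi_2^{Nk}|\chi_2'|\,(r+\widehat X\overline{\mathcal U})$ absorbs the amplification, which gives $C^\infty$ on $\{\chi_2>0\}$.
\item You then still need a separate argument for smooth matching at $|y|=\frac52C_0$. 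The natural tools are the outgoing characteristics, the large damping $\Re(J+\lambda)$, and the infinite-order vanishing of $\chi_2$.
\end{itemize}

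Finally, the mollification step needs more than the $L^2$ Friedrichs lemma. For $v\in H^k$ only, the order-$(k+1)$ commutator $[\mathcal L,\mathcal M_\epsilon]v$ does not tend to zero. You must bound its top-order part by $C\|\nabla^{k+1}\mathcal M_\epsilon v\|_{L^2}$, with $C$ independent of $k$ and $\epsilon$, and absorb it into $k\tilde\eta$. This works, for instance, with $\mathcal M_\epsilon=(1-\epsilon^2\Delta)^{-1}$, whereas for a convolution mollifier the analogous bound is not immediate.
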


As a direct consequence of Lemmas~\ref{lemma:abstract_result}--\ref{prop:smooth}, we obtain the following properties of $\mathcal L$.

\begin{Lemma}[\cite{shijia}, Proposition 1.8] \label{prop:maxdissmooth} 
Let $\smallc_g \in (0,1)$, and let $(m,J)$ be fixed as in  {\rm Lemmas~\ref{lemma:abstract_result}{\rm--}\ref{prop:smooth}}. 
Then the Hilbert space $X$ can be decomposed as $V_{\text{\rm sta}} \oplus V_{\text{\rm uns}}$, 
where both $V_{\rm{sta}}$ and $ V_{\rm{uns}}$ are invariant subspaces of $\mathcal{L} = (\mathcal{L}_u, \mathcal{L}_\subclss)$, 
and $V_{\rm{uns}}$ is finite-dimensional and spanned by smooth functions $\{(\varphi_{i,u}, \varphi_{i,\subclss})\}^N_{i=1}$. 
Moreover, there exists a metric  $\Upsilon$ of $V_{uns}$ such that the decomposition satisfies{\,\rm:}
\begin{align} \begin{split} \label{eq:decomposition_condition}
\Re\langle \mathcal{L} v,  v \rangle_\Upsilon &\geq -\frac{3}{5}\smallc_g \| v \|_{\Upsilon}^2  \qquad \,\,\, \text{for all}\,\, v \in V_{\rm{uns}}, \\
\left\| e^{t\mathcal{L}} v \right\|_X &\leq e^{-t\frac{\smallc_g}{ 2}} \| v \|_X \qquad\,\,\, \text{for all}\,\, v \in V_{\rm{sta}}.
\end{split} \end{align}
\end{Lemma}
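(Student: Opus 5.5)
The statement is a direct consequence of Lemmas~\ref{lemma:abstract_result} and \ref{prop:smooth}, so the plan is to assemble the two rather than prove anything new.

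\textbf{Decomposition and invariance.} Fix $\smallc_g\in(0,1)$ and take $m\ge\max\{\widetilde C\tilde\eta^{-1},\tilde m\}$ and $J\ge\max\{2m,\bar J\}$. Then both lemmas apply simultaneously. Lemma~\ref{lemma:abstract_result}(i)--(ii) gives the invariant splitting $X=V_{\rm uns}\oplus V_{\rm sta}$, with $V_{\rm uns}$ finite-dimensional, $N=\dim V_{\rm uns}$. Lemma~\ref{prop:smooth} shows that $V_{\rm uns}$ consists of smooth functions. Any basis of $V_{\rm uns}$, in particular the Jordan basis of part (iii), is therefore a family of smooth pairs $\{(\varphi_{i,u},\varphi_{i,\subclss})\}_{i=1}^N$; these can be normalized in $X$.

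\textbf{The metric $\Upsilon$.} Define $\Upsilon$ on $V_{\rm uns}$ by transporting the standard Hermitian product of $\mathbb C^N$ through the coordinate map of the Jordan basis in Lemma~\ref{lemma:abstract_result}(iii), that is,
\[
\langle v,w\rangle_\Upsilon:=\bar w^\top v
\]
in those coordinates. This is a genuine inner product on the finite-dimensional space $V_{\rm uns}$, and it is equivalent to $\|\cdot\|_X$ by finite dimensionality. With this choice, $\Re\langle\mathcal L v,v\rangle_\Upsilon$ is exactly the quantity in \eqref{eq:chisinau}, which yields the first inequality of \eqref{eq:decomposition_condition}. The off-diagonal entries $\smallc_g/10$ are what make this work: a Jordan block $\lambda I+\tfrac{\smallc_g}{10}N_0$ satisfies $\Re(\bar w^\top J w)\ge(\Re\lambda-\tfrac{\smallc_g}{10})|w|^2\ge-\tfrac{3}{5}\smallc_g|w|^2$, because $\Re\lambda>-\smallc_g/2$.

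\textbf{Decay on $V_{\rm sta}$.} The second inequality is \eqref{eq:prim} with $\aleph(t)=e^{t\mathcal L}$.

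\textbf{Main obstacle.} The only delicate point is compatibility of the parameter choices, namely that one pair $(m,J)$ satisfies both lemmas at once. Since the thresholds are monotone lower bounds, taking the maximum of the two thresholds suffices. The rescaled Jordan form with off-diagonal entries $\smallc_g/10$ rather than $1$ is obtained by conjugating with $\mathrm{diag}(1,\epsilon,\epsilon^2,\dots)$, $\epsilon=\smallc_g/10$, and I take this as already contained in Lemma~\ref{lemma:abstract_result}(iii).
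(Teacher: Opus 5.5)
Your proposal is correct and matches the paper, which gives no separate argument and presents this lemma as a direct consequence of Lemmas~\ref{lemma:abstract_result}--\ref{prop:smooth}. Taking $\Upsilon$ to be the Hermitian product in the Jordan coordinates of part (iii), the first inequality is exactly \eqref{eq:chisinau} and the second is \eqref{eq:prim}. One point of care: keep the $X$-normalized spanning family separate from the unnormalized Jordan basis that defines $\Upsilon$. Rescaling individual basis vectors would change the off-diagonal entries $\smallc_g/10$.
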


\section{Remarks  on the Choice of the Initial Data}\label{appendix D}
In this section, we give some remarks on the initial data considered in Theorem \ref{Thm1.1}. 
First, for the reformulated system \eqref{selfsimilar eq}, 
we choose the initial data $(\Clss_0, U_0)$ in the self-similar coordinates $(\tau, y)$ with the following form{\rm:}
$$ \Clss_0 = \widehat{X}(\tau_0, y)\overline{\Clss}+ \widetilde{\Clss}_0^* +\sum_{i=1}^N \hat{k}_i\varphi_{i, \subclss},\quad U_0 = \widehat{X}(\tau_0, y)\overline{U}+ \widetilde{U}_0^* +\sum_{i=1}^N \hat{k}_i\varphi_{i, u},$$
where $\{\varphi_{i, \subclss}, \varphi_{i, u}\}_{i=1}^N$ denotes a normalized basis of $V_{\mathrm{uns}}$, consisting of smooth functions compactly 
supported in ball $B(0, 3C_0)$, and the corresponding coefficients ${\hat{k}_i}$ are  fixed in Proposition \ref{prop:ci}. 
Here,  $(\widetilde{\Clss}_0^*, \widetilde{U}_0^*)$ is required to satisfy the conditions: 
\begin{align} \label{smallness-D}
\begin{split} 
\max\big\{\|\widetilde{\Clss}_0^* \|_{L^{\infty}},\  \|\widetilde{U}_0^* \|_{L^{\infty}}\big\}
&\leq  \smallc_1, \\
\| \nabla^K\widetilde{\Clss}_0^*\phi^{\frac K2} \|_{L^2}+\| \nabla^K\widetilde{U}_0^* \phi^{\frac K2}\|_{L^2} &\leq\frac{E}{4}, 
 \\
\widetilde{\Clss}_0^*+\widehat{X}\overline{\Clss}&\geq \frac{\smallc_1}{2}, \\
|\nabla(\widetilde{\Clss}_0^*+\widehat{X} \overline{\Clss})|+|\nabla(\widetilde{U}_0^*+\widehat{X} \overline{U})|&\leq \frac{C}{\langle y \rangle^{\Lambda}},\\
(\chi_2 \widetilde{\Clss}_0^*, \chi_2 \widetilde{U}_0^*) &\in V_{\mathrm{sta}}.\end{split} \end{align}

 On one hand, we consider a Banach space $\mathcal{B}$ that incorporates all the norms appearing in 
\eqref{smallness-D}. 
There exists a sufficiently small ball $B_* \subset \mathcal{B}$ such that every element in this ball satisfies the required smallness assumptions shown in \eqref{smallness-D}. 
On the other hand,  the restriction 
$$(\chi_2 \widetilde{\Clss}_0^*, \,\chi_2 \widetilde{U}_0^*) \in V_{\mathrm{sta}}$$ shown in \eqref{smallness-D} is equivalent to 
\[
P_{\mathrm{uns}} (\widetilde{\Clss}_0^*, \widetilde{U}_0^*) = 0,
\]
which is a finite-dimensional closed restriction. Since projection $P_{\mathrm{uns}}$ maps onto the finite-dimensional subspace 
$V_{\mathrm{uns}}$, there always exists a representative of $(\widetilde{\Clss}_0^*, \widetilde{U}_0^*)$ in the quotient space 
$\mathcal{B}/V_{\mathrm{uns}}$ that satisfies both of the above conditions. 
By the form of initial data, $(\Clss_0, U_0)$ belongs to an equivalence class in the quotient space $\mathcal{B} / V_{\mathrm{uns}}$. Since $V_{\mathrm{uns}}$ is finite-dimensional, 
the manifold of initial data that give rise to finite-time implosion has finite codimension.

Second, it follows from  \eqref{scaling1},  $\tau_0=-\frac{\log T}{\Lambda}$, and $y= xT^{-\frac{1}{\Lambda}}=e^{\tau_0}x$ that 
\begin{equation*}
    \begin{aligned}
        \lss_0(x)&= \frac{1}{\Lambda T^{1-\frac{1}{\Lambda}}}\Big(\mathscr{X}(x)\overline{\Clss}(\frac{x}{T^{\frac{1}{\Lambda}}})+ \widetilde{\Clss}_0^*(\frac{x}{T^{\frac{1}{\Lambda}}})+\sum_{i=1}^N \hat{k}_i\varphi_{i,\subclss}(\frac{x}{T^{\frac{1}{\Lambda}}})\Big),\\
         u_0(x)&= \frac{1}{\Lambda T^{1-\frac{1}{\Lambda}}}\Big(\mathscr{X}(x)\overline{U}(\frac{x}{T^{\frac{1}{\Lambda}}})+ \widetilde{U}_0^*(\frac{x}{T^{\frac{1}{\Lambda}}})+ \sum_{i=1}^N \hat{k}_i\varphi_{i,u}(\frac{x}{T^{\frac{1}{\Lambda}}})\Big).
         \end{aligned}
\end{equation*}
By the construction of $(\Clss_0, U_0)$, we obtain  
$$
\lss_0 \geq \frac{\smallc_1}{2}\frac{1}{\Lambda T^{1-\frac{1}{\Lambda}}}
\,\, \Longrightarrow \,\, \rho_0 
\geq \alpha^{\frac{1}{\alpha}}\Big(\frac{\smallc_1}{2}\frac{1}{\Lambda T^{1-\frac{1}{\Lambda}}}\Big)^{\frac{1}{\alpha}},
$$
and
$$
\|(\rho_0, u_0)\|_{L^\infty}\leq C,\qquad 
\|(\nabla^K\rho_0\phi^{\frac K2}, \,\nabla^K u_0\phi^{\frac K2})\|_{L^2}\leq C(E).
$$
Moreover, $(\rho_0, u_0)$ belongs to an equivalence class in the quotient space $\mathcal{B} / V_{\mathrm{uns}}$.

\bigskip
\bigskip
\noindent{\bf Acknowledgements:}  
 The authors would like to thank Professor Zhifei Zhang for his truly helpful suggestions and remarks.
This research is partially supported by National Key R$\&$D Program of China (No. 2022YFA1007300). The research of Gui-Qiang G. Chen was also supported in part by the UK Engineering and Physical Sciences Research
Council Award EP/L015811/1, EP/V008854, and EP/V051121/1.
The research of Shengguo Zhu was also supported in part by 
the National Natural Science Foundation of China under the Grant  12471212, 
and the Royal Society (UK)-Newton International 
Fellowships NF170015.

\bigskip
\noindent{\bf Conflict of Interest:} The authors declare  that they have no conflict of
interest. 
The authors also  declare that this manuscript has not been previously published, 
and will not be submitted elsewhere before your decision.

\bigskip
\noindent{\bf Data availability:} Data sharing is not applicable to this article as no datasets were generated or analyzed during the current study.

\end{document}